\numberwithin{equation}{section}
\def\argmin{\mathop{\rm arg\, min}}
\newcommand{\bel}{\begin{eqnarray}\label}
\newcommand{\eel}{\end{eqnarray}}
\newcommand{\bes}{\begin{eqnarray*}}
\newcommand{\ees}{\end{eqnarray*}}
\newcommand{\bei}{\begin{itemize}}
\newcommand{\eei}{\end{itemize}}
\newcommand{\beiftnt}{\begin{itemize}\footnotesize}
\def\benu{\begin{enumerate}}
\def\eenu{\end{enumerate}}
\def\argmin{\mathop{\rm arg\, min}}
\def\real{{\mathbb{R}}}
\def\R{{\real}}
\def\E{{\mathbb{E}}}
\def\P{{\mathbb{P}}}
\def\complex{\mathop{{\rm I}\kern-.58em\hbox{\rm C}}\nolimits}
\def\pa{\partial}
\def\diag{\hbox{\rm diag}}
\def\rank{\hbox{\rm rank}}
\def\sgn{\hbox{\rm sgn}}
\DeclareMathOperator{\trace}{trace}
\def\Var{\hbox{\rm Var}}
\def\mathbold{\boldsymbol} 
\def\ba{\mathbold{a}}
\def\bA{\mathbold{A}}
\def\bb{\mathbold{b}}
\def\bfe{\mathbold{e}}
\def\bff{\mathbold{f}}
\def\tbf{{\widetilde{\bff}}}
\def\bg{\mathbold{g}}
\def\bG{\mathbold{G}}
\def\hbG{{\widehat{\bG}}}
\def\bh{\mathbold{h}}
\def\tbh{{\widetilde{\bh}}}
\def\bH{\mathbold{H}}\def\scrH{{\mathscr H}}
\def\hbH{{\widehat{\bH}}}
\def\bI{\mathbold{I}}
\def\bM{\mathbold{M}}
\def\bP{\mathbold{P}}
\def\bQ{\mathbold{Q}}
\def\bs{\mathbold{s}}
\def\bS{\mathbold{S}}
\def\Shat{{\widehat{S}}}\def\Sbar{{\overline S}}
\def\be{\mathbold{e}}
\def\bu{\mathbold{u}}
\def\bv{\mathbold{v}}
\def\bw{\mathbold{w}}
\def\bW{\mathbold{W}}\def\tbW{{\widetilde{\bW}}}
\def\bx{\mathbold{x}}
\def\bX{\mathbold{X}}\def\tbX{{\widetilde{\bX}}}\def\bXbar{{\overline \bX}}
\def\by{\mathbold{y}}
\def\bybar{\overline{\by}}
\def\tby{{\widetilde{\by}}}
\def\bz{\mathbold{z}}
\def\hbz{{\widehat{\bz}}}
\def\bbeta{\mathbold{\beta}}\def\hbeta{\widehat{\beta}}
\def\hbbeta{{\smash{\widehat{\bbeta}}}}
\def\tbbeta{{\smash{\widetilde{\bbeta}}}}
\def\bbetabar{{\overline\bbeta}}
\def\ep{\varepsilon}\def\eps{\epsilon}
\def\bep{ {\mathbold{\ep} }}
\def\bepbar{ \overline{{\mathbold{\ep} }} }
\def\tbep{{\widetilde{\bep}}}
\def\bfeta{\mathbold{\eta}}
\def\htheta{\widehat{\theta}}
\def\kappabar{{\overline{\kappa}}}
\def\lam{\lambda}
\def\bmu{\mathbold{\mu}}
\def\hbmu{{\widehat{\bmu}}}
\def\xibar{{\overline{\xi}}}
\def\bPi{\mathbold{\Pi}}
\def\bSigma{\mathbold{\Sigma}}
\def\bphi{\mathbold{\phi}}
\declaretheorem[name=Theorem,numberwithin=section]{theorem}
\declaretheorem[name=Proposition,sibling=theorem]{proposition}
\declaretheorem[name=Lemma,sibling=theorem]{lemma}
\declaretheorem[name=Assumption,numberwithin=section]{assumption}
\declaretheorem[name=Example,style=definition]{example}
\crefname{assumption}{assumption}{assumptions}
\def\risk{{{R_*}}}
\def\df{{\widehat{\mathsf{df}}}{}}
\def\argmin{\mathop{\rm arg\, min}}
\def\defas{\stackrel{\text{\tiny def}}{=}}
\DeclareMathOperator{\dv}{div}
\def\DeBias{\hbbeta{}^{\text{\tiny (de-bias)} } }
\def\debias{\hbeta{}^{\text{\tiny (de-bias)} } }
\def\htheta{{\widehat\theta}}
\def\hbeta{{\widehat\beta}}
\def\bmubar{{\overline\bmu}}
\def\epsbar{{\overline\eps}}
\def\epsdoublebar{\overline{{\overline\eps}}}
\def\bAbar{{\overline\bA}}
\newcommand\restateIfEnabled[1]{ #1 } 
\begin{document}

\title{
De-biasing 
convex regularized estimators and interval estimation in linear models 
}
\runtitle{Asymptotic normality when $p/n\to\gamma$}
\author{Pierre C Bellec and Cun-Hui Zhang}
\runauthor{Bellec and Zhang}
\date{\today}
%
%
%
%
%
%
%

\begin{abstract}
    New 
    upper bounds are developed for the $L_2$ distance 
    between $\xi/\Var[\xi]^{1/2}$ and linear and quadratic functions of $\bz\sim N({\bf 0},\bI_n)$ 
    for random variables of the form 
    ${\xi}=\bz^\top f(\bz) - \dv f(\bz)$. The linear approximation 
    yields a central limit theorem 
    when the squared norm of $f(\bz)$ dominates the squared
    Frobenius norm of $\nabla f(\bz)$ in expectation. 

    Applications of this normal approximation
    are given for the asymptotic normality of de-biased
    estimators in linear regression with correlated design
    and convex penalty in the regime $p/n \le \gamma$
    for constant $\gamma\in(0,{\infty})$. 
    For the estimation of linear functions $\langle \ba_0,\bbeta\rangle$
    of the unknown coefficient vector $\bbeta$,
    this analysis leads to
    asymptotic normality of the de-biased estimate
    for most normalized directions $\ba_0$, where ``most'' is quantified in a precise
    sense.
    This asymptotic normality holds
    for any convex penalty if $\gamma<1$ and
    for any strongly convex
    penalty if $\gamma\ge 1$.
    In particular the penalty needs not be separable or permutation
    invariant.
        By allowing arbitrary regularizers, the results
        vastly broaden the scope of applicability of de-biasing
        methodologies to obtain confidence intervals
        in high-dimensions. 
        In the absence of strong convexity for $p>n$,
        asymptotic normality of the de-biased estimate is obtained
        for the Lasso and the group Lasso under additional conditions. 
        For general convex penalties, 
        our analysis also provides prediction and estimation error bounds of independent interest. 
\end{abstract}

\maketitle

\section{Introduction}
Consider the linear model
\bel{LM}
\by = \bX\bbeta + \bep
\eel
with an unknown coefficient vector $\bbeta\in \R^p$, a Gaussian noise vector 
$\bep\sim N({\bf 0},\sigma^2\bI_n)$, and a Gaussian design matrix $\bX\in \R^{n\times p}$ with iid 
$N({\bf 0},\bSigma)$ rows independent of $\bep$. 
We assume throughout the sequel that $\bSigma$ is invertible. 
The paper develops confidence intervals for
$\theta  = \langle \ba_0, \bbeta \rangle$
from a given regularized initial estimator $\hbbeta\in\R^p$,
using a technique referred to as \emph{de-biasing}:
a correction to the initial estimate $\langle \ba_0,\hbbeta\rangle$ in the
direction $\ba_0$ is constructed so that the 
\emph{``de-biased"} estimate
can be used for inference about 
$\theta=\langle\ba_0,\bbeta\rangle$. 


\subsection{Regularization induces bias}
\label{sec:intro-regularization-induces-bias}
If $\bX^\top\bX$ is invertible,
the unregulated least-squares estimate $\hbbeta^{ls} = (\bX^\top\bX)^{-1}\bX^\top\by$ is unbiased, that is, $\E[\hbbeta^{ls}-\bbeta | \bX] = \mathbf{0}$.
On the other hand, if the square loss is regularized with an additive penalty,
\begin{equation}
\hbbeta=\argmin_{\bb\in\R^p} \|\by-\bX\bb\|^2/(2n) + g(\bb)
\label{eq:intro-hbbeta-g}
\end{equation}
for penalty functions commonly used in high-dimensional statistics
such as $g(\bb)=\lambda\|\bb\|_1$ for $\lambda>0$ (Lasso)
or $g(\bb) = \mu\|\bb\|_2^2$ for $\mu>0$ (ridge regression), 
then $\hbbeta$ is biased. 

For ridge regression $\hbbeta = (\bX^\top\bX +  n\mu \bI_p)^{-1}\bX^\top\by$, 
this bias can be quantified explicitly when $\bSigma=\bI_p$ as a shrinkage to the origin. 
Let $\sum_{i=1}^r \bu_i s_i \bv_i^\top$ be the SVD of $\bX$ with $s_i>0$ and $r=\min(n,p)$. 
By rotational invariance, $\bv_i$ is independent of $s_i$ 
and uniformly distributed in the unit sphere in $\R^p$. 
Thus, with $G_{\gamma}$ being the Marchenko-Pastur law,
$$
\E\big[\hbbeta\big]
= \E\bigg[\sum_{i=1}^r \frac{s_i^2\bv_i\bv_i^\top\bbeta }{s_i^2 + n\mu}\bigg]
= \E\biggl[\sum_{i=1}^r \frac{p^{-1}s_i^2}{s_i^2 + n\mu}\bigg]\bbeta
\approx \bbeta \int \frac{(r/p)x}{x+(r/p)\mu}G_{\gamma}(dx)
\ \text{ as }\frac{p}{n}\to \gamma.
$$

The Lasso penalty $g(\bb) = \lambda \|\bb\|_1$ also introduces bias. 
For example, for deterministic orthonormal designs, 
the Lasso estimator of the coefficient $\beta_j$ is the soft-thresholding of 
$N(\beta_j,\sigma^2/n)$ which is again biased toward the origin. 
For Gaussian designs with $\bSigma = \bI_p$ and in an average sense, 
the Lasso is approximately the soft-thresholding of 
$N(\beta_j,\tau_*^2/n)$ with certain $\tau_*\ge \sigma$ under proper conditions 
\cite{bayati2012lasso}. 
Thus, with $s_1 = \#\{j:|\beta_j|>\lam\}$, 
the squared bias of the Lasso, $\|\bbeta - \E[\hbbeta]\|_2^2$,  
is expected to have no smaller order than the lower bound $s_1\lam^2$ for its $\ell_2$ risk 
\cite[Theorem 3.1]{bellec2018noise}. 
Alternative approaches were proposed to remove or reduce the bias of the Lasso for strong signals,
e.g.,
by using concave penalty functions (e.g., SCAD \cite{fan2001variable},
MCP \cite{zhang2010nearly}) 
or 
iterated hard thresholding algorithms
\cite{blumensath2009iterative}.
These approaches yield an error term
of the order $(\|\bbeta\|_0-s_1')\lambda^2 + s_1'\sigma^2/n$ where $s_1' = \{j=1,...,p: |\beta_j|>c \lambda\}$
for some constant $c>0$ \cite{feng2019sorted,ndaoud2020scaled}, 
alleviating the bias of the Lasso for large coefficients at typical penalty levels $\lam >\sigma/n^{1/2}$. 

\paragraph*{De-biasing the Lasso, asymptotic normality and confidence intervals}
If the goal is the estimation of a single scalar parameter
$\theta=\langle \ba_0,\bbeta\rangle$ in a predetermined direction $\ba_0$ 
instead of the full vector $\bbeta\in\R^p$, 
it is possible to correct
the bias of the Lasso 
and to construct confidence intervals for $\theta$: 
there is already
a vast literature on asymptotic normality of de-biased estimates
in sparse linear regression for the Lasso 
\cite[among others]{ZhangSteph14,GeerBR14,JavanmardM14a,JavanmardM14b,belloni2014high,javanmard2018debiasing,miolane2018distribution,bellec_zhang2019dof_lasso}. 
In this literature $\ba_0$ is usually the $j$-th canonical basis vector
and $\beta_j$ the scalar parameter of interest.
Given the Lasso $\hbbeta$ as an initial estimator of $\bbeta$,
the idea is to add a de-biasing term to achieve asymptotic normality  
which then yields confidence intervals 
for $\theta=\langle \ba_0,\bbeta\rangle$.
If $s_0 = \|\bbeta\|_0$ in 
\eqref{LM}, 
several de-biased estimators 
have been proposed and their asymptotic normality 
hold under certain rate conditions on $s_0,n,p$.
The earliest works on this topic 
\cite{ZhangSteph14,GeerBR14,JavanmardM14a,belloni2014high}
provide asymptotic normality results
in the regime $s_0 \log(p) / \sqrt n \to 0$. 
When $s_0 \log(p) / \sqrt n \to 0$ indeed holds, the  
de-biasing constructions in these papers are all first order equivalent to each other, 
and under normalization $\|\bSigma^{-1/2}\ba_0\|_2=1$ to 
\bel{eq:debiased-estiamte-lasso-z_0}
\qquad && \htheta =
\underbrace{\langle \ba_0,\hbbeta\rangle}_{
    \text{initial estimate}}
+  \underbrace{
\|\bz_0\|_2^{-2} \bz_0^\top(\by-\bX\hbbeta)}_{\text{de-biasing correction}},
\ \ 
\sqrt n (\htheta - \theta) =
\underbrace{\sqrt n \|\bz_0\|_2^{-2} \bz_0^\top \bep}_{\text{normal part}}
+ \underbrace{O_\P( R_n)}_{\text{remainder}}, 
\eel
where
$\bu_0 = \bSigma^{-1}\ba_0/\big\langle\ba_0,\bSigma^{-1}\ba_0\big\rangle$ 
and $\bz_0 = \bX\bu_0 \sim N(\mathbf{0},\bI_n)$. 
While these works do not assume $\bSigma$ known and construct an estimated score
vector $\hbz$ for 
$\bz_0$, the impact of using $\hbz$ can be absorbed into the remainder in 
\eqref{eq:debiased-estiamte-lasso-z_0} with $R_n = \sigma s_0\log(p) / \sqrt n$.
The direction $\bu_0$ and the de-biasing correction 
in \eqref{eq:debiased-estiamte-lasso-z_0}
have a natural semi-parametric interpretation
\cite{zhang2011statistical}.
Viewing $\theta:\R^p\to\R$ as the function $\theta(\bbeta)=\langle \ba_0,\bbeta\rangle$,
the Fischer information 
for the estimation of $\theta(\bbeta)$
in \eqref{LM}
is $F_\theta = 1/(\sigma^2 \langle \ba_0,\bSigma^{-1}\ba_0\rangle)$,
and the direction $\bu_0$ above is the only $\bu\in\R^p$ with 
\begin{equation}
\langle \nabla \theta(\hbbeta), \bu\rangle
= \langle \ba_0,\bu\rangle 
= 1
\label{eq:normalization-u-intro}
\end{equation}
such that $F_\theta$ is also the Fischer information in the one-dimensional
submodel $\{\hbbeta + t \bu, t\in \R\}$.
For this reason the line $\{\hbbeta + {t\bu_0,} t\in \R\}$ is referred to
as the least-favorable one-dimensional submodel for the estimation of $\theta$.
The normalization \eqref{eq:normalization-u-intro} ensures that 
$\theta(\hbbeta+t\bu) = \theta(\hbbeta) + t$ 
and $\widehat{\theta} = \theta(\hbbeta+\widehat{t} \bu_0)$
with $\widehat{t}=\|\bz_0\|_{2}^{-2}\bz_0^\top(\by-\bX\hbbeta)$, 
so that \eqref{eq:debiased-estiamte-lasso-z_0} replaces 
the initial $\hbbeta$ 
with its one-step correction 
$\hbbeta + \widehat{t} \bu_0$, where
$\widehat{t}$ maximizes the likelihood in the least-favorable submodel. 
We refer to \cite{BickelKRW98} for a systematic study of
this semi-parametric perspective.

If $s_0\log(p)/\sqrt n \to +\infty$
and $\bSigma$ is unknown with bounded spectrum,
the minimax estimation error of the form
$\sqrt{n}(\htheta - \theta)$
diverges for any estimator $\htheta$ \cite{cai2017confidence}.
This rules out asymptotic normally results
at the $\sqrt n$ adjusted rate if $s_0\log(p)/\sqrt n\to +\infty$
and no further assumption is made on $\bSigma$.
However, if $\bz_0$ is known, 
\eqref{eq:debiased-estiamte-lasso-z_0} holds with $R_n' = \sqrt{s_0\log(p/s_0)/n}(1+s_0/\sqrt n)$, 
providing asymptotic normality for sparsity levels
$s_0 \lesssim n^{2/3}$ up to logarithmic factors, 
cf. \cite[Corollary 3.3]{bellec_zhang2019dof_lasso}.
Similarly,
\cite[Theorem 3.8]{javanmard2018debiasing} provides 
\eqref{eq:debiased-estiamte-lasso-z_0} 
with $\ba_0=\be_j\in\R^p$ a canonical basis vector
and $R_n' = \log(p) \sqrt{s_0/n}
{\max_j\|\bSigma^{-1} \be_j\|_1}$.
Already in the regime $\sqrt n \lll s_0 \lll n^{2/3}$,
the arguments of \cite{javanmard2018debiasing,bellec_zhang2019dof_lasso}
differ significantly from
the $\ell_1$-$\ell_\infty$ H\"older inequality argument of
\cite{ZhangSteph14,GeerBR14,JavanmardM14a,belloni2014high}:
while these earlier works prove asymptotic normality with a remainder term
of order $O_\P(s_0\log(p)/\sqrt n)$,
\cite{javanmard2018debiasing,bellec_zhang2019dof_lasso}
analyze explicitly the smaller order terms hidden
in this $O_\P(s_0\log(p)/\sqrt n)$ remainder.

For $s_0\ggg n^{2/3}$, the de-biasing correction 
in \eqref{eq:debiased-estiamte-lasso-z_0} 
needs to be modified: 
\begin{align}
\label{eq:debiased-estiamte-lasso}
\htheta
= \langle \ba_0,\hbbeta\rangle + 
(n-|\Shat|)^{-1} \bz_0^\top(\by-\bX\hbbeta),
\ \ 
{\sqrt{n}(\htheta - \theta)
= \sqrt n \|\bz_0\|_2^{-2} \bz_0^\top \bep + O_\P(R_n')}
\end{align}
with $\Shat=\{j\in[p]:\hbeta_j\ne 0\}$ and
${R_n'} =\sigma (s_0 \log(p/s_0)/n)^{1/2}$,
cf. \cite[Theorem 3.1]{bellec_zhang2019dof_lasso}. 
For $\|\bSigma^{-1/2}\ba_0\|=1$
the difference from \eqref{eq:debiased-estiamte-lasso-z_0}
is the replacement of
$\|\bz_0\|_2^{-2}\approx n^{-1}$ in the de-biasing correction with
$(n-|\Shat|)^{-1}$  
to amplify it by a factor $(1-|\Shat|/n)^{-1}$.
This modification is required as soon as $s_0\ggg n^{2/3}$ 
up to logarithmic factors 
\cite[Section 3]{bellec_zhang2019dof_lasso}.
These asymptotic results for $s_0\ggg \sqrt n$ are amenable
to the lack of knowledge of $\bSigma$: in this case
estimation of $\bz_0$ is possible when $\bSigma^{-1}\ba_0$
is sufficiently sparse, see \cite{javanmard2018debiasing}
if the direction of interest $\ba_0$ is canonical basis vector
and \cite[Section 2.2]{bellec_zhang2019dof_lasso}
for arbitrary direction $\ba_0$.
These results \cite{javanmard2018debiasing,bellec_zhang2019dof_lasso} for 
$s_0\ggg \sqrt n$ and correlated $\bSigma$ are so far restricted to
random Gaussian designs.


\paragraph*{Inflated asymptotic variance for non-vanishing prediction error}
In the results discussed so far for the Lasso, 
$s_0\log(p/s_0)/n\to 0$ or stronger conditions are required for asymptotic normality, 
and 
the asymptotic variance of $\sqrt n (\htheta - \theta)$
is $\sigma^2$.
The condition $s_0\log(p/s_0)/n\to 0$ implies
the consistency of the Lasso in prediction and estimation
thanks to error bounds of the form
$\|\bSigma^{1/2}(\hbbeta-\bbeta)\|_2^2 \lesssim s_0\log(p/s_0)/n$
\cite{ZhangH08,sun2012scaled,bellec2016slope,bellec_zhang2019dof_lasso}.
It turns out that the asymptotic variance of $\sqrt{n}(\htheta - \theta)$
is larger than $\sigma^2$ 
if $\|\bSigma^{1/2}(\hbbeta-\bbeta)\|_2^2$ does not vanish;
this is the situation studied in the present work.
The literature on asymptotic normality of de-biased estimates
in the regime 
\begin{equation}
    p/n\to \gamma\in (0,+\infty),\qquad s_0/n\to \kappa\in(0,1)
    \label{intermediate-regime}
\end{equation}
for constants $\gamma,\kappa>0$ is more scarce.
In this regime where $p,n$ and $s_0$ are all of the same order, \cite{JavanmardM14b,miolane2018distribution}
provide asymptotic normality results 
for the de-biased Lasso \eqref{eq:debiased-estiamte-lasso} 
in the estimation of $\beta_j$ (canonical $\ba_0=\be_j$) 
in the isotropic Gaussian design. In these works,
the asymptotic variance of $\sqrt n (\htheta - \theta)$
equals a constant $\tau_*^2$ 
satisfying the 
system of two nonlinear 
equations in \cite{bayati2012lasso} and \cite[Proposition 3.1,Theorem 3.1]{miolane2018distribution}.
The constant $\tau_*^2$ is related to the residual sum of squares
\cite[Corollary 4.1]{miolane2018distribution}
and out-of-sample error 
\cite[Theorem 3.2]{miolane2018distribution} as in 
$$
(1-|\Shat|/n)^{-2}\|\by-\bX\hbbeta\|_2^2/n \to^\P \tau_*^2,
\qquad
\sigma^2 + \|\bSigma^{1/2}(\hbbeta-\bbeta)\|_2^2 \to^\P \tau_*^2
$$
where $\to^\P$ denotes convergence in probability. 
These results for $\bSigma=\bI_p$ highlight that
the  asymptotic variance is strictly larger than $\sigma^2$
when $p, n$ are of the same order as in \eqref{intermediate-regime}.
This phenomenon in the regime \eqref{intermediate-regime} is generic:
for instance the asymptotic variance is also larger than $\sigma^2$
for all permutation-invariant penalty functions
\cite[Proposition 4.3]{celentano2019fundamental}.

In this regime where $n$ and $p$ are of the same order,
\cite{el_karoui2013robust,donoho2016high}
proved asymptotic normality and characterized the variance
for unregularized $M$-estimators.
For $M$-estimators a de-biasing correction is unnecessary
due to the absence of regularization,
and a rotational invariance argument reduces
the problem of correlated designs to a corresponding uncorrelated one
\cite[Lemma 1]{el_karoui2013robust}.
However, this rotational invariance is lost in the presence of a penalty such as the $\ell_1$-norm.
New techniques are called for to analyse the asymptotic behavior, in the regime \eqref{intermediate-regime}
and under correlated designs, of estimators that are not rotational invariant.
More recently, the Approximate Message Passing techniques used 
in \cite{JavanmardM14b,donoho2016high}
were used to obtain similar results in logistic regression
\cite{sur2018modern}; but again, these techniques cannot handle
the Lasso penalty for correlated design.
A more detailed comparison with these works is made 
in \Cref{sec:relaxing-strong-convex-lasso-GL}.
To our knowledge, there is no previous asymptotic normality result
for de-biased
estimates in the regime \eqref{intermediate-regime} for correlated designs
in the presence of a penalty not depending on $\bSigma$ 
(i.e., in situations where rotational invariance does not hold). 
A main goal of the paper is to fill this gap.
Available techniques that tackle the regime \eqref{intermediate-regime}
assume, in addition to uncorrelated design, that the penalty is invariant
under permutations of the $p$ coefficients
\cite{bayati2012lasso,miolane2018distribution,celentano2019fundamental,bu2019algorithmic}
and that the empirical distribution of the true $\{\sqrt{n}\beta_j, j\le p\}$ converges to
some prior distribution.
A second goal of the present paper is to show that asymptotic normality
of de-biased estimates can be obtained beyond the Lasso and beyond
permutation-invariant penalty functions,
without imposing the convergence of the empirical distribution of the normalized coefficients 
$\{\sqrt n\beta_j, j\le p\}$.

\subsection{A general construction of de-biased estimators}
\label{sec:setup-de-biasing}
This section describes a general approach  to systematically construct de-biased
estimates in the linear model \eqref{LM} where $\bX$ has iid $N({\bf 0},
\bSigma)$ rows.
Our goal is to construct confidence intervals for the one-dimensional parameter $\theta=\langle \ba_0, \bbeta\rangle$.
Consider an initial estimator $\hbbeta$,
viewed as a function
of $(\by,\bX)$, i.e.,
$\hbbeta:\R^{n\times (1+p)}\to\R^p$
and assume that this function $\hbbeta$ is Fr\'echet\footnote{
Although the Fr\'echet derivative is the usual definition of derivative
in finite dimension, we write Fr\'echet to emphasize that
the derivative is linear. Linearity may fail for weaker 
notions such as Gateaux differentiability.}
differentiable.
For a given observed data $(\by,\bX)$ from the linear model \eqref{LM}
and a $\hbbeta$ Fr\'echet differentiable at $(\by,\bX)$,
there exist uniquely matrices $\hbH\in\R^{n\times n}$ and $\hbG\in\R^{n\times p}$ such that
\begin{align}
\bX\hbbeta(\by+\bfeta,\bX)
-
\bX\hbbeta(\by,\bX)
&=
\hbH{}^\top\bfeta
+ o(\|\bfeta\|),
\label{eq:def-H-intro}
\\
\hbbeta(\by,\bX+\bfeta \ba_0^\top)
-
\hbbeta(\by,\bX)
&=
\hbG{}^\top\bfeta
+ o(\|\bfeta\|)
\nonumber
\end{align}
for all $\bfeta\in\R^n$.
With $\bX= (x_{ij})_{i\in[n],j\in[p]}$,
if the partial derivatives of $\hbbeta(\by,\bX)$
at the observed data $(\by,\bX)$
are $(\partial/\partial x_{ij})\hbbeta(\by,\bX)$
and $(\partial/\partial y_i)\hbbeta(\by,\bX)$
then \eqref{eq:def-H-intro} implies
$\hbH{}^\top \be_i = \bX(\partial/\partial y_i) \hbbeta(\by,\bX)$
and $\hbG{}^\top \be_i = \sum_{j=1}^p \langle \ba_0,\be_j\rangle
(\partial/\partial x_{ij}) \hbbeta(\by,\bX)$
for canonical basis vectors $\be_i\in\R^n$ and $\be_j\in\R^p$. 
The derivatives of $\smash{\hbbeta}$
and the matrices $\smash{\hbH}$ and $\smash{\hbG}$ can be computed
by only looking at the observed data $(\by,\bX)$, 
for instance by finite difference schemes.

Next, consider the function $\bphi$ defined as
$$\bphi:\R^{n\times (1+p)}\to\R^n,\qquad
(\by,\bX)\mapsto \bphi(\by,\bX) = \bX\hbbeta(\by,\bX) - \by
.$$
If $\hbbeta$ is differentiable
at $(\by,\bX)$ then $\bphi$ is differentiable as well.
By the product and 
chain rules
\begin{equation}
\begin{split}
\bphi(\by +\bfeta,\bX)
-
\bphi(\by,\bX)
&=
[\hbH - \bI_n]^\top
\bfeta + o(\|\bfeta\|),
\\
\bphi(\by,\bX+\bfeta\ba_0^\top)
-
\bphi(\by,\bX)
&=
[
\langle \ba_0, \hbbeta\rangle \bI_n
+  \hbG{} \bX^\top
]^\top
\bfeta + o(\|\bfeta\|).
\end{split}
\label{eq:gradient-phi-intro}
\end{equation}
If the partial derivatives of $\bphi$
are $(\partial/\partial y_i)\bphi$ and
$(\partial/\partial x_{ij})\bphi$, the second line of
the previous display
is equivalently rewritten as
$$\hbox{$\sum$}_{j=1}^p \langle \ba_0,\be_j\rangle
(\partial/\partial x_{ij})\bphi(\by,\bX)
=[\langle \ba_0,\hbbeta\rangle\bI_n + \hbG{}\bX^\top
]^\top\be_i
$$ for each canonical basis vector $\bfeta = \be_i  \in\R^n$.

Observe that the arguments $(\by,\bX)$ of $\bphi$
are centered and jointly normal random variables
and their correlations are computed explicitly, e.g.,
$\E[x_{ij}y_l] = \be_j^\top\bSigma\bbeta I_{\{i=\ell\}}$, 
with basis vectors $\be_j\in\R^p$. 
One version of Stein's formula, also known as Gaussian integration by parts,
is $\E[G  h(Z_1,...,Z_q)] = \sum_{k=1}^q \E[GZ_k] \E[(\partial/\partial z_k) h(Z_1,...,Z_q)]$ provided that the function $h(z_1,...,z_q)$ is differentiable
and that $G,Z_1,...,Z_q$ are centered jointly normal random variables, 
provided the 
existence of the expectations
\cite[Appendix A.4]{talagrand2010mean}.
We leverage this version of Stein's formula
to obtain an unbiased estimating equation involving
only one unknown parameter, the scalar
$\theta=\langle \ba_0,\bbeta\rangle$ of interest.
For $G_i = \be_i^\top\bX\bSigma^{-1}\ba_0$ we find
$\E[G_i y_k] = \E[G_i x_{kj}] = 0$ if $i\ne k$ while
$\E[G_i x_{ij}] = \langle \ba_0, \be_j\rangle$
and
$\E[G_i y_i] = \langle \ba_0, \bbeta\rangle$
so that by reading the partial derivatives in \eqref{eq:gradient-phi-intro},
\bel{eq:stein-intro}
\E\Bigl[G_i\phi_i(\by,\bX)\Bigr]
&=&
\langle \ba_0,\bbeta\rangle 
\E\Bigl[\frac{\partial\phi_i}{\partial y_{i}}(\by,\bX)\Bigr]
+
\sum_{j=1}^p \langle \ba_0, \be_j\rangle
\E\Bigl[\frac{\partial\phi_i}{\partial x_{ij}}(\by,\bX)\Bigr]
\cr&=&
\E\Bigl[
    \langle \ba_0,\bbeta\rangle(\hbH{}_{ii} - 1)
\Bigr]
+
\E\Bigl[
    \langle \ba_0,\hbbeta \rangle
    +
    \be_i^\top\hbG\bX^\top\be_{i}
\Bigr].
\eel
Summing over $i=1,...,n$ and
using $\bphi(\by,\bX) = \bX\hbbeta-\by$, we find that 
$$
\E\bigl[
    \langle \bX\bSigma^{-1}\ba_0,\bX\hbbeta-\by\rangle
\bigr]
= \E\bigl[
    -\langle \ba_0,\bbeta\rangle\trace[\bI_n-\hbH]
    +
    \langle \ba_0,\hbbeta \rangle n
    +
    \trace[\bX^\top\hbG]
\bigr].
$$
To transform this equation into a form representative
of the results of the paper, define
the scalars $\df$ and ${\widehat{A}}$
by
\begin{equation}
\df = \trace [ \hbH ],
\quad
{\widehat{A}} = 
\trace[\bX^\top\hbG] 
+
\langle \ba_0, \hbbeta\rangle \df
.
\label{def-hat-df-and-B}
\end{equation}
The notation $\df$ underlines that $\trace[\hbH]$ has the interpretation
of degrees-of-freedom of the estimator $\hbbeta$ in 
Stein's Unbiased Risk Estimate (SURE) \cite{stein1981estimation}:
regarding $\hbmu=\bX\hbbeta$ as an estimate of $\bmu=\bX\bbeta$ in the 
Gaussian sequence model with observation $\by=\bmu+\bep$, the quantity
$\widehat{\textsc{sure}}=\|\by-\hbmu\|^2+2\sigma^2\df - \sigma^2n$ is an unbiased
estimate of the in-sample error $\|\hbmu-\bmu\|^2 =\|\bX(\hbbeta-\bbeta)\|^2$.
With this notation, we obtain the \emph{unbiased estimating equation}
\begin{equation}
0 = \E\bigl[
\langle \bX\bSigma^{-1}\ba_0, \by - \bX\hbbeta\rangle 
+ (n - \df)(\langle \ba_0, \hbbeta \rangle - \theta)
+ {\widehat{A}}
\bigr]
\label{eq:unbiased-estimating-equation}
\end{equation}
where the only unobserved quantity inside the expectation
is $\theta=\langle \ba_0,\bbeta\rangle$, the scalar parameter
we wish to estimate.
In the above application
of Stein's formula, $G_i=\be_i^\top\bX\bSigma^{-1}\ba_0$ was chosen
on purpose so that $\bbeta$ appears in \eqref{eq:stein-intro}
only through $\langle \ba_0,\bbeta\rangle$
thanks to $\E[G_iy_i] = \langle \ba_0,\bbeta\rangle$.
Note that 
replacing $G_i$ in \eqref{eq:stein-intro} 
by $\be_i^\top\bX\bu$ for any $\bu\in\R^p$ not proportional
to $\bSigma^{-1}\ba_0$ brings a scalar projection of $\bbeta$
different from $\langle \ba_0,\bbeta\rangle$:
this shows the unique role of the random vector
$\bX\bSigma^{-1}\ba_0$ to derive an unbiased estimating equation
for $\theta=\langle \ba_0,\bbeta\rangle$. It is notable that
the direction $\bSigma^{-1}\ba_0$ coincides
with the least-favorable
direction described around \eqref{eq:normalization-u-intro}.
Equation \eqref{eq:unbiased-estimating-equation} is obtained for an arbitrary
initial estimator $\hbbeta$ provided that its derivatives with respect to $(\by,\bX)$ exist 
and the integrability conditions hold to ensure
existence of the expectations involved. 
From \eqref{eq:unbiased-estimating-equation},
the method of moments suggests to estimate $\theta$ with 
$\htheta =
\langle \ba_0, \hbbeta\rangle
+ (n-\df)^{-1}\bigl(\langle \bX\bSigma^{-1}\ba_0, \by - \bX\hbbeta\rangle + {\widehat{A}}\bigr)$
which resembles \eqref{eq:debiased-estiamte-lasso}
for the Lasso for $\df=|\Shat|$ and $\bX\bSigma^{-1}\ba_0 = \bz_0$
under the normalization $\langle \ba_0,\bSigma^{-1}\ba_0\rangle = 1$.

It is useful at this point to specialize the above derivation
to an estimator for which all derivatives can be computed explicitly.
For Ridge regression with penalty $g(\bb)=\mu\|\bb\|_2^2$
for some $\mu>0$, $\hbbeta(\by,\bX) = (\bX^\top\bX + n\mu\bI_p)^{-1}\bX^\top\by$
and
\begin{align}
    \label{eq:ridge-calculation-intro}
    \hbH{}^\top &= \bX(\bX^\top\bX + n\mu\bI_p)^{-1}\bX^\top,
\cr
    \hbG{}^\top &= (\bX^\top\bX + n\mu\bI_p)^{-1}\bigl[\ba_0(\by-\bX\hbbeta)^\top-
\bX^\top \langle \ba_0,\hbbeta\rangle
\bigr].
\end{align}
Indeed, the derivatives of $\hbbeta(\by,\bX)$ exist
as it is the composition of elementary differentiable functions.
Differentiation with respect to $\by$ is straightforward
as $\hbbeta$ is linear in $\by$, while
in order to compute $\hbG$
we proceed by setting
$\bb(t)=\hbbeta(\by,\bX(t))$ with $\bX(t) = \bX + t\bfeta
\ba_0^\top$. Differentiation of the KKT
conditions $\bX(t)^\top(\by - \bX(t)\bb(t)) = n \mu \bb(t)$
at $t=0$ provides the directional derivative $(d/dt)\bb(t)|_{t=0}=\hbG{}^\top\bfeta$. 
This gives \eqref{eq:ridge-calculation-intro}.
It follows from \eqref{eq:ridge-calculation-intro} 
that $\df = \trace[\bX(\bX^\top\bX + n\mu\bI_p)^{-1}\bX^\top]$
and $\widehat{A} = (\by-\bX\hbbeta)^\top \bX (\bX^\top\bX + n\mu\bI_p)^{-1}\ba_0$
for the quantities in \eqref{def-hat-df-and-B}
(for $\widehat{A}$, note the fortuitous cancellation of the term
$\langle \ba_0,\hbbeta\rangle\df$).
For the Lasso similar differentiability formulae are derived
in \cite{bellec_zhang2019dof_lasso}. It is however,
unclear how to obtain closed form formulae for the derivatives of $\hbbeta$
for an arbitrary convex penalty $g$ in \eqref{eq:intro-hbbeta-g}.

We now set up some notation that will be useful for the rest
of the paper, and derive again the unbiased estimating
equation  \eqref{eq:unbiased-estimating-equation} using this new
notation.
Define
\bel{u_0}
\qquad\bu_0 = \bSigma^{-1}\ba_0/\big\langle\ba_0,\bSigma^{-1}\ba_0\big\rangle,
\quad
\bz_0 = \bX\bu_0,
\quad
\bQ_0 = \bI_{p\times p} - \bu_0\ba_0^\top
.
\eel
The normalizing constant in $\bu_0$ is such that $\langle \ba_0,\bu_0\rangle=1$
holds so that
the expression \eqref{u_0} for $\bu_0$
coincides with the direction of the least-favorable
submodel discussed around \eqref{eq:normalization-u-intro}.
The vector $\bz_0$ is independent of $\bX \bQ_0$
by construction as $(\bz_0,\bX\bQ_0)$ are jointly normal
and uncorrelated. 
This follows  by noting that
$\bX\bSigma^{-1/2}$ has iid $N(0,1)$ entries and
$$
\bz_0 = \bX\bSigma^{-1/2}\bv /\|\bSigma^{-1/2}\ba_0\|,
\quad
\bX\bQ_0 = \bX\bSigma^{-1/2}(\bI_p-\bv\bv^\top)\bSigma^{1/2}
$$
for the unit vector $\bv = \bSigma^{-1/2}\ba_0/\|\bSigma^{-1/2}\ba_0\|$
as by construction of $\bQ_0$ matrix
$\bI_p-\bv\bv^\top=\bSigma^{1/2}\bQ_0\bSigma^{-1/2}$
is the orthogonal projection onto $\{\bv\}^\perp$.
We summarize this as
\begin{equation}
    \label{eq:ref-X-Q0}
    \bX = \bX\bQ_0 + \bz_0 \ba_0^\top
    \ \text{ with }\ 
    \bz_0\sim N(\mathbf{0},\|\bSigma^{-1/2}\ba_0\|^{-2}\bI_n)
    \text{ independent of }
    \bX\bQ_0 
    .
\end{equation}
For brevity, we assume in the sequel and without loss of generality
that the direction of interest $\ba_0$ is normalized such that 
\begin{equation}
    \label{normalization-a_0}
    \|\bSigma^{-1/2} \ba_0\|^2 = \langle \ba_0, \bSigma^{-1} \ba_0 \rangle = 1.
\end{equation}
By definition of $\bu_0$ and $\bz_0$,
the normalization \eqref{normalization-a_0}
gives $\bz_0 \sim N({\bf 0},\bI_n)$. 

Conditionally on $(\bX\bQ_0,\bep)$,
define the function
$f_{(\bX\bQ_0,\bep)}:\R^n\to\R^n$ by
\begin{equation}
    \label{definition-f-intro}
    f_{(\bX\bQ_0,\bep)}(\bz_0) = \bX\hbbeta-\by.
\end{equation}
By \eqref{eq:ref-X-Q0} and the independence of $\bep$ and $\bX$, the 
conditional expectation
given $(\bX\bQ_0,\bep)$ can be written as integrals
against the Gaussian measure of $\bz_0$, e.g.,
$$
\E\Bigl[\bz_0^\top f_{(\bX\bQ_0,\bep)}(\bz_0) ~\Big|~ (\bX\bQ_0,\bep)\Bigr]
=
\int\Bigl(\bz^\top f_{(\bX\bQ_0,\bep)}(\bz) \Bigr)
e^{-\|\bz\|_2^2/2}(\sqrt{2\pi})^{-n} d\bz
$$
since $\bz_0\sim N(\mathbf{0},\bI_n)$. 
As we argue conditionally on $(\bX\bQ_0,\bep)$,
we omit the dependence on  $(\bX\bQ_0,\bep)$
and write simply $f:\R^n\to\R^n$.
Since $\by=\bep + \bX\hbbeta$ and
$\bX=\bX\bQ_0 + \bz_0\ba_0^\top$,
$$f(\bz_0) = \bX\hbbeta\bigl(
    \bep + \bX\bQ_0\bbeta + \bz_0 \ba_0^\top\bbeta,
    \bX\bQ_0 + \bz_0\ba_0^\top
\bigr)
-\bX\bbeta - \bep.$$
The gradient $\nabla f$ with respect to $\bz_0$,
holding $(\bX\bQ_0,\bep)$ fixed, can be computed by the product rule and the chain rule 
via \eqref{eq:gradient-phi-intro}:
\begin{equation}
\nabla f(\bz_0)^\top
= \bI_n \langle \ba_0, \hbbeta-\bbeta\rangle
+
\bigl[
    \langle \ba_0,\bbeta\rangle
    \hbH{}^\top
+
\bX\hbG{}^\top
\bigr].
    \label{nabla-f-introduction-chain-rule}
\end{equation}
We adopt the usual convention that the gradient of a vector valued
function is the transpose of its Jacobian.  
Computing the directional derivative
of $f$ in a direction $\bfeta$ requires considering difference
of an expression at $(\bep,\bX\bQ_0,\bz_0 + t\bfeta)$
minus the same expression at $(\bep,\bX\bQ_0,\bz_0)$,
dividing by $t$ and taking the limit as $t\to0$;
this is equivalent to considering the difference
of an expression at $(\bep,\bX(t))$ with $\bX(t)=\bX+t\bfeta \ba_0^\top$
minus the same expression at $(\bep,\bX)$,
dividing by $t$ and taking the limit as $t\to0$.

Taking the trace of \eqref{nabla-f-introduction-chain-rule} and by definition
of $\df$ and $\widehat{A}$ in \eqref{def-hat-df-and-B}, the identity
\bel{intro-decomposition}
\nonumber
-{\xi_0} 
   & \defas &
\dv f(\bz_0) - \bz_0^\top f(\bz_0)
\\ &=&
   (n - \df)(\langle \ba_0, \hbbeta \rangle - \theta)
+ \langle \bz_0, \by - \bX\hbbeta\rangle + {\widehat{A}}
\eel
holds
where $\dv f(\bz_0) = \trace[\nabla f(\bz_0)]$.
Since $\E[\dv f(\bz_0) - \bz_0^\top f(\bz_0) | (\bX\bQ_0,\bep)]=0$
by Stein's formula \cite{stein1981estimation},
this provides the unbiased estimating equation
\eqref{eq:unbiased-estimating-equation}.
Reasoning conditionally on $(\bep,\bX\bQ_0)$,
using Stein formulae with respect to $\bz_0$
involving conditional expectations given $(\bep,\bX\bQ_0)$
and gradients of the form $\nabla f(\bz_0)$ 
holding $(\bep,\bX\bQ_0)$ fixed will be a recurring
theme throughout the paper.
In this context, the function $f$ itself depends
on $(\bep,\bX\bQ_0)$ as in \eqref{definition-f-intro},
although the dependence on $(\bep,\bX\bQ_0)$ is omitted 
for brevity.

In order to construct confidence intervals using the unbiased estimating
equation \eqref{eq:unbiased-estimating-equation},
one may hope that
the quantity \eqref{intro-decomposition} above is well behaved---ideally,
approximately normal with mean zero and a 
variance that can be consistently estimated from the observed data.
By the Second Order Stein's formula in
\Cref{prop:2nd-order-stein-Bellec-Zhang} below, 
which was already known to \citet[(8.6)]{stein1981estimation}
in a different form, 
the conditional variance of \eqref{intro-decomposition} given $(\bep,\bX\bQ_0)$
is 
\bel{variance-f-second-order-stein-V-star-intro}
\Var_0[\xi_0] &=&
\E_0\left[ \|f(\bz_0)\|^2
+  \trace[\{ \nabla f(\bz_0)\}^2 ]
\right]
\\&=&
\E_0\left[ V^*(\theta)]
\right] 
\quad
\text{ for } 
\ 
V^*(\theta)
=
\|\by-\bX\hbbeta\|^2
+  \trace[\{ \nabla f(\bz_0)\}^2] 
\nonumber
\eel
where $\E_0 = \E[ \cdot | \bep,\bX\bQ_0]$ denotes the conditional expectation
with respect to $\bz_0$ given $(\bep,\bX\bQ_0)$
and $\Var_0$ denotes the conditional variance given $(\bep,\bX\bQ_0)$. 
The gradient $\nabla f(\bz_0)$ in \eqref{nabla-f-introduction-chain-rule}
and the unbiased estimate $V^*(\theta)$ of $\Var_0[\xi_0]$ 
only depend on the unknown parameter of interest $\theta$
and observable quantities, and $V^*(\theta)$ is quadratic in $\theta$. 

Assume now we are in an ideal situation in the sense that both conditions below
are satisfied:
(i)
The quantity \eqref{intro-decomposition} is approximately 
normally distributed conditionally on $(\bep,\bX\bQ_0)$, and
(ii)
$V^*(\theta)$ is a consistent estimator of
\eqref{variance-f-second-order-stein-V-star-intro}, the conditional
variance of the random variable \eqref{intro-decomposition}.
Then the set of $\theta$ for which the inequality 
\begin{equation}
    \label{quadratic-polynomial-equation-confidence-interval}
\big[
    (n-\df)\big(\langle \ba_0, \hbbeta \rangle - \theta\big)
    + \langle \bz_0, \by - \bX\hbbeta\rangle + {\widehat{A}}
\big]^2
    -
    V^*(\theta) 
    z_{\alpha/2}^2
     \le 0
\end{equation} 
is satisfied is an $(1-\alpha)$-confidence interval, 
where $\P(|N(0,1)|>z_{\alpha/2})=1-\alpha$. 
Solving the corresponding quadratic equality gives up to two solutions 
$\Theta_1(z_{\alpha/2}) \le \Theta_2(z_{\alpha/2})$
that are such that \eqref{quadratic-polynomial-equation-confidence-interval}
holds with equality. These two solutions implicitly depend on the observables 
$$\langle \by -\bX\hbbeta, \bz_0\rangle,\quad 
\|\by - \bX\hbbeta\|^2,\quad
\df,\quad 
{\widehat{A}},\quad
\ba_0^\top\hbbeta$$
and the derivatives of $\hbbeta$. 
If the coefficient of $\theta^2$ in the left hand side of
    \eqref{quadratic-polynomial-equation-confidence-interval}
    is positive,
    (i.e., if the leading coefficient of
    \eqref{quadratic-polynomial-equation-confidence-interval},
    seen as a polynomial in $\theta$ with data-driven coefficients,
    is positive), 
a $(1-\alpha)$ confidence interval for $\theta=\ba_0^\top\bbeta$ is then given by
\bel{CI-introduction}
\widehat{CI} = \big[\Theta_1(z_{\alpha/2}),\Theta_2(z_{\alpha/2})\big].
\eel
We will show in the discussion surrounding \eqref{eq:event-CI-sec3}
below that the dominant coefficient is positive
and that the confidence interval is indeed of the above form 
if $\hbbeta$ is a convex penalized estimator.
Although a variant of the above construction
was briefly presented in \cite[Section 6]{bellec_zhang2018second_order_stein}
(there, the function $\bz_0\to\bX\bQ_0(\hbbeta-\bbeta) -\bep$ is used),
important questions remain unanswered
to prove the validity of the general confidence interval in 
\eqref{CI-introduction} and its applicability to commonly used regularized estimators. 



\subsection{The rest of the paper is organized as follows} 
\Cref{sec:CLT} develops an $L_2$ bound between $\xi/\Var[\xi]^{1/2}$ and $N(0,1)$ 
for random variables of the form ${\xi} = \bz^\top f(\bz) - \dv f(\bz)$
where $\bz\sim N({\bf 0}, \bI_n)$.
\Cref{sec:application-de-biasing} uses this normal approximation 
to show the asymptotic normality of \eqref{intro-decomposition} 
and proves the consistency of the variance estimate $V^*(\theta)$ in 
\eqref{variance-f-second-order-stein-V-star-intro} 
in the regime where $p$ and $n$ 
are of the same order in the linear model \eqref{LM} with correlated design.
\Cref{sec:examples} provides closed-form formulas to apply the results 
in \Cref{sec:application-de-biasing}
to the Lasso, the group Lasso and twice continuously differentiable penalty functions.
\Cref{sec:proofs} contains the proofs of the results in
\Cref{sec:application-de-biasing}. 
\Cref{appendix:integrability-edelman}
provides a technical lemma on the integrability of smallest eigenvalue of Wishart matrices,
\Cref{sec:p-larger-n-without-strong-convexity}
provides the proofs of the asymptotic normality results 
for the Lasso and group Lasso when $p>n$, and
\Cref{sec:proof-gradient-GL} contains
the proofs of the derivative
formulae for the group Lasso.

\subsection{Notation}
For two reals $\{a,b\}$, let
$a\wedge b = \min\{a,b\}$, $a\vee b=\max\{a,b\}$
and $a_+= a \vee 0$.
Let $\bI_d$ be the identity matrix of size $d\times d$, e.g. $d=n,p$. 
For any $p\ge 1$, let $[p]$ be the set $\{1,...,p\}$.
Let $\|\cdot\|$ be the Euclidean norm
and $\|\cdot\|_q$ the $\ell_q$ norm of vectors for any $q\ge 1$, so that $\|\cdot\| = \|\cdot\|_2$. 
Let $\|\cdot\|_{op}$ be the operator norm 
of matrices and 
$\|\cdot\|_F$ the Frobenius norm.
Let $\phi_{\min}(\bS)$ be
the smallest eigenvalue of a symmetric matrix $\bS$.
We use the notation $\langle\cdot,\cdot\rangle$ for the canonical scalar product of vectors in $\R^n$ or $\R^p$,
i.e., $\langle \ba,\bb\rangle  = \ba^\top \bb$ for two vectors $\ba,\bb$ of the same dimension.
For any event $\Omega$, denote by $I_\Omega$ its indicator function. 
The unit sphere is $S^{p-1}=\{\bx\in\R^p:\|\bx\|=1\}$.
Convergence in distribution is denoted by $\to^d$
and convergence in probability by $\to^\P$.
Throughout the paper, $C_0, C_1,...$ denote positive absolute constants,
$C_k(\gamma)$ positive constants depending on $\gamma$ only, 
and $C_k(\gamma,\mu)$   
on $\{\gamma,\mu\}$ only. 

For any vector $\bv=(v_1,...,v_p)^\top \in\R^p$ and set $A\subset [p]$,
the vector $\bv_A\in\R^{|A|}$ is the restriction $(v_j)_{j\in A}$.
For any $n\times p$ matrix $\bM$ with columns $(\bM_1,\ldots,\bM_p)$
and any subset $A\subset [p]$, let $\bM_A = (\bM_j, j\in A)$ be 
the matrix composed of columns of $\bM$ indexed by $A$. 
If $\bM$ is a symmetric matrix of size $p\times p$ and $A\subset [p]$,
then $\bM_{A,A}$ denotes the sub-matrix of $\bM$ with rows and columns in $A$,
and $\bM_{A,A}^{-1}$ is the inverse of $\bM_{A,A}$. 
For any square matrix $\bM$, let $\bM^s = (\bM+\bM^\top)/2$ 
be its symmetrization giving the same quadratic form. 

For a vector valued map
$h:\R^n\to \R^q$ with coordinates $h_1,...,h_q:\R^n\to\R$, the gradient 
$\nabla h\in\R^{n\times q}$ is the matrix with columns $\nabla h_1, ....,\nabla h_q$. 
Thus, $\nabla h$ is the transpose of the Jacobian of $h$ and 
$h(\bx+\bfeta) = h(\bx) + \nabla h(\bx)^\top \bfeta + o(\|\bfeta\|)$ if each 
coordinate $h_i$ is Fr\'echet differentiable at $\bx$. 
For deterministic matrices $\bA\in \R^{m\times q}$, 
$\nabla(\bA h) = (\nabla h)\bA^\top \in \R^{n\times m}$.
For $f$ in  \eqref{stein-unbiased-random-variable},
$\nabla f(\bx)\in\R^{n\times n}$ and
the divergence is $\dv f(\bx) = \trace[\nabla f(\bx)]$.  

\section{Normal approximation in Stein's formula} 
\label{sec:CLT}
We develop in this section normal approximations for random variables
of the form 
\bel{stein-unbiased-random-variable}
{\xi} = \bz^\top f(\bz) - \dv f(\bz), 
\eel
for which Stein's formula \cite{stein1981estimation} states $\E[\xi]=0$,  
where $\bz\sim N({\bf 0},\bI_n)$ is standard normal and $f:\R^n\to \R^n$. 
We establish $L_2$ bounds for the linear and quadratic  
approximations of $\xi$ and construct consistent variance estimates in the related CLT. 

Throughout this paper, the $i$-th coordinate $f_i$ of $f$ is a function
$f_i:\R^n\to\R$ and its weak gradient is denoted by $\nabla f_i$.
Similarly, the weak derivative of $g(\bz)$ is denoted by $\nabla g$.  
We refer to \cite[Section 1.5]{bogachev1998gaussian}
for definitions of weak differentiability.
For the application to asymptotic normality of de-biased 
estimates in \Cref{sec:application-de-biasing}, the functions
we will consider are locally Lipschitz. 
By Rademacher's theorem,
locally Lipschitz functions 
are Fr\'echet differentiable almost everywhere,
which is stronger than the existence of directional derivatives
in all directions. In this case
the weak derivatives agree with the classical partial derivatives
almost everywhere. As far as the application in
\Cref{sec:application-de-biasing} is concerned, the reader unfamiliar with
weak differentiability may consider the additional assumption
that $f$ is locally Lipschitz in the following results
and replace weak derivatives with classical derivatives.
The variance of \eqref{stein-unbiased-random-variable} is given
by the following proposition.

\begin{proposition}
    \label{prop:2nd-order-stein-Bellec-Zhang}
    [Second Order Stein formula,
    \cite[Eq. (8.6)]{stein1981estimation}
    \cite{bellec_zhang2018second_order_stein}]
Let $\bz\sim N({\bf 0}, \bI_n)$ and $f:\R^n\to\R^n$ be a function 
with each coordinate $f_i$ being squared integrable
and weakly differentiable with squared integrable
gradient, i.e., $\E[f_i(\bz)^2] + \E[\|\nabla f_i(\bz)\|^2]<+\infty$.
Then
\bel{eq:variance}
\qquad\E[(\bz^\top f(\bz) - \dv f(\bz))^2 ]
=
\E[\|f(\bz)\|^2 ] + \E   \trace[ \{\nabla f(\bz)\}^2 ].
\eel
\end{proposition}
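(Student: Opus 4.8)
The plan is to prove the Second Order Stein formula \eqref{eq:variance} by applying the ordinary (first order) Stein identity twice, being careful about integrability. First I would record the vector form of the first order Stein/Gaussian integration-by-parts identity: for any $h:\R^n\to\R^n$ whose coordinates are weakly differentiable with the requisite square-integrability, one has $\E[\bz^\top h(\bz)] = \E[\dv h(\bz)]$, and more generally for any fixed vector $\bv\in\R^n$, $\E[(\bv^\top\bz)(\bv^\top h(\bz))] = \|\bv\|^2\,\E[\bv^\top \nabla(\text{appropriate})\dots]$; but the cleaner route is the coordinatewise version $\E[z_i\, g(\bz)] = \E[\partial_i g(\bz)]$ valid for scalar weakly differentiable $g$ with $\E|g|+\E\|\nabla g\|<\infty$. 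I would cite \cite[Section 1.5]{bogachev1998gaussian} or the Stein's formula reference already invoked in the excerpt for this.

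\medskip

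The main computation goes as follows. Write $\xi = \bz^\top f(\bz) - \dv f(\bz) = \sum_i \big(z_i f_i(\bz) - \partial_i f_i(\bz)\big)$. Then
\[
\E[\xi^2] = \E\Big[\xi \sum_i z_i f_i(\bz)\Big] - \E\Big[\xi \sum_i \partial_i f_i(\bz)\Big].
\]
For the first term, apply the scalar first order Stein identity in the variable $z_i$ to the function $\bz\mapsto \xi(\bz) f_i(\bz)$ (treating $\xi$ as a weakly differentiable function of $\bz$, which it is since $f$ has weakly differentiable coordinates — here one uses that $\dv f$ is weakly differentiable, which requires $f$ to be twice weakly differentiable; I expect the statement to be first proved for smooth $f$ and then extended by approximation, see the obstacle paragraph below). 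This gives
\[
\E\Big[\xi \sum_i z_i f_i(\bz)\Big]
= \sum_i \E\big[\partial_i(\xi f_i)\big]
= \sum_i \E[(\partial_i \xi) f_i] + \sum_i \E[\xi\, \partial_i f_i].
\]
The second sum here is exactly $\E[\xi \sum_i \partial_i f_i(\bz)]$, so it cancels against the second term in the expression for $\E[\xi^2]$, leaving $\E[\xi^2] = \sum_i \E[(\partial_i\xi) f_i(\bz)] = \E[(\nabla\xi)^\top\! f(\bz)]$ where I abuse notation writing $\nabla \xi$ for the vector with entries $\partial_i\xi$. Now compute $\partial_i \xi = f_i(\bz) + \sum_k z_k \partial_i f_k(\bz) - \partial_i \dv f(\bz)$. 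Substituting,
\[
\E[\xi^2] = \E[\|f(\bz)\|^2] + \E\Big[\sum_{i,k} z_k (\partial_i f_k)(\bz) f_i(\bz)\Big] - \E\Big[\sum_i f_i(\bz)\,\partial_i \dv f(\bz)\Big].
\]
For the middle term, fix $i$ and apply the scalar Stein identity in $z_k$ to $\bz\mapsto (\partial_i f_k)(\bz) f_i(\bz)$, summing over $k$: $\sum_k \E[z_k (\partial_i f_k) f_i] = \sum_k \E[\partial_k(\partial_i f_k\, f_i)] = \sum_k \E[(\partial_k\partial_i f_k) f_i] + \sum_k\E[(\partial_i f_k)(\partial_k f_i)]$. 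Now $\sum_{i,k}\E[(\partial_k\partial_i f_k)f_i] = \sum_i \E[f_i\, \partial_i(\sum_k \partial_k f_k)] = \sum_i\E[f_i\,\partial_i\dv f]$, which cancels against the last term. What remains is $\sum_{i,k}\E[(\partial_i f_k)(\partial_k f_i)] = \E\,\trace[\{\nabla f(\bz)\}^2]$, using the paper's convention that $\nabla f$ is the transpose of the Jacobian so that $(\nabla f)_{ki} = \partial_k f_i$ and $\trace[(\nabla f)^2] = \sum_{i,k}(\nabla f)_{ik}(\nabla f)_{ki} = \sum_{i,k}\partial_i f_k\,\partial_k f_i$. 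This yields \eqref{eq:variance}.

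\medskip

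The main obstacle is rigor under the stated hypotheses, which only assume $f$ has \emph{one} weakly differentiable, square-integrable coordinatewise gradient — not two derivatives. The formal manipulation above differentiates $\dv f$, so it is not literally justified under the hypotheses. The standard remedy, which I would carry out, is a mollification/truncation argument: first establish \eqref{eq:variance} for $f\in C_c^\infty(\R^n;\R^n)$ (or Schwartz class) where every step above is unambiguous, then extend to the general case by approximating $f$ by $f^{(m)} = (f * \rho_m)\cdot\chi_m$ with $\rho_m$ a mollifier and $\chi_m$ a smooth cutoff, and passing to the limit. The key is that both sides of \eqref{eq:variance} are continuous with respect to the relevant convergence: one needs $f^{(m)}\to f$ and $\nabla f^{(m)}\to\nabla f$ in $L^2(\gamma_n)$ (Gaussian measure), together with uniform integrability of $\|f^{(m)}\|^2$ and $\|\nabla f^{(m)}\|_F^2$, plus convergence $\xi^{(m)}\to\xi$ in $L^2$ via the first-order Stein isometry $\E[(\xi^{(m)}-\xi)^2]\le 2\E\|f^{(m)}-f\|^2 + 2\E\|\nabla f^{(m)}-\nabla f\|_F^2$ (this last bound being itself a consequence of \eqref{eq:variance} applied to the smooth difference, or of the Gaussian Poincaré-type estimate). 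Since this proposition is quoted from \cite[Eq.~(8.6)]{stein1981estimation} and \cite{bellec_zhang2018second_order_stein}, I would expect the authors to either reproduce this approximation argument or simply cite those sources for the technical closure; in a self-contained write-up I would present the smooth-case computation in full and then state the density argument, flagging that the weak-derivative framework of \cite[Section~1.5]{bogachev1998gaussian} makes the limiting step routine.
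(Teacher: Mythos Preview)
Your proposal is correct and matches the paper's treatment. The paper does not give a self-contained proof of this proposition: it merely remarks that ``if $f$ is twice differentiable, the result follows by a sequence of integration by parts'' and that the relaxation to once weakly differentiable $f$ is carried out in \cite{bellec_zhang2018second_order_stein}. Your double-integration-by-parts computation is exactly the ``sequence of integration by parts'' the paper alludes to, and you correctly identify both the obstacle (the formal argument differentiates $\dv f$) and the remedy (mollification and density), which is precisely the content of the cited reference. Your write-up is in fact more detailed than anything appearing in the paper itself.
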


The above result, in the twice differentiable case,
was known to Stein \cite[Eq. (8.6)]{stein1981estimation}.
If $f$ is twice differentiable, the result follows by 
a sequence of integration by parts. The differentiability
requirement was relaxed to only once weakly differentiable $f$
in \cite{bellec_zhang2018second_order_stein} where statistical
applications of this formula to such once differentiable $f$ are discussed. 

\subsection{Linear approximation}\label{subsection:linear-approx}
The goal of the present section is to derive
normal approximations and CLT 
for the random variable \eqref{stein-unbiased-random-variable}.
The intuition is as follow. 
We are looking for linear approximation of the random variable
\eqref{stein-unbiased-random-variable}, of the form $\bz^\top \bmu\sim N(0, \|\bmu\|^2)$
for some deterministic $\bmu\in\R^n$.
We rewrite \eqref{stein-unbiased-random-variable} as
\bel{decomposition-linear-part-remainder}
\bz^\top f(\bz) - \dv f(\bz)
&=& \underbrace{\; \bz^\top \bmu \;}_{\text{linear part}} \quad+\quad 
\underbrace{\bz^\top(f(\bz) - \bmu) - \dv f(\bz)}_{\text{remainder}}.
\eel
The remainder term above is mean-zero with second moment equal to
$\E[ \|f(\bz)-\bmu\|^2] + \E\trace[\{ \nabla f(\bz)\}^2]$
by \Cref{prop:2nd-order-stein-Bellec-Zhang}.
This second moment is minimized for $\bmu= \E[f(\bz)]$, hence 
{$\bz^\top \E[f(\bz)]$ gives} the best linear approximation of $\xi$ 
in \eqref{stein-unbiased-random-variable}. 
The following result provides conditions on $f$ under which the
remainder term {is negligible in \eqref{decomposition-linear-part-remainder}.} 

\begin{theorem}
    \label{thm:L2-distance-from-normal}
Let $\bz\sim N({\bf 0}, \bI_n)$ and $f$ be a function $f:\R^n\to\R^n$,
with each coordinate $f_i$ being squared integrable
and weakly differentiable with squared integrable
gradient, i.e. $\E[f_i(\bz)^2] + \E[\|\nabla f_i(\bz)\|^2]<+\infty$.
    Then ${\xi}= \bz^\top f(\bz) - \dv f(\bz)$ satisfies 
\begin{equation}
        \label{second-order-poincare-eq1} 
        \E\big[\big(\xi/\Var[{\xi}]^{1/2} - Z\big)^2\big] 
        =  {\eps_{1}^2} + \bigl(1- (1-{\eps_{1}^2})^{1/2}\bigr)^2 
        = {\eps_{1}^2} + c_{1}\eps_1^4        
        \end{equation}
        with $Z=\bz^\top \E[f(\bz)] / \|\E[f(\bz)]\|\sim N(0,1)$, 
        deterministic real $1/4 \le c_{1} \le 1$ and  
\bel{identity-eps-n-norm-of-E-f}
\eps_{1}^2 \defas 1 - \frac{\|\E[f(\bz)] \|^2}{\Var[{\xi}]} \le
\epsbar_1^2
\le
\frac
{2\E[\|\nabla f(\bz)\|_F^2]}
{\E[\|f(\bz)\|^2]+\E[\|\nabla f(\bz)\|_F^2]}, 
\eel
where $\epsbar_1^2 \defas
{2\E[\|\{\nabla f(\bz)\}^s\|_F^2]}\big/
\{\|\E[f(\bz)]\|^2+2\E[\|\{\nabla f(\bz)\}^s\|_F^2]\}$. 
Consequently, $\sup_{t\in\R}
    \big|
    \P(\xi/\Var[\xi]^{1/2}\le t)
    - \P(Z\le t)
    \big|
    \le 
    C
    (\eps_1^2+c_1\eps_1^4)^{1/3}
$
for $C=1+(2\pi)^{-1/2}$. 
\end{theorem}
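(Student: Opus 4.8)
The plan is to derive all three claims --- the exact $L_2$ identity \eqref{second-order-poincare-eq1}, the bound on $\eps_1^2$ in \eqref{identity-eps-n-norm-of-E-f}, and the Berry--Esseen corollary --- from a short chain of second-moment identities, the workhorse being \Cref{prop:2nd-order-stein-Bellec-Zhang}. Write $\bmu=\E[f(\bz)]\in\R^n$ and decompose as in \eqref{decomposition-linear-part-remainder},
\[
\xi=\bz^\top f(\bz)-\dv f(\bz)=\bz^\top\bmu+R,\qquad R\defas\bz^\top\bigl(f(\bz)-\bmu\bigr)-\dv f(\bz),
\]
using $\dv(f-\bmu)=\dv f$. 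I would record three facts. (a) $\bz^\top\bmu\sim N(0,\|\bmu\|^2)$, so $\E[(\bz^\top\bmu)^2]=\|\bmu\|^2$. (b) $\E[\xi]=0$ by Stein's formula and $\Var[\xi]=\E\|f(\bz)\|^2+\E\trace[\{\nabla f(\bz)\}^2]$ by \Cref{prop:2nd-order-stein-Bellec-Zhang}; throughout I assume $\Var[\xi]>0$, the case $\Var[\xi]=0$ (then $\xi=0$ a.s.) being degenerate. (c) The key orthogonality $\E[R\,(\bz^\top\bmu)]=0$: polarizing \Cref{prop:2nd-order-stein-Bellec-Zhang}, i.e. applying it to $f+\bmu$ and to $f-\bmu$ and subtracting (using $\dv(f\pm\bmu)=\dv f$), gives $4\E[\xi\,(\bz^\top\bmu)]=\E\|f+\bmu\|^2-\E\|f-\bmu\|^2=4\,\E[f(\bz)]^\top\bmu=4\|\bmu\|^2$, hence $\E[R\,(\bz^\top\bmu)]=\E[\xi\,(\bz^\top\bmu)]-\|\bmu\|^2=0$.

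From (a)--(c), $\E[R^2]=\E[\xi^2]-\|\bmu\|^2=\Var[\xi]-\|\bmu\|^2=\eps_1^2\,\Var[\xi]\ge 0$, which also shows $\eps_1^2\in[0,1]$. Setting $\sigma=\Var[\xi]^{1/2}$ and $Z=\bz^\top\bmu/\|\bmu\|\sim N(0,1)$, one has $\E[Z^2]=1$, $\E[\xi Z]=\|\bmu\|$, and therefore
\[
\E\bigl[(\xi/\sigma-Z)^2\bigr]=1-2\,\|\bmu\|/\sigma+1=2\bigl(1-(1-\eps_1^2)^{1/2}\bigr),
\]
since $\|\bmu\|^2/\sigma^2=1-\eps_1^2$. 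The elementary identity $2(1-\sqrt{1-u})=u+(1-\sqrt{1-u})^2$ on $u\in[0,1]$ rewrites this as $\eps_1^2+\bigl(1-(1-\eps_1^2)^{1/2}\bigr)^2$; and writing $1-\sqrt{1-\eps_1^2}=\eps_1^2/(1+\sqrt{1-\eps_1^2})$ identifies $c_1=(1+\sqrt{1-\eps_1^2})^{-2}$, which lies in $[1/4,1]$ because $\sqrt{1-\eps_1^2}\in[0,1]$ (take $c_1:=1/4$ when $\eps_1=0$). This proves \eqref{second-order-poincare-eq1}.

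For the bound on $\eps_1^2$, decompose $\nabla f=\{\nabla f\}^s+A$ with $A=(\nabla f-(\nabla f)^\top)/2$ antisymmetric, so that $\trace[\{\nabla f\}^2]=\|\{\nabla f\}^s\|_F^2-\|A\|_F^2$ and $\|\nabla f\|_F^2=\|\{\nabla f\}^s\|_F^2+\|A\|_F^2$. Applying \Cref{prop:2nd-order-stein-Bellec-Zhang} to $f-\bmu$ gives $\E[R^2]=\E\|f(\bz)-\bmu\|^2+\E\|\{\nabla f\}^s\|_F^2-\E\|A\|_F^2$, while the Gaussian Poincar\'e inequality applied coordinatewise gives $\E\|f(\bz)-\bmu\|^2=\sum_i\Var[f_i(\bz)]\le\sum_i\E\|\nabla f_i(\bz)\|^2=\E\|\nabla f(\bz)\|_F^2=\E\|\{\nabla f\}^s\|_F^2+\E\|A\|_F^2$; combining, $\E[R^2]\le 2\E\|\{\nabla f\}^s\|_F^2$. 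Since $\eps_1^2=\E[R^2]/(\|\bmu\|^2+\E[R^2])$ and $t\mapsto t/(c+t)$ is nondecreasing, this yields $\eps_1^2\le\epsbar_1^2=2\E\|\{\nabla f\}^s\|_F^2/(\|\bmu\|^2+2\E\|\{\nabla f\}^s\|_F^2)$. The last inequality of \eqref{identity-eps-n-norm-of-E-f} is similar: from $\|\{\nabla f\}^s\|_F^2\le\|\nabla f\|_F^2$ and the same Poincar\'e bound (which gives $\E\|f\|^2-\|\bmu\|^2=\E\|f-\bmu\|^2\le\E\|\nabla f\|_F^2$), monotonicity of $t\mapsto 2t/(c+2t)$ and then monotonicity in the denominator give $\epsbar_1^2\le 2\E\|\nabla f\|_F^2/(\E\|f\|^2+\E\|\nabla f\|_F^2)$. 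Finally, the Berry--Esseen corollary is the standard smoothing argument: for $W=\xi/\sigma$, $Z\sim N(0,1)$ and any $\lambda>0$, bounding the normal density by $(2\pi)^{-1/2}$ and using Markov's inequality gives $|\P(W\le t)-\P(Z\le t)|\le\lambda(2\pi)^{-1/2}+\lambda^{-2}\E[(W-Z)^2]$ for all $t$; taking $\lambda=\delta^{2/3}$ with $\delta^2=\E[(W-Z)^2]=\eps_1^2+c_1\eps_1^4$ produces the constant $C=1+(2\pi)^{-1/2}$.

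I do not expect a genuine obstacle once \Cref{prop:2nd-order-stein-Bellec-Zhang} is available: the argument is a chain of variance/covariance identities plus one application of Gaussian Poincar\'e and one elementary smoothing inequality. The one step that demands care is the middle inequality $\eps_1^2\le\epsbar_1^2$, where the antisymmetric contribution $\|A\|_F^2$ must disappear; this succeeds precisely because $\|A\|_F^2$ enters $\E[R^2]$ negatively through $\trace[\{\nabla f\}^2]$ but positively through the coordinatewise Poincar\'e bound on $\E\|f-\bmu\|^2$, so the two occurrences cancel. Integrability of every expectation is guaranteed by the hypothesis $\E[f_i(\bz)^2]+\E\|\nabla f_i(\bz)\|^2<\infty$, and the degenerate case $\Var[\xi]=0$ should be kept explicitly aside.
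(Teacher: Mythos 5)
Your proof is correct and takes essentially the same approach as the paper: both rest entirely on \Cref{prop:2nd-order-stein-Bellec-Zhang}, the Gaussian Poincar\'e inequality, and the identity $\|\bM\|_F^2 + \trace(\bM^2) = 2\|\bM^s\|_F^2$ (which you re-express via the symmetric/antisymmetric split of $\nabla f$), and the Berry--Esseen step is the same smoothing argument. The only organizational difference is that you first establish the orthogonality $\E[R\,(\bz^\top\bmu)]=0$ by polarizing \Cref{prop:2nd-order-stein-Bellec-Zhang}, whereas the paper obtains the identity \eqref{second-order-poincare-eq1} in a single step by applying that proposition directly to $g(\bz)=f(\bz)-r\,\E[f(\bz)]$ with $r=\Var[\xi]^{1/2}/\|\E[f(\bz)]\|$ and then performing a bias--variance expansion of $\E\|g(\bz)\|^2$.
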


A direct consequence of \Cref{thm:L2-distance-from-normal}
is $\eps_{1}^2 \le \eqref{second-order-poincare-eq1} \le 2\eps_{1}^2 \le 2\epsbar_1^2$.
Inequality \eqref{second-order-poincare-eq1} provides
an upper bound on the 2-Wasserstein 
distance between $\xi/\Var[\xi]^{1/2}$ and $Z\sim N(0,1)$. 
When $\eps_{1}^2\to 0$, it gives a stronger $L_2$ form of the CLT  
$\xi/\Var[\xi]^{1/2}\to^d N(0,1)$ 
in addition to the Kolmogorov distance bound in \Cref{thm:L2-distance-from-normal}. 
The theorem follows from \Cref{prop:2nd-order-stein-Bellec-Zhang}
and an application of 
the Gaussian Poincar\'e inequality. 

\begin{proof}[Proof of \Cref{thm:L2-distance-from-normal}]
    Define $Z=\bz^\top \E[f(\bz)] / \|\E[f(\bz)] \|$ then
    $Z\sim N(0,1)$ and
    \begin{equation*}
    {\xi} - \Var[{\xi}]^{1/2} Z
    =
    \bz^\top g(\bz) - \dv g(\bz)
    \end{equation*}
    where $g(\bz) = f(\bz) - r \E f(\bz)$ and $r=(\Var[\xi]^{1/2}/\|\E f(\bz)\|)$. 
    By Proposition \ref{prop:2nd-order-stein-Bellec-Zhang} applied to $g$
    and a bias-variance decomposition,
    \bes
    && \E[({\xi} - \Var[{\xi}]^{1/2} Z)^2] 
    \cr &=& \E \| f(\bz) - r\E[f(\bz)] \|^2 + \E \trace[ \{\nabla f(\bz)\}^2 ] 
    \cr &=& \E \| f(\bz) - \E[f(\bz)] \|^2 + \E \trace[ \{\nabla f(\bz)\}^2 ] 
    + \{ \Var[{\xi}]^{1/2} - \| \E f(\bz) \|  \}^2
    \cr &=& 
        \Var[\xi] - \|\E f(\bz)\|^2
        + \{ \Var[{\xi}]^{1/2} - \| \E f(\bz) \|  \}^2
    \ees
    thanks to $(r-1)\|\E f(\bz)\| = \Var[\xi]^{1/2}-\|\E f(\bz)\|$.
    Thus, \eqref{second-order-poincare-eq1} follows from 
    the definition of $\eps_{1} ^2$ in \eqref{identity-eps-n-norm-of-E-f}. 
    Moreover, $\E[\| f(\bz) - \E[f(\bz)] \|^2] \le \E[\|\nabla f(\bz)\|_F^2]$ 
    by the Gaussian Poincar\'e inequality and 
    $\|\bM\|_F^2+\trace(\bM^2)=2\|\bM^s\|_F^2$ for $\bM\in\R^{n\times n}$.
    Hence, with $a=\|\E f(\bz)\|^2$, $b=\E \|f(\bz)-\E f(\bz)\|^2$,
    $c = \E\trace[ \{\nabla f(\bz)\}^2]$ and $d = \E[\|\{\nabla f(\bz)\}^s\|_F^2]$ we have
    $$
    \eps_{1} ^2 = \frac{b+c}{a+b+c} \le \frac{2d}{a+2d}
    = \epsbar^2_{1}
        \le
        \frac{2\E[\|\nabla f(\bz)\|_F^2]}
        {a+2\E[\|\nabla f(\bz)\|_F^2]}
        \le
        \frac{2\E[\|\nabla f(\bz)\|_F^2]}
        {\E[\|f(\bz)\|^2 + \|\nabla f(\bz)\|_F^2]}
    $$
    thanks to another Gaussian Poincar\'e inequality for the last inequality. 
    Finally, $x^2/4 \le (1-\sqrt{1-x})^2\le x^2$ holds
    for all $x\in[0,1]$ which proves $c_{1} \in[1/4,1]$. 
    
For any $\delta>0$, by Markov's inequality
$\P({\xi}/{\Var[\xi]^{1/2}}\le t) - \P(Z\le t)
\le \P(|\xi/\Var[\xi]^{1/2}-Z|>\delta) + \P(Z\in[t,t+\delta])
\le (\eps_1^2+c_1\eps_1^4)/\delta^2 + \delta(2\pi)^{-1/2}$ 
since the standard normal pdf is uniformly bounded by $(2\pi)^{-1/2}$. 
Hence, with $\delta=(\eps_1^2+c_1\eps_1^4)^{1/3}$, 
the above and a similar argument on $[t-\delta,t]$ provide 
the Kolmogorov distance bound. 
\end{proof}

Normal approximation results such as \Cref{thm:L2-distance-from-normal} are
flexible tools as they let us derive asymptotic normality results by
mechanically computing gradients: By \Cref{thm:L2-distance-from-normal} 
it suffices to show that the expectation of $\|\nabla f(\bz)\|_F^2$ is
negligible compared with that of $\|f(\bz)\|^2$ to obtain
$\xi/\Var[\xi]^{1/2}\to^d N(0,1)$.
Normal approximations involving derivatives have been studied
for random variables with the more general form $W=g(\bz)$
for differentiable functions $g:\R^n\to\R$.
The Second Order Poincar\'e inequality of \cite{chatterjee2009fluctuations}
bounds the total variation distance $d_{TV}$ of $g(\bz)$ to the Gaussian distribution
using the first and second derivatives of $g$:
\cite[Theorem 2.2]{chatterjee2009fluctuations} specialized
to $W=g(\bz)$ with $\bz\sim N(\mathbf{0},\bI_n)$
states that 
\begin{equation}
    \label{chatterjee-second-order-Poincare}
    d_{TV}\{W,\; N(\mu_0,\sigma_0^2)\} 
    \le (2\sqrt 5/{\sigma_0^2})\E[ \|\nabla g(\bz)\|^4]^{1/4}\E[ \|\nabla^2 g(\bz)\|_{op}^4]^{1/4}
\end{equation}
where $W=g(\bz)$, $\bz\sim N({\bf 0},\bI_n)$,
$\mu_0= \E[W]$ and $\sigma_0^2=\Var[W]$.
Above, $\nabla g,\nabla^2 g$ denote the gradient and 
Hessian matrix of $g$. 
Inequality \eqref{chatterjee-second-order-Poincare} provides
a CLT for $g(\bz)$
provided that the moments of the derivatives
$\E[ \|\nabla g(\bz)\|^4]^{1/4}$ and $\E[ \|\nabla^2 g(\bz)\|_{op}^4]^{1/4}$
are negligible compared to the variance $\sigma_0^2=\Var[g(\bz)]$. 
Inequality \eqref{chatterjee-second-order-Poincare} has been successfully applied
to derive asymptotic normality of unregularized $M$-estimators when $p/n\to\gamma < 1$
and the $M$-estimation loss is twice differentiable
\cite{lei2018asymptotics}.
However, the \eqref{chatterjee-second-order-Poincare}-based
approach is not applicable for regularized estimators such as the Lasso and
group Lasso that are only once differentiable functions of $(\bX,\by)$. 
In fact, by \Cref{proposition:lasso-general} below, 
the Lasso is not twice differentiable as 
$\trace[(\partial/\partial\by)\bX\hbbeta(\by,\bX)]$ is integer-valued.  
In \Cref{thm:L2-distance-from-normal},
while ${\xi} = \bz^\top f(\bz) - \dv f(\bz)$ already involves the derivatives
of $f$ through the divergence, the ratio ${\epsbar_1^2}$ that appears in the upper bound \eqref{identity-eps-n-norm-of-E-f} only involves $f$
and its gradient $\nabla f$; the second derivatives of
$f$ need not exist. \Cref{sec:application-de-biasing} uses
\Cref{thm:L2-distance-from-normal} to provide
asymptotic normality for de-biasing estimators that are 
only once differentiable. 

\paragraph{Variance estimate}
It follows from \Cref{thm:L2-distance-from-normal} that 
random variables $\xi$ 
of the form \eqref{stein-unbiased-random-variable} 
are asymptotically normal 
under the condition $1 - \|\E[f(\bz)]\|^2/\Var[\xi] \to 0$, 
or under a somewhat stronger but more explicit 
condition $\epsbar_1^2\to 0$ as in \eqref{identity-eps-n-norm-of-E-f}. 
The following theorem  provide consistent estimates of $\Var[\xi]$.

\begin{theorem}\label{thm:consistency-variance} 
Let $f, \bz, \xi, \eps_{1} ^2$ and $c_{1} \in [1/4,1]$
be as in \Cref{thm:L2-distance-from-normal}. Then, 
\bel{eq:consistency-variance}
    \E\bigl[\bigl(\|f(\bz)\|/\Var[\xi]^{1/2} - 1\bigr)^2\bigr]
    &\le& 
    \eps_{1} ^2 - \E\big[\trace(\{\nabla f(\bz)\}^2)]/\Var[\xi] +c_{1} \eps_{1} ^4
    \cr &\le& 
    (1- \eps_{1} ^2)\epsdoublebar_1^2/(2- 2\epsdoublebar_1^2)+c_{1} \eps_{1} ^4  
\eel
with $\epsdoublebar_1^2 \defas 2\E\big[\|\nabla f(\bz)\|_F^2\big]
\big/\{\|\E[f(\bz)]\|^2 + 2\E\big[\|\nabla f(\bz)\|_F^2\big]\}\ge\eps_{1} ^2$. 
Consequently, 
\bel{eq:consistency-variance-2}
\|f(\bz)\|^2/\Var[\xi]\to^\P 1
\quad
\hbox{ and }
\quad 
\xi/\|f(\bz)\| \to^d N(0,1). 
\eel
when $\eps_{1} ^2+\epsdoublebar_1^2I\{\E\big[\trace(\{\nabla f(\bz)\}^2)]<0\} \to 0$. 
\end{theorem}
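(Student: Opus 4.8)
The plan is to control $\E\bigl[(\|f(\bz)\|/\Var[\xi]^{1/2}-1)^2\bigr]$ directly from the exact variance identity of \Cref{prop:2nd-order-stein-Bellec-Zhang} and the Gaussian Poincar\'e inequality, and then to derive \eqref{eq:consistency-variance-2} by combining this moment bound with the central limit theorem of \Cref{thm:L2-distance-from-normal} through Slutsky's lemma. Throughout write $V=\Var[\xi]$, $a=\|\E f(\bz)\|^2$, $b=\E\|f(\bz)-\E f(\bz)\|^2$ and $c=\E\trace[\{\nabla f(\bz)\}^2]$, so that \Cref{prop:2nd-order-stein-Bellec-Zhang} reads $V=a+b+c$ and the definition of $\eps_1^2$ in \eqref{identity-eps-n-norm-of-E-f} gives $\eps_1^2=(b+c)/V$, hence $b/V=\eps_1^2-c/V$. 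Moreover $b+c$ is the variance of $\bz^\top(f(\bz)-\E f(\bz))-\dv f(\bz)$ by \Cref{prop:2nd-order-stein-Bellec-Zhang} applied to $f-\E f(\bz)$, hence $b+c\ge0$ and $a^{1/2}=\|\E f(\bz)\|\le V^{1/2}$.

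For the first inequality in \eqref{eq:consistency-variance} the key step is the elementary splitting $\|f(\bz)\|-V^{1/2}=(\|f(\bz)\|-a^{1/2})+(a^{1/2}-V^{1/2})$. Squaring and taking expectations produces three terms: first, $\E(\|f(\bz)\|-\|\E f(\bz)\|)^2\le\E\|f(\bz)-\E f(\bz)\|^2=b$ by the reverse triangle inequality; second, the cross term $2(a^{1/2}-V^{1/2})(\E\|f(\bz)\|-a^{1/2})$, which is nonpositive since $a^{1/2}\le V^{1/2}$ while $\E\|f(\bz)\|\ge\|\E f(\bz)\|=a^{1/2}$ by Jensen's inequality, so it may simply be discarded; and third, $(a^{1/2}-V^{1/2})^2=V\bigl(1-(1-\eps_1^2)^{1/2}\bigr)^2=Vc_1\eps_1^4$ by the definition of $c_1$ in \Cref{thm:L2-distance-from-normal}. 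Dividing by $V$ gives $\E[(\|f(\bz)\|/V^{1/2}-1)^2]\le b/V+c_1\eps_1^4=\eps_1^2-\E\trace[\{\nabla f(\bz)\}^2]/V+c_1\eps_1^4$, which is the first asserted bound.

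For the second inequality I would use the Gaussian Poincar\'e inequality coordinatewise. Writing $D=\E\|\nabla f(\bz)\|_F^2$, it gives $b=\sum_i\Var[f_i(\bz)]\le\sum_i\E\|\nabla f_i(\bz)\|^2=D$, and since $\trace(\bM^2)\le\|\bM\|_F^2$ pointwise also $c\le D$. A one-line computation from $\epsdoublebar_1^2=2D/(a+2D)$ yields $\epsdoublebar_1^2/(2-2\epsdoublebar_1^2)=D/a$; combined with $1-\eps_1^2=a/V$ this shows $(1-\eps_1^2)\epsdoublebar_1^2/(2-2\epsdoublebar_1^2)=D/V\ge b/V$, which together with the first inequality is exactly the second bound. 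The same estimates $b\le D$ and $c\le D$ give $b+c\le2D$, hence $\eps_1^2=(b+c)/(a+b+c)\le2D/(a+2D)=\epsdoublebar_1^2$ by monotonicity of $x\mapsto x/(a+x)$, which verifies the inequality $\epsdoublebar_1^2\ge\eps_1^2$ claimed in the theorem.

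Finally, for \eqref{eq:consistency-variance-2}, I would check that under the stated condition the right-hand side of \eqref{eq:consistency-variance} tends to $0$: when $c\ge0$ the hypothesis reduces to $\eps_1^2\to0$ and the first bound is at most $\eps_1^2+\eps_1^4$ (using $b/V\le\eps_1^2$ and $c_1\le1$); when $c<0$ the hypothesis forces $\eps_1^2\to0$ and $\epsdoublebar_1^2\to0$, and the second bound is at most $\epsdoublebar_1^2/(2-2\epsdoublebar_1^2)+\eps_1^4$, which tends to $0$ once $\epsdoublebar_1^2<1/2$. In either case $\E[(\|f(\bz)\|/\Var[\xi]^{1/2}-1)^2]\to0$, so $\|f(\bz)\|/\Var[\xi]^{1/2}\to^\P1$ and equivalently $\|f(\bz)\|^2/\Var[\xi]\to^\P1$. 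Since $\eps_1^2\to0$ in both cases, \eqref{second-order-poincare-eq1} gives $\xi/\Var[\xi]^{1/2}\to^d N(0,1)$, and writing $\xi/\|f(\bz)\|=(\xi/\Var[\xi]^{1/2})/(\|f(\bz)\|/\Var[\xi]^{1/2})$ and applying Slutsky's lemma finishes the proof. I expect the only delicate point to be the first inequality — specifically noticing that the cross term in the splitting of $\|f(\bz)\|-V^{1/2}$ has a definite sign and can be dropped; the rest is bookkeeping with quantities already introduced, \Cref{prop:2nd-order-stein-Bellec-Zhang}, and the Gaussian Poincar\'e inequality.
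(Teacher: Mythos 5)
Your proof is correct and arrives at exactly the same bound as the paper's. The paper gets the first inequality by expanding $\E[(\|f(\bz)\|/\Var[\xi]^{1/2}-1)^2]$ directly and applying Jensen's inequality to $\E[\|f(\bz)\|]\ge\|\E f(\bz)\|$, whereas you split $\|f(\bz)\|-\Var[\xi]^{1/2}$ at $\|\E f(\bz)\|$, invoke the reverse triangle inequality, and drop the nonpositive cross term — an equivalent use of the same Jensen fact — so the two routes are algebraic rearrangements of one another, and the rest (Gaussian Poincar\'e for the second inequality, case splitting on the sign of $\E\trace[\{\nabla f(\bz)\}^2]$ plus Slutsky for \eqref{eq:consistency-variance-2}) matches the paper's argument.
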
 


\begin{proof}[Proof of \Cref{thm:consistency-variance}]
It follows from the Jensen inequality and \eqref{second-order-poincare-eq1} that  
\bes
    \E\Big[\big(\|f(\bz)\|/\Var[\xi]^{1/2}-1\big)^2\Big]
    &\le & \E\big[\|f(\bz)\|^2\big]/\Var[\xi] +1 - 2\|\bmubar\|/\Var[\xi]^{1/2} 
    \cr &=& \eps_{1} ^2 - \E\big[\trace(\{\nabla f(\bz)\}^2)]/\Var[\xi] +c_{1} \eps_{1} ^4
    \ees
    with $\bmubar=\E[\nabla f(\bz)]$ due to $\eps_{1} ^2=1-\|\bmubar\|^2/\Var[\xi]$. 
    For the second inequality in \eqref{eq:consistency-variance}, 
    \bes
    \eps_{1} ^2 - \E\big[\trace(\{\nabla f(\bz)\}^2)]/\Var[\xi] 
    = \E\big[\|f(\bz)-\bmubar\|^2\big]/\Var[\xi]
    \le \E\big[\|\nabla f(\bz)\|_F^2\big]/\Var[\xi], 
    \ees
    thanks to the Gaussian Poincar\'e inequality, and 
    $(1- \eps_{1} ^2)\epsdoublebar_1^2/(2- 2\epsdoublebar_1^2)$ equals to the right-hand side above by 
    the definition of $\epsdoublebar_1^2$. 
\end{proof}

\subsection{Quadratic approximation}
\label{subsection:quadratic-approx} 
The decomposition \eqref{decomposition-linear-part-remainder}
is especially useful if the linear part $\bz^\top \bmu$ with $\bmu=\E[f(\bz)]$
is a good approximation for $\xi=\bz^\top f(\bz) - \dv f(\bz)$.
In some cases, e.g.,
if $f(\bz) = \bA \bz$ for some square deterministic matrix $\bA$,
the decomposition
\eqref{decomposition-linear-part-remainder} is uninformative.
It is then natural to look for the best quadratic
approximation of $\xi$ 
in the sense of the $L_2$ orthogonal projection to 
\bes
\scrH_{1,2} = \big\{\xi_{\bmu,\bA} = \bmu^\top \bz +\bz^\top \bA \bz - \trace[\bA]: 
\bmu\in\R^n, \bA\in\R^{n\times n}\big\} = \scrH_1\oplus\scrH_2, 
\ees
where $\scrH_1=\{\bmu^\top \bz:\bmu\in\R^n\}$ and 
$\scrH_2=\big\{\bz^\top \bA \bz - \trace[\bA]: \bA\in\R^{n\times n}\big\}$ 
are $L_2$ subspaces orthogonal to each other. 

The calculation in \eqref{second-order-poincare-eq1} for $\scrH_1$ is generic in
the following sense. 
If $\xibar$ is the $L_2$ projection
of a random variable $\xi$ in $L_2$
then the sine of the $L_2$-angle
between $\xibar$ and $\xi$ 
is
$\eps   = (\E[(\xi - \xibar)^2]/\E[\xi^2])^{1/2} = (1- \E[\xibar^2]/\E[\xi^2])^{1/2}$  and
    \bel{quadratic-approximation-2}
          && \E\big[\big(\xi/\Var[{\xi}]^{1/2} - \xibar/\Var[\xibar]^{1/2}\big)^2\big] 
        = 2\big(1-\sqrt{1-\eps  ^2}\big)
        = \eps  ^2 + c  \eps  ^4        
    \eel
    holds for some deterministic real $1/4 \le c   \le 1$.
    Indeed, take $\eps  =\sin\alpha$ with $\alpha$ being the 
    $L_2$-angle between $\xi$ and $\xibar$, so that \eqref{quadratic-approximation-2} becomes 
    $(2\sin(\alpha/2))^2=2(1-\cos(\alpha)) 
    = 
    \eps^2 + c  \eps  ^4$ 
    as in the proof of \Cref{thm:L2-distance-from-normal}. 

The next result 
extends \Cref{thm:L2-distance-from-normal} to the $L_2$ quadratic projections  
to $\scrH_2$ and $\scrH_{1,2}$, and also gives \Cref{thm:L2-distance-from-normal} 
the interpretation as the $L_2$ projection to $\scrH_1$. 

\begin{theorem}
    \label{thm:quadratic-approximation}
    Let $\bz\sim N({\bf 0},\bI_n)$,
    $f:\R^n\to\R^n$ satisfy the assumption of \Cref{thm:L2-distance-from-normal},  
    and $\xi=\bz^\top f(\bz) - \dv f(\bz)$. 
    For $\bmu\in\R^n$ and $\bA\in\R^{n\times n}$ let 
    $\xi_{\bmu,\bA} = \bz^\top(\bmu+\bA \bz) - \trace \bA$. 
    Let $\overline\bmu=\E[f(\bz)]$ and $\overline\bA=\E[\nabla f(\bz)]$. 
    Then, $\xi_{\bmubar,\bAbar}$ is the $L_2$ projection of $\xi$ to $\scrH_{1,2}$ and 
    \bel{quadratic-approximation-1}
    \E[(\xi - \xi_{\bmubar,\bAbar})^2]
    &=& \E [\|f(\bz)-\bmubar\|^2 - \|\bAbar\|_F^2] +\E\trace[ \{\nabla f(\bz)-\bAbar \}^2 ]
    \cr &\le & 2\E[\|\{\nabla f(\bz)-\bAbar\}^s\|_F^2]. 
    \eel
    Consequently, $\xi_{\bmubar,{\bf 0}}$ is the projection of $\xi$ and $\xi_{\bmubar,\bAbar}$ to 
    $\scrH_1$ with $\E[(\xi_{\bmubar,\bAbar}-\xi_{\bmubar,{\bf 0}})^2] = 2\|\bAbar^s\|_F^2$ and 
    $\xi_{{\bf 0},\bAbar}$ is the projection of $\xi$ and $\xi_{\bmubar,\bAbar}$ to 
    $\scrH_2$ with $\E[(\xi_{\bmubar,\bAbar}-\xi_{{\bf 0},\bAbar})^2] = \|\bmubar\|^2$. 

    For the projection $\xi_{\bmubar,\bAbar}$ of $\xi$ to $\scrH_{1,2}$ ,
    $\eps_{1,2}^2\defas 1-\E[\xi_{\bmubar,\bAbar}^2]/\E[\xi^2]$
    satisfies
    $\eps_{1,2}^2 \le \epsbar_{1,2}^2 
    \defas 2\E[\|\{\nabla f(\bz)-\bAbar\}^s\|_F^2]/\{\|\bmubar\|^2+2\E[\|\{\nabla f(\bz)\}^s\|_F^2]\}$ and
    under the condition $\eps_{1,2}^2=o(1)$,
    \bel{quadratic-approximation-3}
    \|\bAbar^s\|^{2}_{op}/(\|\bmubar\|_2^2 + \|\bAbar^s\|_F^2)\to 0
    \ \ \Leftrightarrow\ \ \xi/\Var[{\xi}]^{1/2} \to^d N(0,1).
    \eel

    For the projection $\xi_{\mathbf{0},\bAbar}$ of $\xi$ to $\scrH_2$,
    $\eps_2^2=1-\E[\xi_{\mathbf{0},\bAbar}^2]/\E[\xi^2]$ 
    satisfies
    $
    \eps_{2}^2\le \epsbar_2^2 
\defas \{\|\bmubar\|^2+2\E[\|\{\nabla f(\bz)-\bAbar\}^s\|_F^2]\}
/\{\|\bmubar\|^2+2\E[\|\{\nabla f(\bz)\}^s\|_F^2]\} 
$
    and under the condition $\eps_{2}^2=o(1)$,
    \bel{quadratic-approximation-H_2}
    \|\bAbar^s\|_{op}^2/\|\bAbar^s\|_F^2\to 0
    \ \ \Leftrightarrow\ \
    \xi/\Var[\xi]^{1/2} \to^d N(0,1).
    \eel
\end{theorem}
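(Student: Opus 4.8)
The plan is to establish \Cref{thm:quadratic-approximation} in three stages: first the projection identity and the $L_2$ distance formula \eqref{quadratic-approximation-1}, then the orthogonal decomposition of $\scrH_{1,2}=\scrH_1\oplus\scrH_2$ and the corresponding sub-projections, and finally the two CLT equivalences \eqref{quadratic-approximation-3} and \eqref{quadratic-approximation-H_2}. The key computational input throughout is the Second Order Stein formula of \Cref{prop:2nd-order-stein-Bellec-Zhang}, applied to suitable auxiliary functions, together with the elementary Hermite/Wick structure of $\scrH_1$ and $\scrH_2$.

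\emph{Projection and the distance formula.} First I would verify that $\scrH_1$ and $\scrH_2$ are orthogonal in $L_2(\gamma_n)$ and that $\E[\xi_{\mathbf 0,\bA}]=0$, $\E[\xi_{\bmu,\mathbf 0}]=0$ for every $\bmu,\bA$, so that $\scrH_{1,2}$ consists of mean-zero random variables; orthogonality of $\scrH_1$ and $\scrH_2$ is immediate since odd and even Hermite polynomials are uncorrelated under the standard Gaussian. To identify the projection of $\xi$ onto $\scrH_{1,2}$, I would compute the inner products $\E[\xi\,(\bmu^\top\bz)]$ and $\E[\xi\,(\bz^\top\bA\bz-\trace\bA)]$ by Stein's formula: the first equals $\bmu^\top\E[f(\bz)]=\bmu^\top\bmubar$, and the second, after integrating $\bz^\top f(\bz)$ against the quadratic by parts, equals $\trace[\bA^s\,\E[\nabla f(\bz)]]=\trace[\bA^s\bAbar]$ (here the divergence term $-\dv f(\bz)$ in $\xi$ produces exactly the compensator that makes this clean). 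Matching these against the inner products of the candidate $\xi_{\bmubar,\bAbar}$ — which are $\bmu^\top\bmubar$ and $\trace[\bA^s\bAbar]+\trace[(\bAbar^s)^2]-\trace[\bAbar^2]$; the last two cancel since $\trace[(\bAbar^s)^2]=\trace[\bAbar^2]$ for the symmetrized part paired with a symmetric quadratic — confirms $\xi_{\bmubar,\bAbar}$ is the projection, modulo the harmless observation that $\xi_{\bmu,\bA}=\xi_{\bmu,\bA^s}$ so only the symmetric part of $\bA$ matters. Then \eqref{quadratic-approximation-1} follows from Pythagoras, $\E[(\xi-\xi_{\bmubar,\bAbar})^2]=\E[\xi^2]-\E[\xi_{\bmubar,\bAbar}^2]$, where $\E[\xi^2]=\E\|f\|^2+\E\trace[\{\nabla f\}^2]$ by \Cref{prop:2nd-order-stein-Bellec-Zhang} and $\E[\xi_{\bmubar,\bAbar}^2]=\|\bmubar\|^2+2\|\bAbar^s\|_F^2$ by a direct Gaussian moment computation; rearranging with the identity $\|\bM\|_F^2+\trace[\bM^2]=2\|\bM^s\|_F^2$ and the Poincaré-type inequality $\E\|f-\bmubar\|^2\le\E\|\nabla f\|_F^2$ yields both the equality and the stated upper bound $2\E[\|\{\nabla f-\bAbar\}^s\|_F^2]$. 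A cleaner route to the last step is to apply \Cref{prop:2nd-order-stein-Bellec-Zhang} directly to $h(\bz)=f(\bz)-\bmubar-\bAbar\bz$, whose divergence is $\dv f(\bz)-\trace\bAbar$, giving $\E[(\xi-\xi_{\bmubar,\bAbar})^2]=\E\|h\|^2+\E\trace[\{\nabla h\}^2]=\E\|h\|^2+\E\trace[\{\nabla f-\bAbar\}^2]$, and then bounding via Poincaré and the Frobenius identity.

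\emph{Sub-projections and the $\eps$ ratios.} The sub-projection statements are then bookkeeping: since $\xi_{\bmubar,\bAbar}=\xi_{\bmubar,\mathbf 0}+\xi_{\mathbf 0,\bAbar}$ with the two summands in the orthogonal pieces $\scrH_1,\scrH_2$, the projection onto $\scrH_1$ of both $\xi$ and $\xi_{\bmubar,\bAbar}$ is $\xi_{\bmubar,\mathbf 0}=\bz^\top\bmubar$, with residual $\E[(\xi_{\bmubar,\bAbar}-\xi_{\bmubar,\mathbf 0})^2]=\E[\xi_{\mathbf 0,\bAbar}^2]=2\|\bAbar^s\|_F^2$, and symmetrically for $\scrH_2$ the residual is $\|\bmubar\|^2$. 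For the ratios, $\eps_{1,2}^2=1-\E[\xi_{\bmubar,\bAbar}^2]/\E[\xi^2]=\E[(\xi-\xi_{\bmubar,\bAbar})^2]/\E[\xi^2]$, and dividing the numerator bound $2\E[\|\{\nabla f-\bAbar\}^s\|_F^2]$ by the lower bound $\E[\xi^2]\ge\|\bmubar\|^2+2\E[\|\{\nabla f\}^s\|_F^2]$ (this last inequality is again $\E\|f\|^2\ge\|\bmubar\|^2$ plus the Frobenius identity) gives $\epsbar_{1,2}^2$; for $\eps_2^2$ one uses $\E[\xi_{\mathbf 0,\bAbar}^2]=2\|\bAbar^s\|_F^2$ and $\E[\xi^2]-\E[\xi_{\mathbf 0,\bAbar}^2]=\|\bmubar\|^2+\E[\xi^2]-\|\bmubar\|^2-2\|\bAbar^s\|_F^2$, bounded above by $\|\bmubar\|^2+2\E[\|\{\nabla f-\bAbar\}^s\|_F^2]$ after the same manipulation, divided by the same denominator.

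\emph{The CLT equivalences.} This is the main obstacle and where the real content lies. Under $\eps_{1,2}^2=o(1)$, by \eqref{quadratic-approximation-1} and the $L_2$-angle identity \eqref{quadratic-approximation-2} the normalized $\xi$ is $L_2$-close, hence close in distribution, to the normalized $\xi_{\bmubar,\bAbar}/\Var[\xi_{\bmubar,\bAbar}]^{1/2}$; so $\xi/\Var[\xi]^{1/2}\to^d N(0,1)$ iff $\xi_{\bmubar,\bAbar}/\Var[\xi_{\bmubar,\bAbar}]^{1/2}\to^d N(0,1)$. The task reduces to: \emph{when is a normalized linear-plus-quadratic Gaussian chaos $\bz^\top\bmu+(\bz^\top\bA\bz-\trace\bA)$, with $\bA=\bA^s$, asymptotically standard normal?} Diagonalizing $\bA$ in an orthonormal basis and rotating, this becomes $\sum_i\mu_i z_i+\sum_i\lambda_i(z_i^2-1)$ with $z_i$ iid $N(0,1)$; this is a sum of independent terms with total variance $\|\bmu\|^2+2\sum\lambda_i^2=\|\bmu\|^2+2\|\bA^s\|_F^2$, and by the classical CLT for sums of independent (but non-identically-distributed) random variables, it is asymptotically normal precisely when the Lindeberg/Lyapunov condition holds, i.e. when $\max_i(\mu_i^2+\lambda_i^2)/(\|\bmu\|^2+\|\bA^s\|_F^2)\to 0$; since $\max_i\lambda_i^2=\|\bA^s\|_{op}^2$ and $\max_i\mu_i^2\le\|\bmu\|^2$ (the $\bmu$-part is already Gaussian so its contribution is never an obstruction), this condition is equivalent to $\|\bAbar^s\|_{op}^2/(\|\bmubar\|^2+\|\bAbar^s\|_F^2)\to 0$, which is \eqref{quadratic-approximation-3}. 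For \eqref{quadratic-approximation-H_2}, under $\eps_2^2=o(1)$ the same argument replaces $\xi$ by the pure quadratic $\xi_{\mathbf 0,\bAbar}=\sum_i\lambda_i(z_i^2-1)$, and its asymptotic normality is equivalent to $\max_i\lambda_i^2/\sum_i\lambda_i^2\to 0$, i.e. $\|\bAbar^s\|_{op}^2/\|\bAbar^s\|_F^2\to 0$. I would want to be careful to cite a precise form of the chaos CLT — e.g. the fourth-moment theorem of Nualart--Peccati for the quadratic part (which states $\sum\lambda_i(z_i^2-1)$ normalized converges to $N(0,1)$ iff $\sum\lambda_i^4/(\sum\lambda_i^2)^2\to 0$, equivalently $\|\bA^s\|_{op}^2/\|\bA^s\|_F^2\to0$ since $\sum\lambda_i^4\le\|\bA^s\|_{op}^2\sum\lambda_i^2\le\|\bA^s\|_{op}^2\cdot\|\bA^s\|_F^2$ and conversely $\sum\lambda_i^4\ge\|\bA^s\|_{op}^4$) — and then handle the independent linear perturbation either by noting the $\bmu$-term is exactly Gaussian so adding it can only help, or by a Slutsky/characteristic-function argument, making the "only if" direction also require checking that a non-negligible quadratic spike cannot be washed out, which follows because the linear part is independent of the quadratic part so the limit law (if it exists) is a convolution of $N(0,\|\bmu\|^2)$ with the quadratic limit, and that convolution is Gaussian iff the quadratic factor is.
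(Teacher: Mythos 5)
Your proof of the projection identity, the formula \eqref{quadratic-approximation-1}, the sub-projection facts, and the bounds $\eps_{1,2}^2\le\epsbar_{1,2}^2$, $\eps_2^2\le\epsbar_2^2$ is essentially correct. Your "cleaner route" of applying \Cref{prop:2nd-order-stein-Bellec-Zhang} directly to $h(\bz)=f(\bz)-\bmubar-\bAbar\bz$ is in fact exactly what the paper does (with $g(\bz)=f(\bz)-\bmu-\bA^\top\bz$ and then optimizing over $\bmu,\bA$), and the inner-product argument you sketch first is an equivalent reformulation. Similarly, diagonalizing $\bAbar^s$ and rotating to reduce $\xi_{\bmubar,\bAbar}$ to $\sum_j\{a_jG_j+b_j(G_j^2-1)\}$ with iid $G_j$ is exactly the paper's reduction. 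Where you diverge, and where there are real gaps, is in the two directions of the CLT equivalence \eqref{quadratic-approximation-3}.

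\textbf{Forward direction.} You claim asymptotic normality holds ``precisely when'' the condition $\max_i(\mu_i^2+\lambda_i^2)/(\|\bmu\|^2+\|\bA^s\|_F^2)\to 0$ holds, and then assert this is equivalent to $\|\bAbar^s\|_{op}^2/(\|\bmubar\|^2+\|\bAbar^s\|_F^2)\to 0$. It is not: the theorem's condition places no constraint on $\max_i\mu_i^2$, and one or a few coordinates may carry an $O(1)$ fraction of $\|\bmubar\|^2$ (e.g.\ $\bAbar=\mathbf 0$ and $\bmubar=\be_1$, where $\xi_{\bmubar,\bAbar}=z_1$ is exactly normal yet $\max_i\mu_i^2/\|\bmu\|^2=1$). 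In that regime the uniform-asymptotic-negligibility (Feller) condition fails, so Lindeberg is no longer necessary and you cannot invoke ``asymptotically normal precisely when Lindeberg holds.'' You flag the issue (``the $\bmu$-part is already Gaussian so its contribution is never an obstruction'') but do not supply the argument. The paper resolves exactly this by splitting the sum at $k_n=\lceil\|\bAbar^s\|_{op}^{-1}\rceil$: for $j\le k_n$ the quadratic parts $b_j(G_j^2-1)$ have total variance $\le 2k_n\|\bAbar^s\|_{op}^2\to 0$, so they can be discarded, leaving an exactly Gaussian head $\sum_{j\le k_n}a_jG_j$; for $j>k_n$ the non-increasing arrangement forces $\max_{j>k_n}(a_j^2+2b_j^2)\le 1/(k_n+1)\to 0$, so the Lyapunov condition holds for the tail. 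Both pieces are independent and asymptotically normal, so the sum is. Your argument skips this splitting.

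\textbf{Converse direction.} Your convolution argument asserts that ``the linear part is independent of the quadratic part,'' but this is false: writing $L=\sum_i\mu_iz_i$ and $Q=\sum_i\lambda_i(z_i^2-1)$, $L$ and $Q$ are uncorrelated (different Hermite ranks) but not independent, e.g.\ $\E[L^2Q]=2\sum_i\mu_i^2\lambda_i\ne 0$ in general. So you cannot conclude the limit law is a convolution $N(0,\cdot)*(\text{quadratic limit})$ and apply Cram\'er's decomposition theorem at the level of $L$ vs.\ $Q$. The paper instead applies the Cram\'er--L\'evy theorem to the \emph{per-coordinate} terms $W_j=a_jG_j+b_j(G_j^2-1)$, which are genuinely mutually independent: each $W_{j_n}$ is an independent summand of a sum converging to $N(0,1)$, hence must itself be asymptotically normal along subsequences, which forces any limit $(a,b)$ of $(a_{j_n},b_{j_n})$ to have $b=0$, i.e.\ $\max_jb_j^2=\|\bAbar^s\|_{op}^2\to 0$. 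Your approach decomposes by Hermite order, which destroys independence; the correct decomposition is by coordinate. (Your fourth-moment-theorem argument for the pure quadratic case \eqref{quadratic-approximation-H_2} is fine as far as it goes, but it does not extend to \eqref{quadratic-approximation-3} through the flawed independence claim.)
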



\begin{proof}[Proof of \Cref{thm:quadratic-approximation}]
    The function $g(\bz) = f(\bz) - \bmu - \bA^\top \bz$ has
    gradient $\nabla g = \nabla f - \bA$.
    Application of the Second Order Stein's formula 
    in \Cref{prop:2nd-order-stein-Bellec-Zhang} to $g$ yields
    \bes
    \E[(\xi - \xi_{\bmu,\bA})^2] = 
    \E[\| f(\bz) - \bmu - \bA^\top\bz\|^2 ] + \E\trace[\{\nabla f(\bz)-\bA\}^2]
    \ {\defas}\ 
    I + II.
    \ees
    The first term is 
    $I = \E[\|f(\bz)-\bmu\|^2] + \|\bA\|_F^2 - 2 \E[ \bz^\top \bA(f(\bz)-\bmu) ]
    $.
    By Stein's formula and the linearity of the trace, we have 
    \bes
    \|\bA\|_F^2 - 2\E[ \bz^\top \bA(f(\bz)-\bmu) ] 
    &=& 
    \|\bA\|_F^2 - 2\E\trace(\nabla f(\bz) \bA^\top)
    \\&=&
    \|\bA\|_F^2 - 2\trace[\bA^\top \bAbar] 
    \\&=& -\|\bAbar\|_F^2+\|\bA-\bAbar\|_F^2.
    \ees
    We also have
    $\E[\|\nabla f(\bz)-\bAbar\|_F^2] = \E[\|\nabla f(\bz)\|_F^2] - \|\bAbar\|_F^2$
    so that 
    $$I = \E[\|f(\bz)-\bmu\|^2 - \|\nabla f(\bz)\|_F^2]
    + \E[ \|\nabla f(\bz) - \bAbar\|_F^2] + \|\bA-\bAbar\|_F^2.$$
    For the second term, using that $\E[\nabla f(\bz) - \bAbar] =0$ we get
    \bes
    II
    = \E\trace[\{\nabla f(\bz)-\bA\}^2]
    =
    \E \trace[ \{\nabla f(\bz)-\bAbar\}^2] + \trace[ \{\bAbar - \bA\}^2].
    \ees
    Due to $\|\bM\|_F^2+\trace(\bM^2)=2\|\bM^s\|_F^2$ for $\bM\in\R^{n\times n}$, 
    it follows that 
    \bes
    \E[(\xi - \xi_{\bmu,\bA})^2]
    &= &\E [\|f(\bz)-\bmubar\|^2 - \|\bAbar\|_F^2] +\E\trace[ \{\nabla f(\bz)-\bAbar \}^2 ]
    \cr && +\|\bmu-\bmubar\|^2 + 2\|(\bA-\bAbar)^s\|_F^2
    \ees
    The optimality of $\bmu=\bmubar$ and $\bA=\bAbar$ follows, so that 
    $\xi_{\bmubar,\bAbar}$ is the $L_2$ projection of $\xi$ to $\scrH_{1,2}$. 
    Also, the first line above gives the formula of $\E[(\xi - \xi_{\bmubar,\bAbar})^2]$ 
    in \eqref{quadratic-approximation-1}, and the second line gives the formulas 
    for the variances of $\xi_{\bmubar,\bf 0}$ and $\xi_{\bf 0,\bAbar}$. 
    The upper bound in \eqref{quadratic-approximation-1} follows from 
    $\E [\|f(\bz)-\bmubar\|^2 - \|\bAbar\|_F^2] 
    \le \E [\|\nabla f(\bz)\|_F^2] - \|\bAbar\|_F^2 
    = \E [\|\nabla f(\bz)-\bAbar\|_F^2]$ 
    thanks to the Gaussian Poincar\'e inequality. 
    Inequality \eqref{quadratic-approximation-1} is equivalent to
    $$\E[\xi^2] = \E[\xi_{\bmubar,\bAbar}^2] + \E[(\xi-\xi_{\bmubar,\bAbar})^2] 
    \le\|\bmubar\|^2 + 2\|\bAbar^s\|_F^2 + 2\E[ \|\{\nabla f(\bz)-\bAbar\}^s\|_F^2]$$ which provides
    $\eps_{1,2}^2\le \epsbar_{1,2}^2$ and $\eps_2^2\le \epsbar_{2}^2$
    by bounding from above the denominator in $\eps_{1,2}^2=1-\E[\xi^2_{\bmubar,\bAbar}]/\E[\xi^2]$
    and $\eps_2^2 = 1-\E[\xi^2_{\mathbf{0},\bAbar}]/\E[\xi^2]$. 

    For \eqref{quadratic-approximation-3}, we write 
    $\xi_{\bmubar,\bAbar} = \sum_{j=1}^n\{a_j G_j+ b_j(G_j^2-1)\}$ with iid $G_j\sim N(0,1)$, 
    where $a_j = \bu_j^\top\bmubar$ and $G_j=\bu_j^\top \bz$ with the 
    eigenvalue decomposition $\bAbar^s = \sum_{j=1}^n b_j\bu_j\bu_j^\top$. 
    Assume without loss of generality that $\Var(a_j G_j+ b_j(G_j^2-1)) = a_j^2+2b_j^2$ is non-increasing in $j$
    and that $\Var[\xi_{\bmubar,\bAbar}]$ satisfies
    $\sum_{j=1}^n(a_j^2+2b_j^2) = 1$.
    The condition on the left-hand side of \eqref{quadratic-approximation-3}
        implies that the integer $k_n\defas\lceil\|\bAbar^s\|_{op}^{-1}\rceil
        =\lceil(\max_j b_j)^{-1} \rceil
        $ satisfies $k_n\to+\infty$
        and 
    $\sum_{j=1}^{k_n}b_j^2 \ll 1 = \Var[\xi_{\bmubar,\bAbar}]$, so that 
    $$\textstyle
    \xi_{\bmubar,\bAbar} = \sum_{j=1}^{k_n} a_j G_j
    +\sum_{j=k_n+1}^n\{a_j G_j+ b_j(G_j^2-1)\} + o_{\P}(1).$$
    Assuming that
        $\sum_{j=1}^{k_n}a_j^2\to c$ for some $c\in[0,1]$ by extracting
    a subsequence if necessary,
    $k_n\to+\infty$ implies 
    $\max_{j>k_n}(a_j^2+2b_j^2) =a_{k_n+1}^2 + 2b_{k_n+1}^2  \to0$
    so that
    the second term above is independent of the first and approximately $N(0,1-c)$ by the Lyapunov CLT
    when $\sum_{j=1}^{k_n}a_j^2\to c \le 1$. 
    This proves that the LHS of \eqref{quadratic-approximation-3} implies
    the RHS. 
    Conversely, assume the asymptotic normality on the RHS so that 
    $\sum_{j=1}^n\{a_j G_j+ b_j(G_j^2-1)\}\to N(0,1)$. 
    Let $W_j = a_j G_j+ b_j(G_j^2-1)$ and $j_n\le n$. 
    As $W_{j_n}$  is an independent component of the sum, 
    for any $(a_{j_n},b_{j_n}) \to (a,b)$ along a subsequence with $a^2+b^2>0$, 
    we must have $b=0$ because 
    $W_{j_n}\to^d N(0,a^2+2b^2)$ by the Cram\'er-L\'evi theorem and 
    $W_{j_n}\to^d aG+b(G^2+1)$ for some $G\sim N(0,1)$. 
    As $j_n\le n$ are arbitrary, this gives 
    $\|\bAbar^s\|_{op} = \max_{j=1.,..,n} b_j^2\to 0$.
    \end{proof}

\paragraph{Variance estimate: quadratic case}
\Cref{thm:quadratic-approximation} provides the quadratic 
normal approximation of $\xi$ under the 
condition $\epsbar_{1,2}^2\to 0$ with
\bel{eps_1,2}
\epsbar_{1,2}^2 \ge \eps_{1,2}^2 = 1 - (\|\E[f(\bz)]\|^2+2\|\{\E[\nabla f(\bz)]\}^s\|_F^2)/{\Var[\xi]} 
\eel
where $\epsbar_{1,2}^2$ is defined using the upper bound 
$\|\E f(\bz)\|^2+2\E[\|\{\nabla f(\bz)\}^s\|_F^2]\ge \Var[\xi]$ 
established in \eqref{quadratic-approximation-1} 
in the denominator on the right-hand side 
of \eqref{eps_1,2}.

\begin{theorem}\label{thm:consistency-variance-two-terms} 
Let $f, \bz, \xi, \bmubar = \E[f(\bz)]$ and $\bAbar = \E[\nabla f(\bz)]$ 
be as in \Cref{thm:quadratic-approximation}  
and $\widehat{\Var[\xi]} =\|f(\bz)\|^2+\trace[\{\nabla f(\bz)\}^2]$. Then, 
\bel{thm:consistency-variance-two-terms-1}
\quad && \E\Big[\Big|\widehat{\Var[\xi]}/\Var[\xi]  - 1 \Big|\Big]
\le 2\epsdoublebar_{1,2}^2 + 2\epsdoublebar_{1,2} C_0 + C_0\|\bAbar\|_{op}/\Var[\xi]^{1/2}
\eel
with $\epsdoublebar_{1,2} \defas 
\{2\E[\|\nabla f(\bz)-\bAbar\|_F^2]/\Var[\xi]\}^{1/2}$ and 
$C_0 \defas \{(\|\bmubar\|^2+2\|\bAbar\|_F^2)/\Var[\xi]\}^{1/2}$. Consequently, 
under the conditions $\big\{\E[\|\nabla f(\bz)-\bAbar\|_F^2]+\|\bAbar\|_{op}\big\}
\big/(\|\bmubar\|^2+\|\bAbar^s\|_F^2) = o(1)$ 
and $\|\bAbar\|_F^2/(\|\bmubar\|^2+\|\bAbar^s\|_F^2)=O(1)$,  
\bel{thm:consistency-variance-two-terms-2}
\widehat{\Var[\xi]}/\Var[\xi]\to^\P 1\ \hbox{ and }\ \big(\widehat{\Var[\xi]}\big)^{-1/2}\xi \to^d N(0,1). 
\eel
\end{theorem}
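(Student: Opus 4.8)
The plan is to exploit that the proposed estimator $\widehat{\Var[\xi]}=\|f(\bz)\|^2+\trace[\{\nabla f(\bz)\}^2]$ is \emph{exactly} unbiased for $\Var[\xi]$: by the Second Order Stein formula (\Cref{prop:2nd-order-stein-Bellec-Zhang}) and $\E[\xi]=0$, $\E[\widehat{\Var[\xi]}]=\E\|f(\bz)\|^2+\E\trace[\{\nabla f(\bz)\}^2]=\E[\xi^2]=\Var[\xi]$, so \eqref{thm:consistency-variance-two-terms-1} is purely a statement about the $L_1$-fluctuation of $\widehat{\Var[\xi]}$ about its mean. To control that fluctuation I would linearize $f$ around the affine map it is $L_2$-closest to: set $g(\bz)=f(\bz)-\bmubar-\bAbar^\top\bz$, so that $\E[g(\bz)]=\mathbf 0$, $\E[\nabla g(\bz)]=\mathbf 0$, $f(\bz)=\bmubar+\bAbar^\top\bz+g(\bz)$ and $\nabla f(\bz)=\bAbar+\nabla g(\bz)$. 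Expanding the square norm and the squared trace yields $\widehat{\Var[\xi]}=V_0+2\langle\bmubar+\bAbar^\top\bz,\,g(\bz)\rangle+2\trace[\bAbar\,\nabla g(\bz)]+R$, where $V_0\defas\|\bmubar+\bAbar^\top\bz\|^2+\trace[\bAbar^2]$ is an inhomogeneous degree-two Gaussian chaos with $\E[V_0]=\|\bmubar\|^2+\|\bAbar\|_F^2+\trace[\bAbar^2]=\|\bmubar\|^2+2\|\bAbar^s\|_F^2$, the two middle terms are mean-zero cross terms (by Stein), and $R\defas\|g(\bz)\|^2+\trace[\{\nabla g(\bz)\}^2]$ is the Second Order Stein estimator of $\Var[\xi-\xi_{\bmubar,\bAbar}]=\E[(\xi-\xi_{\bmubar,\bAbar})^2]$. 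In particular $\E[\widehat{\Var[\xi]}]=\E[V_0]+\E[R]$, so no separate bias term survives and it suffices to bound $\E|V_0-\E V_0|$, $\E|R-\E R|$, and the $L_1$-norm of the cross terms.

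I would bound $\E|R-\E R|$ using $|R|\le\|g(\bz)\|^2+\|\nabla g(\bz)\|_F^2$ and the Gaussian Poincar\'e inequality $\E\|g(\bz)\|^2\le\E\|\nabla g(\bz)\|_F^2=\E\|\nabla f(\bz)-\bAbar\|_F^2$, which gives $\E|R-\E R|\le 2\E|R|\le 4\E\|\nabla f(\bz)-\bAbar\|_F^2=2\epsdoublebar_{1,2}^2\Var[\xi]$. For the cross terms, Cauchy--Schwarz and Poincar\'e again give $\E|2\langle\bmubar+\bAbar^\top\bz,g(\bz)\rangle|\le 2(\|\bmubar\|^2+\|\bAbar\|_F^2)^{1/2}(\E\|\nabla f(\bz)-\bAbar\|_F^2)^{1/2}$ and $\E|2\trace[\bAbar\,\nabla g(\bz)]|\le 2\|\bAbar\|_F(\E\|\nabla f(\bz)-\bAbar\|_F^2)^{1/2}$, both bounded by a multiple of $C_0\epsdoublebar_{1,2}\Var[\xi]$ once one recognizes $\|\bmubar\|^2+2\|\bAbar\|_F^2=C_0^2\Var[\xi]$. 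For the chaos term, $V_0-\E V_0=2\bmubar^\top\bAbar^\top\bz+\big(\bz^\top\bAbar\bAbar^\top\bz-\|\bAbar\|_F^2\big)$ has variance $4\|\bAbar\bmubar\|^2+2\|\bAbar\bAbar^\top\|_F^2$, and the submultiplicativity bounds $\|\bAbar\bmubar\|\le\|\bAbar\|_{op}\|\bmubar\|$, $\|\bAbar\bAbar^\top\|_F\le\|\bAbar\|_{op}\|\bAbar\|_F$ give $\E|V_0-\E V_0|\le\sqrt{4\|\bAbar\bmubar\|^2+2\|\bAbar\bAbar^\top\|_F^2}\lesssim\|\bAbar\|_{op}(\|\bmubar\|^2+\|\bAbar\|_F^2)^{1/2}$, i.e. a multiple of $C_0\|\bAbar\|_{op}\Var[\xi]^{1/2}$. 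Summing the three contributions and dividing by $\Var[\xi]$ produces a bound of the form $c_1\epsdoublebar_{1,2}^2+c_2\epsdoublebar_{1,2}C_0+c_3C_0\|\bAbar\|_{op}/\Var[\xi]^{1/2}$ with absolute $c_1,c_2,c_3$; keeping the sharp value $\sqrt{2/\pi}$ in $\E\,|N(0,\sigma^2)|$ and applying $(u+v)^2\le 2(u^2+v^2)$ at the right places yields the displayed coefficients.

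For \eqref{thm:consistency-variance-two-terms-2}, the sandwich $\|\bmubar\|^2+2\|\bAbar^s\|_F^2\le\Var[\xi]\le\|\bmubar\|^2+2\|\bAbar^s\|_F^2+2\E\|\nabla f(\bz)-\bAbar\|_F^2$ from \Cref{thm:quadratic-approximation}, combined with the stated conditions, shows $\Var[\xi]\asymp\|\bmubar\|^2+\|\bAbar^s\|_F^2$; hence $\epsdoublebar_{1,2}\to 0$, $C_0=O(1)$ and $\|\bAbar\|_{op}/\Var[\xi]^{1/2}\to 0$, so the right-hand side of \eqref{thm:consistency-variance-two-terms-1} vanishes and $\widehat{\Var[\xi]}/\Var[\xi]\to^\P 1$ by Markov's inequality. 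Moreover the same conditions give $\eps_{1,2}^2=o(1)$ and $\|\bAbar^s\|_{op}^2/(\|\bmubar\|^2+\|\bAbar^s\|_F^2)\to 0$, so $\xi/\Var[\xi]^{1/2}\to^d N(0,1)$ by \eqref{quadratic-approximation-3} of \Cref{thm:quadratic-approximation}; writing $(\widehat{\Var[\xi]})^{-1/2}\xi=(\Var[\xi]/\widehat{\Var[\xi]})^{1/2}\cdot\xi/\Var[\xi]^{1/2}$ and applying Slutsky's theorem completes the proof.

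The step I expect to be the main obstacle is the chaos bound: $V_0$ is a genuine quadratic form in $\bz$, not a sum of (nearly) independent pieces, so its $L_1$-concentration is not automatic, and it is precisely the estimate $\|\bAbar\bAbar^\top\|_F\le\|\bAbar\|_{op}\|\bAbar\|_F$ that makes the deviation scale with $\|\bAbar\|_{op}$ rather than with $\|\bAbar\|_F$ (which may be of order $\Var[\xi]^{1/2}$). This is what forces the requirement $\|\bAbar\|_{op}^2=o(\Var[\xi])$ and mirrors the operator-versus-Frobenius dichotomy already seen in \eqref{quadratic-approximation-H_2}; the remaining work is bookkeeping to route each term into exactly one of $\epsdoublebar_{1,2}^2$, $\epsdoublebar_{1,2}C_0$, $C_0\|\bAbar\|_{op}/\Var[\xi]^{1/2}$.
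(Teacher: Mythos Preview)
Your proposal is correct and follows essentially the same route as the paper: both linearize $f$ around its best affine approximation $\bmubar+\bAbar^\top\bz$, expand $\widehat{\Var[\xi]}$ into the chaos term $V_0$, the cross terms, and the remainder $R$, and then bound each piece with the Gaussian Poincar\'e inequality, Cauchy--Schwarz, and the variance formula for the quadratic form $\|\bmubar+\bAbar^\top\bz\|^2$. The only cosmetic difference is bookkeeping: you use $\E[V_0]+\E[R]=\Var[\xi]$ to eliminate the bias and then center $R$, whereas the paper keeps $R$ uncentered and bounds the residual bias $|\E[V_0]-\Var[\xi]|=|\|\bmubar\|^2+2\|\bAbar^s\|_F^2-\Var[\xi]|\le\epsdoublebar_{1,2}^2\Var[\xi]$ separately; both paths rely on the same estimate $\E|R|\le 2\E\|\nabla f-\bAbar\|_F^2$ and land on the same constant $2\epsdoublebar_{1,2}^2$.
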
 

It follows from the Second Order Stein formula in \Cref{prop:2nd-order-stein-Bellec-Zhang} that 
$\widehat{\Var[\xi]}$ is an unbiased estimator of $\Var[\xi]$. 
Moreover, when $\E[\|\nabla f(\bz)-\bAbar\|_F^2]$, $\|\bAbar\|_F$ and $\|\bAbar\|_{op}$ are all equivalent to their symmetric counterparts, $\E[\|\nabla f(\bz)-\bAbar\|_F^2]\asymp \E[\|\{\nabla f(\bz)-\bAbar\}^s\|_F^2]$, $\|\bAbar\|_F\asymp \|\bAbar^s\|_F$ 
and $\|\bAbar\|_{op}\asymp \|\bAbar^s\|_{op}$, the condition for 
\eqref{thm:consistency-variance-two-terms-2} holds if and only if 
$\eps_{1,2}^2+\|\bAbar^s\|_{op}^2/(\|\bmubar\|^2+\|\bAbar^s\|_F^2)=o(1)$ 
for the quantities in \eqref{eps_1,2} and \eqref{quadratic-approximation-3}. 
The proof is given in \Cref{appendix:proof-variance-quadratic-case}.

\section{De-biasing general convex regularizers}
\label{sec:application-de-biasing}
Our main application of the normal approximation
in \Cref{thm:L2-distance-from-normal} concerns
de-biasing regularized estimators of the form
\begin{equation}
    \label{penalized-hbbeta-general}
    \hbbeta = \argmin_{\bb\in\R^p}\left\{\|\by - \bX\bb \|^2/(2n) + g(\bb) \right\} 
\end{equation}
for
convex $g:\R^p\to\R$ in the linear model \eqref{LM}.
Throughout, let $\bh=\hbbeta-\bbeta$ be the error vector, 
$\ba_0\in\R^p$ be a direction of interest, $\theta=\langle \ba_0,\bbeta\rangle$ 
be the target of statistical inference, and
$\bu_0$, $\bz_0$ and $\bQ_0$ be as in \eqref{u_0} so that
\eqref{eq:ref-X-Q0} holds.

\subsection{Assumption}
We say that $g$ is $\mu$-strongly convex
with respect to the norm $\bb\to\|\bSigma^{1/2}\bb\|$ if its symmetric Bregman divergence 
is bounded from below as
\begin{equation}
    \label{strong-convex}
    (\tilde\bb - \bb)^\top\big((\pa g)(\tilde\bb) - (\pa g)(\bb)\big) 
    \ge \mu\|\bSigma^{1/2}(\tilde\bb - \bb)\|^2
\end{equation}
for some $\mu > 0$. 
Here the interpretation of \eqref{strong-convex} is 
its validity for all choices in the sub-differential $(\pa g)(\tilde\bb)$ and $(\pa g)(\bb)$. 
Condition \eqref{strong-convex} holds for any convex $g$ for $\mu=0$.
If $g$ is twice differentiable, \eqref{strong-convex} holds if and only if
$\mu\bSigma$ is a lower 
bound for the Hessian of $g$. However, \eqref{strong-convex} may also hold for 
non-differentiable $g$, e.g. the Elastic-Net penalty with $\bSigma=\bI_p$.
Our results require the following assumption.

\begin{assumption}
    \label{assum:main}
    (i) Let $\gamma>0,\mu\in[0,\frac12]$ be constants
    such that $\mu+(1-\gamma)_+ > 0$,
    that is, either $\mu>0$ or $\gamma<1$ must hold.
    Consider a sequence of regression problems \eqref{LM}
    with $n,p\to+\infty$ and $p/n\le\gamma$. 
    The penalty $g:\R^p\to\R$ in \eqref{penalized-hbbeta-general} 
    is convex and 
    {\eqref{strong-convex}} holds. 
    The rows of $\bX$ are iid $N({\bf 0},\bSigma)$
    with invertible $\bSigma$ and  
    the noise $\bep\sim N({\bf 0},\sigma^2\bI_n)$ is independent of $\bX$. 
    (ii) $\ba_0\in\R^p$ is a sequence of vectors normalized
    with $\|\bSigma^{-1/2}\ba_0\|=1$. 
\end{assumption}

Note that if \eqref{strong-convex} holds for $\mu\ge 0$
it also holds for $\mu'=\min(\frac 12, \mu)$ and we may thus assume
$\mu\in[0,\frac12]$ without loss of generality.
Strongly convex objective functions admit
unique minimizers.
Since
$\gamma<1$ implies $\P(\phi_{\min}(\bSigma^{-1/2}\bX^\top\bX\bSigma^{-1/2})>0)=1$ (cf. \Cref{appendix:integrability-edelman})
and
the objective function of the optimization problem
\eqref{penalized-hbbeta-general}
is $(\phi_{\min}(\bSigma^{-1/2}\bX^\top\bX\bSigma^{-1/2}/n) + \mu)$-strongly convex,
\Cref{assum:main} grants almost surely the
existence and uniqueness of the minimizer
\eqref{penalized-hbbeta-general}.

\subsection{Gradient with respect to $\by$ and effective degrees-of-freedom}

Consider a penalized estimator \eqref{penalized-hbbeta-general}
viewed as a function $\hbbeta=\hbbeta(\by,\bX)$.
For every $\bX\in\R^{n\times p}$,
the map $\by\mapsto\bX\hbbeta(\by,\bX)$
is 1-Lipschitz (cf. \Cref{prop:hat-bH}).
By Rademacher's theorem,
for almost every $\by$ there exists a unique matrix $\hbH\in\R^{n\times n}$
such that
\begin{equation}
    \bX\hbbeta(\by+\bfeta,\bX)
    =
    \bX\hbbeta(\by,\bX)
    +
    \hbH{}^\top\bfeta + o(\|\bfeta\|),
    \label{def-matrix-H}, 
\end{equation}
as in \eqref{eq:def-H-intro}, 
i.e., $\hbH$ is the gradient of the map $\by\mapsto \bX\hbbeta(\by,\bX)$.
Furthermore $\hbH$ is symmetric with eigenvalues in $[0,1]$; 
See 
\Cref{prop:hat-bH} for the existence of $\hbH$ and its properties.
While existence of $\hbH$ 
was assumed in \eqref{eq:def-H-intro} in the introduction,
for penalized estimators \eqref{penalized-hbbeta-general}
the matrix $\hbH$ provably exists for almost every $\by$ by 
\Cref{prop:hat-bH}.

\Cref{table:hbH} provides closed-form expressions
of $\hbH$ for specific penalty functions $g$. The proofs of these
closed-form expressions will be given in \Cref{sec:examples}.
An advantage of defining $\hbH$ as the Fr\'echet derivative of the Lipschitz
map $\by\mapsto\bX\hbbeta(\by,\bX)$
is that this definition applies to any convex penalty $g$, 
even though for arbitrary penalty $g$ we are unable to provide a
closed-form expression for $\hbH$.
Finally, 
define the effective degrees-of-freedom $\df$ of $\hbbeta$ by
\begin{equation}
    \label{eq:def-trace-H}
    \df = \trace[\hbH] 
\end{equation}
as in \eqref{def-hat-df-and-B}. 
Because $\hbH$ is symmetric with eigenvalues in $[0,1]$
(cf. \Cref{prop:hat-bH}),
$0\le \df \le n$ holds almost surely.
The matrix $\hbH$ and the scalar $\df$
play a major role in our analysis. 

\begin{table}[ht]
\begin{tabular}{@{}|p{5cm}|l|l|l|@{}}
\toprule
Penalty & $\hbH\in\R^{n\times n}$ & Justification  \\ \midrule
$g(\bb)=\lambda\|\bb\|_1$ (Lasso) &
${\bX_{\Shat}}(\bX_{\Shat}^\top\bX_{\Shat})^{-1}\bX_{\Shat}^\top$
                                  &
                                          \cite{tibshirani2013lasso},
                                  \Cref{proposition:lasso-general}
                                  \\ \midrule
$g(\bb)=\mu\|\bb\|_2^2$ (Ridge) &
${\bX}(\bX^\top\bX + n\mu\bI_p)^{-1}\bX^\top$
                                & \eqref{eq:ridge-calculation-intro},
                                \Cref{sec:computation-twice-differentiation-g}  \\ \midrule
$g(\bb)=\lambda\|\bb\|_1 + \mu\|\bb\|_2^2$ (Elastic-Net)&
${\bX_{\Shat}}(\bX_{\Shat}^\top\bX_{\Shat} + n\mu \bI_{|\Shat|})^{-1}\bX_{\Shat}^\top$
                                          & 
                                          \cite[(28)]{tibshirani2013lasso},
                                          \cite[\S 3.5.3]{bellec_zhang2018second_order_stein}
                                          \\ \midrule
$g(\bb)=\|\bb\|_{GL}=\sum_{k=1}^K\lambda_k\|\bb_{G_k}\|_2$
(group Lasso \eqref{group-lasso})&
${\bX_{\Shat}}(\bX_{\Shat}^\top\bX_{\Shat} + \bM)^{-1}\bX_{\Shat}^\top$
                                                                                            &                                                                              \cite{vaiter2012degrees}, \Cref{lemma:gradient-group-lasso} \\ \midrule
$g(\bb)$ twice continuously differentiable &
${\bX}(\bX^\top\bX + n\nabla^2g(\hbbeta))^{-1}\bX^\top$
                                           &  \Cref{sec:computation-twice-differentiation-g}
                                          \\ \midrule
$g(\bb)$ arbitrary convex function &
symmetric with eigenvalues in $[0,1]$
                                          &
\Cref{prop:hat-bH}
                                          \\ \bottomrule
\end{tabular}
\caption{
    Closed-form expressions $\hbH$
    from \Cref{def-matrix-H}
    for specific convex penalty functions $g:\R^p\to\R$.
    For the Lasso and Elastic-Net,
    $\Shat = \{j\in[p]:\hbeta_j\ne 0\}$.
    For the group Lasso, $\Shat$ 
    and $\bM$ are given in 
    \Cref{sec:de-biasing-group-lasso}. 
}
\label{table:hbH}
\end{table}

\subsection{Approximation for 
\texorpdfstring{$\xi_0=\bz_0^\top f(\bz_0) - \dv f(\bz_0)$}{xi0}
and the de-biased vector 
\texorpdfstring{ $\DeBias$ }{hbbeta-debiased}}
Consider, for a fixed value of                 
$(\bX\bQ_0,\bep)$ the function $f_{(\bX\bQ_0,\bep)}:\R^n\to\R^n$ given by
\begin{equation}
    \label{def-f-xi}
    f_{(\bX\bQ_0,\bep)}(\bz_0)
    =
    f(\bz_0) = \bX\hbbeta - \by.
\end{equation}
For brevity we will often 
omit the dependence on $(\bX\bQ_0,\bep)$ of $f$ as
discussed after \eqref{definition-f-intro}.
The Fr\'echet gradient $\nabla f(\bz_0)$, where it exists,
is uniquely defined by
\begin{equation}
    f_{(\bX\bQ_0,\bep)}(\bz_0+\bfeta)
    -
    f_{(\bX\bQ_0,\bep)}(\bz_0)
    =
    [\nabla f(\bz_0)]^\top\bfeta + o(\|\bfeta\|)
    \label{eq:gradient-f-bXQ0-bep}
\end{equation}
and the divergence by
$\dv f(\bz_0) = \trace[\nabla f(\bz_0)]$.
If 
$\tbbeta=\argmin_{\bb\in\R^p}
\bigl(\|\bep - \tbX(\bb-\bbeta)\|^2/(2n) + g(\bb)\bigr)$
with $\tbX=\bX + \bfeta \ba_0^\top$, then
\eqref{eq:gradient-f-bXQ0-bep} is equivalent to
\begin{equation}
\label{gradient-f-bz0-explicit}
(\tbX(\tbbeta-\bbeta) - \bep)
-
(\bX\hbbeta - \by)
=
[\nabla f(\bz_0)]^\top
\bfeta
+ o(\|\bfeta\|).
\end{equation}
By Stein's formula,
we have conditionally on $(\bX\bQ_0,\bep)$ that almost surely
\begin{equation}
    \label{xi0-mean-zero-conditionally}
    \E[\xi_0 ~|~ (\bX\bQ_0,\bep)] = 0
    \qquad
    \text{ for }
    \quad
    \xi_0 = \bz_0^\top f(\bz_0) - \dv f(\bz_0).
\end{equation}
As in \eqref{intro-decomposition} for the general case discussed
in the introduction,
\eqref{xi0-mean-zero-conditionally} gives an unbiased
estimating equation for $\theta=\langle \ba_0,\bbeta\rangle$.
The next lemma provides an expression for $\nabla f(\bz_0)$.

\begin{restatable}{lemma}{lemmaExistenceWZero}
    \label{lemma:existence-bw0}
    Let \Cref{assum:main}(i) be fulfilled, 
    $\ba_0\in\R^p$ and $\hbH$ be as in \eqref{def-matrix-H}. 
    Then, 
    \begin{equation}
        \label{eq:identity-bw0}
    \nabla f(\bz_0)^\top = 
    (\bI_n-\hbH)^\top\langle \ba_0,\bh\rangle
    +  \bw_0 (\by-\bX\hbbeta)^\top
    \end{equation}
    satisfies \eqref{eq:gradient-f-bXQ0-bep}
    for some random $\bw_0\in\R^n$ almost surely.
    If additionally $\|\bSigma^{-1/2}\ba_0\|=1$ then 
    \begin{equation}
    \|\bw_0\|^2 \le
    n^{-1}
    \min\bigl\{
        (4 \mu)^{-1},
        \phi_{\min}
        (
            \bSigma^{-1/2}\bX^\top\bX\bSigma^{-1/2}/n
        )^{-1}
    \bigr\}.
    \label{eq:bound-norm-bw0}
    \end{equation}
\end{restatable}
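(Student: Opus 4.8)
The plan is to reduce the statement to one structural identity for $\nabla f(\bz_0)^\top$ and then to an eigenvalue estimate. Conditionally on $(\bX\bQ_0,\bep)$, perturbing $\bz_0$ in a direction $\bfeta$ is, via $\bX=\bX\bQ_0+\bz_0\ba_0^\top$ and $\by=\bX\bbeta+\bep$, the same as replacing $(\by,\bX)$ by $(\by+t\langle\ba_0,\bbeta\rangle\bfeta,\ \bX+t\bfeta\ba_0^\top)$, i.e., replacing $\hbbeta$ by the $\tbbeta$ of \eqref{gradient-f-bz0-explicit}. Under \Cref{assum:main}(i) the objective in \eqref{penalized-hbbeta-general} is $\bigl(\mu+\phi_{\min}(\bSigma^{-1/2}\bX^\top\bX\bSigma^{-1/2})/n\bigr)$-strongly convex, so $\hbbeta$ is a locally Lipschitz function of $(\by,\bX)$; hence $\bz_0\mapsto f(\bz_0)=\bX\hbbeta-\by$ is locally Lipschitz and, by Rademacher's theorem together with the absolute continuity of the law of $\bz_0$, Fr\'echet differentiable almost surely. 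Where it is differentiable, the product and chain rules give, exactly as in \eqref{nabla-f-introduction-chain-rule}, $\nabla f(\bz_0)^\top=\langle\ba_0,\bh\rangle\bI_n+\langle\ba_0,\bbeta\rangle\hbH^\top+\bX\hbG^\top$ with $\hbG$ the gradient of $\hbbeta$ along $\bX\mapsto\bX+\bfeta\ba_0^\top$; since $\hbH$ is symmetric (\Cref{prop:hat-bH}), identity \eqref{eq:identity-bw0} is equivalent to
\[
\bX\hbG^\top+\langle\ba_0,\hbbeta\rangle\hbH=\bw_0(\by-\bX\hbbeta)^\top,
\]
i.e., to the assertion that $\bX\hbG^\top+\langle\ba_0,\hbbeta\rangle\hbH$ has rank at most one with row space contained in $\mathrm{span}\{\by-\bX\hbbeta\}$ (when $\by-\bX\hbbeta=\mathbf{0}$ one takes $\bw_0=\mathbf{0}$).

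I would establish this first for twice continuously differentiable $g$ that is $\mu$-strongly convex relative to $\|\bSigma^{1/2}\cdot\|$, where everything is explicit. Differentiating the stationarity equation $\tfrac1n\bX^\top(\by-\bX\hbbeta)=\nabla g(\hbbeta)$ along the combined perturbation yields $\bM_g\,\dot\bh=\tfrac1n\langle\bfeta,\by-\bX\hbbeta\rangle\,\ba_0-\tfrac1n\langle\ba_0,\bh\rangle\bX^\top\bfeta$ with $\bM_g:=\tfrac1n\bX^\top\bX+\nabla^2g(\hbbeta)\succ0$, whence $\hbH=\tfrac1n\bX\bM_g^{-1}\bX^\top$ (matching \Cref{table:hbH}), $\bX\hbG^\top\bfeta=\langle\bfeta,\by-\bX\hbbeta\rangle\bw_0-\langle\ba_0,\hbbeta\rangle\hbH\bfeta$, and
\[
\bw_0=\bX\bigl(\bX^\top\bX+n\nabla^2g(\hbbeta)\bigr)^{-1}\ba_0,
\]
which gives \eqref{eq:identity-bw0}. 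Writing $\bA=\tfrac1n\bX^\top\bX$ and $\bB=\nabla^2g(\hbbeta)\succeq\mu\bSigma$, the elementary operator inequality $(\bA+\bB)^{-1}\bA(\bA+\bB)^{-1}\preceq\tfrac14\bB^{-1}$ (from $x/(x+c)^2\le 1/(4c)$), together with $(\bA+\bB)^{-1}\bA(\bA+\bB)^{-1}\preceq(\bA+\bB)^{-1}\preceq\bA^{-1}$ (valid when $\gamma<1$, so that $\bA$ is a.s.\ invertible), $\ba_0^\top\bSigma^{-1}\ba_0=1$, and $\ba_0^\top\bA^{-1}\ba_0\le\phi_{\min}(\bSigma^{-1/2}\bX^\top\bX\bSigma^{-1/2}/n)^{-1}$, gives $n\|\bw_0\|^2=\ba_0^\top\bM_g^{-1}\bA\bM_g^{-1}\ba_0\le\min\bigl\{(4\mu)^{-1},\,\phi_{\min}(\bSigma^{-1/2}\bX^\top\bX\bSigma^{-1/2}/n)^{-1}\bigr\}$, i.e., \eqref{eq:bound-norm-bw0}.

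For a general convex $g$ satisfying \eqref{strong-convex}, I would write $g=g_\flat+\mu\|\bSigma^{1/2}\cdot\|^2$ with $g_\flat$ convex, replace $g_\flat$ by its Moreau--Yosida regularizations mollified to be $C^2$, obtaining twice differentiable $\mu$-strongly convex penalties $g_k\downarrow g$; by stability of minimizers of uniformly strongly convex functions the corresponding $\hbbeta_k$, the residuals $\by-\bX\hbbeta_k$, the matrices $\hbH_k$, and the vectors $\bw_{0,k}$ converge, so \eqref{eq:identity-bw0} and the uniform bound \eqref{eq:bound-norm-bw0} persist in the limit. An alternative that avoids approximation is to differentiate the fixed-point relation $\hbbeta=\mathrm{prox}_g\bigl(\hbbeta+\tfrac1n\bX^\top(\by-\bX\hbbeta)\bigr)$ at points of differentiability: with $\bP=\nabla\,\mathrm{prox}_g$ (symmetric, eigenvalues in $[0,1]$, and satisfying $\bP^{-1}-\bI\succeq\mu\bSigma$ on $\mathrm{range}\,\bP$ because $\partial g$ is $\mu$-strongly monotone relative to $\|\bSigma^{1/2}\cdot\|$), one solves the differentiated equations on $\mathrm{range}\,\bP$ and recovers the same $\bw_0$ and the same norm bound.

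The main obstacle is the non-smooth convex case: justifying the a.e.\ existence of the $\bX$-directional derivative $\hbG$ of $\hbbeta$, together with the limit identifications $\hbbeta_k\to\hbbeta$, $\hbH_k\to\hbH$, $\bw_{0,k}\to\bw_0$ in the approximation route (or, in the direct route, the invertibility of the differentiated proximal equations restricted to $\mathrm{range}\,\bP$ and the bookkeeping that relates those restricted inverses back to the clean bound \eqref{eq:bound-norm-bw0}). Once the structural identity \eqref{eq:identity-bw0} is available, deriving \eqref{eq:bound-norm-bw0} is routine as above.
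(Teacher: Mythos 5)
Your route differs from the paper's in a substantial way, and the gap you flag at the end is the crux. Your treatment of the smooth case is correct: differentiating the stationarity equation gives the explicit formulas in \Cref{table:hbH} and \Cref{table:bw0}, and the operator inequality $(\bA+\bB)^{-1}\bA(\bA+\bB)^{-1}\preceq\min\{\tfrac14\bB^{-1},\bA^{-1}\}$ (valid without commutation, by conjugating with $\bB^{1/2}$) does yield \eqref{eq:bound-norm-bw0}. But the extension to a general convex $g$ via Moreau--Yosida mollification does not go through as stated: pointwise convergence $\hbbeta_k\to\hbbeta$ for Lipschitz maps gives at best weak-$*$ convergence of the gradient fields, and it does not guarantee $\hbH_k\to\hbH$ or $\bw_{0,k}\to\bw_0$ almost everywhere, so neither the structural identity \eqref{eq:identity-bw0} nor the norm bound passes to the limit without further argument. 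The prox fixed-point route has the same unresolved bookkeeping you identify. Both of your routes treat the non-smooth case as a perturbation of the smooth one, which is precisely where the difficulty concentrates.

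The paper avoids explicit formulas and approximation entirely. It uses the KKT-based identity \eqref{lipschitz-eq-1} from \Cref{lemma:lipschitzness} directly on the non-smooth problem. The key device is the coupled perturbation $\tbX=\bX+t\bfeta\ba_0^\top$, $\tbep=\bep+t\bfeta\langle\ba_0,\bh\rangle$, chosen so that $(\tbX-\bX)\bh+\bep-\tbep=\mathbf{0}$: this kills the second addend on the first line of \eqref{lipschitz-eq-1}, leaving
\begin{equation*}
\mu n\|\bSigma^{1/2}(\hbbeta-\tbbeta)\|^2+\|\bphi_0-\bphi_t\|^2\le\big|\langle\ba_0,\bh-\tbh\rangle\,t\,\bfeta^\top\bff\big|.
\end{equation*}
This shows immediately that $\bphi_t=\bphi_0$ whenever $\bfeta\perp\bff$, which is exactly the rank-one statement \eqref{eq:identity-bw0}, and then setting $\bfeta=-\bff/\|\bff\|^2$ and running the cases $\mu>0$ (via $uv\le u^2/4+v^2$) and $\mu=0$ (via $\phi_{\min}(\tbX^\top\tbX\bSigma^{-1})$) gives \eqref{eq:bound-norm-bw0}. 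This is both shorter and strictly more general than your mollification strategy, because it never needs second derivatives of $g$ or any convergence of gradients. If you want to salvage your approach, you would need a separate argument that the explicit identity survives the limit — for instance, that the non-differentiability set of $\hbbeta$ is a null set on which the mollified gradients converge locally uniformly off it — and that is precisely the work that the paper's coupled perturbation makes unnecessary.
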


\begin{table}[ht]
\begin{tabular}{@{}|p{5cm}|l|l|l|@{}}
\toprule
Penalty & Vector $\bw_0\in\R^n$ in \Cref{lemma:existence-bw0} & Justification  \\ \midrule
$g(\bb)=\lambda\|\bb\|_1$ (Lasso) &
${\bX_{\Shat}}(\bX_{\Shat}^\top\bX_{\Shat})^{-1} (\ba_0)_{\Shat}$
                                  & 
                                  \cite{bellec_zhang2019dof_lasso},
                                  \Cref{proposition:lasso-general}
                                  \\ \midrule
$g(\bb)=\mu\|\bb\|_2^2$ (Ridge) &
${\bX}(\bX^\top\bX + n\mu\bI_p)^{-1}\ba_0$
                                &
                                \Cref{sec:computation-twice-differentiation-g} 
                                \\ \midrule
$g(\bb)=\|\bb\|_{GL}=\sum_{k=1}^K\lambda_k\|\bb_{G_k}\|_2$ 
(group Lasso \eqref{group-lasso})&
${\bX_{\Shat}}(\bX_{\Shat}^\top\bX_{\Shat} + \bM)^{-1} (\ba_0)_{\Shat}$
                                          & 
\Cref{lemma:gradient-group-lasso}
                                          \\ \midrule
$g(\bb)$ twice continuously differentiable &
${\bX}(\bX^\top\bX + n\nabla^2g(\hbbeta))^{-1}\ba_0$
                                &  \Cref{sec:computation-twice-differentiation-g} 
                                         \\ \bottomrule
\end{tabular}
\caption{Closed-form expressions for $\bw_0\in\R^n$
    in \Cref{lemma:existence-bw0}
    for specific convex penalties $g:\R^p\to\R$.
}
\label{table:bw0}
\end{table}

\Cref{lemma:existence-bw0} is proved in \Cref{sec:proof-lipschitzness}.
Although we do not use this fact in any results,
we mention here in passing that
vector $\bw_0$ in \eqref{eq:identity-bw0} is linear in $\ba_0$
in the sense that $\bw_0$ can be chosen
of the form $\bW\bSigma^{-1/2}\ba_0$ for some matrix $\bW\in\R^{n\times p}$.
Indeed, the proof of \Cref{lemma:existence-bw0}
shows that the map $(\bep,\bX)\mapsto \bX(\hbbeta-\bbeta)- \bep$
is Fr\'echet differentiable at almost every point
by Rademacher's theorem.
At such a point,
with $\ba_1,\ba_2\in\R^p$, $t\in \R$,
the linear combination $\ba_3=\ba_1+ t \ba_2$
and the perturbed design matrix $\tbX = \bX + \bfeta(\ba_1 + t \ba_2)^\top$,
linearity of the Fr\'echet derivative implies that
\bes
&& (\tbX(\tbbeta-\bbeta) - \bep)
-
(\bX\hbbeta - \by)
\cr &=&
\bigl(
\langle \ba_1 + t\ba_2,\bh\rangle (\bI_n-\hbH)^\top
+  (\bw_1 + t \bw_2) (\by-\bX\hbbeta)^\top
\bigr)
\bfeta
+ o(\|\bfeta\|)
\ees
where $\bw_1$ and $\bw_2$ denote the $\bw_0$ from \eqref{eq:identity-bw0}
for $\ba_0=\ba_1$ and $\ba_0=\ba_2$ respectively.
Hence with $\bw_3 = \bw_1  + t\bw_2$,
\eqref{eq:identity-bw0} holds for $(\ba_0,\bw_0)=(\ba_3,\bw_3)$.
This proves that $\bw_0$ is linear in $\ba_0$,
i.e., it is of the form
$\bw_0 = \bW\bSigma^{-1/2}\ba_0$ for some matrix $\bW\in\R^{n\times p}$.
One way to construct $\bW$ explicitly
is the following: define the $j$-th column of 
$\bW$ as the vector $\bw_0$ corresponding to $\ba_0 = \bSigma^{1/2}\be_j$
where $\be_j$ is the $j$-th canonical basis vector. The linearity proved above for any linear combination $\ba_3$ then implies that
\eqref{eq:identity-bw0} holds for $\bw_0=\bW\bSigma^{-1/2}\ba_0$ for any $\ba_0\in\R^p$.
Inequality \eqref{eq:bound-norm-bw0} 
provides an upper bound
on the operator norm of $\bW$.
Linearity of $\bw_0$ with respect to $\ba_0$
and explicit matrices $\bW$ can be seen for some penalty functions
in \Cref{table:bw0}.

By taking the trace of \eqref{eq:identity-bw0}, we obtain
almost surely under \Cref{assum:main}
\bel{-xi0}
-\xi_0 
&=&
\dv f(\bz_0) - \langle \bz_0, f(\bz_0)\rangle
\cr&=& (n-\df)({\langle \ba_0,\hbbeta\rangle} - \theta) + \langle \bz_0 + \bw_0,\by-\bX\hbbeta\rangle
\cr&=&
(n-\df)(\htheta - \theta)
\eel
for
\begin{equation}
    \label{def-hat-theta}
\htheta \defas \langle \ba_0,\hbbeta\rangle + 
(n-\df)^{-1}\langle \bz_0 + \bw_0, \by-\bX\hbbeta\rangle.
\end{equation}
In the present context of the regularized
estimator $\hbbeta$ in \eqref{penalized-hbbeta-general}
with its effective degrees-of-freedom $\df$ defined in
\eqref{eq:def-trace-H}
and under \Cref{assum:main},
the quantities $\xi_0$, $\df$, $\htheta$ in the previous display
coincide with the random variables with the same name
in \eqref{intro-decomposition}.
By \eqref{xi0-mean-zero-conditionally}, equality $\E[\xi_0] = 0$ holds
so that 
\bel{eq:unbiased-estimating-equation-2}
0 = \E[-\xi_0]
   &=&
    \E[(n-\df)(\htheta-\theta)]
 \cr&=&
    \E[
    (n-\df)(\langle \ba_0,\hbbeta\rangle - \theta) + \langle \bz_0 + \bw_0,\by-\bX\hbbeta\rangle
    ]
\eel
by taking expectations in \eqref{-xi0}.
This provides a first evidence that the correction
$(n-\df)^{-1}\langle \bz_0+\bw_0,\by-\bX\hbbeta\rangle$
indeed removes the bias, at least after multiplication of $(\htheta-\theta)$
by $(n-\df)$.
Since $\bz_0=\bX\bSigma^{-1}\ba_0$ under the normalization
\eqref{normalization-a_0},
the unbiased estimating equation 
\eqref{eq:unbiased-estimating-equation-2}
is the specialization of
\eqref{eq:unbiased-estimating-equation} from the introduction
to the penalized estimator \eqref{penalized-hbbeta-general},
for which we have the gradient expression \eqref{eq:identity-bw0}
in terms of $\bw_0$ and $\hbH$.
If $\hbbeta$ is given by \eqref{penalized-hbbeta-general},
by identifying the terms in
\eqref{intro-decomposition}
and \eqref{-xi0} we see that 
the random variable $\widehat{A}$ defined
in \eqref{def-hat-df-and-B} of the introduction 
is here $\widehat{A}=\langle \bw_0, \by-\bX\hbbeta\rangle$
with $\bw_0$ given by \Cref{lemma:existence-bw0}.
The following lemma shows that this term is negligible.

\begin{restatable}{lemma}{lemmaNegligibleTerm}
    \label{lemma:negligible-term}
    Under \Cref{assum:main} there exists
    $\Omega_n$ with $\P({\Omega_n^c})\le C_0(\gamma,\mu)n^{-1/2}$
    and
    \begin{equation}
    \E\bigl[
        I_{\Omega_n}
        \langle \bw_0, \by-\bX\hbbeta\rangle^2
        \big/
        \Var_0[\xi_0]
    \bigr]
    \le {\C(\gamma,\mu)}n^{-1}.
    \label{eq:extra-term-to-0}
    \end{equation}
\end{restatable}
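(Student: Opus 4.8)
The plan is to combine three ingredients: a Cauchy--Schwarz bound on $\widehat A=\langle\bw_0,\by-\bX\hbbeta\rangle$ together with the norm bound \eqref{eq:bound-norm-bw0} on $\bw_0$ (this is where the $n^{-1}$ factor enters), a lower bound $\Var_0[\xi_0]\gtrsim\E_0[\|\by-\bX\hbbeta\|^2]$ coming from the Second Order Stein formula \Cref{prop:2nd-order-stein-Bellec-Zhang} applied conditionally on $(\bX\bQ_0,\bep)$, and the tower property, which lets the conditional expectation $\E_0[\|\by-\bX\hbbeta\|^2]$ cancel and leaves only the $n^{-1}$.

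First I would fix $\Omega_n$. If $\mu>0$, inequality \eqref{eq:bound-norm-bw0} gives $\|\bw_0\|^2\le(4n\mu)^{-1}$ for every realization, so one may take $\Omega_n$ to be the whole sample space. If $\mu=0$ (hence $\gamma<1$ by \Cref{assum:main}), I take $\Omega_n=\{\phi_{\min}(\bSigma^{-1/2}\bX^\top\bX\bSigma^{-1/2}/n)\ge c_1(\gamma)\}$ with a fixed $c_1(\gamma)\in(0,(1-\sqrt\gamma)^2)$; standard non-asymptotic bounds on the smallest singular value of a matrix with iid $N(0,1)$ entries (the same ones behind the integrability statement of \Cref{appendix:integrability-edelman}) give $\P(\Omega_n^c)\le C_0(\gamma)e^{-C_1(\gamma)n}\le C_0(\gamma,\mu)n^{-1/2}$, and on $\Omega_n$ one has $n\|\bw_0\|^2\le c_1(\gamma)^{-1}$. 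Either way $n\|\bw_0\|^2\le C_2(\gamma,\mu)$ on $\Omega_n$, so Cauchy--Schwarz yields $\widehat A^2\le\|\bw_0\|^2\|\by-\bX\hbbeta\|^2\le C_2(\gamma,\mu)n^{-1}\|\by-\bX\hbbeta\|^2$ there.

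Next I would lower bound the conditional variance. By \Cref{lemma:existence-bw0} and the symmetry of $\hbH$, $\nabla f(\bz_0)=\langle\ba_0,\bh\rangle(\bI_n-\hbH)+(\by-\bX\hbbeta)\bw_0^\top$; writing $\bM^a=(\bM-\bM^\top)/2$, the symmetric matrix $\bI_n-\hbH$ does not contribute to the skew part, so $\{\nabla f(\bz_0)\}^a=\tfrac12\big((\by-\bX\hbbeta)\bw_0^\top-\bw_0(\by-\bX\hbbeta)^\top\big)$ and hence $\|\{\nabla f(\bz_0)\}^a\|_F^2=\tfrac12\big(\|\bw_0\|^2\|\by-\bX\hbbeta\|^2-\widehat A^2\big)\le\tfrac12\|\bw_0\|^2\|\by-\bX\hbbeta\|^2$. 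Using $\trace(\bM^2)=\|\bM^s\|_F^2-\|\bM^a\|_F^2\ge-\|\bM^a\|_F^2$ and the Second Order Stein formula \eqref{variance-f-second-order-stein-V-star-intro},
$$\Var_0[\xi_0]=\E_0\big[\|\by-\bX\hbbeta\|^2+\trace[\{\nabla f(\bz_0)\}^2]\big]\ge\E_0\big[\|\by-\bX\hbbeta\|^2\big(1-\tfrac12\|\bw_0\|^2\big)\big].$$
If $\mu>0$ then $\|\bw_0\|^2\le(4n\mu)^{-1}\le1$ for every $\bz_0$ once $n\ge(4\mu)^{-1}$, so the right-hand side is at least $\tfrac12\E_0[\|\by-\bX\hbbeta\|^2]$. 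If $\mu=0$, the bound $\|\bw_0\|^2\le n^{-1}\phi_{\min}(\bSigma^{-1/2}\bX^\top\bX\bSigma^{-1/2}/n)^{-1}$ only makes $\|\bw_0\|^2$ small for those $\bz_0$ with $\phi_{\min}$ bounded away from $0$, so here I would split $\E_0$ according to whether that smallest eigenvalue (a function of $\bz_0$ through $\bX=\bX\bQ_0+\bz_0\ba_0^\top$) exceeds $c_1(\gamma)$; on the complementary $\bz_0$-set, whose Gaussian measure decays exponentially, a H\"older argument using the moment bound of \Cref{appendix:integrability-edelman} for $\phi_{\min}(\cdots)^{-1}$ together with a polynomial moment bound in $\bz_0$ for $\|\by-\bX\hbbeta\|$ (obtained from the optimality of $\hbbeta$ in \eqref{penalized-hbbeta-general}) shows the contribution is negligible, and again $\Var_0[\xi_0]\ge\tfrac12\E_0[\|\by-\bX\hbbeta\|^2]$ for $n$ large.

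Finally, since $\Var_0[\xi_0]$ is $(\bX\bQ_0,\bep)$-measurable whereas $I_{\Omega_n}$ and $\|\by-\bX\hbbeta\|^2$ are not, the tower property gives
$$\E\Big[I_{\Omega_n}\frac{\widehat A^2}{\Var_0[\xi_0]}\Big]\le\frac{C_2(\gamma,\mu)}{n}\,\E\Big[\frac{\E_0[\|\by-\bX\hbbeta\|^2]}{\Var_0[\xi_0]}\Big]\le\frac{2C_2(\gamma,\mu)}{n},$$
which is the assertion. The Cauchy--Schwarz and Stein computations are routine; I expect the main obstacle to be making the variance lower bound uniform in the non-strongly-convex regime $\mu=0$, $\gamma<1$, namely controlling --- against the Gaussian law of $\bz_0$ --- the region where $\phi_{\min}(\bSigma^{-1/2}\bX^\top\bX\bSigma^{-1/2}/n)$ is small, which is exactly the purpose of the integrability lemma in \Cref{appendix:integrability-edelman}.
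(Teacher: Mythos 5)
Your argument takes a genuinely different route from the paper's, and for $\mu>0$ it works cleanly: there you have $\|\bw_0\|^2\le (4n\mu)^{-1}$ for every $\bz_0$, so the skew-part identity $\|\{\nabla f(\bz_0)\}^a\|_F^2=\tfrac12(\|\bw_0\|^2\|\by-\bX\hbbeta\|^2-\widehat A^2)$ together with $\trace(\bM^2)\ge -\|\bM^a\|_F^2$ gives $\Var_0[\xi_0]\ge\tfrac12\E_0[\|\by-\bX\hbbeta\|^2]$ for almost every $(\bX\bQ_0,\bep)$, and the Cauchy--Schwarz step followed by the tower property closes the proof with no reference to the $\Delta_n$-machinery. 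The paper instead proves a deterministic lower bound $\Var_0[\xi_0]\ge C_*^2(\gamma,\mu)n\risk/16$ on $\Omega_n$ (via \Cref{lemma:Deltas-a-b-c-d-strongly-convex,lemma:Delta_n-to-0,lemma:n-df-strongly-convex}) and pairs it with the unconditional moment bound $\E[\widehat A^2]\le C(\gamma,\mu)\risk$ from \eqref{eq:bound-w0-inner-f}; the $\risk$ factors cancel and the $n^{-1}$ drops out. Your tower trick sidesteps that entire construction for strongly convex $g$, which is a nontrivial simplification.

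The gap is in the case $\mu=0$, $\gamma<1$, and I think it is a real one. Your final step requires $\E_0[\|\by-\bX\hbbeta\|^2]/\Var_0[\xi_0]\le 2$ for (almost) every $(\bX\bQ_0,\bep)$ --- note that once you bound $I_{\Omega_n}\le 1$ inside the tower, the set $\Omega_n$ no longer helps, because $\Var_0[\xi_0]$ and $\E_0[\cdot]$ are computed by integrating over \emph{all} $\bz_0$ regardless of whether $(\bep,\bX)\in\Omega_n$. So you need $\E_0\big[\|\by-\bX\hbbeta\|^2\|\bw_0\|^2\big]\le\E_0\big[\|\by-\bX\hbbeta\|^2\big]$ pointwise in $(\bX\bQ_0,\bep)$, but with $\mu=0$ the only handle is $\|\bw_0\|^2\le n^{-1}\phi_{\min}(\bSigma^{-1/2}\bX^\top\bX\bSigma^{-1/2}/n)^{-1}$, which is not uniformly bounded as $\bz_0$ ranges over $\R^n$. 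Your proposed split-and-H\"older fix has two problems: (a) \Cref{appendix:integrability-edelman} only controls the \emph{unconditional} negative moments of $\phi_{\min}$, and the conditional probability $\P_0(\phi_{\min}<c_1)$ for a fixed $(\bX\bQ_0,\bep)$ need not decay uniformly --- a bad realization of $\bX\bQ_0$ (e.g., with a small second-smallest singular value of $\bX\bQ_0\bSigma^{-1/2}$) can make this conditional probability appreciable; (b) even granting a small bad set, your H\"older bound on $\E_0[\|\by-\bX\hbbeta\|^2\|\bw_0\|^2\mathbf{1}_{\text{bad}}]$ produces an absolute quantity, and there is no reason it is dominated by $\E_0[\|\by-\bX\hbbeta\|^2]$, which can be tiny for some $(\bX\bQ_0,\bep)$. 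To fix (b) you would need a uniform lower bound on $\E_0[\|\by-\bX\hbbeta\|^2]$ of order $n\risk$ on a high-probability event --- but that is exactly \eqref{eq:lower-bound-Var-0-in-Omega_n}, the output of the $\Delta_n$-machinery you set out to avoid. In the end, I think the $\mu=0$ case forces you back onto the paper's route (or something equivalent to it), and your streamlining is a real gain only in the strongly convex regime.
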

The proof is given in \Cref{sec:proof-variance}.
Since $\P(\Omega_n)\to 1$, inequality \eqref{eq:extra-term-to-0}
implies
$\langle \bw_0,\by-\bX\hbbeta\rangle^2/\Var_0[\xi_0]\to^\P0$. 
This
motivates the definition 
\begin{equation}
    \label{eq:De-Bias-hbbeta}
    \DeBias
    \defas \hbbeta
    + (n-\df)^{-1}\bSigma^{-1}\bX^\top( \by-\bX\hbbeta ).
\end{equation}
The de-biased estimate $\langle \ba_0,\DeBias\rangle$
in direction $\ba_0$ is obtained
from $\htheta$ in \eqref{def-hat-theta} by dropping the smaller order term
$(n-\df)^{-1}\langle \bw_0,\by-\bX\hbbeta\rangle$.
By Slutsky's theorem, \eqref{eq:extra-term-to-0} implies
\begin{equation}
\frac{\xi_0}{
    \Var_0[\xi_0]^{1/2}
}\to^d F
\qquad\text{if and only if}\qquad
\frac{(n-\df)\langle\ba_0,  \DeBias-\bbeta\rangle}{
    \Var_0[\xi_0]^{1/2}
}
\to^d F
\label{eq:limiting-F}
\end{equation}
for any candidate limiting distribution $F$.
As $\E[\xi_0]=0$, this suggests that the simpler correction
in \eqref{eq:De-Bias-hbbeta} 
also corrects the bias
of $\hbbeta$. By Prohorov's theorem, there exists a subsequence
and limiting distribution $F$ such that \eqref{eq:limiting-F} holds
in this subsequence. While
$F$ is mean-zero as $\xi_0/\Var_0[\xi_0]$ has mean zero and variance one, 
$F$ has variance at most one by Fatou's lemma. 
However, the normality of $F$ is unclear at this point. 

To obtain more precise information on the 
limiting distribution and the deviations of $\xi_0$,
the next subsections build estimate of its variance
and derive asymptotic normality results by showing that
$F=N(0,1)$ for most directions $\ba_0$.
The next result provides a loose data-driven
upper bound on the error
$\langle\ba_0, \DeBias\rangle-\theta$.

\begin{restatable}{theorem}{theoremConstantTimesResiduals}
    \label{thm:De-Bias-within-constant}
    Under \Cref{assum:main} there exists  $\Omega_n$ with
    $\P(\Omega_n^c)\le C_0(\gamma,\mu) n^{-1/2}$ and 
    \begin{equation}
        \label{eq:DeBias-within-constant-factor}
    \E\bigl[
    I_{\Omega_n}
    (n-\df)^2 \langle \ba_0,\DeBias - \bbeta\rangle^2 \big/
    \|\by-\bX\hbbeta\|^2
    \bigr] 
    \le \C(\gamma,\mu).
    \end{equation}
    Furthermore
    $|\langle \ba_0,\DeBias-\bbeta\rangle|
    = O_\P(1) \|\by-\bX\hbbeta\|/(n-\df)
    = O_\P(1) \|\by-\bX\hbbeta\|/n$.
\end{restatable}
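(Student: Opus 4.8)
The plan is to start from an exact algebraic identity expressing $\langle \ba_0,\DeBias-\bbeta\rangle$ through $\xi_0$. Combining the definition \eqref{eq:De-Bias-hbbeta} of $\DeBias$ with the fact that $\bz_0=\bX\bSigma^{-1}\ba_0$ under the normalization \eqref{normalization-a_0} gives $(n-\df)\langle \ba_0,\DeBias-\bbeta\rangle=(n-\df)\langle \ba_0,\bh\rangle+\langle \bz_0,\by-\bX\hbbeta\rangle$, which by \eqref{-xi0} equals $-\xi_0-\langle \bw_0,\by-\bX\hbbeta\rangle$. Squaring, applying $(a+b)^2\le 2a^2+2b^2$, and dividing by $\|\by-\bX\hbbeta\|^2=\|f(\bz_0)\|^2$ (recall $f(\bz_0)=\bX\hbbeta-\by$ from \eqref{def-f-xi}), it suffices to bound, on a suitable $\Omega_n$, the expectations of $\xi_0^2/\|f(\bz_0)\|^2$ and of $\langle \bw_0,\by-\bX\hbbeta\rangle^2/\|f(\bz_0)\|^2$.

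The second term is elementary: Cauchy--Schwarz bounds it by $\|\bw_0\|^2$, and on the event $\Omega_n$ on which $\phi_{\min}(\bSigma^{-1/2}\bX^\top\bX\bSigma^{-1/2}/n)\ge c(\gamma,\mu)$ and $\phi_{\min}(\bSigma^{-1/2}\bX^\top\bX\bSigma^{-1/2}/n)\le C(\gamma,\mu)$ --- an event with $\P(\Omega_n^c)\le C_0(\gamma,\mu)n^{-1/2}$ by \Cref{appendix:integrability-edelman} when $\gamma<1$, while the lower eigenvalue bound is unneeded when $\mu>0$ --- \Cref{lemma:existence-bw0} gives $\|\bw_0\|^2\le C(\gamma,\mu)/n$, so this contributes only $O(1/n)$ to \eqref{eq:DeBias-within-constant-factor}.

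The heart of the matter is $\E[I_{\Omega_n}\,\xi_0^2/\|f(\bz_0)\|^2]\le C(\gamma,\mu)$, for which the crude estimate $\xi_0^2\le 2\|\bz_0\|^2\|f(\bz_0)\|^2+2(\dv f(\bz_0))^2$ is useless (since $\E\|\bz_0\|^2=n$); the cancellation in $\xi_0=\bz_0^\top f(\bz_0)-\dv f(\bz_0)$ must be exploited. Reasoning conditionally on $(\bX\bQ_0,\bep)$, \Cref{prop:2nd-order-stein-Bellec-Zhang} gives $\E_0[\xi_0^2]=\Var_0[\xi_0]=\E_0[\|f(\bz_0)\|^2]+\E_0[\trace(\{\nabla f(\bz_0)\}^2)]$; using $\trace(\bM^2)\le\|\bM\|_F^2$, the gradient formula \eqref{eq:identity-bw0}, the bound $\|\bI_n-\hbH\|_F^2\le n$ (valid since $\hbH$ has eigenvalues in $[0,1]$), and $\|\bw_0\|^2\le C(\gamma,\mu)/n$ on $\Omega_n$, one obtains $\trace(\{\nabla f(\bz_0)\}^2)\le\|\nabla f(\bz_0)\|_F^2\le 2n\langle \ba_0,\bh\rangle^2+C(\gamma,\mu)n^{-1}\|f(\bz_0)\|^2$. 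The remaining ingredient is an estimation-error bound $n\langle \ba_0,\bh\rangle^2\le C(\gamma,\mu)\|\by-\bX\hbbeta\|^2$ on $\Omega_n$; since $\langle \ba_0,\bh\rangle^2\le\|\bSigma^{1/2}\bh\|^2$ by \eqref{normalization-a_0}, this reduces to $n\|\bSigma^{1/2}\bh\|^2\le C(\gamma,\mu)\|\by-\bX\hbbeta\|^2$, which I expect to be the main obstacle and which should follow from the strong convexity in \Cref{assum:main} together with the eigenvalue control on $\Omega_n$ --- this is precisely where $\mu+(1-\gamma)_+>0$ enters. Feeding this back yields $\E_0[\xi_0^2]\le C(\gamma,\mu)\E_0[\|f(\bz_0)\|^2]$, and to convert this ratio of expectations into $\E_0[\xi_0^2/\|f(\bz_0)\|^2]$ one runs a concentration argument for the (locally Lipschitz) map $\bz_0\mapsto\|f(\bz_0)\|$, its fluctuations being controlled through the bound on $\nabla f$ in \eqref{eq:identity-bw0} on $\Omega_n$, so that $\|f(\bz_0)\|^2\ge\tfrac12\E_0[\|f(\bz_0)\|^2]$ up to a conditional event of probability $o(1)$ that is folded into $\Omega_n^c$; integrating over $(\bX\bQ_0,\bep)$ then gives \eqref{eq:DeBias-within-constant-factor}.

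Finally, the two $O_\P$ assertions follow from \eqref{eq:DeBias-within-constant-factor} by Markov's inequality and $\P(\Omega_n)\to1$, which give $|\langle \ba_0,\DeBias-\bbeta\rangle|=O_\P(1)\,\|\by-\bX\hbbeta\|/(n-\df)$; the replacement of $(n-\df)$ by $n$ uses $n-\df\ge c(\gamma,\mu)n$ with probability tending to one, which for $\gamma<1$ is immediate from $\df\le\rank(\hbH)\le p\le\gamma n$ and in the strongly convex case follows from the representation of $\bI_n-\hbH$ (cf.\ \Cref{table:hbH}) together with the upper eigenvalue bound on $\Omega_n$.
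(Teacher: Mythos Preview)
Your decomposition $(n-\df)\langle\ba_0,\DeBias-\bbeta\rangle=-\xi_0-\langle\bw_0,\by-\bX\hbbeta\rangle$, your handling of the $\bw_0$ term, and the final $O_\P$ statements are all correct and align with the paper. The gap is exactly where you flag it: the pointwise bound $n\langle\ba_0,\bh\rangle^2\le C(\gamma,\mu)\|\by-\bX\hbbeta\|^2$ (or the stronger $n\|\bSigma^{1/2}\bh\|^2\le C(\gamma,\mu)\|\by-\bX\hbbeta\|^2$) on an eigenvalue event does not follow from strong convexity and spectral control alone. Strong convexity combined with the KKT conditions yields an \emph{upper} bound on $\|\bSigma^{1/2}\bh\|$ but no \emph{lower} bound on $\|\by-\bX\hbbeta\|$; the inequality one extracts from $\bX^\top(\by-\bX\hbbeta)=n\,\partial g(\hbbeta)$ and \eqref{strong-convex} by testing against $\bh$ still contains the uncontrolled term $\bh^\top\partial g(\bbeta)$ for generic $(\bbeta,g)$. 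Your subsequent concentration argument for $\bz_0\mapsto\|f(\bz_0)\|$ inherits the same difficulty, since by \eqref{eq:identity-bw0} its Lipschitz constant involves $|\langle\ba_0,\bh\rangle|$, which is itself $\bz_0$-dependent and not uniformly bounded on your event.

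The paper supplies the missing lower bound on $\|\by-\bX\hbbeta\|^2$ through the deterministic oracle risk $\risk=\sigma^2+\|\bSigma^{1/2}(\bbeta^*-\bbeta)\|^2$ of \eqref{oracle-general-g-strongly-convex}. Combining the KKT conditions of $\hbbeta$ with those of the oracle $\bbeta^*$ --- for which $\partial g(\bbeta^*)=-\bSigma\bh^*$ is explicit --- gives the deterministic inequality of \Cref{lemma:Deltas-a-b-c-d-strongly-convex}, and hence $\|\by-\bX\hbbeta\|^2\ge c(\gamma,\mu)\,n\risk$ on an event $\Omega_n$ that, in addition to the $\Omega_0$ of \Cref{lemma:n-df-strongly-convex}, constrains the correction terms $\Delta_n^a,\Delta_n^b,\Delta_n^c$ to be small (\Cref{lemma:Delta_n-to-0}). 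On the numerator side, \Cref{lemma:uniformly-bounded} (via \Cref{lemma:moment-equivalence}) gives $\E\bigl[I_{\Omega_0}\bigl(\langle\ba_0,\bh\rangle+(n-\df)^{-1}\langle\bz_0,\by-\bX\hbbeta\rangle\bigr)^2\bigr]\le C(\gamma,\mu)\,\risk/n$. Since $I_{\Omega_n}\|\by-\bX\hbbeta\|^{-2}\le C/(n\risk)$ holds pointwise, \eqref{eq:DeBias-within-constant-factor} follows by direct multiplication --- no concentration argument for $\|f(\bz_0)\|$ is needed. The role of $\risk$ is precisely to decouple numerator and denominator through a common deterministic scale, which is the idea your sketch is missing.
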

\Cref{thm:De-Bias-within-constant} is proved in
\Cref{sec:proof-asymptotic-normality}.
If $\|\bX(\hbbeta-\bbeta)\|^2/n = O_\P(\sigma^2)$ then
$\|\by-\bX\hbbeta\|/n = O_\P(1) \sigma/\sqrt n$ is
of the same order as the width of confidence intervals
based on the least-squares estimator as $n\to 0$ while $p$ remains fixed.
\Cref{thm:De-Bias-within-constant} shows that under
this mild additional assumption on the prediction error
$\|\bX(\hbbeta-\bbeta)\|^2/n$,
the second term in \eqref{eq:De-Bias-hbbeta} indeed
corrects the bias, 
achieving $\langle \ba_0,\DeBias - \bbeta\rangle = O_\P(1) \sigma/\sqrt n$.

\subsection{Variance estimates}
By \Cref{prop:2nd-order-stein-Bellec-Zhang},
the conditional variance
$\Var_0[{\xi}_0]$ 
can be written as $\Var_0[{\xi}_0]=\E_0[V^*(\theta)]$ for  
\bel{Var(xi_0)} && 
V^*(\theta)
\defas
\|\by-\bX\hbbeta\|^2 + \trace[\{\nabla f(\bz_0)\}^2]
.
\eel
We allow the variance estimate to depend on the unknown
$\theta=\langle \ba_0,\bbeta\rangle$ as the resulting pivotal quantity,
$-V^*(\theta)^{-1/2}\xi_0=V^*(\theta)^{-1/2}(n-\df)(\htheta-\theta)$
via \eqref{-xi0}, would depend on $\theta$ anyway.
While $V^*(\theta)$ itself can be used to estimate $\Var_0[{\xi}_0]$,
its sign is unclear.
The following simplified version of it,
obtained by removing the smaller order terms in $V^*(\theta)$,
\begin{align}
    \label{def-hat-V-theta}
\widehat{V}(\theta) 
& \defas  \|\by-\bX\hbbeta\|^2 +
\trace[(\hbH - \bI_n)^2]
\bigl(\langle \ba_0,\hbbeta\rangle - \theta\bigr)^2 
\\& =
\|\by-\bX\hbbeta\|^2 + \|\hbH-\bI_n\|_F^2
\langle \ba_0,\bh\rangle^2, 
\nonumber
\end{align}
is non-negative. This follows from \Cref{prop:hat-bH} since 
$\bI_n-\hbH$ is almost surely 
positive semi-definite.
\Cref{lemma:delta-0} below shows that the relative bias
$\E_0[\widehat{V}(\theta)]/\Var_0[\xi_0] - 1$
converges to 0 in probability,
i.e., $\widehat{V}(\theta)$ is asymptotically unbiased
for $\Var_0[\xi_0]$.

\begin{restatable}{lemma}{lemmaDeltaZero}
    \label{lemma:delta-0}
    Under \Cref{assum:main} there exists
    $\Omega_n$ with $\P(\Omega_n^c)\le C_0(\gamma,\mu)n^{-1/2}$
    and
    \begin{equation}
        \label{eq:delta-0-to-0}
    \E\Bigl[
        I_{\Omega_n}
    \Big|\frac{\E_0[\widehat{V}(\theta)]}{\Var_0[\xi_0]} -  1 \Big|
    \Bigr]
    \le
    \E\Bigl[
        I_{\Omega_n} ~
    \frac{\E_0[|\widehat{V}(\theta) - V^*(\theta)|]}{ \Var_0[\xi_0]}
    \Bigr] 
    \le \frac{\C(\gamma,\mu)}{n}.
    \end{equation}
\end{restatable}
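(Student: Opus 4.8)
The plan is to begin from an exact identity for $V^*(\theta)-\widehat V(\theta)$ obtained by feeding the gradient formula of \Cref{lemma:existence-bw0} into the Second Order Stein expression for the conditional variance, and then to show that the error terms this produces have relative size $O(1/n)$. Write $\br=\by-\bX\hbbeta$, $c=\langle\ba_0,\bh\rangle$ and $\bM=\bI_n-\hbH$. By \Cref{lemma:existence-bw0}, $\nabla f(\bz_0)=c\,\bM+\br\bw_0^\top$, hence $\trace[\{\nabla f(\bz_0)\}^2]=c^2\|\bM\|_F^2+2c\,\bw_0^\top\bM\br+(\bw_0^\top\br)^2$. Since $\widehat V(\theta)=\|\br\|^2+c^2\|\bM\|_F^2$ and $V^*(\theta)=\|\br\|^2+\trace[\{\nabla f(\bz_0)\}^2]$, this gives the identity
\[
V^*(\theta)-\widehat V(\theta)=2c\,\bw_0^\top\bM\br+(\bw_0^\top\br)^2 .
\]
The left inequality in \eqref{eq:delta-0-to-0} is then immediate: by \Cref{prop:2nd-order-stein-Bellec-Zhang} applied conditionally on $(\bX\bQ_0,\bep)$ one has $\Var_0[\xi_0]=\E_0[V^*(\theta)]$, whence $\E_0[\widehat V(\theta)]/\Var_0[\xi_0]-1=\E_0[\widehat V(\theta)-V^*(\theta)]/\Var_0[\xi_0]$ and Jensen's inequality closes it. Everything else is for the right inequality.

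Next I would produce a pointwise bound on $|V^*(\theta)-\widehat V(\theta)|$ valid on a large portion $\mathcal{G}$ of the $\bz_0$-space, namely the event on which $\|\bw_0\|^2\le C_1(\gamma,\mu)/n$ and $n-\df\ge c_*(\gamma,\mu)\,n$ hold simultaneously. The bound on $\bw_0$ follows from \Cref{lemma:existence-bw0}, which gives $\|\bw_0\|^2\le n^{-1}\min\{(4\mu)^{-1},\phi_{\min}(\bSigma^{-1/2}\bX^\top\bX\bSigma^{-1/2}/n)^{-1}\}$: deterministic if $\mu>0$, and with probability $\ge 1-e^{-c(\gamma)n}$ if $\gamma<1$ by concentration of the smallest eigenvalue of the Wishart matrix (cf.\ \Cref{appendix:integrability-edelman}). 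The bound on $n-\df$ holds because $\df\le p\le\gamma n$ when $\gamma<1$, and follows from strong convexity together with $\|\bX\|_{op}^2\le C(\gamma)n$ (off an event of probability $e^{-c(\gamma)n}$) when $\mu>0$. On $\mathcal{G}$ we have $\|\bM\|_{op}\le 1$ since the eigenvalues of $\hbH$ are in $[0,1]$, and $\|\bM\|_F^2=\trace[\bM^2]\ge(n-\df)^2/n\ge c_*^2 n$ by Cauchy--Schwarz, hence $c^2\le\widehat V(\theta)/(c_*^2 n)$ and $\|\br\|^2\le\widehat V(\theta)$; combining these with $|\bw_0^\top\bM\br|\le\|\bw_0\|\|\br\|$ yields, on $\mathcal{G}$,
\[
|V^*(\theta)-\widehat V(\theta)|\le\frac{C_2(\gamma,\mu)}{n}\,\widehat V(\theta)+(\bw_0^\top\br)^2 .
\]

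I would then integrate over $\bz_0$. Splitting off $\mathcal{G}^c$,
\[
\E_0[|V^*(\theta)-\widehat V(\theta)|]\le\frac{C_2(\gamma,\mu)}{n}\E_0[\widehat V(\theta)]+\E_0[(\bw_0^\top\br)^2]+\E_0[|V^*(\theta)-\widehat V(\theta)|\,I_{\mathcal{G}^c}] ,
\]
and then using $\E_0[\widehat V(\theta)]=\Var_0[\xi_0]+\E_0[\widehat V(\theta)-V^*(\theta)]$ and absorbing the resulting multiple of $\E_0[|\widehat V(\theta)-V^*(\theta)|]$ into the left-hand side, one obtains for $n$ large enough
\[
\frac{\E_0[|V^*(\theta)-\widehat V(\theta)|]}{\Var_0[\xi_0]}\le\frac{2C_2(\gamma,\mu)}{n}+2\,\frac{\E_0[(\bw_0^\top\br)^2]}{\Var_0[\xi_0]}+2\,\frac{\E_0[|V^*(\theta)-\widehat V(\theta)|\,I_{\mathcal{G}^c}]}{\Var_0[\xi_0]} .
\]
Multiplying by $I_{\Omega_n}$ — with $\Omega_n$ a $(\bX\bQ_0,\bep)$-measurable good event, intersected with those supplied by the earlier lemmas — and taking expectations: the first term contributes $2C_2/n$; the second contributes $O(1/n)$ by \Cref{lemma:negligible-term}, because $\bw_0^\top\br=\langle\bw_0,\by-\bX\hbbeta\rangle$ is exactly the quantity $\widehat{A}$ bounded there and, $\Var_0[\xi_0]$ being $(\bX\bQ_0,\bep)$-measurable, a tower-property argument reduces this term to precisely the bound of \Cref{lemma:negligible-term}; and the third — the bad-set remainder — must be shown to be $O(1/n)$.

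The bad-set remainder is the main obstacle. After bounding $\Var_0[\xi_0]$ below on $\Omega_n$ (a lower bound $\Var_0[\xi_0]\gtrsim n$ is consistent with $\Var_0[\xi_0]=\E_0[V^*(\theta)]$ and with $\widehat V(\theta)\ge\|\by-\bX\hbbeta\|^2\gtrsim n$ on the good set, and costs only within the probability budget $\P(\Omega_n^c)\le C_0(\gamma,\mu)n^{-1/2}$), the remainder reduces to estimating $\E[|V^*(\theta)-\widehat V(\theta)|\,I_{\mathcal{G}^c}]$. By Hölder's inequality this is at most $\E[|V^*(\theta)-\widehat V(\theta)|^{q}]^{1/q}\,\P(\mathcal{G}^c)^{1-1/q}$ for some $q>1$, so I need (a) conditional moment bounds on $\|\by-\bX\hbbeta\|^2$, $\langle\ba_0,\bh\rangle^2$ and $\|\bw_0\|^2$ growing at most polynomially in $n$ — the first from the optimality of $\hbbeta$ and standard Gaussian moment estimates, or from the prediction/estimation error bounds developed in the paper, the latter two from \Cref{appendix:integrability-edelman} and those same bounds — and (b) an exponentially small bound $\P(\mathcal{G}^c)\le e^{-c(\gamma,\mu)n}$, valid because the two events defining $\mathcal{G}^c$ are governed by concentration of the extreme singular values of a Gaussian matrix. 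Since $e^{-cn}$ dominates any polynomial, the bad-set remainder is then $o(1/n)$. A last subtlety worth flagging is that $\mathcal{G}$ is $\bz_0$-dependent while $\E_0[\cdot]$ and $\Var_0[\xi_0]$ live on the $\sigma$-field of $(\bX\bQ_0,\bep)$; the way around this is to keep the $\bz_0$-dependent event inside $\E_0$ throughout and to reserve $\Omega_n$ for a separate $(\bX\bQ_0,\bep)$-measurable event on which the relevant conditional expectations and the variance lower bound are under control.
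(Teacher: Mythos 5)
Your key identity $V^*(\theta)-\widehat V(\theta)=2\langle\ba_0,\bh\rangle\,\bw_0^\top(\bI_n-\hbH)(\by-\bX\hbbeta)+\langle\bw_0,\by-\bX\hbbeta\rangle^2$ is exactly the one the paper uses, and the Jensen argument for the first inequality is also the same. But your overall architecture is different and more involved than the paper's: you introduce a $\bz_0$-dependent event $\mathcal{G}$, do an absorption step, and close the bad-set remainder via H\"older. The paper avoids all of this by bounding $|V^*(\theta)-\widehat V(\theta)|$ \emph{almost surely} by $\C\risk F_+F^3$ using \eqref{new-bd-1}--\eqref{new-bd-2} (where $F_+,F$ are the random variables in \eqref{def-F_+}--\eqref{def-F} with uniformly bounded moments from \eqref{inequalities-moments}), so that $\E[|V^*(\theta)-\widehat V(\theta)|]\le\C\risk$ holds directly with no good-set/bad-set split. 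Once you have that almost-sure bound your $\mathcal G$, the absorption step, and the H\"older estimate on $\mathcal{G}^c$ all become unnecessary, so the paper's route is considerably shorter.

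The genuine gap in your proposal is the lower bound $\Var_0[\xi_0]\gtrsim n\risk$ on $\Omega_n$. You invoke it twice --- once for the $\E_0[(\bw_0^\top\br)^2]$ term (through \Cref{lemma:negligible-term}, which itself needs this bound) and once for the bad-set remainder --- but you do not derive it. You write that such a bound ``is consistent with'' $\Var_0[\xi_0]=\E_0[V^*(\theta)]$ and with $\|\by-\bX\hbbeta\|^2\gtrsim n$ on the good set, but ``consistent with'' is not a proof, and the inner bound $\|\by-\bX\hbbeta\|^2\gtrsim n\risk$ is itself the nontrivial part: it is \eqref{eq:lower-bound-Var-0-in-Omega_n}, obtained in the paper from the deterministic inequality of \Cref{lemma:Deltas-a-b-c-d-strongly-convex} (which rests on the KKT conditions of both $\hbbeta$ and the oracle $\bbeta^*$, the degrees-of-freedom bound $n-\df\ge C_*n$ from \Cref{lemma:n-df-strongly-convex}, and the oracle decomposition of the residual), together with the moment control $\E[\Delta_n]\le\C n^{-1/2}$ from \Cref{lemma:Delta_n-to-0}. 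This is exactly what makes the definition of $\Omega_n$ in \eqref{Omega_n} the right one. Without this ingredient the division by $\Var_0[\xi_0]$ that appears in every one of your three terms is not justified, so the proof is incomplete as written; the rest of your plan (the absorption, the measurability bookkeeping you flag at the end, the H\"older estimate) is workable but would be doing extra work once the correct lower bound is in place.
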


An alternative variance estimate, that does not depend on the
unknown parameter $\theta$, is given
by replacing $\theta$ in $\widehat{V}(\theta)$ by the point
estimate $\langle \ba_0,\DeBias\rangle$ with
$\DeBias$ in \eqref{eq:De-Bias-hbbeta}:
\begin{equation}\label{eq:check-V-variance-def}
\check V(\ba_0)
=
\widehat{V}({\langle \ba_0,\DeBias\rangle})
=
\|\by-\bX\hbbeta\|^2 +
\|\bI_n - \hbH\|_F^2
\frac{\langle \bz_0, \by-\bX\hbbeta\rangle^2}{ (n-\df)^2}.
\end{equation}
The next lemma provides
$\check V(\ba_0)/\widehat{V}(\theta)\to^\P 1$
and that $\check V(\ba_0)$ is also asymptotically unbiased
in the sense $\E_0[\check{V}(\ba_0)]/\Var_0[\xi_0] \to^\P 1$.
\Cref{lemma:delta-0,lemma:check-V-variance} are proved in
\Cref{sec:proof-variance}.

\begin{restatable}{lemma}{lemmaCheckVariance}
    \label{lemma:check-V-variance}
    Under \Cref{assum:main} there exists
    $\Omega_n$ with $\P(\Omega_n^c)\le C_0(\gamma,\mu)n^{-1/2}$
    and
    \bel{eq:check-variance-1}
    \max \Bigl\{
    \E\Bigl[
        I_{\Omega_n}
        \Big|\frac{\check V(\ba_0)^{1/2}}{\widehat{V}(\theta)^{1/2}} - 1\Big|^2
    \Bigr]
    ,~~
    \E\Bigl[
        I_{\Omega_n}
        \Big|\frac{\E_0[\check V(\ba_0)]^{1/2}}{\E_0[\widehat{V}(\theta)]^{1/2}} - 1\Big|^2
    \Bigr]
    \Bigr\}
    &\le& \frac{\C(\gamma,\mu)}{n}.
    \eel
\end{restatable}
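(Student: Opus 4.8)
The plan is to deduce \eqref{eq:check-variance-1} from the exact identity
\[
\check V(\ba_0)-\widehat V(\theta)=\|\bI_n-\hbH\|_F^2\big(\delta^2-2\,\delta\,\langle\ba_0,\bh\rangle\big),\qquad \delta:=\langle\ba_0,\DeBias-\bbeta\rangle,
\]
together with \Cref{thm:De-Bias-within-constant}. The identity holds because $\langle\ba_0,\hbbeta\rangle-\langle\ba_0,\DeBias\rangle=-(n-\df)^{-1}\langle\bz_0,\by-\bX\hbbeta\rangle$ by \eqref{eq:De-Bias-hbbeta} and the normalization $\|\bSigma^{-1/2}\ba_0\|=1$, so $\langle\ba_0,\hbbeta\rangle-\langle\ba_0,\DeBias\rangle=\langle\ba_0,\bh\rangle-\delta$, and plugging this into \eqref{def-hat-V-theta} evaluated at $\langle\ba_0,\DeBias\rangle$ (cf. \eqref{eq:check-V-variance-def}) leaves only the displayed correction. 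Using $\|\bI_n-\hbH\|_F^2\langle\ba_0,\bh\rangle^2\le\widehat V(\theta)$ and the Cauchy--Schwarz inequality on the cross term gives $|\check V(\ba_0)/\widehat V(\theta)-1|\le U+2\sqrt U$ with $U:=\|\bI_n-\hbH\|_F^2\delta^2/\widehat V(\theta)$; since $|x^{1/2}-1|\le|x-1|$ for $x\ge0$ and $(U+2\sqrt U)^2\le 9(U+U^2)$, it suffices to prove $\E[I_{\Omega_n}(U+U^2)]\le C(\gamma,\mu)/n$.

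Throughout we intersect $\Omega_n$ with the events of \Cref{lemma:delta-0,lemma:negligible-term,thm:De-Bias-within-constant}, with $\{\|\bz_0\|^2\le C_0(\gamma)n\}$, and with the event on which $n-\df\ge c_0(\gamma,\mu)n$ --- this last inequality being deterministic when $\gamma<1$ since then $\df=\trace[\hbH]\le\rank\hbH\le p\le\gamma n$, and otherwise holding with probability $1-O(n^{-1/2})$ by the operator-norm concentration of Wishart matrices and the strong-convexity eigenvalue bound behind \Cref{prop:hat-bH}/\eqref{eq:bound-norm-bw0}; all these have complement probability $O(n^{-1/2})$. On this $\Omega_n$, $\|\bI_n-\hbH\|_F^2\le\trace[\bI_n-\hbH]=n-\df$ (eigenvalues of $\hbH$ in $[0,1]$) and $\widehat V(\theta)\ge\|\by-\bX\hbbeta\|^2$, so $U\le (c_0(\gamma,\mu)n)^{-1}(n-\df)^2\delta^2/\|\by-\bX\hbbeta\|^2$ and \eqref{eq:DeBias-within-constant-factor} yields $\E[I_{\Omega_n}U]\le C(\gamma,\mu)/n$. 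For $U^2$ I would instead show $U$ is \emph{bounded} on $\Omega_n$: from $(n-\df)\delta=(n-\df)\langle\ba_0,\bh\rangle+\langle\bz_0,\by-\bX\hbbeta\rangle$ and $\|\bI_n-\hbH\|_F^2/(n-\df)^2\le(n-\df)^{-1}$ one gets $\|\bI_n-\hbH\|_F^2\delta^2\le 2(n-\df)\langle\ba_0,\bh\rangle^2+2(n-\df)^{-1}\|\bz_0\|^2\|\by-\bX\hbbeta\|^2$; dividing by $\widehat V(\theta)$ and using $\widehat V(\theta)\ge\|\bI_n-\hbH\|_F^2\langle\ba_0,\bh\rangle^2\ge\tfrac{(n-\df)^2}{n}\langle\ba_0,\bh\rangle^2$ (Cauchy--Schwarz gives $\|\bI_n-\hbH\|_F^2\ge(n-\df)^2/n$) for the first term and $\widehat V(\theta)\ge\|\by-\bX\hbbeta\|^2$ for the second, each term is $\le C(\gamma,\mu)$ on $\Omega_n$. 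Hence $U^2\le C(\gamma,\mu)\,U$ there and $\E[I_{\Omega_n}U^2]\le C(\gamma,\mu)/n$, which proves the first bound in \eqref{eq:check-variance-1}.

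For the second bound I would apply $\E_0$ to the identity above and repeat the same two-sided estimate with the conditional Cauchy--Schwarz inequality, obtaining $|\E_0[\check V(\ba_0)]^{1/2}/\E_0[\widehat V(\theta)]^{1/2}-1|^2\le 9(U_0+U_0^2)$ for $U_0:=\E_0[\|\bI_n-\hbH\|_F^2\delta^2]/\E_0[\widehat V(\theta)]$. Writing $(n-\df)\delta=-\xi_0-\langle\bw_0,\by-\bX\hbbeta\rangle$ from \eqref{-xi0}, so that $(n-\df)^2\delta^2\le 2\xi_0^2+2\langle\bw_0,\by-\bX\hbbeta\rangle^2$, and restricting the $\bz_0$-integral to $\{\|\bz_0\|^2\le 2n\}$ (the complement has exponentially small conditional probability, and on it $n-\df\ge c_0(\gamma,\mu)n$ still holds), one gets $\E_0[\|\bI_n-\hbH\|_F^2\delta^2]\le \tfrac{2}{c_0(\gamma,\mu)n}\big(\Var_0[\xi_0]+\E_0[\langle\bw_0,\by-\bX\hbbeta\rangle^2]\big)$ using $\E_0[\xi_0]=0$; combined with $\E_0[\widehat V(\theta)]\gtrsim\Var_0[\xi_0]$ from \eqref{eq:delta-0-to-0} and $\E[I_{\Omega_n}\E_0[\langle\bw_0,\by-\bX\hbbeta\rangle^2]/\Var_0[\xi_0]]\le C(\gamma,\mu)/n$ from \eqref{eq:extra-term-to-0}, this gives $\E[I_{\Omega_n}U_0]\le C(\gamma,\mu)/n$; and $U_0\le C(\gamma,\mu)$ on $\Omega_n$ follows by taking $\E_0$ of the same pointwise bound used for $U$, so $\E[I_{\Omega_n}U_0^2]\le C(\gamma,\mu)\E[I_{\Omega_n}U_0]\le C(\gamma,\mu)/n$. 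Taking the maximum of the two terms completes the proof.

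I expect the main obstacle to be exactly this upgrade from the $L^1$ bound on $U$ --- all that \Cref{thm:De-Bias-within-constant} directly supplies --- to an $L^2$ bound, which is forced by the $\sqrt U$ term. The decisive point is that $U$ measures $\|\bI_n-\hbH\|_F^2\delta^2$ \emph{against $\widehat V(\theta)$ rather than against $\|\by-\bX\hbbeta\|^2$}: because $\widehat V(\theta)$ dominates both $\|\by-\bX\hbbeta\|^2$ and $\tfrac{(n-\df)^2}{n}\langle\ba_0,\bh\rangle^2$, which are precisely the two pieces controlling $\|\bI_n-\hbH\|_F^2\delta^2$ after the split $(n-\df)\delta=(n-\df)\langle\ba_0,\bh\rangle+\langle\bz_0,\by-\bX\hbbeta\rangle$, one gets a deterministic bound on $U$ over $\Omega_n$; this would fail if $\widehat V(\theta)$ were replaced by $\|\by-\bX\hbbeta\|^2$, which can be far smaller than $\widehat V(\theta)$ when $\hbbeta$ carries a large bias in the direction $\ba_0$. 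A secondary, minor point is making the deterministic lower bound $n-\df\ge c_0(\gamma,\mu)n$ available inside the conditional expectation when $\gamma\ge 1$, handled by discarding the negligible $\bz_0$ with $\|\bz_0\|^2>2n$.
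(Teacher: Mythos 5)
Your proposal reaches the target but by a genuinely longer route than the paper, and one cited justification does not stand as written.

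The paper's proof is essentially one line. Both $\check V(\ba_0)$ and $\widehat V(\theta)$ have the form $a^2+b^2$ with the common first term $a=\|\by-\bX\hbbeta\|$ and the respective second terms $b_1=\|\bI_n-\hbH\|_F\,|\langle\bz_0,\by-\bX\hbbeta\rangle|/(n-\df)$ and $b_2=\|\bI_n-\hbH\|_F\,|\langle\ba_0,\bh\rangle|$. The reverse triangle inequality for the Euclidean norm in $\R^2$ then gives directly
$|\check V^{1/2}-\widehat V^{1/2}|\le |b_1-b_2|\le \|\bI_n-\hbH\|_F\,|\delta|$,
with $\delta=\langle\ba_0,\DeBias-\bbeta\rangle$, and dividing by $\widehat V^{1/2}\ge\|\by-\bX\hbbeta\|$ and squaring, the first term in \eqref{eq:check-variance-1} is controlled on $\Omega_n$ by $\E[I_{\Omega_n}\delta^2]\,16/(\risk C_*^2)\le C(\gamma,\mu)/n$ via \eqref{eq:uniformly-bounded-2}. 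The second term is the same estimate after applying the $L_2(\E_0)$ triangle inequality to $|\E_0[\check V]^{1/2}-\E_0[\widehat V]^{1/2}|\le \E_0[|\check V^{1/2}-\widehat V^{1/2}|^2]^{1/2}$. You instead expand $\check V-\widehat V$ and convert to the square root via $|x^{1/2}-1|\le|x-1|$. That inequality is poor away from $x=1$, which is exactly what forces you to control both $U$ and $U^2$ (and similarly $U_0,U_0^2$); the paper's $\R^2$ triangle inequality avoids the $U^2$ level entirely by operating on the square roots from the start. Your handling of $U^2$ via the pointwise bound $U\le C(\gamma,\mu)$ on an enlarged $\Omega_n$ (adding $\{\|\bz_0\|^2\lesssim n\}$ and $\{n-\df\gtrsim n\}$) is correct but adds machinery the statement does not require.

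On the second bound, the step ``$\E_0[\widehat V(\theta)]\gtrsim\Var_0[\xi_0]$ from \eqref{eq:delta-0-to-0}'' is not justified as stated: \eqref{eq:delta-0-to-0} gives an $L_1$ bound on the ratio, not a pointwise inequality on $\Omega_n$. The fix is to avoid comparing $\E_0[\widehat V]$ and $\Var_0[\xi_0]$ directly and instead use the common lower bound $\E_0[\widehat V]\ge\E_0[\|\by-\bX\hbbeta\|^2]\ge \risk n C_*^2/16$ from \eqref{eq:lower-bound-Var-0-in-Omega_n} together with $\E[\Var_0[\xi_0]]=\E[\xi_0^2]\le C n\risk$ from \Cref{lemma:uniformly-bounded}; you should also replace $\Omega_n$ with its $\E_0$-measurable superset $\Omega_0\cap\{\E_0[\Delta_n]\le C_*^2/16\}$ before taking the iterated expectation so that $I_{\Omega_n}$ can be pulled through $\E_0$. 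These gaps are fixable and do not signal a wrong approach, but they are needed to make your argument rigorous; the paper's route sidesteps all of this.
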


\subsection{Asymptotic normality of de-biased estimates}
\label{sec:3.7-general-result}

Throughout this section,
$\Phi(t)=\P(N(0,1)\le t)$ denotes the standard normal cdf.
For a given penalty function $g:\R^p\to\R$,
we define the deterministic oracle $\bbeta^*$ and its associated noiseless
prediction risk $\risk$ by
\begin{equation}
\label{oracle-general-g-strongly-convex}
  \bbeta^* \defas \argmin_{\bb\in\R^p}
      \big\|\bSigma^{1/2}(\bbeta-\bb)\big\|^2/2+g(\bb)
    ,
    \quad
    \bh^* \defas
    \bbeta^*-\bbeta,
    \quad
    {\risk}
    \defas
    \sigma^2 + \|\bSigma^{1/2}\bh^*\|^2
    .
\end{equation}
Our first result provides asymptotic normality of the de-biased
estimate when the error $\langle \ba_0,\hbbeta-\bbeta\rangle$
of $\hbbeta$ in direction $\ba_0$ is negligible compared
to $\risk$.

\begin{restatable}{theorem}{theoremFromConsistency}
    \label{thm:from-consistency}
    Let \Cref{assum:main} be fulfilled.
    Let  $\DeBias$ be as in \eqref{eq:De-Bias-hbbeta}. 
    Then, for any $\ba_0$
    with $\|\bSigma^{-1/2}\ba_0\|=1$ such that
    $\langle \ba_0,\bh\rangle^2/\risk \to^{\P}0$,
   \begin{equation*}
    {\sup_{t\in \R}\bigg[\bigg|} 
    \P
    \bigg(
        \frac{\xi_0}{V_0^{1/2}}
        \le 
        t
    \bigg)
    { - \Phi(t)\bigg| + \bigg|}
    \P
\bigg(
    {\frac{ \langle \ba_0, \DeBias \rangle-\theta 
        }{V_0^{1/2}/(n - \df )}}
        \le
        t
    \bigg) - \Phi(t)\bigg|\bigg] \to 0,
    \end{equation*}
    where
    $V_0$ denotes any of the four quantities: 
    $\Var_0[\xi_0]$,
    $\|\by-\bX\hbbeta\|^2$,
    $\widehat{V}(\theta)$
    or $\check V(\ba_0)$.
\end{restatable}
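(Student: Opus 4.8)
The plan is to prove the normal approximation conditionally on $(\bX\bQ_0,\bep)$ and then integrate out the conditioning. Write $f=f_{(\bX\bQ_0,\bep)}$ as in \eqref{def-f-xi}; let $\E_0,\Var_0$ be conditional expectation and variance given $(\bX\bQ_0,\bep)$, under which $\bz_0\sim N(\mathbf{0},\bI_n)$ and $\E_0[\xi_0]=0$ by \eqref{xi0-mean-zero-conditionally}. Applying \Cref{thm:L2-distance-from-normal} to the conditionally deterministic map $f$ (noting $\|f(\bz_0)\|^2=\|\by-\bX\hbbeta\|^2$) bounds the conditional Kolmogorov distance of $\xi_0/\Var_0[\xi_0]^{1/2}$ to $N(0,1)$ by a constant multiple of $(\eps_\star^2)^{1/3}$, where by the last inequality in \eqref{identity-eps-n-norm-of-E-f},
\[
\eps_\star^2\ \defas\ 1-\frac{\|\E_0 f(\bz_0)\|^2}{\Var_0[\xi_0]}\ \le\ \frac{2\,\E_0\!\left[\|\nabla f(\bz_0)\|_F^2\right]}{\E_0\!\left[\|\by-\bX\hbbeta\|^2\right]}\,.
\]
It then suffices to prove (a) $\eps_\star^2\to^\P0$ and (b) $V_0/\Var_0[\xi_0]\to^\P1$ for each of the four choices of $V_0$. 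Indeed, (a) together with \Cref{thm:L2-distance-from-normal} gives $\sup_t|\P(\xi_0/\Var_0[\xi_0]^{1/2}\le t\mid\bX\bQ_0,\bep)-\Phi(t)|\to^\P0$, hence (Jensen, then bounded convergence) $\sup_t|\P(\xi_0/\Var_0[\xi_0]^{1/2}\le t)-\Phi(t)|\to0$; combined with (b) a uniform Slutsky step (valid since $\Phi$ is continuous, by P\'olya's theorem) yields the first half of the claim. For the second half, \eqref{-xi0}, \eqref{def-hat-theta} and \eqref{eq:De-Bias-hbbeta} yield the exact identity $(n-\df)\langle\ba_0,\DeBias-\bbeta\rangle=-\xi_0-\langle\bw_0,\by-\bX\hbbeta\rangle$ (cf.\ the discussion around \eqref{eq:limiting-F}); dividing by $V_0^{1/2}$, the $\bw_0$-term vanishes in probability by \Cref{lemma:negligible-term} and (b), and the same uniform Slutsky argument applied to $-\xi_0/V_0^{1/2}$ (equal in distribution to $\xi_0/V_0^{1/2}$) finishes.

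For (a), \Cref{lemma:existence-bw0} gives $\nabla f(\bz_0)=(\bI_n-\hbH)\langle\ba_0,\bh\rangle+(\by-\bX\hbbeta)\bw_0^\top$, so by the triangle inequality and $\|\bI_n-\hbH\|_F^2=\trace[(\bI_n-\hbH)^2]\le\trace[\bI_n-\hbH]=n-\df$ (using $\mathbf{0}\preceq\bI_n-\hbH\preceq\bI_n$, cf.\ \Cref{prop:hat-bH}),
\[
\|\nabla f(\bz_0)\|_F^2\ \le\ 2(n-\df)\langle\ba_0,\bh\rangle^2+2\|\bw_0\|^2\|\by-\bX\hbbeta\|^2\,,
\]
whence $\eps_\star^2\le 4\,\E_0[(n-\df)\langle\ba_0,\bh\rangle^2]/\E_0[\|\by-\bX\hbbeta\|^2]+4\sup_{\bz_0}\|\bw_0\|^2$. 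By \eqref{eq:bound-norm-bw0} the last term is $\le(\mu n)^{-1}$ when $\mu>0$, and when $\gamma<1$ it is $O(1/n)$ on a high-probability event by \Cref{appendix:integrability-edelman}; in both cases it is $o_\P(1)$. For the first term, $n-\df\le n$ bounds the numerator by $n\,\E_0[\langle\ba_0,\bh\rangle^2]$, while \Cref{lemma:delta-0} gives $\E_0[\|\by-\bX\hbbeta\|^2]\le\E_0[\widehat V(\theta)]\le(1+o_\P(1))\Var_0[\xi_0]$ and, on a high-probability event, $n-\df\ge c(\gamma,\mu)n$ together with the prediction-error lower bound gives $\E_0[\|\by-\bX\hbbeta\|^2]\ge c'(\gamma,\mu)n\,\risk$; hence the first term is $\lesssim\E_0[\langle\ba_0,\bh\rangle^2]/\risk$. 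Finally $\E_0[\langle\ba_0,\bh\rangle^2]/\risk\to^\P0$: a Gaussian-concentration argument for the Lipschitz map $\bz_0\mapsto\langle\ba_0,\hbbeta\rangle$ (its Lipschitz constant being controlled by \Cref{lemma:existence-bw0} and \eqref{eq:bound-norm-bw0}) gives $\E_0[\langle\ba_0,\bh\rangle^2]\le 2\langle\ba_0,\bh\rangle^2+O_\P(n^{-1})$, which is $o_\P(\risk)$ under the hypothesis $\langle\ba_0,\bh\rangle^2/\risk\to^\P0$ since $\risk\ge\sigma^2$. This proves (a).

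For (b): the case $V_0=\Var_0[\xi_0]$ is trivial; for $V_0=\|\by-\bX\hbbeta\|^2=\|f(\bz_0)\|^2$ apply \Cref{thm:consistency-variance} conditionally, its hypothesis being granted by (a) and the bound on $\eps_\star^2$ above (which equally controls $\epsdoublebar_1^2$, the numerators agreeing up to a constant); for $V_0=\widehat V(\theta)=\|\by-\bX\hbbeta\|^2+\|\bI_n-\hbH\|_F^2\langle\ba_0,\bh\rangle^2$ the first summand over $\Var_0[\xi_0]$ tends to $1$ by the previous case, while the second is $\le(n-\df)\langle\ba_0,\bh\rangle^2$, whose conditional mean over $\Var_0[\xi_0]$ is $o_\P(1)$ as shown in (a), hence the summand itself is $o_\P(1)$ by Markov's inequality; for $V_0=\check V(\ba_0)$, combine $\check V(\ba_0)/\widehat V(\theta)\to^\P1$ from \Cref{lemma:check-V-variance} with the $\widehat V(\theta)$ case. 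This proves (b), and with it the theorem.

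The main obstacle is the first term of the $\eps_\star^2$ bound in the second paragraph: it requires the two companion high-probability facts that $n-\df$ stays proportional to $n$ and that $\E_0[\|\by-\bX\hbbeta\|^2]$ stays of order $n\,\risk$, i.e.\ that the penalized estimator is bounded away from interpolating $\by$ --- this is exactly where the condition $\mu+(1-\gamma)_+>0$ in \Cref{assum:main} is used, via strong convexity when $\gamma\ge1$ and via \Cref{appendix:integrability-edelman} when $\gamma<1$, together with the prediction and estimation error bounds developed in the paper --- and it also requires transferring the hypothesis on $\langle\ba_0,\bh\rangle^2$ to its conditional expectation through Lipschitz concentration in $\bz_0$. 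Once these inputs are in place, the remainder is a mechanical assembly of \Cref{thm:L2-distance-from-normal,thm:consistency-variance,lemma:existence-bw0,lemma:negligible-term,lemma:delta-0,lemma:check-V-variance,prop:hat-bH} via Stein's identity and Slutsky's theorem.
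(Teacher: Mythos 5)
Your overall architecture matches the paper: condition on $(\bX\bQ_0,\bep)$, apply \Cref{thm:L2-distance-from-normal} to $f$, show the gradient-to-function ratio $\delta_1^2$ (your $\eps_\star^2$) is small, then Slutsky and \Cref{lemma:negligible-term,lemma:check-V-variance,lemma:delta-0}. But the two key steps of your proof of (a) break down.

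First, the bound $\eps_\star^2 \le \cdots + 4\sup_{\bz_0}\|\bw_0\|^2$ is vacuous when $\mu=0$ and $\gamma<1$. In that case \eqref{eq:bound-norm-bw0} gives $\|\bw_0\|^2\le n^{-1}\phi_{\min}(\bSigma^{-1/2}\bX^\top\bX\bSigma^{-1/2}/n)^{-1}$ and, since $\bX\bQ_0$ has rank $p-1$, this eigenvalue tends to $0$ as $\bz_0\to 0$; so $\sup_{\bz_0}\|\bw_0\|^2=\infty$ and it is not ``$O(1/n)$ on a high-probability event''. The paper handles the $\bw_0(\by-\bX\hbbeta)^\top$ contribution by integrating it against the negative-moment bound of \Cref{prop:bounded-negative-moments} (see \eqref{eq:bound-w0-inner-f} and \eqref{bound-nabla-f}), not by a pointwise supremum over $\bz_0$. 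Second, the Gaussian-concentration step $\E_0[\langle\ba_0,\bh\rangle^2]\le 2\langle\ba_0,\bh\rangle^2+O_\P(n^{-1})$ is not justified. \Cref{lemma:existence-bw0} and \eqref{eq:bound-norm-bw0} control $\bw_0$, a piece of $\nabla f$; they do not give a (uniform in $\bz_0$) Lipschitz constant for $\bz_0\mapsto\langle\ba_0,\hbbeta\rangle$. Indeed the argument in the proof of \Cref{lemma:existence-bw0} yields a \emph{local} Lipschitz constant of order $\|\by-\bX\hbbeta\|/(\mu n)$, which is not uniformly bounded over $\bz_0$, so the map is only locally Lipschitz and concentration does not apply off the shelf. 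The paper sidesteps both difficulties by working with the unconditional quantity $\E[\delta_1^2]$: it restricts to the event $\Omega_n$ of \eqref{Omega_n} (where $\E_0[\|\by-\bX\hbbeta\|^2]\gtrsim n\risk$ and $\delta_1^2\le 1$ handles $\Omega_n^c$), bounds $\E[\|\nabla f(\bz_0)\|_F^2]$ via \eqref{bound-nabla-f}, and then uses uniform integrability of $\langle\ba_0,\bh\rangle^2/\risk$ (from the $L_2$ bound $\E[F_+^2F^4]\le C(\gamma,\mu)$ in \eqref{new-bd-1}, \eqref{inequalities-moments}) to upgrade the hypothesis $\langle\ba_0,\bh\rangle^2/\risk\to^\P 0$ to $\E[\langle\ba_0,\bh\rangle^2/\risk]\to 0$. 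That uniform-integrability step is exactly what replaces your (broken) concentration argument and is where the heavy lifting happens. Parts (b) and the de-biased-estimator reduction via \eqref{eq:limiting-F} and \Cref{lemma:negligible-term} are otherwise in line with the paper.
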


\Cref{thm:from-consistency} is proved in
\Cref{sec:proof-asymptotic-normality}.
The theorem, as well as its variants below,
are obtained
by applying
\Cref{thm:L2-distance-from-normal} 
conditionally on $(\bep,\bX\bQ_0)$
to the function $f(\bz_0)$ in \eqref{def-f-xi}. 
This argument relies on the normality of $\bz_0$ conditionally on $(\bep,\bX\bQ_0)$ 
and thus the Gaussian design assumption. Here is an outline. Define 
\begin{equation}
    \label{definition-eps_0-a_0}
    \delta_1^2(\ba_0)
    \defas
\E_0\big[\|\nabla f(\bz_0)\|_F^2\big]
\big/
\big\{
\E_0\bigl[\|f(\bz_0)\|^2\bigr] +
\E_0\big[\|\nabla f(\bz_0)\|_F^2\big]
\big\}
\end{equation}
where $\E_0$ and $\Var_0$ are the conditional expectation
and conditional variance given $(\bX\bQ_0,\bep)$.
It is sufficient to show that $\delta_1^2(\ba_0)\to^\P 0$ in order to
prove asymptotic normality of $\xi_0/\Var_0[\xi_0]^{1/2}$
by  \Cref{thm:L2-distance-from-normal}
and of $\xi_0/\|f(\bz_0)\|$ by \Cref{thm:consistency-variance}
since $\delta_1^2=\delta_1^2(\ba_0)$
satisfies $2\delta_1^2\ge \max(\eps_1^2,\epsdoublebar_1^2)$ for 
the $\eps_1^2,\epsdoublebar_1^2$ in \Cref{thm:L2-distance-from-normal,thm:consistency-variance}. 
The proof makes rigorous the following informal bound:
$$
\delta_1^2(\ba_0)
=
\frac{
    \E_0[\|\nabla f(\bz_0)\|_F^2]
    }{
    \E_0[\|f(\bz_0)\|^2]
    +
    \E_0[\|\nabla f(\bz_0)\|_F^2]
}
\lesssim \frac{\E_0[\|\bI_n - \hbH\|_F^2 \langle \ba_0,\bh\rangle^2]}{
    C_*^2(\gamma,\mu) n \risk
}
+ O_\P(n^{-1/2})
$$
for some constant $C_*(\gamma,\mu)$,
by establishing a lower bound on
$\|f(\bz_0)\|^2 = \|\by-\bX\hbbeta\|^2$
for the denominator (\Cref{lemma:Deltas-a-b-c-d-strongly-convex,lemma:n-df-strongly-convex,lemma:Delta_n-to-0}),
and by showing that the rank one term $\bw_0(\by-\bX\hbbeta)^\top$
in \eqref{eq:identity-bw0} is negligible in the numerator.
Finally, $\|\bI_n - \hbH\|_F^2 \le n$ always holds by
\Cref{prop:hat-bH} and 
$\langle \ba_0,\bh\rangle^2/\risk$ is shown to be uniformly integrable,
so that the assumption $\langle \ba_0,\bh\rangle^2/\risk\to^\P 0$
grants $\E[\delta_1^2(\ba_0)]\to 0$.
The next two results identify directions $\ba_0$ such that
$\langle \ba_0,\bh\rangle^2/\risk\to^\P 0$ holds.

\begin{restatable}{theorem}{theoremMainResult}
    \label{thm:main-result}
    There exists an absolute constant $C^*>0$ such that
    the following holds.
    Let \Cref{assum:main} be fulfilled, 
    $\DeBias$ be as in \eqref{eq:De-Bias-hbbeta}. 
    Then for any increasing sequence $a_p\to+\infty$
    (e.g., $a_p = \log\log p$), the subset
    \begin{equation}
        \label{set-tilde-S-}
        \overline{S} = \left\{
            \bv\in S^{p-1}:
            \E[\langle \bSigma^{1/2}\bv,\bh\rangle^2/\|\bSigma^{1/2}\bh\|^2]
            \le C^*/a_p
        \right\}
    \end{equation}
    of the unit sphere $S^{p-1}$ in $\R^p$
    has relative volume $|\overline{S}|/|S^{p-1}| \ge 1- 2e^{-p/a_p}$
    and 
    \begin{equation}
        \label{eq:main-thm-conclusion-sup-a_0}
        \begin{split}
    \sup_{\ba_0\in\bSigma^{1/2}\overline{S}}
    &{\ \sup_{t\in\R}}
    {\bigg[}\bigg|
    \P
    \bigg(
        \frac{\xi_0}{ V_0^{1/2} } \le t
    \bigg)
    - \Phi(t)
    \bigg|
    +
    \bigg|
    \P
\bigg(
    \frac{\langle\ba_0, \DeBias-\bbeta\rangle
    }{V_0^{1/2}{/(n - \df )}}
        \le t
   \bigg)
    - \Phi(t)
    \bigg|{\bigg]}
    \to 0
    \end{split}
    \end{equation}
    where $V_0$ denotes any of the four quantities: 
    $\Var_0[\xi_0]$,
    $\|\by-\bX\hbbeta\|^2$,
    $\widehat{V}(\theta)$ or $\check V(\ba_0)$.
    Furthermore, 
    with $\be_j\in\R^p$ the $j$-th canonical basis vector
    and $\phi_{\rm cond}(\bSigma)=\|\bSigma\|_{op}\|\bSigma^{-1}\|_{op}$, 
    the asymptotic normality in \eqref{eq:main-thm-conclusion-sup-a_0} uniformly holds 
    over at least
    $(p-\phi_{\rm cond}(\bSigma)a_p/C^*)$ canonical directions
    in the sense that
    $J_p = \{ j\in[p]: \be_j/\|\bSigma^{-1/2}\be_j\| \in \bSigma^{1/2}\overline{S}\}$ 
    has cardinality
    $|J_p|\ge p-\phi_{\rm cond}(\bSigma)a_p/C^*$. 
\end{restatable}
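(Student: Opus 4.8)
The plan is to combine (i) a sphere-concentration estimate for the relative volume of $\overline{S}$, (ii) a quantitative, direction-uniform version of \Cref{thm:from-consistency}, and (iii) a counting argument for the canonical directions. For the volume bound, write $\bv = \bg/\|\bg\|$ with $\bg\sim N({\bf 0},\bI_p)$ independent of the data, set $\bu = \bSigma^{1/2}\bh/\|\bSigma^{1/2}\bh\|$ (with $\bu$ arbitrary on $\{\bh={\bf 0}\}$) and $\bM = \E[\bu\bu^\top]$, which is positive semi-definite with $\trace\bM = 1$ and $\|\bM\|_{op}\le 1$. Since the defining quantity of $\overline{S}$ equals $\bv^\top\bM\bv = (\bg^\top\bM\bg)/\|\bg\|^2$, and since $\E[e^{\bg^\top\bM\bg/4}] = \prod_i(1-\lambda_i/2)^{-1/2}\le \sqrt 2$ (as the eigenvalues $\lambda_i$ of $\bM$ lie in $[0,1]$ and sum to $\trace\bM = 1$), one gets $\P(\bg^\top\bM\bg > t)\le \sqrt 2\,e^{-t/4}$; together with $\P(\|\bg\|^2 < p/2)\le e^{-cp}$ for an absolute $c>0$, this gives, for a large enough absolute constant $C^*$ and all large $p$ (WLOG $a_p = o(p)$, else the claim is vacuous), $\P_{\bv}(\bv^\top\bM\bv > C^*/a_p)\le e^{-cp} + \sqrt 2\,e^{-C^* p/(8 a_p)}\le 2e^{-p/a_p}$, i.e. $|\overline{S}|/|S^{p-1}|\ge 1-2e^{-p/a_p}$.

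Next I would deduce the uniform normality in \eqref{eq:main-thm-conclusion-sup-a_0}. For $\ba_0 = \bSigma^{1/2}\bv$ with $\bv\in\overline{S}$ one has $\langle\ba_0,\bh\rangle = \langle\bv,\bSigma^{1/2}\bh\rangle$, so Cauchy--Schwarz gives $\langle\ba_0,\bh\rangle^2/\|\bSigma^{1/2}\bh\|^2\le 1$ and the membership condition reads $\E[\langle\ba_0,\bh\rangle^2/\|\bSigma^{1/2}\bh\|^2]\le C^*/a_p$. The prediction-error bound $\|\bSigma^{1/2}\bh\|^2\le C(\gamma,\mu)\risk$ on an event $\Omega_n$ with $\P(\Omega_n^c)\le C_0(\gamma,\mu)n^{-1/2}$ (from \Cref{lemma:Deltas-a-b-c-d-strongly-convex,lemma:n-df-strongly-convex,lemma:Delta_n-to-0}), together with the uniform integrability of $\|\bSigma^{1/2}\bh\|^2/\risk$ on $\Omega_n^c$, then yields $\sup_{\ba_0\in\bSigma^{1/2}\overline{S}}\E[\langle\ba_0,\bh\rangle^2/\risk]\le C(\gamma,\mu)/a_p + o(1)\to 0$. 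The proof of \Cref{thm:from-consistency}, which applies \Cref{thm:L2-distance-from-normal} conditionally on $(\bep,\bX\bQ_0)$ to $f(\bz_0)$ (and \Cref{thm:consistency-variance,lemma:delta-0,lemma:check-V-variance} for the four choices of $V_0$), bounds each Kolmogorov distance in \eqref{eq:main-thm-conclusion-sup-a_0} by $C(\gamma,\mu)\big(\E[\delta_1^2(\ba_0)]\big)^{1/3}$, where $\delta_1^2(\ba_0)$ is as in \eqref{definition-eps_0-a_0} and $\E[\delta_1^2(\ba_0)]\le C(\gamma,\mu)\big(\E[\langle\ba_0,\bh\rangle^2/\risk] + n^{-1/2}\big)$ --- this is the informal estimate displayed right after \Cref{thm:from-consistency}, made quantitative by keeping $\|\bI_n-\hbH\|_F^2\le n$ (\Cref{prop:hat-bH}), the lower bound on $\|\by-\bX\hbbeta\|^2$ from the cited lemmas, and the negligibility of the rank-one term $\bw_0(\by-\bX\hbbeta)^\top$ in $\nabla f(\bz_0)$ (\Cref{lemma:existence-bw0,lemma:negligible-term}). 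Because this bound depends on $\ba_0$ only through the scalar $\E[\langle\ba_0,\bh\rangle^2/\risk]$, taking the supremum over the data-dependent set $\bSigma^{1/2}\overline{S}$ and invoking the previous display proves \eqref{eq:main-thm-conclusion-sup-a_0}. The hard part will be exactly this uniformization: \Cref{thm:from-consistency} is stated for one direction sequence, so its proof must be inspected to confirm that the Kolmogorov-distance bound is a fixed function of $n,\gamma,\mu$ and of $\E[\langle\ba_0,\bh\rangle^2/\risk]$ alone, after which the supremum over $\bSigma^{1/2}\overline{S}$ is free.

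Finally, for the canonical-direction count, put $\bv_j = \bSigma^{-1/2}\be_j/\|\bSigma^{-1/2}\be_j\|\in S^{p-1}$, so that $j\in J_p$ iff $\bv_j\in\overline{S}$ and $\langle\bSigma^{1/2}\bv_j,\bh\rangle = h_j/\|\bSigma^{-1/2}\be_j\|$. Using $1/(\bSigma^{-1})_{jj}=\|\bSigma^{-1/2}\be_j\|^{-2}\le\|\bSigma\|_{op}$ and $\|\bh\|^2\le\|\bSigma^{-1}\|_{op}\|\bSigma^{1/2}\bh\|^2$, I get $\sum_{j=1}^p \E[h_j^2/(\|\bSigma^{-1/2}\be_j\|^2\|\bSigma^{1/2}\bh\|^2)]\le\|\bSigma\|_{op}\,\E[\|\bh\|^2/\|\bSigma^{1/2}\bh\|^2]\le\|\bSigma\|_{op}\|\bSigma^{-1}\|_{op} = \phi_{\rm cond}(\bSigma)$; since the summands are nonnegative, at most $\phi_{\rm cond}(\bSigma)a_p/C^*$ of them can exceed $C^*/a_p$, whence $|J_p|\ge p-\phi_{\rm cond}(\bSigma)a_p/C^*$.
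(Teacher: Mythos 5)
Your proof is correct and follows the same overall structure as the paper's, but differs in two places worth flagging.

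For the volume bound, you use the Gaussian representation $\bv=\bg/\|\bg\|$ and a direct moment-generating-function estimate $\E[e^{\bg^\top\bM\bg/4}]=\prod_i(1-\lambda_i/2)^{-1/2}\le\sqrt 2$, combined with lower-deviation control of $\|\bg\|^2$. The paper instead invokes the subgaussianity of $\sqrt p\,\bv$ on the sphere, then Jensen and Fubini to move the expectation over the data inside the exponential; this gives $\int\exp\{\E[(\bv^\top\bSigma^{1/2}\bh)^2/(C^*\|\bSigma^{1/2}\bh\|^2)]\}\,d\nu(\bv)\le 2$ in one line and the Markov step reads cleanly off this. Both routes are valid; yours is more elementary and self-contained (you don't need the sphere-subgaussianity reference), but it requires slightly more constant-chasing near the crossover regime $a_p\asymp p$ (your ``WLOG $a_p=o(p)$'' glosses over the case $p\lesssim a_p\lesssim p$, where a one-line Markov bound closes the gap since $\E_{\bv}[\bv^\top\bM\bv]=1/p$). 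If you don't handle that corner, the stated constant $2$ in $1-2e^{-p/a_p}$ may not come out as written.

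For the uniformization, the issue is with the intermediate claim ``$\|\bSigma^{1/2}\bh\|^2\le C(\gamma,\mu)\risk$ on $\Omega_n$'': the event $\Omega_n$ constructed in the paper (see \eqref{Omega_n}) does not include such an almost-sure bound, because the controlling random variables $F_+$ and $F$ in \eqref{new-bd-1} are merely moment-bounded, not essentially bounded. You would need to augment the event by a truncation like $\{F_+F^2\le C_1\}$ and then estimate the complement by Markov; this works, but is not directly what the cited lemmas provide. The paper avoids this entirely by the Cauchy--Schwarz splitting $\E[\langle\ba_0,\bh\rangle^2/\risk]=\E\bigl[(\langle\ba_0,\bh\rangle\|\bSigma^{1/2}\bh\|/\risk)\cdot(\langle\ba_0,\bh\rangle/\|\bSigma^{1/2}\bh\|)\bigr]\le\E[\|\bSigma^{1/2}\bh\|^4/\risk^2]^{1/2}(C^*/a_p)^{1/2}$, which needs only the fourth-moment bound in \eqref{inequalities-moments} and no new high-probability event. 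Your approach yields the stronger rate $O(1/a_p)+O(n^{-1/4})$ versus the paper's $O(a_p^{-1/2})$, but both suffice since only $\to 0$ is claimed. The canonical-direction counting argument is identical to the paper's.
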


\Cref{thm:main-result} is proved in
\Cref{sec:proof-asymptotic-normality}.
For a given sequence of directions $\ba_0\in\bSigma^{1/2}S^{p-1}$,
if 
$b_p = \E[\langle \ba_0,\bh\rangle^2/\|\bSigma^{1/2}\bh\|^2]\to 0$
then it follows by choosing
$a_p = C^*/b_p$ that $\ba_0\in \bSigma^{1/2}\overline{S}$
for the $\overline{S}$ in \eqref{set-tilde-S-}
so that \eqref{eq:main-thm-conclusion-sup-a_0}
implies that asymptotic normality holds for this sequence of $\ba_0$.
In other words, asymptotic normality holds for all $\ba_0$
such that
$\E[\langle \ba_0,\bh\rangle^2/\|\bSigma^{1/2}\bh\|^2]\to 0$.
Thus a sequence of directions $\ba_0$ for which asymptotic normality
does not follow from \Cref{thm:main-result}
is a sequence such that
$\E[\langle \ba_0,\bh\rangle^2/\|\bSigma^{1/2}\bh\|^2]$
does not vanish, i.e., the squared error $\langle \ba_0,\bh\rangle^2$
in direction $\ba_0$ carries
a constant fraction of the full prediction error 
$\|\bSigma^{1/2}\bh\|^2$.
Such direction $\ba_0$ must thus be very special,
which is embodied by the exponentially small bound on the
relative volume
$|\overline{S}\setminus S^{p-1}|/|S^{p-1}|$.

\begin{restatable}{theorem}{theoremGisANorm}
    \label{thm:g-is-a-norm}
    Under \Cref{assum:main} there exists
    $\Omega_n$ with $\P(\Omega_n^c)\le C_0(\gamma,\mu)n^{-1/2}$
    and
    \begin{equation}
        \label{eq:marginal-a0}
        \E\bigl[I_{\Omega_n}
        \bigl({\langle \ba_0,\hbbeta-\bbeta\rangle} 
        +
    (n-\df)^{-1}
    \bz_0^\top(\by-\bX\hbbeta)
    \bigr)^2\bigr]
    \le
    \risk{}  \C(\gamma,\mu) /n
    \end{equation}
    If additionally
    $g$ is a seminorm 
    then
    $
    |\bz_0^\top(\by-\bX\hbbeta)|/n
    =
    |\ba_0^\top\bSigma^{-1} \bX^\top(\by-\bX\hbbeta)|/n
    \le 
    g(\bSigma^{-1}\ba_0)
    $ always holds
    by properties of the subdifferential of a norm.
    Consequently,
    if $g(\bSigma^{-1}\ba_0)^2/\risk\to 0$
    then
    $\langle \ba_0,\bh\rangle^2/\risk \to^\P 0$
    and the conclusions of \Cref{thm:from-consistency}
    hold.
\end{restatable}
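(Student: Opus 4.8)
The plan is to derive \eqref{eq:marginal-a0} from \Cref{thm:De-Bias-within-constant} and the seminorm bound from the KKT conditions of \eqref{penalized-hbbeta-general}, and then to combine them. The key preliminary observation is that under the normalization $\|\bSigma^{-1/2}\ba_0\|=1$ one has $\bz_0=\bX\bu_0=\bX\bSigma^{-1}\ba_0$, so $\bz_0^\top(\by-\bX\hbbeta)=(\bSigma^{-1}\ba_0)^\top\bX^\top(\by-\bX\hbbeta)$ and the quantity inside the expectation in \eqref{eq:marginal-a0} is exactly $\langle\ba_0,\DeBias-\bbeta\rangle^2$ for $\DeBias$ in \eqref{eq:De-Bias-hbbeta}. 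Thus \eqref{eq:marginal-a0} is \Cref{thm:De-Bias-within-constant} with the loose factor $\|\by-\bX\hbbeta\|^2/(n-\df)^2$ replaced by something of order $\risk/n$.

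To produce that bound I would first record that, after intersecting a few events each of probability $1-C_0(\gamma,\mu)n^{-1/2}$, one has on an event $\Omega_n$ both $n-\df\ge c(\gamma,\mu)n$ and $\|\by-\bX\hbbeta\|^2\le C(\gamma,\mu)n\risk$. The lower bound on $n-\df$ is immediate when $\gamma<1$ since $\df=\trace[\hbH]\le\rank[\hbH]\le p\le\gamma n$ (because $\bX\hbbeta$ lies in the column space of $\bX$), and in the strongly convex case follows from $\hbH\preceq\bX(\bX^\top\bX+n\mu\bSigma)^{-1}\bX^\top$ together with Marchenko--Pastur concentration of the singular values of $\bX\bSigma^{-1/2}/\sqrt n$; it is among the degrees-of-freedom estimates invoked in the outline preceding \Cref{thm:main-result}. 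The upper bound on the residual follows from $\by-\bX\hbbeta=\bep-\bX\bh$, concentration of $\|\bep\|^2$ around $n\sigma^2$, and the prediction/estimation error bounds comparing $\hbbeta$ with the deterministic oracle $\bbeta^*$ of \eqref{oracle-general-g-strongly-convex}. Together these give $\|\by-\bX\hbbeta\|^2\le C(\gamma,\mu)(n-\df)^2\risk/n$ on $\Omega_n$; intersecting with the event furnished by \Cref{thm:De-Bias-within-constant} and multiplying the two estimates yields \eqref{eq:marginal-a0}.

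For the seminorm case, the KKT conditions for \eqref{penalized-hbbeta-general} read $\bX^\top(\by-\bX\hbbeta)/n\in\partial g(\hbbeta)$, and for a seminorm $g$ every subgradient $\bv\in\partial g(\hbbeta)$ satisfies $\langle\bv,\bw\rangle\le g(\bw)$ for all $\bw$; applying this to $\pm\bw$ and using $g(-\bw)=g(\bw)$ gives $|\langle\bv,\bw\rangle|\le g(\bw)$. Taking $\bw=\bSigma^{-1}\ba_0$, $\bv=\bX^\top(\by-\bX\hbbeta)/n$, and recalling $\bz_0^\top(\by-\bX\hbbeta)=(\bSigma^{-1}\ba_0)^\top\bX^\top(\by-\bX\hbbeta)$, this is exactly the deterministic inequality $|\bz_0^\top(\by-\bX\hbbeta)|/n\le g(\bSigma^{-1}\ba_0)$. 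Since a seminorm is positively homogeneous, its symmetric Bregman divergence vanishes along rays through the origin, so \eqref{strong-convex} cannot hold with $\mu>0$; under \Cref{assum:main} this forces $\gamma<1$ and hence $n-\df\ge n-p\ge n(1-\gamma)$ deterministically. Writing $\langle\ba_0,\bh\rangle=-(n-\df)^{-1}\bz_0^\top(\by-\bX\hbbeta)+R_n$ with $\E[I_{\Omega_n}R_n^2]\le\risk\,C(\gamma,\mu)/n$ from \eqref{eq:marginal-a0}, we obtain on $\Omega_n$
\[
\langle\ba_0,\bh\rangle^2\le\frac{2}{(1-\gamma)^2}\,g(\bSigma^{-1}\ba_0)^2+2R_n^2 .
\]
Dividing by $\risk$, the first term tends to $0$ by hypothesis and the second tends to $0$ in probability since $\E[I_{\Omega_n}R_n^2/\risk]\to0$ and $\P(\Omega_n^c)\to0$; therefore $\langle\ba_0,\bh\rangle^2/\risk\to^\P0$ and the conclusions of \Cref{thm:from-consistency} apply.

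The main obstacle is the quantitative residual control $\|\by-\bX\hbbeta\|^2\le C(\gamma,\mu)(n-\df)^2\risk/n$ on a high-probability event: it is what upgrades the loose ratio in \Cref{thm:De-Bias-within-constant} to the sharp order $\risk/n$, and it rests on the prediction/estimation error bounds of \Cref{sec:application-de-biasing} and on the lower bound $n-\df\ge c(\gamma,\mu)n$, both of which use Marchenko--Pastur-type eigenvalue concentration for Gaussian designs. By contrast, the seminorm inequality (from the KKT conditions) and the passage from \eqref{eq:marginal-a0} to $\langle\ba_0,\bh\rangle^2/\risk\to^\P0$ are routine.
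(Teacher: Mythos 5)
Your proof of \eqref{eq:marginal-a0} is correct but takes a noticeably more circuitous route than the paper. The paper's own proof is essentially a one-liner: as you yourself observe, the quantity inside the expectation in \eqref{eq:marginal-a0} is exactly $\langle\ba_0,\DeBias-\bbeta\rangle^2$, and this is precisely inequality \eqref{eq:uniformly-bounded-2} of \Cref{lemma:uniformly-bounded}, which is established directly from the Second Order Stein bound $\E[\xi_0^2]\le C n\risk$ together with the bound \eqref{eq:bound-w0-inner-f} on the $\bw_0$-term, using the decomposition \eqref{-xi0}. You instead reconstruct \eqref{eq:marginal-a0} from \Cref{thm:De-Bias-within-constant} plus a high-probability upper bound $\|\by-\bX\hbbeta\|^2\le C(\gamma,\mu)n\risk$. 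This is logically valid: the needed upper bound follows from the almost-sure inequality $\|\by-\bX\hbbeta\|^2/n\le 4 F_+F^2\risk$ in \eqref{new-bd-2} together with $\chi^2_n$-concentration of $F_+$ and Davidson--Szarek concentration for the smallest eigenvalue entering $F$, each of which is off by at most $e^{-cn}$; combined with the lower bound $n-\df\ge C_*(\gamma,\mu)n$ from \Cref{lemma:n-df-strongly-convex} and the expectation bound from \Cref{thm:De-Bias-within-constant}, the pointwise factor $\|\by-\bX\hbbeta\|^2/(n-\df)^2\le C\risk/n$ can be pulled out of the expectation and gives the claim. But the detour is somewhat circular: \Cref{thm:De-Bias-within-constant} is itself proved from \Cref{lemma:uniformly-bounded} together with a \emph{lower} bound on $\|\by-\bX\hbbeta\|^2$, so you are effectively undoing one step of the paper's chain and then redoing it. Citing \Cref{lemma:uniformly-bounded} directly is shorter and gives the event $\Omega_0$ with the even smaller probability bound $\P(\Omega_0^c)\le e^{-n/2}$. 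Two small notes: (a) the parenthetical claim $\hbH\preceq\bX(\bX^\top\bX+n\mu\bSigma)^{-1}\bX^\top$ is fine for twice differentiable $g$ via \Cref{sec:computation-twice-differentiation-g}, but it is not established in the paper for general strongly convex $g$; this is harmless since you can and do ultimately rely on \Cref{lemma:n-df-strongly-convex}. (b) For the seminorm part your argument matches the paper's, and the observation that a seminorm penalty forces $\mu=0$ and hence $\gamma<1$ under \Cref{assum:main}, yielding the deterministic bound $n-\df\ge n(1-\gamma)$, is correct and a nice simplification, though not strictly necessary given \Cref{lemma:n-df-strongly-convex}.
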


\Cref{thm:g-is-a-norm} is proved in
\Cref{sec:proof-asymptotic-normality}.
The first part of the theorem says that 
the estimation error $\langle \ba_0,\bh\rangle$
is essentially $-\langle \bz_0,\by-\bX\hbbeta\rangle/(n-\df)$ 
up to an error term of order $\risk O_\P(n^{-1/2})$,
so that $\langle \ba_0,\bh\rangle^2/\risk \to^\P 0$
if and only if $(n-\df)^{-2}\langle \bz_0,\by-\bX\hbbeta\rangle^2/\risk\to^\P 0$.
Combined with the fact that $(1-\df/n)$ is bounded away from 0
by \Cref{lemma:n-df-strongly-convex}, this implies that
$\langle \ba_0,\bh\rangle^2/\risk \to^\P 0$
if and only if $n^{-2}\langle \bz_0,\by-\bX\hbbeta\rangle^2/\risk\to^\P 0$.
The last part of the theorem relies on the property
$g(\bu) \le \sup_{\bs\in\partial g(\bu)} \bu^\top\bs$
for any norm $g$
where $\partial g(\bu)$ is the subdifferential of $g$ at $\bu$.
This property also holds if $g$ is a semi-norm, however
it is unclear how to extend the last part of the above
theorem if $g$ is not a 
semi-norm.

For the Lasso, the penalty function is $g(\bb)=\lambda\|\bb\|_1$
and condition $g(\bSigma^{-1}\ba_0)^2/\risk \to 0$ becomes
$\lambda^2 \|\bSigma^{-1}\ba_0\|_1^2 / \risk \to 0$.
Typically, the tuning parameter $\lambda$ is chosen as
$\lambda \propto \sigma(2\log(p/k)/n)^{1/2}$ with $k=1$ 
\cite[among others]{bickel2009simultaneous} or 
$k=s_0$ 
\cite{sun2013sparse,lecue2015regularization_small_ball_I,bellec2016slope,feng2019sorted,bellec2018nb_lsb}, 
where $s_0 = \|\bbeta\|_0$. 
For such choices, the condition
$\lambda^2 \|\bSigma^{-1}\ba_0\|_1^2 /\risk\to 0$ can be 
written as $\|\bSigma^{-1}\ba_0\|_1 = (\risk/\sigma^2)^{1/2} o(\sqrt{n/\log(p/k)})$ 
and since $\risk\ge \sigma^2$, a sufficient condition is
$\|\bSigma^{-1}\ba_0\|_1 = {o(\sqrt{n/\log(p/k)})}$.
If $\ba_0 = \be_j$ is a vector of the canonical basis,
the normalization \eqref{normalization-a_0} gives $(\bSigma^{-1})_{jj}=1$
and $\|\bSigma^{-1}\be_j\|_1$ is the $\ell_1$ norm of the $j$-th column of $\bSigma^{-1}$.
The condition $\|\bSigma^{-1}\ba_0\|_1 ={o(\sqrt{n/\log(p/k)})}$
allows, for instance, the $j$-th column of $\bSigma^{-1}$ to have
{$o(\sqrt{n/\log(p/k)})$ constant entries.}
This assumption is weaker than that of some previous studies; for instance
\cite{javanmard2018debiasing} requires $\|\bSigma^{-1}\ba_0\|_1 =O(1)$ 
for $\ba_0=\bfe_j$.
The following example illustrates the benefit of picking a proper penalty level $\lam$. 

\begin{example} Let $p/n\to\gamma<1$ and $g(\bb)=\lambda\|\bb\|_1$. 
\begin{enumerate}
    \item 
    For $\lam=0$, the Lasso and de-biased Lasso are both identical 
    to the least squares estimator 
    and the de-biasing correction proportional to $\bz_0^\top(\by-\bX\hbbeta)$ is 0 since $\bX^\top(\by-\bX\hbbeta)=0$,
    so that 
    $\htheta - \theta = \langle \ba_0,\bh\rangle = \ba_0^\top(\bX^\top\bX)^{-1}\bX^\top\bep$ in \eqref{def-hat-theta},
    $\df=p$, 
    $\widehat{V}(\theta)\approx \|\by-\bX\hbbeta\|^2 \sim \sigma^2\chi^2_{n-p}$ and 
    $\sqrt{n}(\htheta - \theta) \xrightarrow{d} N\big(0,\sigma^2/(1-\gamma)\big)$.
    \item
    Suppose $|\Shat|/n+\|\bX\bh\|^2/n+\|\bSigma^{1/2}\bh\|^2=o_\P(1)$ for suitable 
    $\lam\ge \sigma \sqrt{2\log(p/s_0)/n}$ as in  
    \cite{ZhangH08,bellec_zhang2019dof_lasso}. 
    Then, $\widehat{V}(\theta)=(1+o_\P(1))n\sigma^2$ and 
    $\sqrt{n}(\htheta - \theta) \xrightarrow{d} N\big(0,\sigma^2\big)$. 
\end{enumerate}
\end{example} 

\subsection{Confidence intervals}
\Cref{thm:from-consistency,thm:main-result,thm:g-is-a-norm}
are valid for any choice of the variance estimate
among $\|\by-\bX\hbbeta\|^2,
\widehat{V}(\theta)$ in \eqref{def-hat-V-theta}
and $\check V(\ba_0)$ in \eqref{eq:check-variance-1}
for directions $\ba_0$ such that
$\langle \ba_0,\bh\rangle^2/\risk \to^\P 0$ holds.
For such direction $\ba_0$,
the choice $\|\by-\bX\hbbeta\|^2$ leads to the narrowest
confidence interval for $\theta$, namely
\begin{equation}
    \P\bigl(~
\theta\in
\bigl[
\langle \ba_0,\DeBias\rangle
\pm z_{\alpha/2} (n-\df)^{-1}\|\by-\bX\hbbeta\|
\bigr]
~
\bigr)
\to 1-\alpha
\label{eq:narrow-confidence-interval}
\end{equation}
where $[u\pm v]$ denotes the interval $[u-v, u+v]$,
$\P(|N(0,1)|> z_{\alpha/2}) = \alpha$
and $\DeBias$ is the de-biased estimator in \eqref{eq:De-Bias-hbbeta}.
The choice $\check V(\ba_0)$ leads to intervals
with larger multiplicative coefficient for $z_{\alpha/2}$, namely
\begin{equation}
\theta\in
\bigg[
\langle \ba_0,\DeBias\rangle
\pm z_{\alpha/2}
\bigg(
    \frac{\|\by-\bX\hbbeta\|^2}{(n-\df)^2}
    +
    \frac{\|\bI_n-\hbH\|_F^2\langle \bz_0,\by-\bX\hbbeta\rangle^2}{
        (n-\df)^4
    }
\bigg)^{1/2} 
\bigg]
\label{eq:CI-with-variance-spike}
\end{equation}
has probability converging to $1-\alpha$
for directions $\ba_0$ satisfying any of the above theorems.
For such directions,
the choice $\widehat{V}(\theta)$ justifies
confidence intervals of the form \eqref{CI-introduction}
as 
\begin{equation}
    \bigl(
    (n-\df)
    (\langle \ba_0,\hbbeta\rangle - \theta)
    + \langle \bz_0,\by-\bX\hbbeta\rangle
    \bigr)^2
    -
    \widehat{V}(\theta) z_{\alpha/2}^2
    \le 0
\label{eq:event-CI-sec3}
\end{equation}
holds with probability converging to $1-\alpha$.
Given the expression for
$\widehat{V}(\theta)$ in \eqref{def-hat-V-theta},
the left-hand side of \eqref{eq:event-CI-sec3} is a quadratic polynomial
in $\theta$ with dominant coefficient
$(n-\df)^2 - z_{\alpha/2}\|\bI_n-\hbH\|_F^2$.
Since $\|\bI_n-\hbH\|_F^2\le n-\df$ almost surely by properties of $\hbH$
in \Cref{prop:hat-bH}
and $n-\df \ge C_*(\gamma,\mu) n$ for  some constant $C_*(\gamma,\mu)$
with probability approaching one
by \Cref{lemma:n-df-strongly-convex},
in the same event the dominant coefficient is positive.
The intersection of events \eqref{eq:event-CI-sec3}
and $\{n-\df \ge C_*(\gamma,\mu) n\}$ has probability converging to $1-\alpha$
and in this event,
$\theta\in [\Theta_1(z_{\alpha/2}),\Theta_2(z_{\alpha/2}))]$
where $\Theta_1(z_{\alpha/2}),\Theta_2(z_{\alpha/2})$ are the two
real roots of the left-hand side of \eqref{eq:event-CI-sec3} as a quadratic function of $\theta$.

\subsection{Variance spike}
One can pick any choice among the three variance estimates
in \Cref{thm:from-consistency}
because it assumes $\langle \ba_0,\bh\rangle^2/\risk\to^\P 0$
and this limit in probability implies both
$\widehat{V}(\theta)/\|\by-\bX\hbbeta\|^2 \to^\P 1$
and
$\check V(\ba_0)/\|\by-\bX\hbbeta\|^2 \to^\P 1$.
These limits in probability to 1
are made rigorous by \eqref{eq:check-variance-1}
and by the lower bound
$\|\by-\bX\hbbeta\|^2 \ge 
\risk n \bigl(C_*^2(\gamma,\mu) - O_\P(n^{-1/2})\bigr)$
obtained from 
\Cref{lemma:Delta_n-to-0,lemma:n-df-strongly-convex,lemma:Deltas-a-b-c-d-strongly-convex} as explained in \eqref{eq:imply-variance-consistency}
of the proof.

The reason that the estimates $\widehat{V}(\theta)$
and $\check V(\ba_0)$ were introduced is that
the simpler estimate $\|\by-\bX\hbbeta\|^2$
is not asymptotically unbiased for $\Var_0[\xi_0]$
for directions $\ba_0$ such that
$\langle \ba_0,\bh\rangle^2/\risk$ does not converge to 0
in probability:
While the relative bias of $\widehat{V}(\theta)$
and $\check V(\ba_0)$ provably converges to 0 in
\Cref{lemma:delta-0} and \eqref{eq:check-variance-1}
for all directions $\ba_0$,
the same cannot be said for the simpler estimate
$\|\by-\bX\hbbeta\|^2$.

\begin{restatable}{theorem}{theoremVarianceConsistentIFF}
    \label{theorem:variance-iff}
    Let \Cref{assum:main} be fulfilled. Then the following are equivalent:
    \begin{multicols}{2}
    \begin{enumerate}
\item
    $\|\by-\bX\hbbeta\|^2/\Var_0[\xi_0]\to^\P 1$,
        \item 
    $\E_0[\|\by-\bX\hbbeta\|^2]/\Var_0[\xi_0]\to^\P 1$,
        \item 
    $\langle \ba_0,\bh\rangle^2/\risk \to^\P 0$,
\item
    $\langle \ba_0,\bh\rangle^2 n/\|\by-\bX\hbbeta\|^2 \to^\P 0$,
\item
    $\langle \bz_0,\by-\bX\hbbeta\rangle^2 /(n\|\by-\bX\hbbeta\|^2) \to^\P 0$,
\item
    $\widehat{V}(\theta)/\|\by-\bX\hbbeta\|^2\to^\P 1$,
\item
    $\check{V}(\ba_0)/\|\by-\bX\hbbeta\|^2\to^\P 1$.

\item[\vspace{\fill}] 
    \end{enumerate}
    \end{multicols}
\end{restatable}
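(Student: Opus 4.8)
The plan is to partition the seven conditions into the ``estimator-side'' cluster $\{(3),(4),(5),(6),(7)\}$ — each a random variable built from the full data $(\bep,\bX\bQ_0,\bz_0)$, to be shown mutually equivalent by elementary manipulations — and the ``variance-side'' pair $\{(1),(2)\}$ — built from conditional expectations given $(\bep,\bX\bQ_0)$, for which the bridge to the cluster is where the work lies. Everything is done on one event $\Omega_n$ with $\P(\Omega_n^c)\to0$ on which I record the background bounds: $C_*^2(\gamma,\mu)\,n\le (n-\df)^2/n\le\|\bI_n-\hbH\|_F^2\le n-\df\le n$ (from \Cref{lemma:n-df-strongly-convex} and Cauchy--Schwarz on the eigenvalues of $\hbH$, which lie in $[0,1]$ by \Cref{prop:hat-bH}); the two-sided bounds $\|\by-\bX\hbbeta\|^2\asymp n\risk$ and $\Var_0[\xi_0]=\E_0[\|\by-\bX\hbbeta\|^2+\trace\{\nabla f(\bz_0)\}^2]\asymp n\risk$ (from \Cref{lemma:Deltas-a-b-c-d-strongly-convex,lemma:Delta_n-to-0,lemma:n-df-strongly-convex}, as used around \eqref{eq:imply-variance-consistency}); $\|\bw_0\|^2\lesssim1/n$ (\Cref{lemma:existence-bw0}); and $|\langle\ba_0,\bh\rangle|\lesssim\sqrt\risk$ (combine \eqref{eq:marginal-a0} with $\|\bz_0\|^2\lesssim n$). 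Recall $\risk\ge\sigma^2$ is deterministic with $n\risk\to\infty$.

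First I would dispatch the cluster. Expanding $\nabla f(\bz_0)$ from \eqref{eq:identity-bw0} (using that $\hbH$ is symmetric) gives $\trace\{\nabla f(\bz_0)\}^2=\|\bI_n-\hbH\|_F^2\langle\ba_0,\bh\rangle^2+2\langle\ba_0,\bh\rangle\,\bw_0^\top(\bI_n-\hbH)(\by-\bX\hbbeta)+\langle\bw_0,\by-\bX\hbbeta\rangle^2$, whose last two terms are $O_\P(\risk)=o_\P(\Var_0[\xi_0]/n)$ on $\Omega_n$ by the background bounds and \Cref{lemma:negligible-term}; in particular $\widehat V(\theta)-\|\by-\bX\hbbeta\|^2=\|\bI_n-\hbH\|_F^2\langle\ba_0,\bh\rangle^2\asymp n\langle\ba_0,\bh\rangle^2$ and $\check V(\ba_0)-\|\by-\bX\hbbeta\|^2\asymp\langle\bz_0,\by-\bX\hbbeta\rangle^2/n$, which gives $(6)\Leftrightarrow(4)$ and $(7)\Leftrightarrow(5)$ at once (or $(6)\Leftrightarrow(7)$ via \Cref{lemma:check-V-variance}). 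The equivalence $(3)\Leftrightarrow(4)$ is just the two-sided bound $\|\by-\bX\hbbeta\|^2\asymp n\risk$. For $(4)\Leftrightarrow(5)$ I would set $R:=\langle\ba_0,\bh\rangle+(n-\df)^{-1}\bz_0^\top(\by-\bX\hbbeta)$; \eqref{eq:marginal-a0} gives $\E[I_{\Omega_n}R^2]\lesssim\risk/n$, so $R^2=o_\P(\risk)$, and with $A:=\langle\bz_0,\by-\bX\hbbeta\rangle^2/(n\|\by-\bX\hbbeta\|^2)$, $B:=n\langle\ba_0,\bh\rangle^2/\|\by-\bX\hbbeta\|^2$ the reverse triangle inequality yields $|\sqrt B-(n/(n-\df))\sqrt A|\le\sqrt n\,|R|/\|\by-\bX\hbbeta\|=O_\P(|R|/\sqrt\risk)=o_\P(1)$; since $n/(n-\df)\asymp1$ this forces $A\to^\P 0\Leftrightarrow B\to^\P 0$.

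Then I would bridge to $(1)$ and $(2)$. The key input is \Cref{lemma:delta-0}: $\E_0[|\widehat V(\theta)-V^*(\theta)|]/\Var_0[\xi_0]\to^\P 0$, equivalently $\Var_0[\xi_0]=\E_0[\|\by-\bX\hbbeta\|^2]+\E_0[\|\bI_n-\hbH\|_F^2\langle\ba_0,\bh\rangle^2]+o_\P(\Var_0[\xi_0]/n)$. Since $(2)$ reads $\E_0[\|\by-\bX\hbbeta\|^2]/\Var_0[\xi_0]\to^\P 1$, this identity together with $\|\bI_n-\hbH\|_F^2\asymp n$ and $\Var_0[\xi_0]\asymp n\risk$ shows $(2)\Leftrightarrow\big(\E_0[\langle\ba_0,\bh\rangle^2]/\risk\to^\P 0\big)$, and the latter is equivalent to $(3)$: ``$\Rightarrow$'' by conditional Markov (a nonnegative variable with conditional mean $\to^\P 0$ tends to $0$ in probability), ``$\Leftarrow$'' using the uniform integrability of $\langle\ba_0,\bh\rangle^2/\risk$ from the proof of \Cref{thm:from-consistency} (so $(3)$ yields $\E[\langle\ba_0,\bh\rangle^2/\risk]\to0$, hence $\E_0[\langle\ba_0,\bh\rangle^2]/\risk\to^\P 0$). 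Finally $(1)\Leftrightarrow(2)$: for $(1)\Rightarrow(2)$, $\|\by-\bX\hbbeta\|^2/(n\risk)$ is uniformly integrable ($\|\by-\bX\hbbeta\|^2\le2\|\bep\|^2+2\|\bX\bh\|^2$, then $\chi^2$-concentration of $\|\bep\|^2$ and the prediction-error bounds for $\|\bX\bh\|^2$), hence so is $I_{\Omega_n}\|\by-\bX\hbbeta\|^2/\Var_0[\xi_0]$, and $\to^\P 1$ upgrades to $L^1$ convergence and then to $\E_0[\|\by-\bX\hbbeta\|^2]/\Var_0[\xi_0]\to^\P 1$; for $(2)\Rightarrow(1)$, the equivalent form $\E_0[\langle\ba_0,\bh\rangle^2]/\risk\to^\P 0$ forces $\delta_1^2(\ba_0)\to^\P 0$ in \eqref{definition-eps_0-a_0} (the $\bw_0$-contributions to $\E_0[\|\nabla f(\bz_0)\|_F^2]$ are $O_\P(n^{-1/2})$), so \Cref{thm:consistency-variance} applied conditionally on $(\bep,\bX\bQ_0)$, followed by conditional Markov, delivers $\|\by-\bX\hbbeta\|^2/\Var_0[\xi_0]\to^\P 1$.

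The hard part is precisely this bridge: $(1)$ and $(4)$--$(7)$ concern $\|\by-\bX\hbbeta\|^2$ and $\langle\ba_0,\bh\rangle^2$ as functions of $\bz_0$, while $(2)$--$(3)$ concern their conditional means given $(\bep,\bX\bQ_0)$ (and the deterministic $\risk$), and moving between ``$\to^\P$ of a random variable'' and ``$\to^\P$ of its conditional expectation'' needs both the asymptotic unbiasedness of \Cref{lemma:delta-0} and the uniform-integrability inputs, on top of marshalling the two-sided bounds $\|\by-\bX\hbbeta\|^2\asymp n\risk$, $\Var_0[\xi_0]\asymp n\risk$, $\|\bI_n-\hbH\|_F^2\asymp n$ from the estimation and prediction lemmas of \Cref{sec:proofs}. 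The remaining steps are bookkeeping on $\Omega_n$.
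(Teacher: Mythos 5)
Your proof is essentially correct and runs parallel to the paper's argument over most of the graph of implications, but you take a genuinely different route for $(i)\Leftrightarrow(ii)$, and one technical step there is glossed over.

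Where you agree with the paper: the cluster $(iii)\Leftrightarrow(iv)\Leftrightarrow(v)\Leftrightarrow(vi)\Leftrightarrow(vii)$ via $\|\bI_n-\hbH\|_F^2\asymp n$, $(n-\df)\asymp n$, $\|\by-\bX\hbbeta\|^2\asymp_\P n\risk$ on $\Omega_n$, with the bridge $(iv)\Leftrightarrow(v)$ supplied by \eqref{eq:marginal-a0} (equivalent to the paper's citation of \eqref{eq:DeBias-within-constant-factor}); the implication $(ii)\Leftrightarrow(iii)$ via \Cref{lemma:delta-0} and conditional Markov / uniform integrability; and the implication $(iii)\Rightarrow(i)$ via $\delta_1^2\to^\P0$ and a conditional application of \Cref{thm:consistency-variance}. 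Where you differ: the paper settles $(i)\Leftrightarrow(ii)$ in one stroke by a Gaussian Poincar\'e bound on the conditional variance $\Var_0[\|\by-\bX\hbbeta\|^2]$, showing $\E[I_{\Omega_n}\Var_0[\|\by-\bX\hbbeta\|^2]/\Var_0[\xi_0]^2]=O(n^{-1/2})$ and concluding by Chebyshev; this gives \emph{both} directions simultaneously and is entirely inside the $(\bep,\bX\bQ_0)$-measurable world. You instead do $(i)\Rightarrow(ii)$ via uniform integrability and $L^1$-upgrade, and $(ii)\Rightarrow(i)$ via $(ii)\Rightarrow\bigl(\E_0[\langle\ba_0,\bh\rangle^2]/\risk\to^\P0\bigr)\Rightarrow\delta_1^2\to^\P0\Rightarrow(i)$. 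Both are valid, but the paper's is more economical.

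The subtle point you should not skip: the event $\Omega_n$ in \eqref{Omega_n} is \emph{not} $(\bep,\bX\bQ_0)$-measurable (it contains the constraint $\Delta_n\le C_*^2/16$ on $\Delta_n$ itself, not just on $\E_0[\Delta_n]$). Consequently $\E_0[I_{\Omega_n}Y]\ne I_{\Omega_n}\E_0[Y]$ in general, so the passage ``$\E\bigl[|I_{\Omega_n}\|\by-\bX\hbbeta\|^2/\Var_0[\xi_0]-1|\bigr]\to0$ and then $\E_0[\|\by-\bX\hbbeta\|^2]/\Var_0[\xi_0]\to^\P1$'' has a gap as written. The fix is to replace $\Omega_n$ by the $(\bep,\bX\bQ_0)$-measurable subevent $\Omega_n'=\Omega_0\cap\{\E_0[\Delta_n]\le C_*^2(\gamma,\mu)/16\}\supset\Omega_n$, on which \eqref{eq:lower-bound-Var-0-in-Omega_n} still provides $\Var_0[\xi_0]\wedge\E_0[\|\by-\bX\hbbeta\|^2]\ge C_*^2(\gamma,\mu)n\risk/16$; then $I_{\Omega_n'}$ factors through $\E_0$ and the $L^1$-argument closes. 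The same care is needed in your $(2)\Rightarrow(1)$ step when you invoke ``conditional Markov'' after applying \Cref{thm:consistency-variance} conditionally: to pass from $\E_0[W_n]\to^\P0$ (with $W_n=(\|\by-\bX\hbbeta\|/\Var_0[\xi_0]^{1/2}-1)^2$) to $W_n\to^\P0$, you need uniform integrability of $I_{\Omega_n'}\E_0[W_n]$, which again holds on the measurable event $\Omega_n'$ but is not automatic on $\Omega_n$. None of this is a wrong idea — it is the measurability bookkeeping that the paper sidesteps by going through $\Var_0[\|\by-\bX\hbbeta\|^2]$ — but it does need to be stated.
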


\Cref{theorem:variance-iff} is proved in
\Cref{sec:proof-variance}.
It shows that for the directions $\ba_0$
such that 
$\langle \ba_0,\bh\rangle^2 n /\|\by-\bX\hbbeta\|^2 \to^\P 0$ does not hold,
e.g., directions such that the error $\langle \ba_0,\bh\rangle^2$
is of the same order as the average squared residual $\|\by-\bX\hbbeta\|^2/n$ 
(see Lemma \ref{lemma:moment-equivalence} in Section \ref{sec:loss-equiv}),
the simpler estimate $\|\by-\bX\hbbeta\|^2$
fails to account for a non-negligible part of the variance
$\Var_0[\xi_0]$ by item (i) 
above.
The goal of the estimates $\widehat{V}(\theta)$ and $\check V(\ba_0)$
is to repair this
as $\E_0[\widehat{V}(\theta)]/\Var_0[\xi_0]\to^\P 1$
and
$\E_0[\check{V}(\ba_0)]/\Var_0[\xi_0]\to^\P 1$
hold for all directions by \Cref{lemma:delta-0,lemma:check-V-variance},
even for directions $\ba_0$ such that (i)-(vii) above fail.
Note that the quantity
$\langle \bz_0,\by-\bX\hbbeta\rangle^2 /(n\|\by-\bX\hbbeta\|^2)$
in item (v) is observable (i.e., does not depend on $\bbeta$), so that
(i)-(vii) 
are expected to hold when 
this quantity is sufficiently small.

For directions $\ba_0$ such that
$\langle \ba_0,\bh\rangle^2 n /\|\by-\bX\hbbeta\|^2 \to^\P 0$
does not hold, we expect
a variance spike, i.e., an extra additive term in the variance
estimate equal to
$\|\bI_n-\hbH\|_F^2\langle \ba_0,\bh\rangle^2$ in
$\widehat{V}(\theta)$ and to
$\|\bI_n-\hbH\|_F^2\langle \bz_0, \by-\bX\hbbeta\rangle^2/(n-\df)^2$ in
$\check V(\ba_0)$.
The confidence interval
\eqref{eq:narrow-confidence-interval}
that does not take into account this variance spike is expected to 
be too narrow and to suffer from undercoverage
for directions $\ba_0$ with large $\langle \ba_0,\bh\rangle^2
n /\|\by-\bX\hbbeta\|^2$.
The wider confidence interval \eqref{eq:CI-with-variance-spike}
is expected to repair this, although
for directions $\ba_0$
such that $\langle \ba_0,\bh\rangle^2 n\|\by-\bX\hbbeta\|^2 \to^\P 0$
does not hold
the current theory does not prove whether the asymptotic distribution
is normal.
The theoretical evidence that the variance spike occurs
is grounded in the relative asymptotic unbiasedness
of $\widehat{V}(\theta)$ in \eqref{eq:delta-0-to-0}
and of $\check V(\ba_0)$ in \eqref{eq:check-variance-1},
combined with the negative result for the simpler variance
estimate $\|\by-\bX\hbbeta\|^2$ 
via \Cref{theorem:variance-iff} as discussed above. 
\Cref{figure:lasso-variance} illustrates the variance
spike on simulations for the Lasso and direction $\ba_0$
proportional to the first canonical basis vector.

\begin{figure}[b]
    \centering
\begin{tabular}{|c|c|c|c|c|}
    \hline
    $\lambda$
    & $\frac{(n-\df)\langle \ba_0,\DeBias -\bbeta\rangle}{\|\by-\bX\hbbeta\|}$
    & $\frac{(n-\df)\langle \ba_0,\DeBias -\bbeta\rangle}{\hat V(\theta)^{1/2}}$
    & $\|\bSigma^{1/2}(\hbbeta-\bbeta)\|^2$
    & $\langle \ba_0,\hbbeta-\bbeta\rangle^2$
    \\
    \hline
    0.005
    & \includegraphics[width=1.2in]{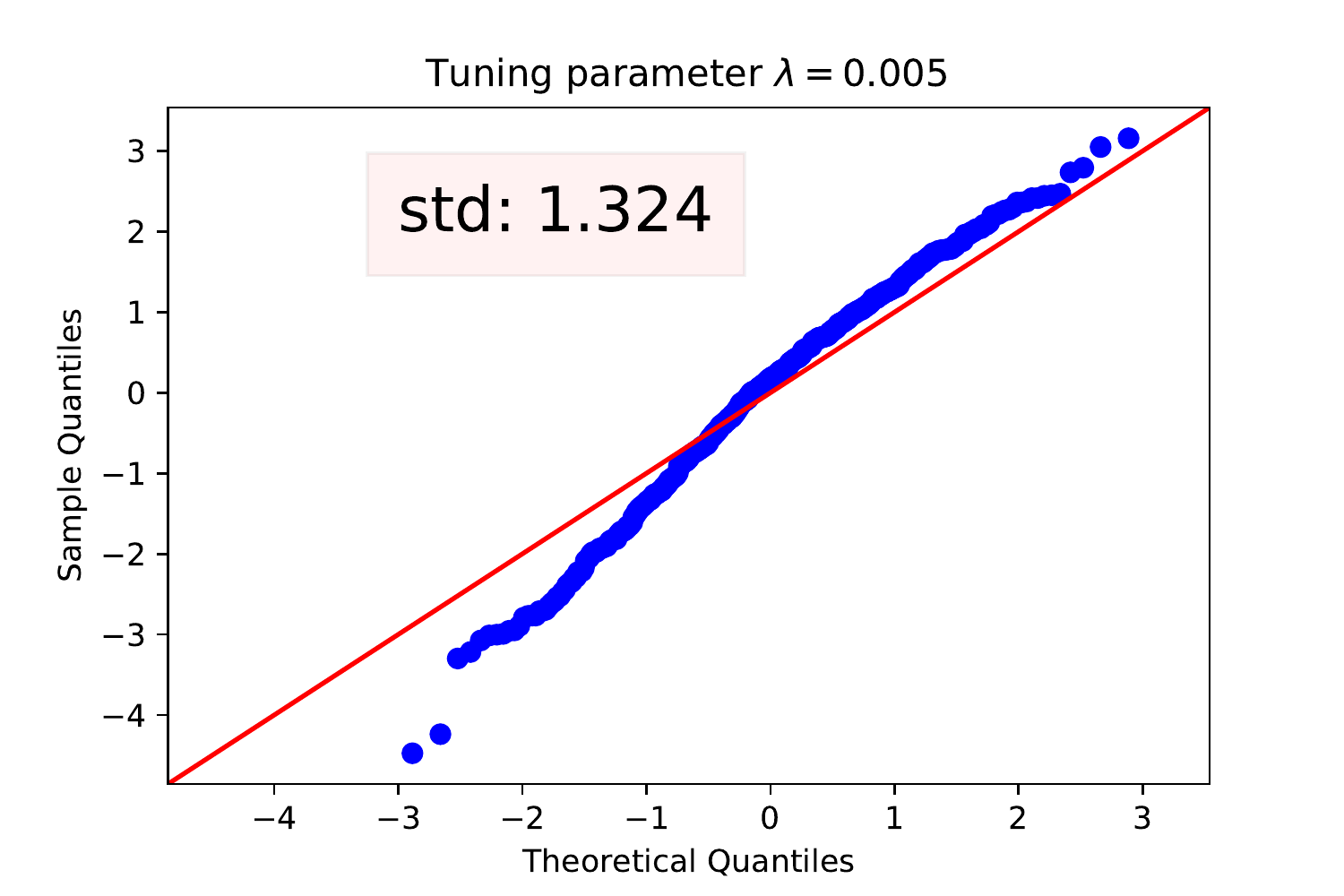}
    & \includegraphics[width=1.2in]{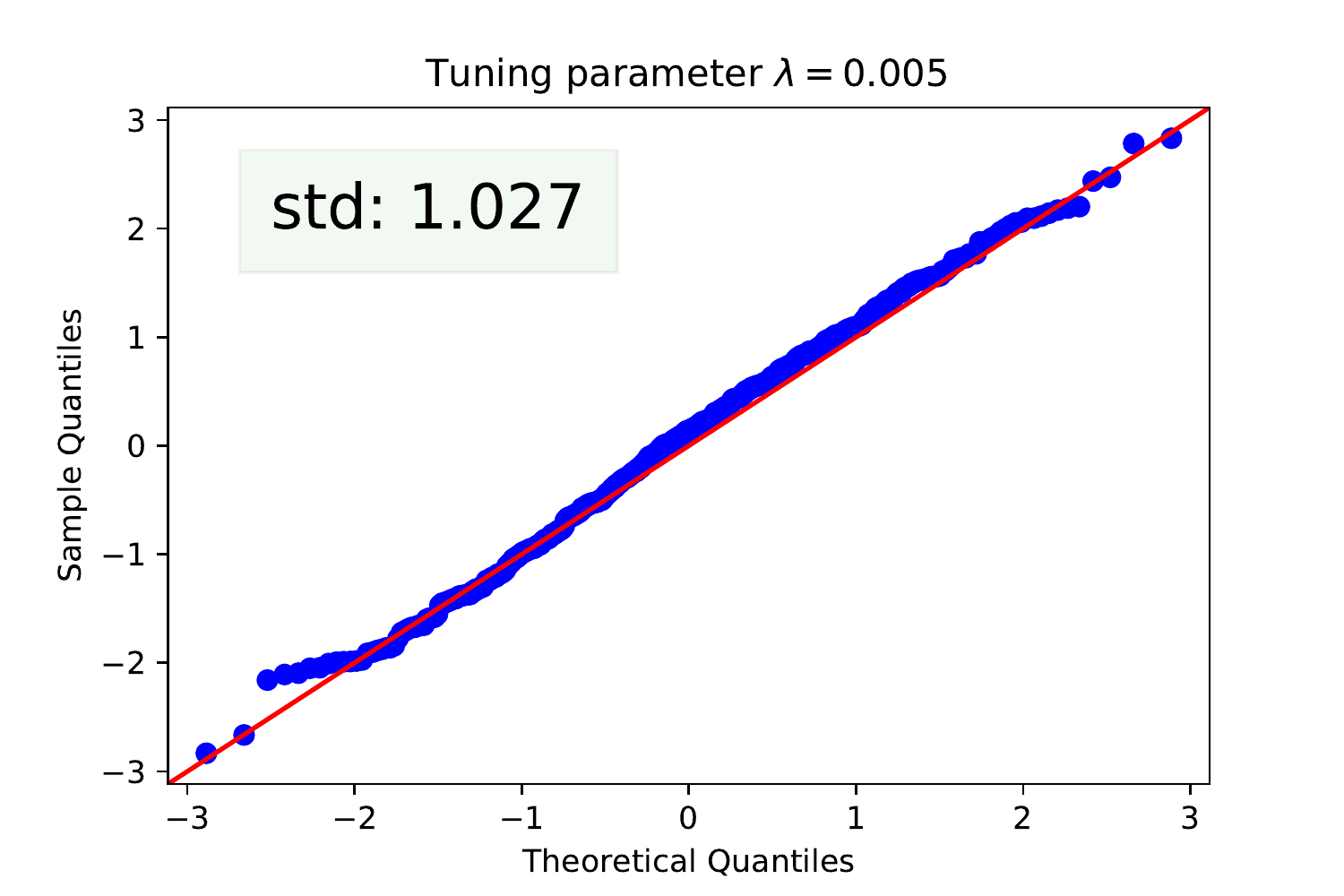}
    & 3.23 $\pm$0.45
    & 0.32 $\pm$0.11
    \\
    \hline
    0.01
    & \includegraphics[width=1.2in]{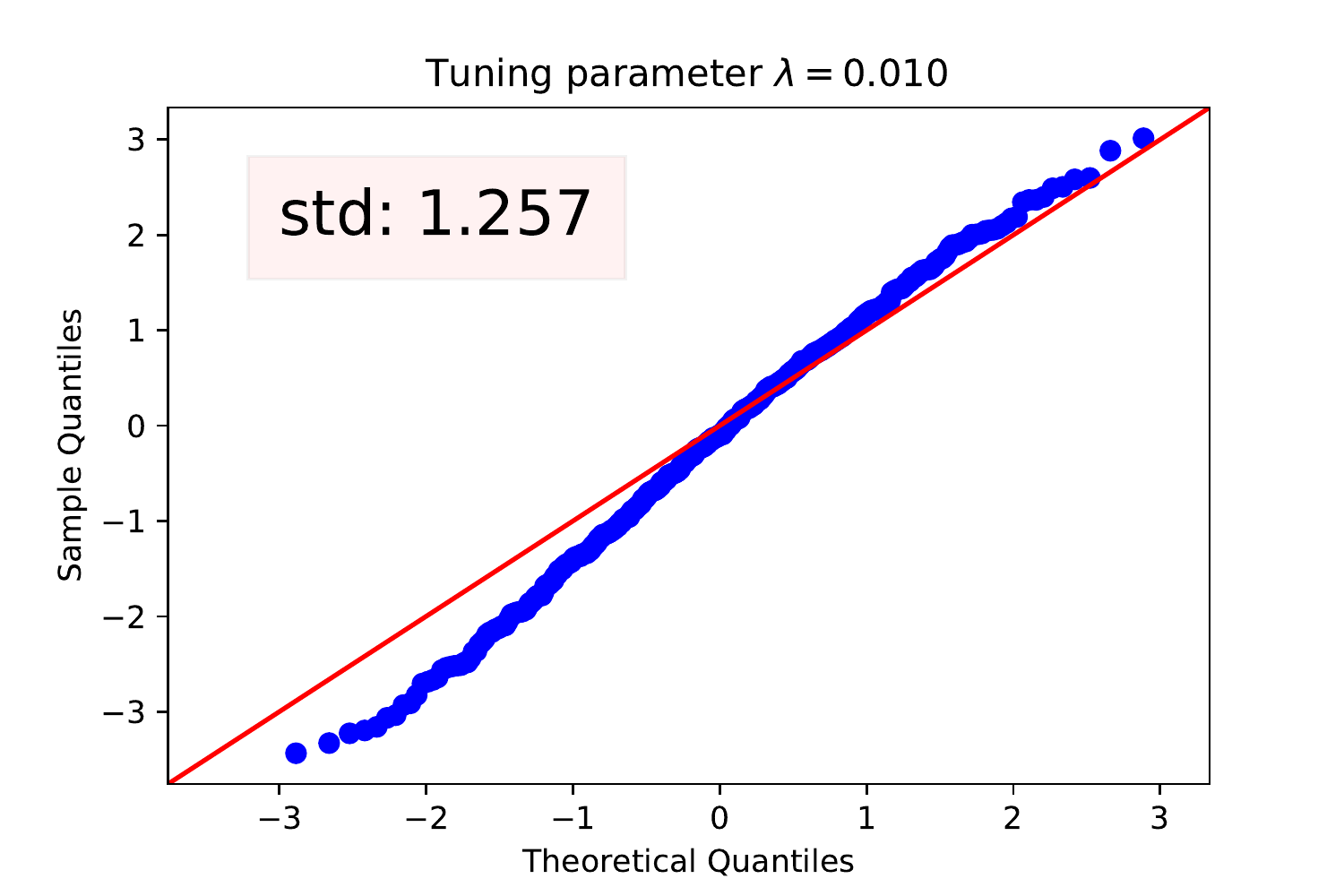}
    & \includegraphics[width=1.2in]{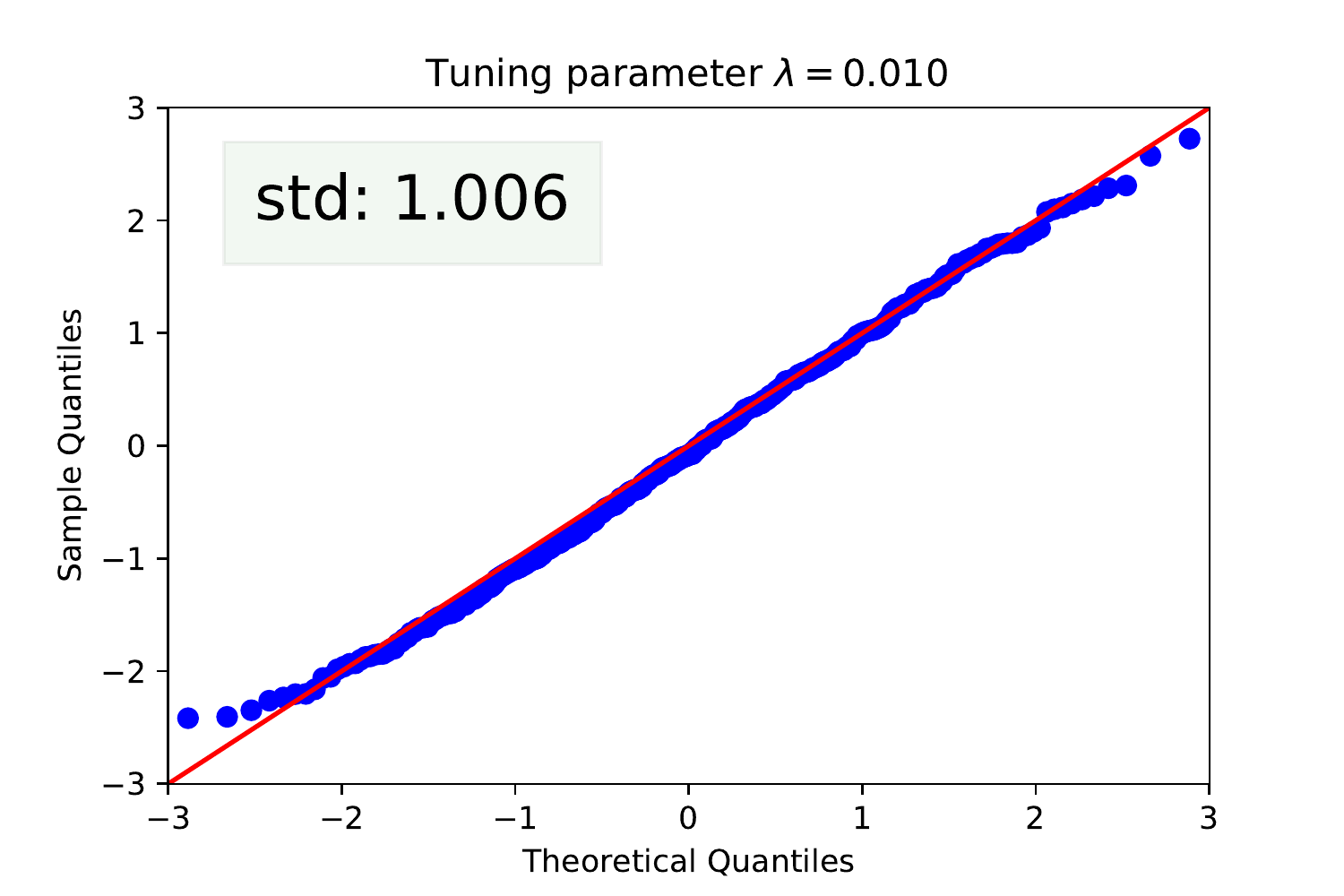}
    & 2.62 $\pm$0.41
    & 0.40 $\pm$0.12
    \\
    \hline
    0.05
    & \includegraphics[width=1.2in]{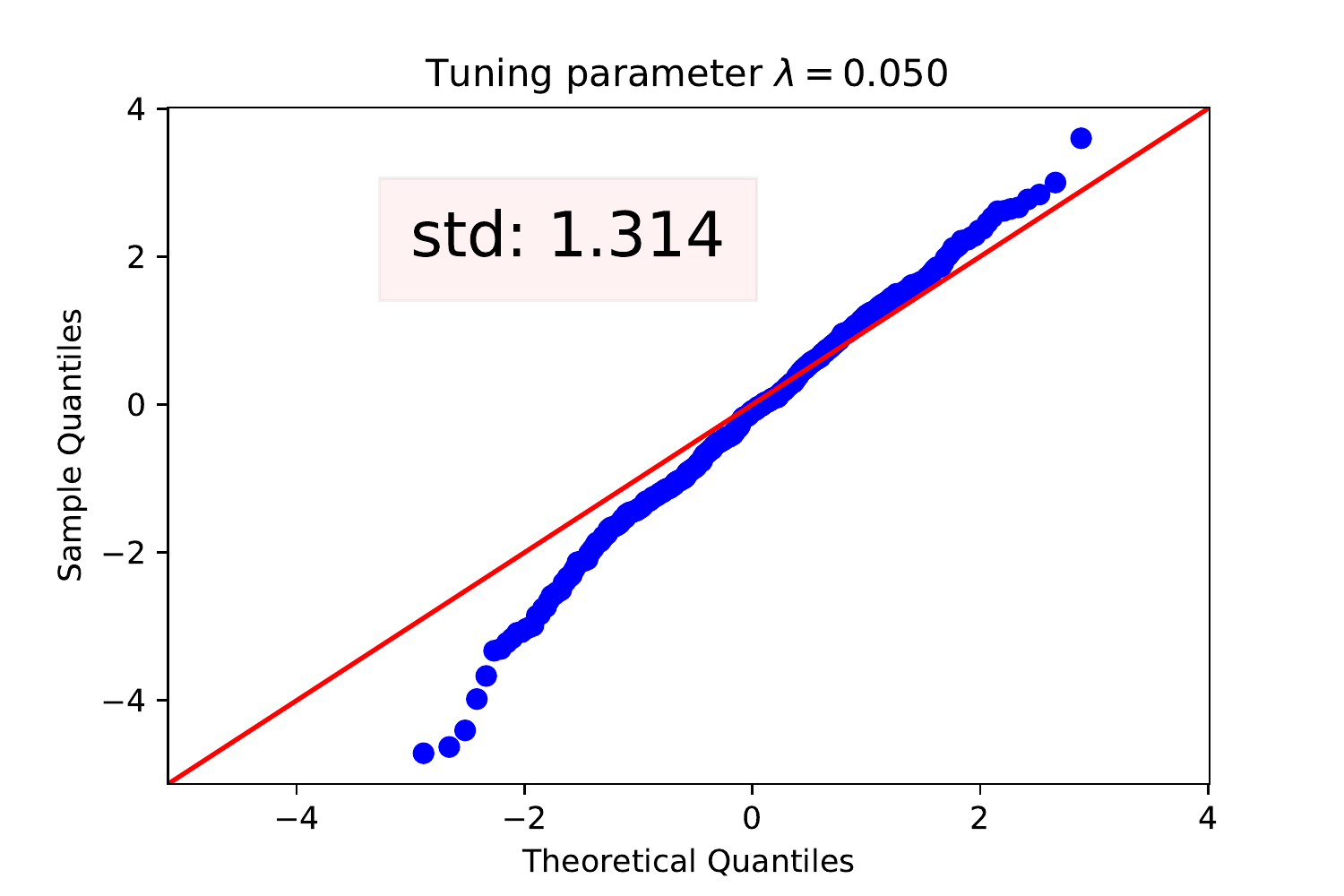}
    & \includegraphics[width=1.2in]{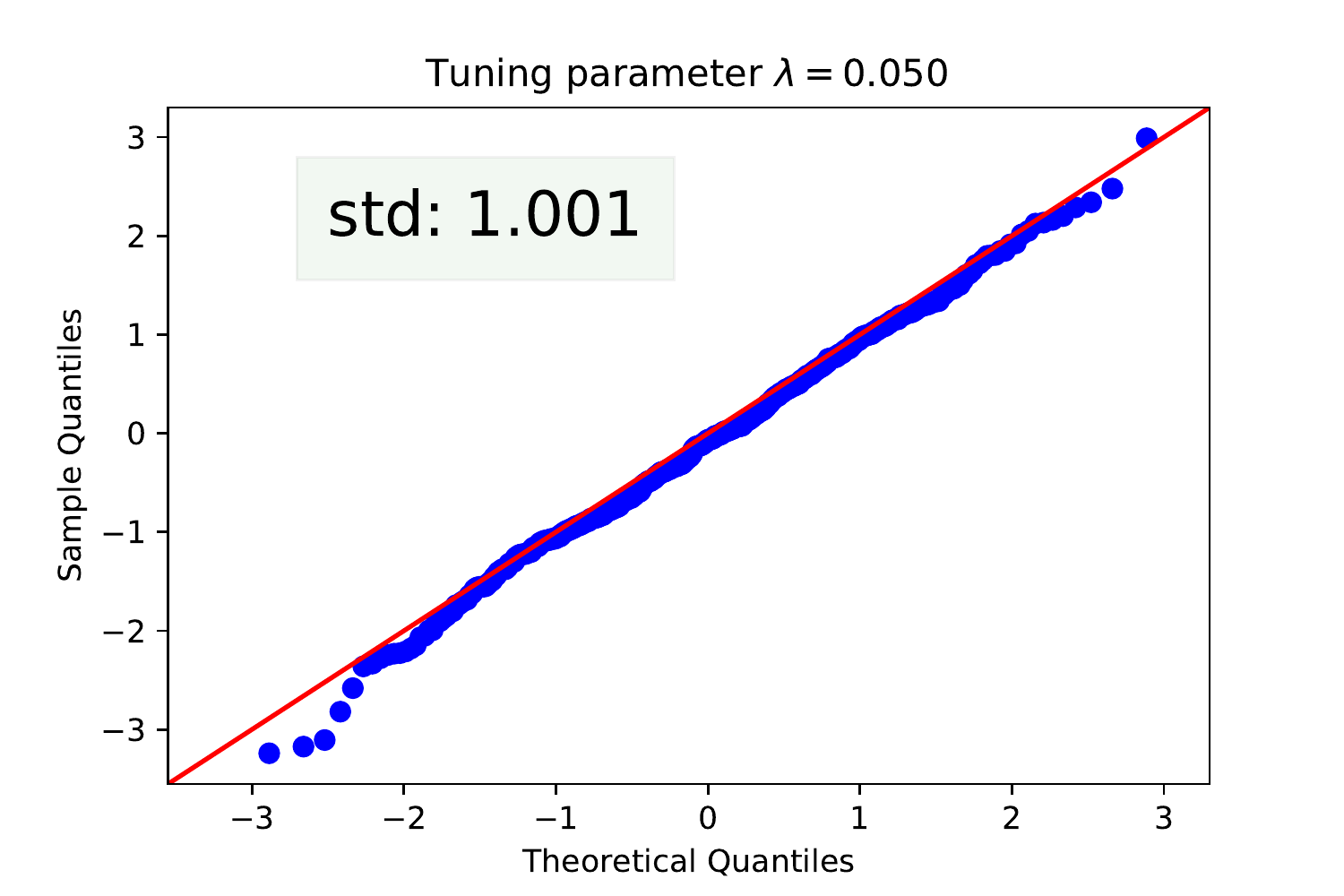}
    & 4.39 $\pm$0.86
    & 1.81 $\pm$0.39
    \\
    \hline
    0.1
    & \includegraphics[width=1.2in]{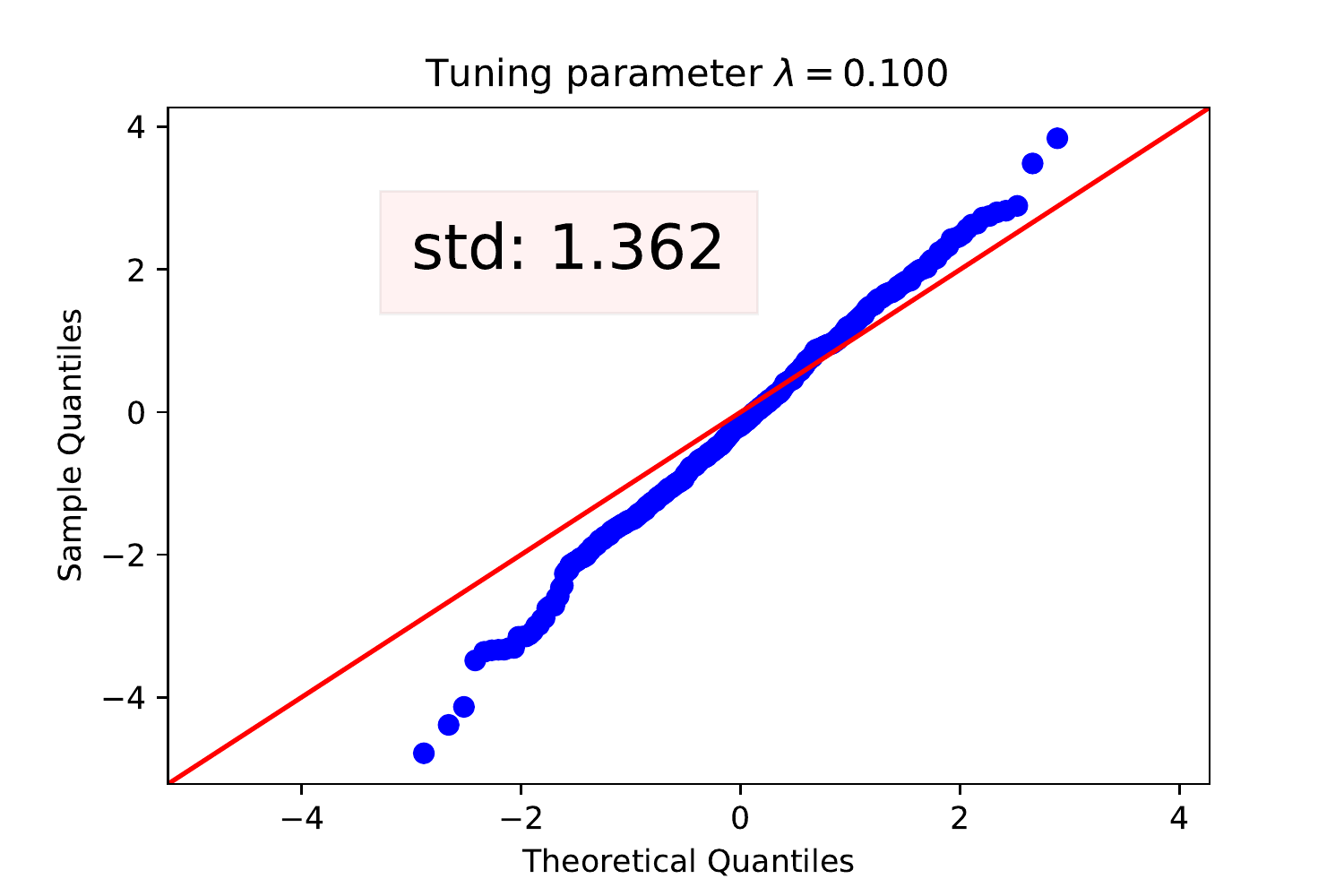}
    & \includegraphics[width=1.2in]{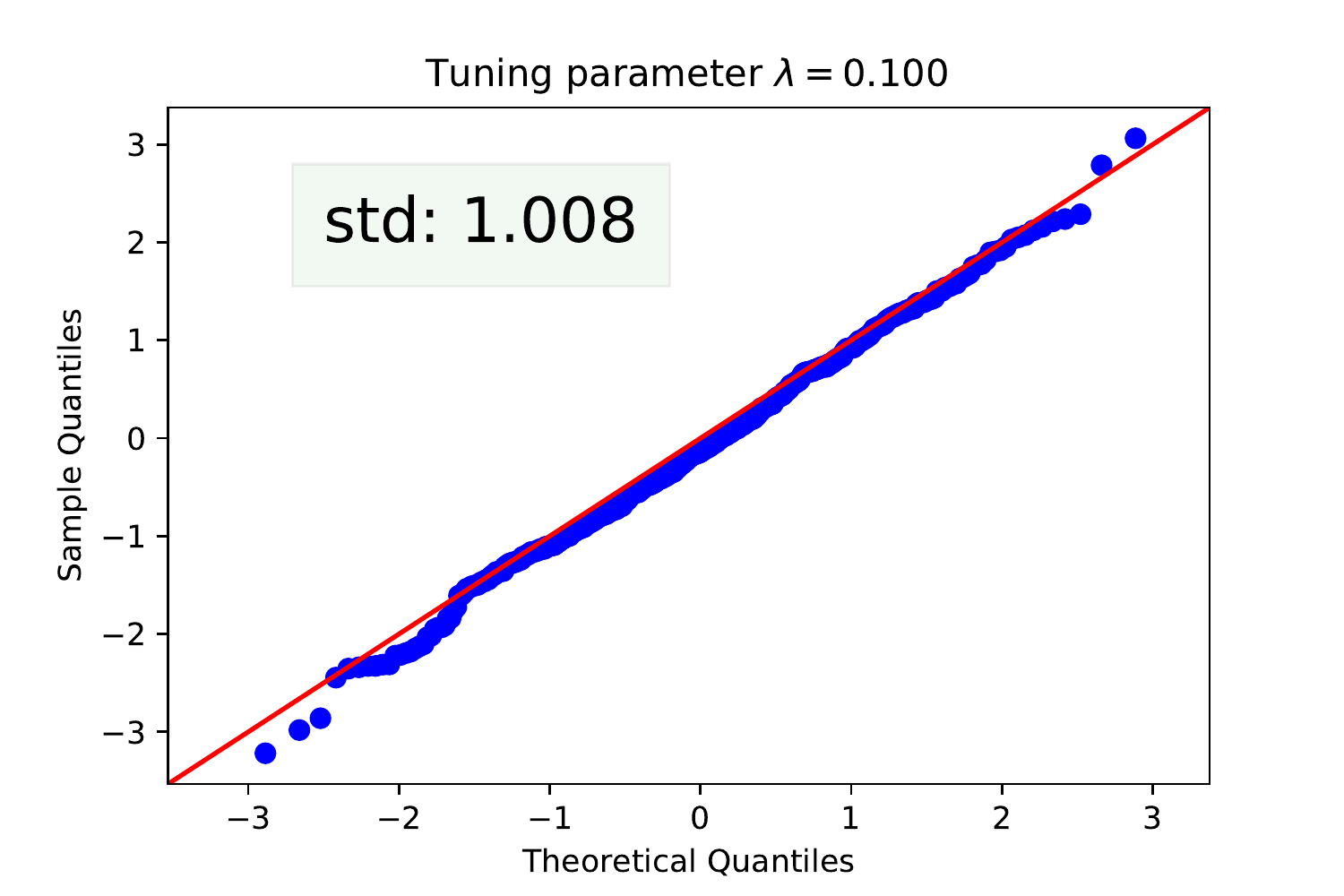}
    & 11.70 $\pm$2.40
    & 5.6 $\pm$1.14
    \\
    \hline
\end{tabular}
\caption{
    \small
    Standard normal QQ-plots of $\langle \ba_0,\DeBias-\bbeta\rangle /V_0^{1/2}$ with $V_0=\|\by-\bX\hbbeta\|^2$ (second column)
    and with $V_0 = \hat V(\theta)=\|\by-\bX\hbbeta\|^2 + (n-\df)\langle \ba_0,\hbbeta-\bbeta\rangle^2$ (third column),
    prediction error $\|\bSigma^{1/2}(\hbbeta-\bbeta)\|^2$ (fourth column)
    and squared estimation error in direction $\ba_0$ (fifth column)
    for the Lasso \eqref{lasso} for each $\lambda\in\{0.005,0.01, 0.05, 0.1\}$
    in the following setting:
    $(s,n,p,\sigma^2)=(200, 750, 1000,1.0)$,
    coefficient vector $\bbeta$ is $s$-sparse with $\beta_1 = 20$,
    $\beta_j = \pm 1$ for $j=2,...,s$ (independent random signs),
    $\beta_j=0$ for $j>s$;
    covariance matrix  
    $ \bSigma^{-1} = \bI_p + 0.9 s^{-1/2} (\be_1\sgn(\bbeta)^\top + \sgn(\bbeta)\be_1^\top)$,
    direction
    $\ba_0 = \be_1 / (\be_1^\top \bSigma^{-1}\be_1)^{1/2}$
    for $\be_1\in\R^p$ the first canonical basis vector.
    512 repetitions were used and $(\bSigma,\bbeta)$ are the same
    across these repetitions.
    We see that
    $V_0=\|\by-\bX\hbbeta\|^2$ yields an empirical standard deviation
    (std)
    substantially larger than 1 (second column), whereas using
    $V_0=\hat V(\theta)$ repairs this with an std
    close to 1 (third column).
    This choice of $(\bSigma,\bbeta)$ is a minor modification
    of \cite[Example 2.1 and Figure 2]{bellec_zhang2019dof_lasso}:
    it is constructed so that $\langle \ba_0,\hbbeta-\bbeta\rangle^2$
    captures a substantial fraction of the full prediction error
    $\|\bSigma^{1/2}(\hbbeta-\bbeta)\|^2$.
}
\label{figure:lasso-variance}
\end{figure}

The second term in the variance estimates
\eqref{def-hat-V-theta} and \eqref{eq:check-V-variance-def} 
is necessary for certain $\ba_0$
for the estimate $\hbbeta=\mathbf{0}$
which corresponds to \eqref{eq:intro-hbbeta-g}
with penalty $g(\mathbf{0})=0$ and $g(\bb)=+\infty$ for $\bb\ne \mathbf{0}$.
For $\bSigma=\bI_p$, $\ba_0=\bbeta/\|\bbeta\|$ and $V_*=2\|\bbeta\|^2+\sigma^2$, 
\begin{equation}
\frac{- \xi_0}{\widehat{V}(\theta)}
= \frac{ - n \langle \ba_0,\bbeta\rangle + \bz_0^\top\by}{
    (\|\by\|^2 + n \langle \ba_0, \bbeta\rangle^2)^{1/2}
}
= \frac{{(nV_*)^{-1/2}}\sum_{i=1}^n
    (
    {-\|\bbeta\|} + z_{0i}\eps_i + \|\bbeta\| z_{0i}^2
    )
    }{
   (\|\by\|^2/n + \|\bbeta\|^2)^{1/2}{/V_*^{1/2}}
}.
\label{zero-estimator-variance}
\end{equation}
By the CLT, the numerator of the rightmost
quantity converges to $N(0,1)$ 
and in the denominator, 
$(\|\by\|^2/n + \|\bbeta\|^2)/V_* \to^\P 1$ 
by the weak law of large numbers, so that \eqref{zero-estimator-variance}
converges in law to $N(0,1)$ by Slutsky's theorem.
On the other hand, 
if the variance estimate $\|\by-\bX\hbbeta\|^2$ is used instead
of $\widehat{V}(\theta)$ in the denominator, 
the CLT
$-\xi_0/\|\by\| \to^d N(0,V_*/(\|\bbeta\|^2 + \sigma^2))$ still holds but 
the asymptotic variance is
$1 + (1+\sigma^2/\|\bbeta\|^2)^{-1}>1$.

\subsection{Relaxing strong convexity when $p>n$}
\label{sec:relaxing-strong-convex-lasso-GL}

The previous theorems are valid under \Cref{assum:main}:
Either $\gamma<1$ and $g$ is an arbitrary convex function,
or $\gamma\ge 1$ and $g$
is required to be strongly convex with parameter $\mu$.
If $p/n>1$ and the penalty $g$ is not strongly convex,
the techniques of the present paper still provide
asymptotic normality results under additional assumptions
as shown in the following result.

Consider either the Lasso
\bel{lasso}
\hbbeta = \argmin_{\bb\in\R^p}\left\{\|\by - \bX\bb \|^2/(2n) + \lam \|\bb\|_1\right\} 
\eel
for some $\lambda>0$ or the group Lasso norm $\|\cdot\|_{GL}$
and group Lasso $\hbbeta$ defined as
\begin{equation}
\label{group-lasso}
    \hbbeta = \argmin_{\bb\in\R^p}\left\{\|\by - \bX\bb\|^2/(2n) + \|\bb\|_{GL}\right\},
    \quad
    \|\bb\|_{GL} = \sum_{k=1}^K \lambda_k\|\bb_{G_k}\|_{2} 
\end{equation}
where $(G_1,...,G_K)$ is a partition of $\{1,...,p\}$ into $K$ non-overlapping groups
and $\lambda_1,...,\lambda_K>0$ are tuning parameters.
If each $G_k$ is a singleton and $\lambda_k=\lambda>0$ for all $k=1,...,p$, 
then \eqref{group-lasso} reduces to the Lasso \eqref{lasso}.

\begin{restatable}{theorem}{theoremGroupLassoPLargerN}
    \label{thm:lasso-p-larger-than-n}
    Let $\gamma\ge 1, \kappa\in (0,1)$ be constants independent of $\{n,p\}$.
    Consider a sequence of regression problems 
    with $p/n\le \gamma$ and
    invertible $\bSigma$.
    Assume that 
    the group Lasso estimator $\hbbeta$ in \eqref{group-lasso} satisfies  
    \begin{equation}
    \P\bigl(\|\hbbeta\|_0 \le \kappa n/2\bigr)
    \to 1.
    \label{eq:assum-lasso}
    \end{equation}
    If $\ba_0$ is such that 
        $\|\bSigma^{-1/2}\ba_0\|=1$ and
    $\langle \ba_0,\hbbeta-\bbeta\rangle^2/\risk\overset{\P}{\to} 0$
    for {the} $\risk$ in \eqref{oracle-general-g-strongly-convex} then 
    \begin{equation}
        \label{eq:thm_lasso-p-bigger-n}
        \sup_{t\in\R}
    \big|\P\bigl(
        \|\by-\bX\hbbeta\|^{-1}
    \big(
        (n-\df)\langle \ba_0,\hbbeta-\bbeta\rangle  + \bz_0^\top(\by-\bX\hbbeta)
    \big)
    \le t
    \bigr)
    - \Phi(t)
    \big|\to 0.
    \end{equation}
    Furthermore, for any $a_p$ with $a_p\to\infty$
    and $\overline{S}$ in \eqref{set-tilde-S-},
    the relative volume bound given after \eqref{set-tilde-S-} holds,
    and the asymptotic normality 
    \eqref{eq:thm_lasso-p-bigger-n} holds uniformly
    over all $\ba_0\in \bSigma^{1/2}\overline{S}$
    and uniformly over at least 
    $(p-\phi_{\rm cond}(\bSigma)a_p/C^*)$ canonical directions
    in the sense that
    $J_p = \{ j\in[p]: \be_j/\|\bSigma^{-1/2}\be_j\| \in \bSigma^{1/2}\overline{S}\}$ 
    has cardinality
    $|J_p|\ge p-\phi_{\rm cond}(\bSigma)a_p/C^*$. 
    %
\end{restatable}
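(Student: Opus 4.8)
The plan is to reduce \Cref{thm:lasso-p-larger-than-n} to the setting already handled under \Cref{assum:main} by showing that, on the event $E_n=\{\|\hbbeta\|_0\le\kappa n/2\}$ (which has probability $1-o(1)$ by \eqref{eq:assum-lasso}), the group Lasso behaves as a strongly convex estimator \emph{restricted to its own support}. Write $\bX=\bZ\bSigma^{1/2}$ with $\bZ$ having iid $N(0,1)$ entries, let $\Shat$ be the index set of the selected groups, and use the closed forms from \Cref{table:hbH,table:bw0}, justified by \Cref{lemma:gradient-group-lasso}: on $E_n$ one has $\hbH=\bX_{\Shat}(\bX_{\Shat}^\top\bX_{\Shat}+\bM)^{-1}\bX_{\Shat}^\top$ and $\bw_0=\bX_{\Shat}(\bX_{\Shat}^\top\bX_{\Shat}+\bM)^{-1}(\ba_0)_{\Shat}$ for a positive semidefinite $\bM$. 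Consequently $\hbH$ is positive semidefinite with eigenvalues in $[0,1]$ and $\rank\hbH\le|\Shat|\le\kappa n/2$, so $\df=\trace\hbH\le\kappa n/2$, which gives $n-\df\ge(1-\kappa/2)n$ for free (replacing \Cref{lemma:n-df-strongly-convex}) and $\|\bI_n-\hbH\|_F^2\le\trace(\bI_n-\hbH)\le n$.

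Next I would supply a substitute for strong convexity. On $E_n$, $\hbbeta$ solves the restriction of \eqref{group-lasso} to the coordinates in $\Shat$, whose quadratic part has Hessian $\bX_{\Shat}^\top\bX_{\Shat}/n$; writing the thin SVD $(\bSigma^{1/2})_{\Shat}=\bU_{\Shat}\bS_{\Shat}\bV_{\Shat}^\top$ with $\bU_{\Shat}$ having orthonormal columns, one checks $\bX_{\Shat}^\top\bX_{\Shat}/n\succeq\mu_{\mathrm{eff}}\,\bSigma_{\Shat,\Shat}$ with the $\bSigma$-free quantity $\mu_{\mathrm{eff}}:=\phi_{\min}(\bU_{\Shat}^\top\bZ^\top\bZ\bU_{\Shat}/n)$, so the restricted objective is $\mu_{\mathrm{eff}}$-strongly convex with respect to $\|\bSigma^{1/2}\cdot\|$. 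For any fixed support $S$ the matrix $\bZ\bU_S$ is an $n\times|S|$ iid standard Gaussian, so $\phi_{\min}(\bU_S^\top\bZ^\top\bZ\bU_S)$ is its squared smallest singular value, and the integrability estimate of \Cref{appendix:integrability-edelman} yields $\mu_{\mathrm{eff}}\ge c(\gamma,\kappa)>0$ with probability $1-o(1)$ and $\E[\mu_{\mathrm{eff}}^{-1}]=O(1)$. The same SVD reduction, together with $(\bA+\bM)^{-1}\bA(\bA+\bM)^{-1}\preceq\bA^{-1}$ for $\bA\succ0$, $\bM\succeq0$, and $\|\bSigma^{-1/2}\ba_0\|=1$, gives the $\bSigma$-free bound $\|\bw_0\|^2\le(\bU_{\Shat}^\top\bSigma^{-1/2}\ba_0)^\top(\bU_{\Shat}^\top\bZ^\top\bZ\bU_{\Shat})^{-1}(\bU_{\Shat}^\top\bSigma^{-1/2}\ba_0)\le\big(n\,\mu_{\mathrm{eff}}\big)^{-1}=O_\P(1/n)$. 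Re-running \Cref{lemma:Deltas-a-b-c-d-strongly-convex,lemma:Delta_n-to-0} with $\mu$ replaced by $\mu_{\mathrm{eff}}$ then gives the residual lower bound $\|\by-\bX\hbbeta\|^2\ge\risk\,n\,(c_*(\gamma,\kappa)-O_\P(n^{-1/2}))$ and the prediction/estimation bound $\E[\|\bSigma^{1/2}\bh\|^2]\le C(\gamma,\kappa)\risk$.

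With these bounds, the argument of \Cref{thm:from-consistency,thm:main-result} applies, conditionally on $(\bep,\bX\bQ_0)$, to $f(\bz_0)=\bX\hbbeta-\by$ from \eqref{def-f-xi}: one controls $\delta_1^2(\ba_0)$ of \eqref{definition-eps_0-a_0} by
\[
\delta_1^2(\ba_0)
\;\lesssim\;
\frac{\E_0\big[\|\bI_n-\hbH\|_F^2\,\langle\ba_0,\bh\rangle^2\big]}{c_*(\gamma,\kappa)\,n\,\risk}
+\E_0\big[\|\bw_0\|^2\big]+O_\P(n^{-1/2})
\;\lesssim\;
\frac{\E_0\big[\langle\ba_0,\bh\rangle^2\big]}{\risk}+O_\P(n^{-1/2}),
\]
and applies \Cref{thm:L2-distance-from-normal,thm:consistency-variance} via $\eps_1^2,\epsdoublebar_1^2\le2\delta_1^2(\ba_0)$; the rank-one term $\bw_0(\by-\bX\hbbeta)^\top$, i.e.\ the analogue of \Cref{lemma:negligible-term}, is negligible because $\langle\bw_0,\by-\bX\hbbeta\rangle^2\le\|\bw_0\|^2\|\by-\bX\hbbeta\|^2$ with $\|\bw_0\|^2=O_\P(1/n)$. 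Tracing \eqref{-xi0}--\eqref{def-hat-theta} and using $\bz_0=\bX\bSigma^{-1}\ba_0$ under \eqref{normalization-a_0}, the quantity inside \eqref{eq:thm_lasso-p-bigger-n} equals $\|f(\bz_0)\|^{-1}(-\xi_0-\langle\bw_0,\by-\bX\hbbeta\rangle)$, so \eqref{eq:thm_lasso-p-bigger-n} is exactly the central limit theorem $(-\xi_0-\langle\bw_0,\by-\bX\hbbeta\rangle)/\|f(\bz_0)\|\to^d N(0,1)$ with Kolmogorov rate, delivered by \Cref{thm:consistency-variance} and Slutsky. For the uniform conclusion, the set $\overline S$ of \eqref{set-tilde-S-} and its relative-volume bound depend only on $\bh$ and are inherited verbatim from \Cref{thm:main-result}; for $\ba_0=\bSigma^{1/2}\bv$ with $\bv\in\overline S$, the definition of $\overline S$ combined with $\E[\|\bSigma^{1/2}\bh\|^2]\le C(\gamma,\kappa)\risk$ forces $\E[\langle\ba_0,\bh\rangle^2]/\risk\le C(\gamma,\kappa)C^*/a_p\to0$ uniformly, so the $\delta_1^2$-bound and hence the Kolmogorov bound are uniform over $\ba_0\in\bSigma^{1/2}\overline S$; the count of canonical directions follows from $\sum_{j\in[p]}\E[\langle\be_j,\bh\rangle^2/(\|\bSigma^{-1/2}\be_j\|^2\|\bSigma^{1/2}\bh\|^2)]\le\phi_{\rm cond}(\bSigma)$, exactly as in \Cref{thm:main-result}.

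The main obstacle is that $\Shat$ is data-dependent: one cannot simply freeze it to a deterministic $S$, and a crude union bound over all $\sum_{s\le\kappa n/2}\binom{p}{s}=\exp(\Theta(n))$ candidate supports is too lossy against the $\exp(-\Theta(n))$ Gaussian concentration available when $\gamma\ge1$ (equivalently, $\log p\asymp\log n$ is not small compared with $n$). Circumventing this—so that the Wishart integrability lemma of \Cref{appendix:integrability-edelman}, applied to the iid-Gaussian block $\bZ\bU_{\Shat}$ after the $\bSigma$-free SVD reduction above, controls $\mu_{\mathrm{eff}}^{-1}$ and $\|\bw_0\|^2$ in the relevant integrated sense—is the principal technical difficulty, and is where the factor $1/2$ in the sparsity constraint $\|\hbbeta\|_0\le\kappa n/2$ and the finiteness of $\gamma$ are used; once it is in place, the remainder is the routine adaptation of the lemmas of \Cref{sec:application-de-biasing} with $\mu$ replaced by a positive constant depending only on $(\gamma,\kappa)$.
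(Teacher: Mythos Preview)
Your proposal identifies many of the right ingredients but has two substantive gaps, and one of your stated beliefs is actually false.

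\textbf{The union bound works.} You assert that a union bound over $\binom{p}{\lfloor\kappa n/2\rfloor}=\exp(\Theta(n))$ supports is too lossy. It is not: the Wishart smallest-eigenvalue tail at level $\eps$ behaves like $\eps^{(n-d+1)/2}$, so for a small enough \emph{constant} $\eps$ depending only on $(\gamma,\kappa)$ this beats the combinatorial factor $e^{n\log(e\gamma)}$. The paper makes exactly this argument (\Cref{lemma:edelman}) and obtains a high-probability restricted-eigenvalue event $\Omega_E$. What the union bound does \emph{not} give is an integrated bound $\E[\mu_{\mathrm{eff}}^{-k}]=O(1)$ over the random support, which is what your plan of ``re-running the lemmas with $\mu$ replaced by $\mu_{\mathrm{eff}}$'' would require. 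That route does not seem viable.

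\textbf{Your restricted eigenvalue is over the wrong cone.} Your $\mu_{\mathrm{eff}}=\phi_{\min}(\bU_{\Shat}^\top\bZ^\top\bZ\bU_{\Shat}/n)$ controls $\|\bX_{\Shat}\bv_{\Shat}\|/\|\bSigma^{1/2}\bv\|$ only for $\bv$ supported on $\Shat$. But $\bh=\hbbeta-\bbeta$ is not supported on $\Shat$ when $\bbeta$ is dense, and the Lipschitz comparison needs $\hbbeta-\tbbeta$, whose support can be as large as $\kappa n$. The paper therefore takes the infimum over subspaces $\{\bu-t\bbeta:\|\bu\|_0\le\kappa n,\ t\in\R\}$, i.e.\ augments each sparse subspace by the line $\R\bbeta$; this is precisely why the assumption is $\|\hbbeta\|_0\le\kappa n/2$ (so that the union of two supports has size $\le\kappa n$) and why \Cref{lemma:edelman} is stated the way it is.

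\textbf{The missing device: a Kirszbraun extension.} Even with a perfect high-probability event $\Omega$, you cannot apply \Cref{thm:L2-distance-from-normal,thm:consistency-variance} to $f(\bz_0)=\bX\hbbeta-\by$ conditionally on $(\bep,\bX\bQ_0)$, because those results need $\E_0[\|f\|^2+\|\nabla f\|_F^2]<\infty$, and you have no control of $f$ or $\nabla f$ for $\bz_0\notin U_0$. Conditioning on $\Omega$ destroys the Gaussianity of $\bz_0$, so that does not help either. The paper's solution is: (i) prove $\Omega$ is \emph{open} (via strict KKT and continuity), (ii) show $f$ is $L$-Lipschitz on $U_0$ with $L=C(\gamma,\kappa)\risk^{1/2}$ using the restricted eigenvalue at both endpoints (this is \eqref{lipschitz-argument-GL}), (iii) extend by Kirszbraun's theorem to a global $L$-Lipschitz $F:\R^n\to\R^n$ with $F=f$ and $\nabla F=\nabla f$ on the open set $U_0$, (iv) apply the CLT to $F$, using $\|\nabla F\|_F^2\le nL^2$ on $\Omega^c$ to handle the bad event, and (v) transfer the conclusion back to $\xi_0$ since $\P(\Omega)\to1$. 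The same extension trick, applied in the direction $\ba_*=\bSigma\bh^*/\|\bSigma^{1/2}\bh^*\|$, is what makes the $\Delta_n^c$ bound go through without moment control of a random $F$. None of this appears in your outline, and it is the heart of the proof.
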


\Cref{thm:lasso-p-larger-than-n} is proved in
\Cref{sec:p-larger-n-without-strong-convexity}.
The strong convexity requirement in \Cref{assum:main}
is relaxed and replaced by assuming the high-probability bound
$\|\hbbeta\|_0\le\kappa n/2$ on the number of non-zero coordinates.
Surprisingly no conditions are required on the true regression
vector $\bbeta$ or on the tuning parameters, although these
quantities affect whether $\P(\|\hbbeta\|_0\le \kappa n/2)\to 1$ is satisfied. 
\Cref{fig:group-lasso} illustrates \Cref{thm:lasso-p-larger-than-n}
on simulated data.

\begin{figure}[b]
    \centering
\begin{tabular}{ccc}
    \includegraphics[width=1.7in]{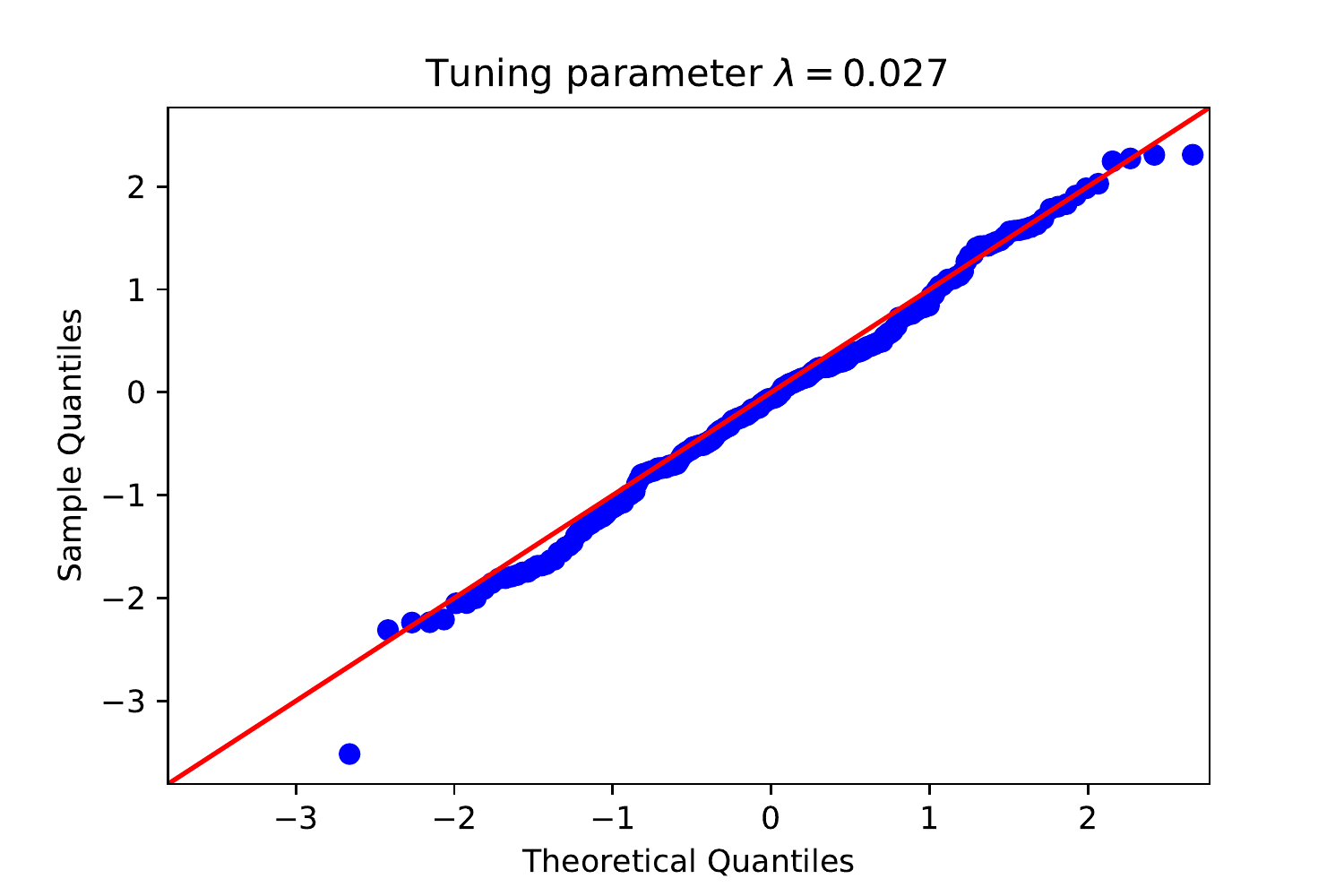}
    &
    \includegraphics[width=1.7in]{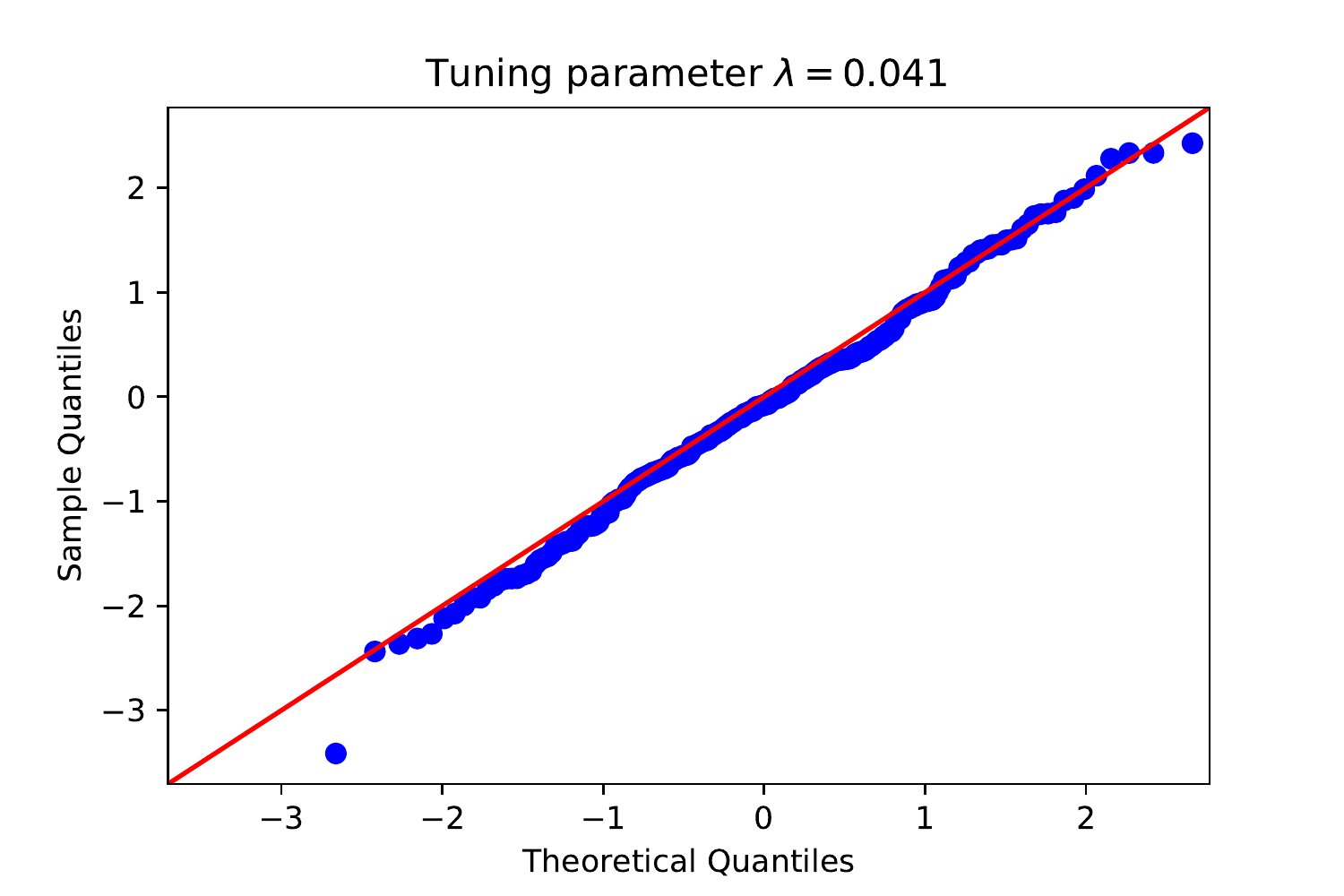}
    &
    \includegraphics[width=1.7in]{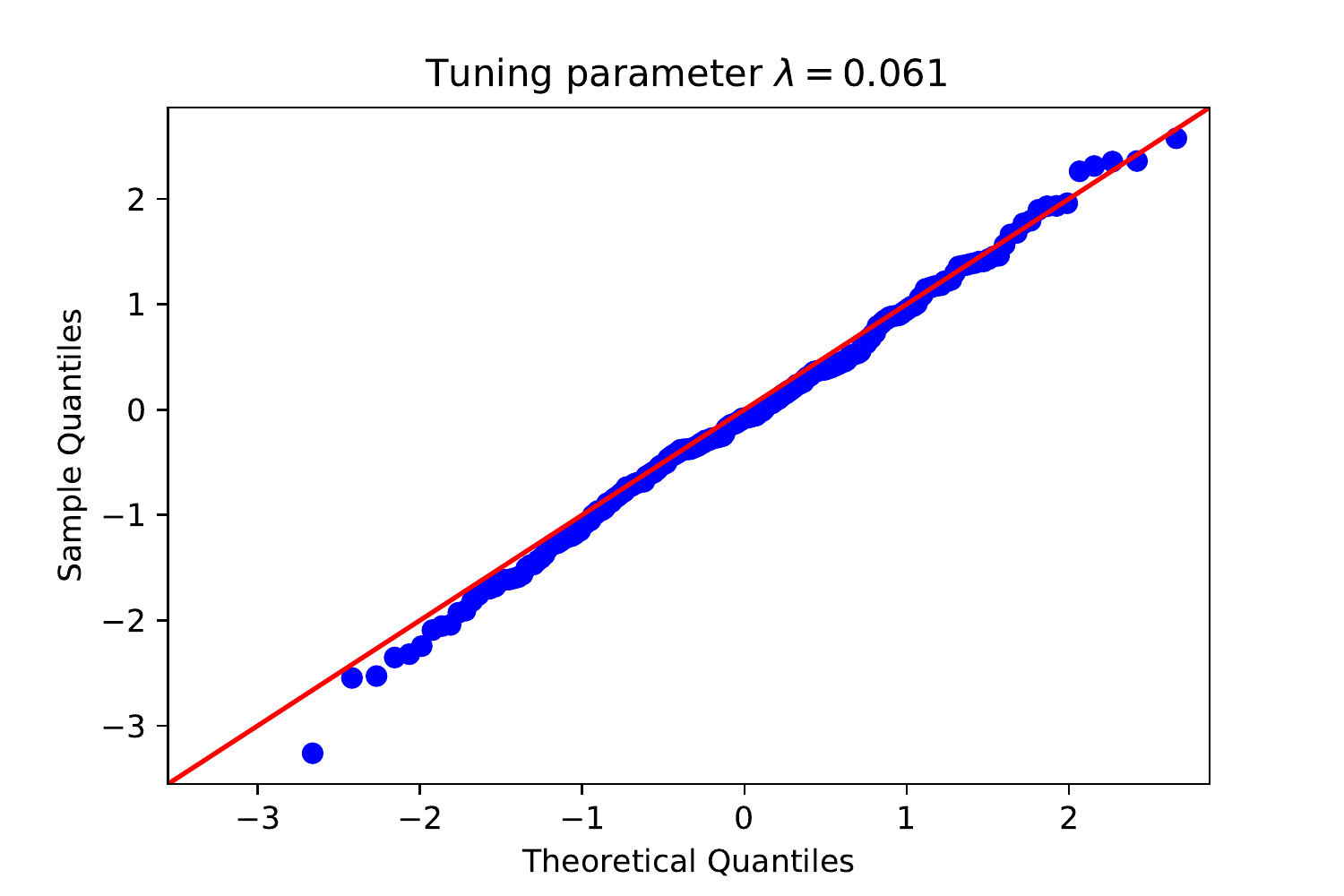}
    \\
    \includegraphics[width=1.7in]{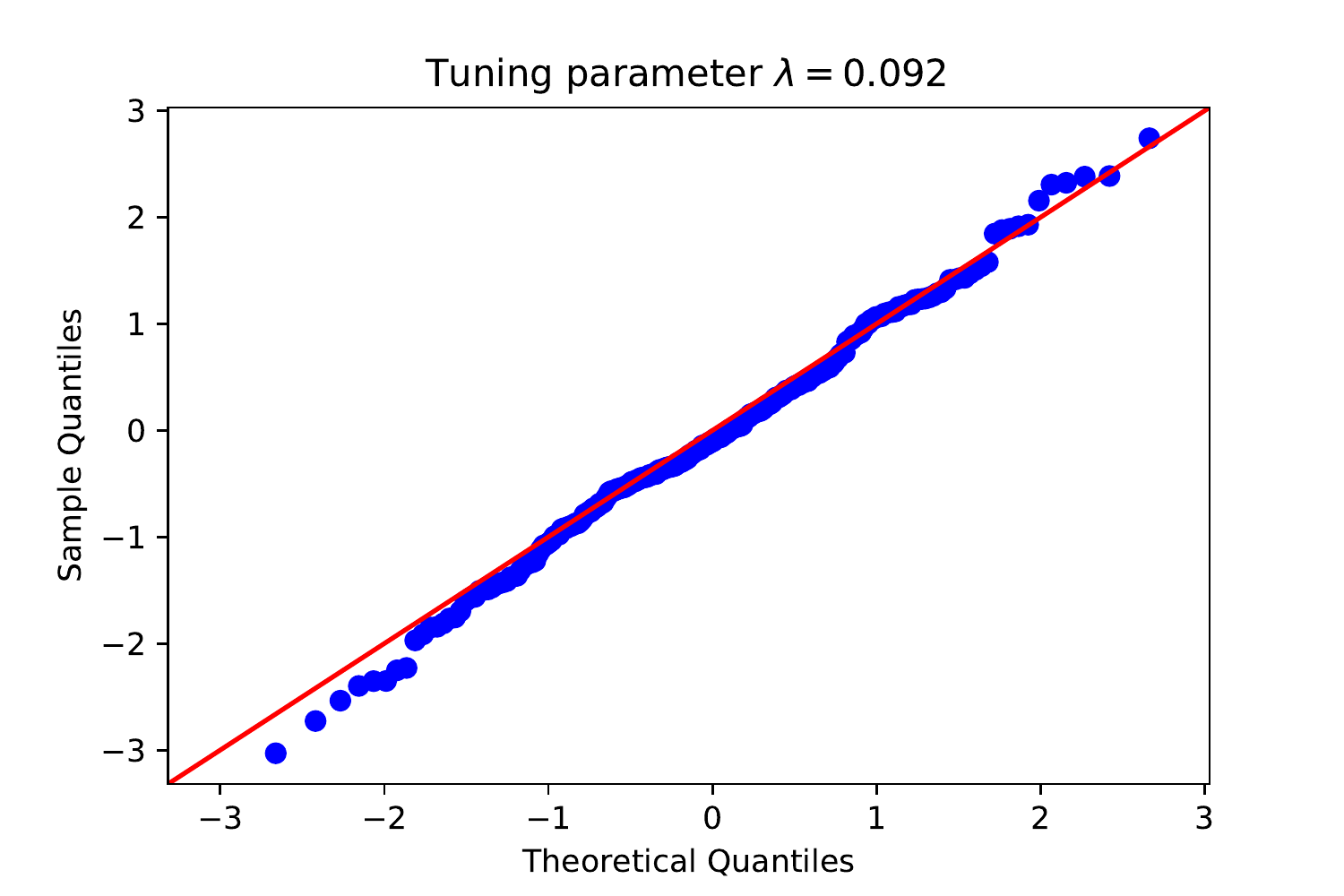}
    &
    \includegraphics[width=1.7in]{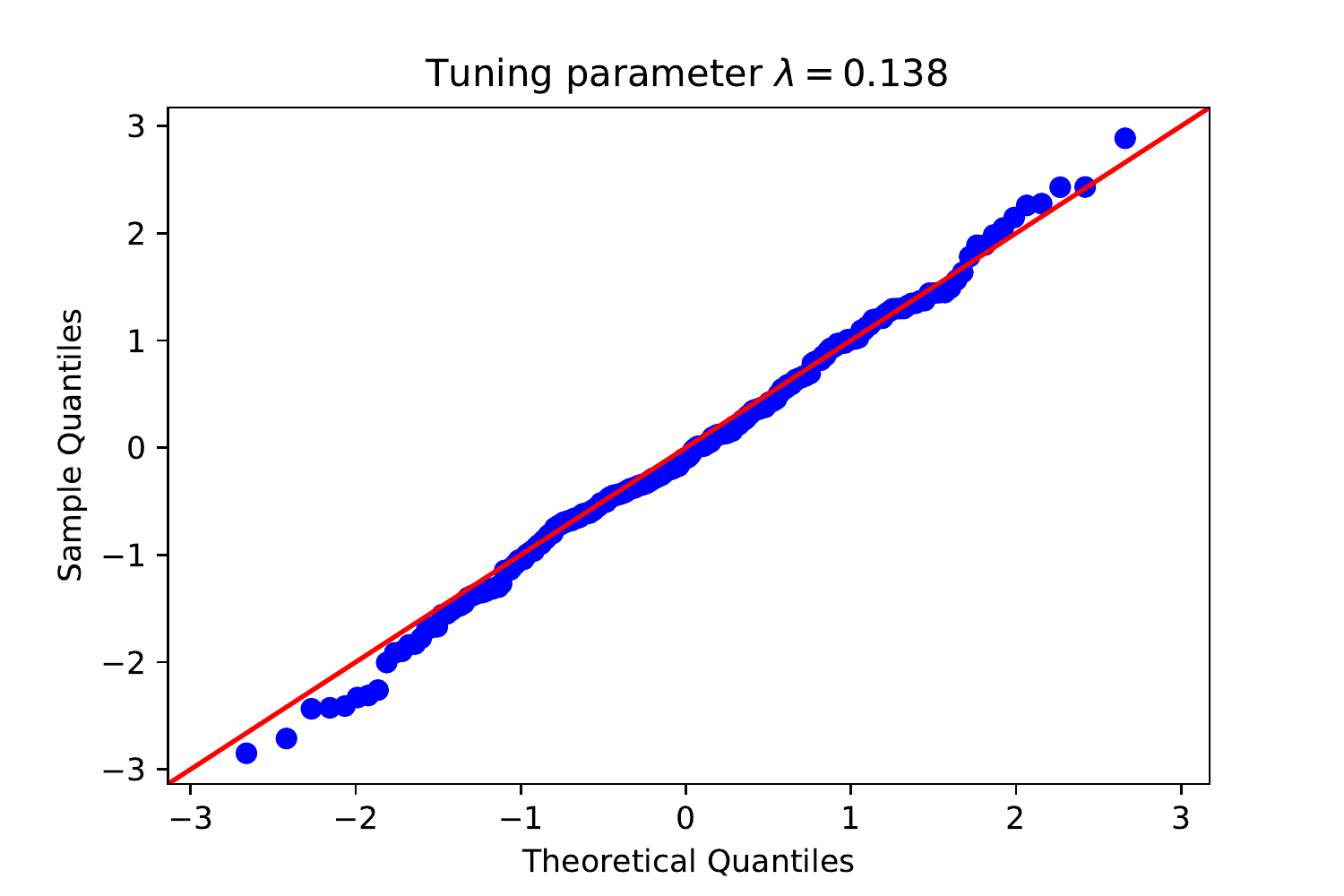}
    &
    \includegraphics[width=1.7in]{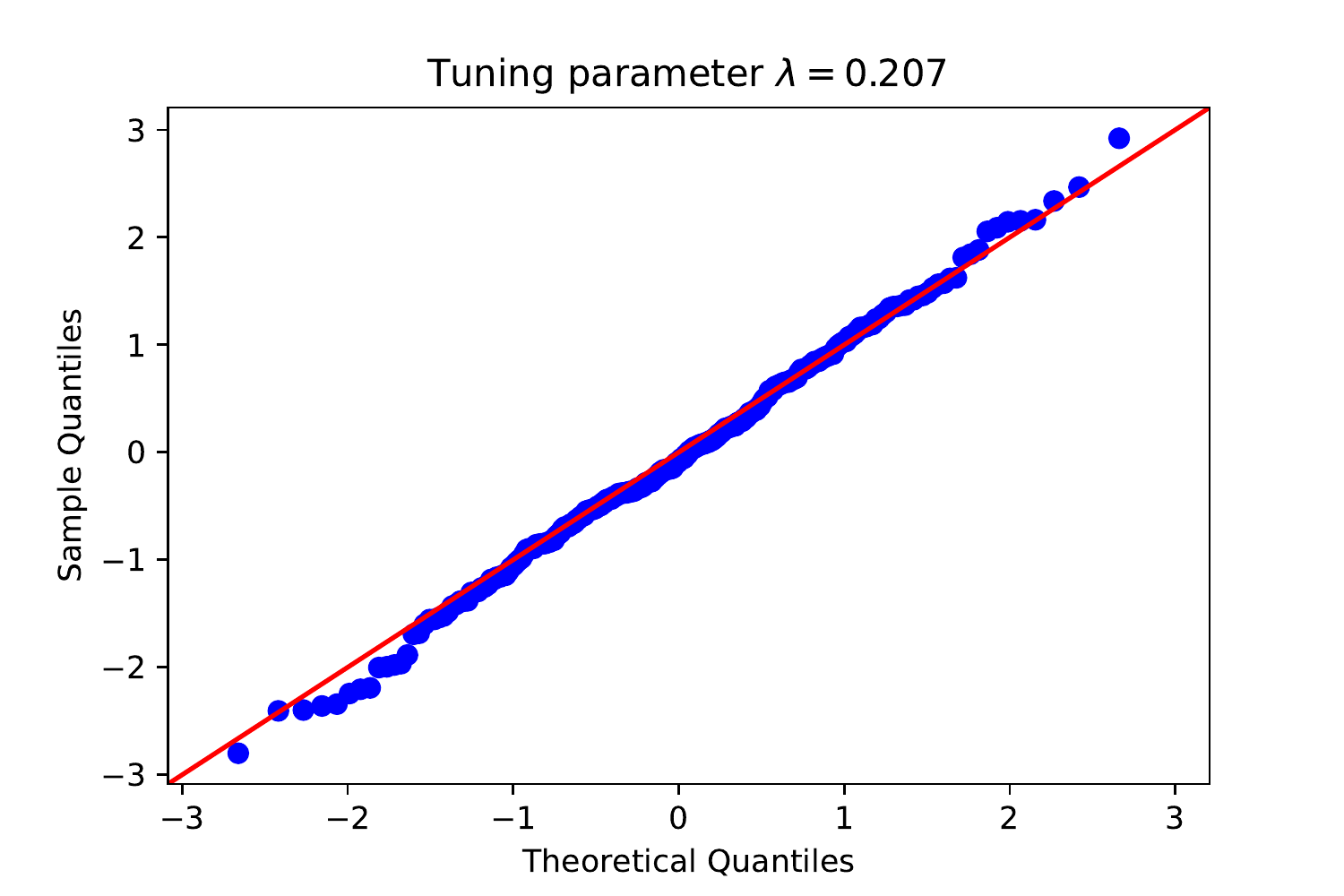}
    \\ \includegraphics[width=1.7in]{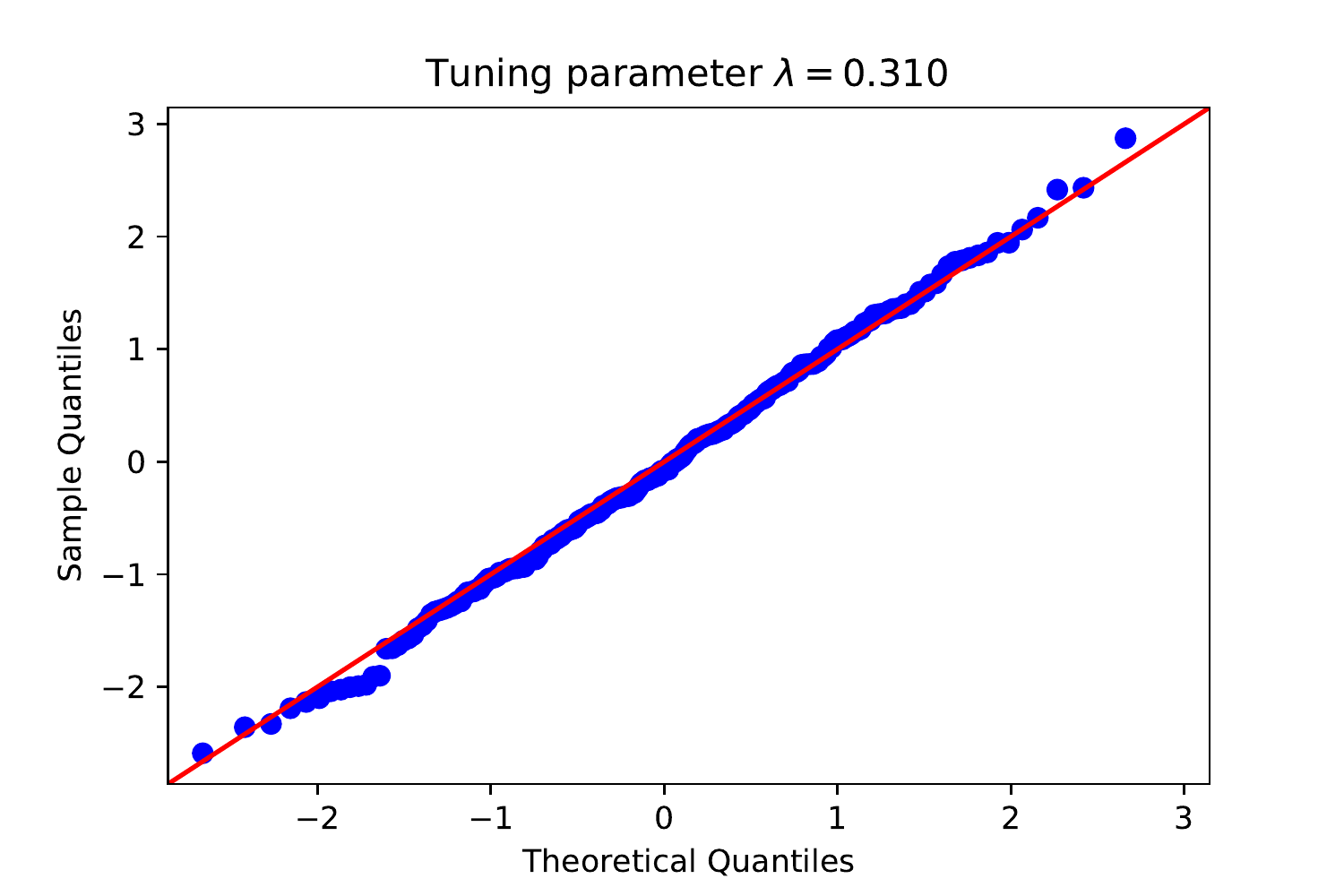}
    &
    \includegraphics[width=1.7in]{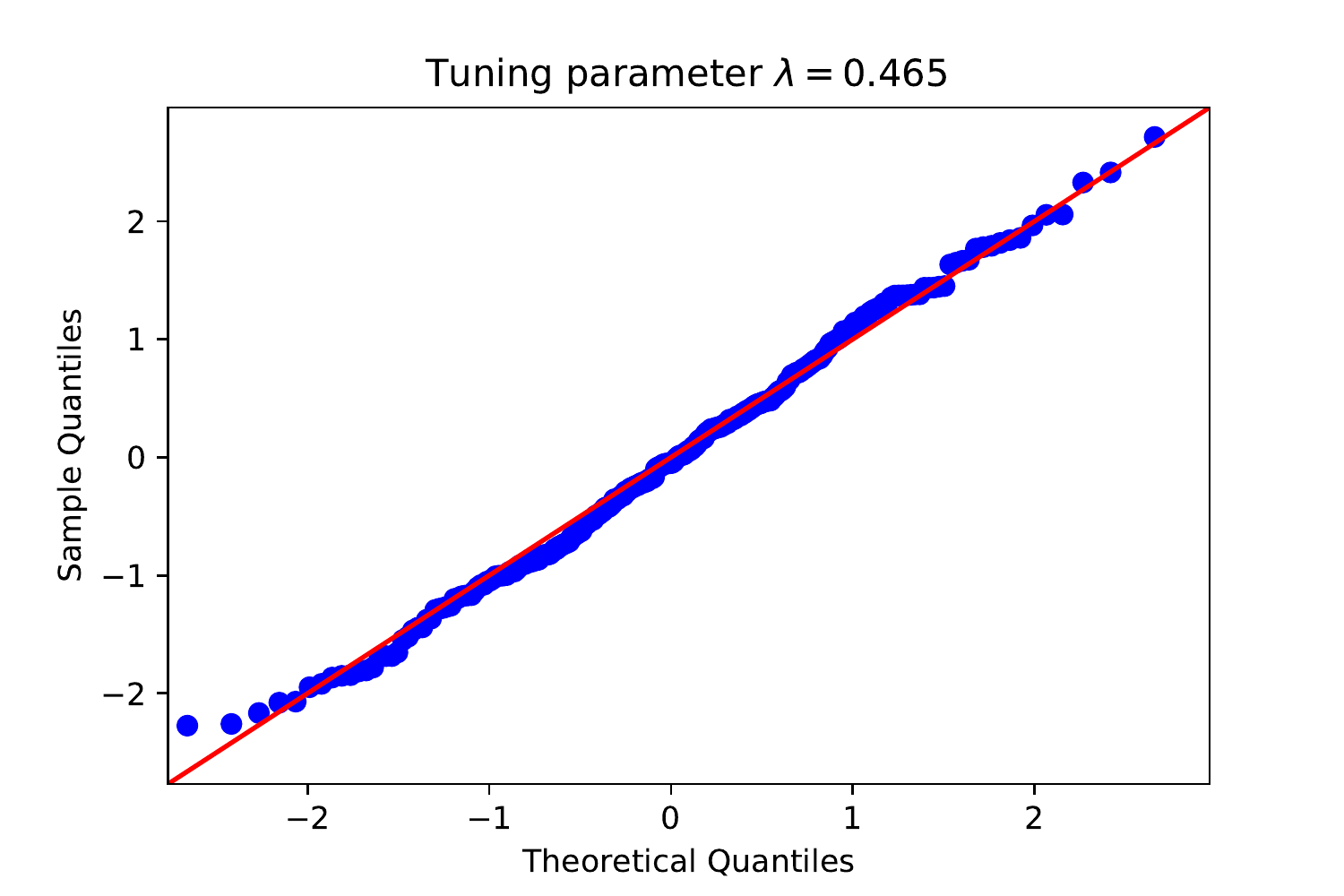}
    &
    \includegraphics[width=1.7in]{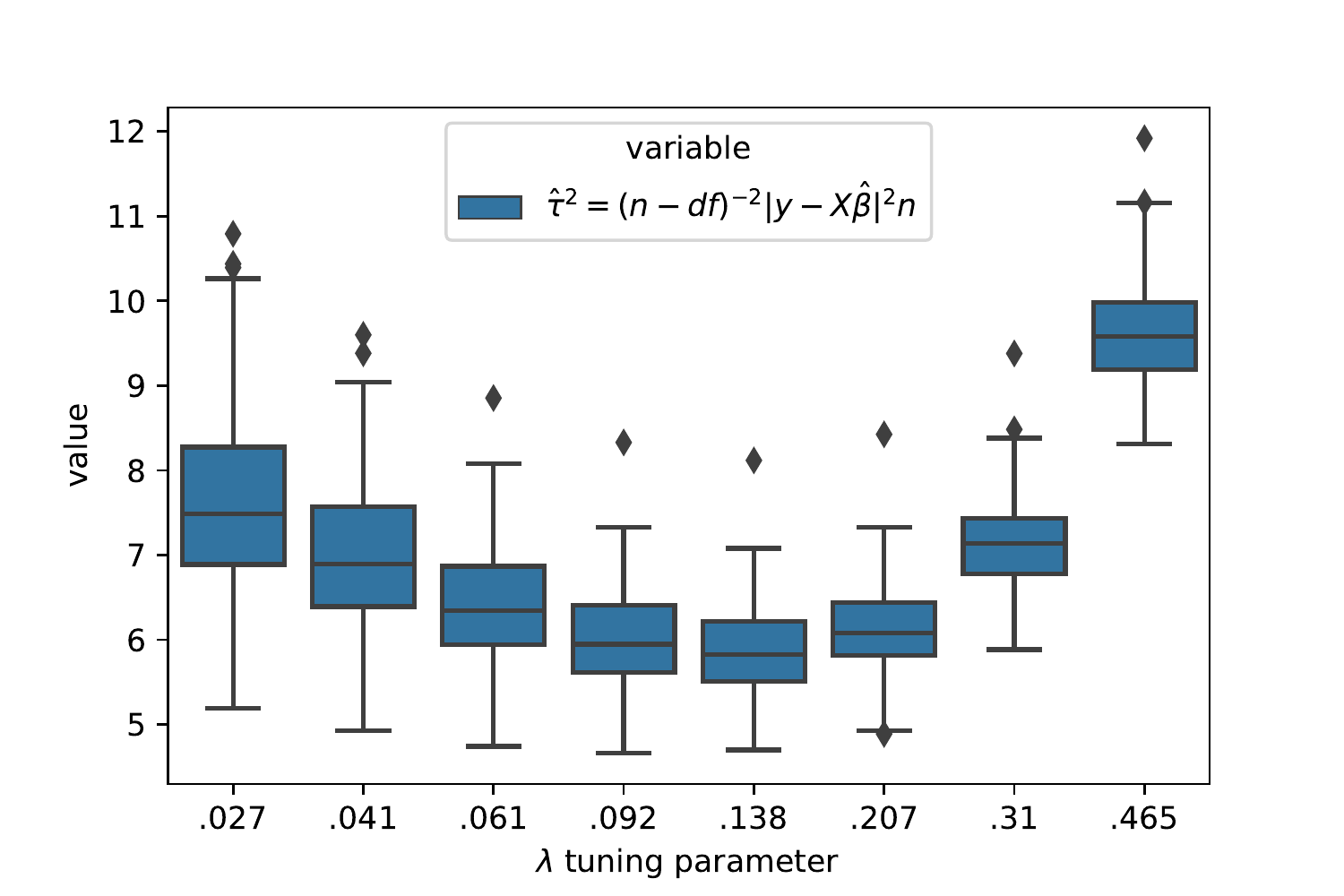}
\end{tabular}
    \caption{
        \small
        Standard normal QQ-plots of 
        $\langle \ba_0, \DeBias-\bbeta\rangle(n-\df)/\|\by-\bX\hbbeta\|$ for
        $(n,p,\sigma^2)=(600,900,2)$ and
        the group-Lasso \eqref{group-lasso}
        with 30 non-overlapping groups each of size 30,
        where all $\lambda_k$ in \eqref{group-lasso} are
        equal to a single parameter $\lambda$.
        The unknown coefficient vector $\bbeta$ is the same across
        all 256 repetitions and has 240 nonzero coefficients,
        all equal to 1 and
        belonging to 8 groups (so that the group sparsiy of $\bbeta$ is 8,
        and within these 8 groups all coefficients are equal to 1).
        The design covariance $\bSigma$ is generated once 
        as $\bSigma=\bW/(5p)$ where $\bW$ has Wishart distribution with covariance
        $\bI_p$ and $5p$ degrees of freedom. This choice of $(\bbeta,\bSigma)$ is 
        the same across all 256 repetitions.
        The direction of interest is
        $\ba_0=\be_1 / \|\bSigma^{-1/2}\be_1\|$ where
        $\be_1\in\R^p$ is the first canonical basis vector.
        The first 8 plots above are standard normal QQ-plots
        across 256 repetitions 
        for 8 different choices of $\lambda$. The ninth plot
        shows, for each $\lambda$,
        boxplots of $\hat\tau^2 = (1-\df/n)^{-2}\|\by-\bX\hbbeta\|^2/n$
        across the 256 repetitions. This
        $\hat\tau$ is proportional to the length of the corresponding
        confidence interval \eqref{eq:narrow-confidence-interval}
        so that the smallest confidence interval \eqref{eq:narrow-confidence-interval} is achieved for $\lambda=0.138$.
    }
\label{fig:group-lasso}
\end{figure}

\paragraph*{Comparison with existing works on the Lasso}
The Lasso is largely the most studied initial estimator
in previous literature on de-biasing and asymptotic normality,
so it provides a level playing field to compare our method
with existing results.
In {the approximate message passing (AMP) literature which includes} 
most existing works in the $n/p \to \gamma$ regime, e.g.
\cite{el_karoui2013robust,donoho2016high,JavanmardM14b} or more recently 
\cite{thrampoulidis2015lasso,miolane2018distribution,thrampoulidis2018precise,sur2018modern},
it is assumed that {$\bSigma=\bI_p$} 
and that the empirical distribution 
{$G_{n,p}(t) = p^{-1}\sum_{j=1}^p I\{\sqrt{n}\beta_j\le t\}$  
converges in distribution and in the second moment to some ``prior'' G
as $n,p\to+\infty$. Assume these conditions 
and consider the $j$-th component
$\debias_j$ of $\DeBias$ in \eqref{eq:De-Bias-hbbeta}
for $\bSigma=\bI_p$, that is,
\bes
\debias_j &=& \hbeta_j + \langle \bX\be_j, \by-\bX\hbbeta\rangle\big/(n-\df)
.
\ees
Then, the Lasso has the interpretation as its soft thresholded de-biased version, 
\bes
\hbeta_j = 
\eta\bigl(
\debias_j
~;~
\lam/(1-\df/n)
\bigr)
\quad\text{ where } \quad
\eta(u ; t) = \sgn(u)(|u|-t)_+
\ees
and the main thrust of the AMP theory is that the joint 
empirical distribution of the de-biased errors and the true coefficients, 
\bes
H_{n,p}(u,t) = p^{-1}\sum_{j=1}^p I\left\{\sqrt{n}\debias_j - \sqrt{n}\beta_j \le u, \sqrt{n}\beta_j\le t\right\}, 
\ees
converges in distribution and the second moment to the limit $H$ with independent 
$N(0,\tau_0)$ and $G$ components, where $\tau_0$} 
is characterized by a system of non-linear equations with
2 or 3 unknowns. 
These non-linear equations depend on the loss (here, the $\ell_2$ loss),
the penalty (here, the $\ell_1$-norm), the distribution of the noise,
as well as the prior distribution
that governs the empirical distribution of the coefficients of $\bbeta$. 
We note that these works typically assume that $\bX$ has $N(0,1/n)$ entries, so 
that their coefficient vector is equivalent to our $\sqrt{n}\bbeta$. 
For instance,
\cite{miolane2018distribution} characterizes the limit of  
the empirical distribution of $(\sqrt n \DeBias,\sqrt n\bbeta)$ 
in terms of two parameters,
$\{\tau_*(\lambda),\kappa_*(\lambda)\}$, that are defined as solutions
of the non-linear equations in \cite[Proposition 3.1]{miolane2018distribution};
see also \cite[Proposition 4.3]{celentano2019fundamental} for similar results
applicable to permutation-invariant penalty.
This approach presents some {drawbacks:} For instance it requires 
the convergence of 
the empirical distribution {$G_{n,p}$} 
to a limit (which can be viewed as a prior), 
it yields {the limiting distribution 
for the joint empirical distribution $H_{n,p}$ of the estimation errors and the unknown coefficients but} 
not for a fixed coordinate. 

The above \Cref{thm:lasso-p-larger-than-n} for the Lasso 
differs from this previous literature in major ways.
First, it provides a limiting distribution for the de-biased version 
of $\langle \ba_0, \hbbeta\rangle$ for a single, fixed direction $\ba_0$:
\Cref{thm:lasso-p-larger-than-n}
does not involve the empirical distributions of $\sqrt{n}\bbeta$, 
$\sqrt{n}\hbbeta$ or its de-biased version.
This contrasts with previous literature on the $n/p\to\gamma$ regime
where the confidence interval guarantee holds on average over the coefficients
$\{1,...,p\}$
\cite{el_karoui2013robust,donoho2016high,JavanmardM14b,sur2018modern}.
This improvement is important in practice: if the practitioner is interested
in the effect of a specific {effect} $j_0\in\{1,...,p\}$, it is important to construct
confidence intervals with strict type I error control for $\beta_{j_0}$,
as opposed to a controlled type I error that only
holds on average over all {coefficients}.
Another feature of the results in this paper is that there is no need to assume a prior
on the coefficients of $\bbeta$ in the limit.

Surprisingly, \Cref{thm:from-consistency,thm:main-result,thm:lasso-p-larger-than-n} and their proofs completely bypass 
solving the non-linear equations that appear in the aforementioned works 
as the nonlinearity is directly treated here with the 
the normal approximation in \Cref{thm:L2-distance-from-normal}. 
Moreover, \Cref{thm:from-consistency,thm:main-result,thm:lasso-p-larger-than-n} handle correlations in $\bSigma$ with a direct approach, 
while it 
is still unclear whether the non-linear equations approach from 
previous works can be extended to $\bSigma\ne \bI_p$.

\section{Examples}    \label{sec:examples}
We now present
three penalty functions
for which closed-form expressions for $\smash{\hbH}$ and $\bw_0$
are available. 
In this section, when computing gradients with respect to $\bz_0$
in order to find closed-form expressions for $\bw_0$
in \eqref{eq:identity-bw0},
we consider $(\bX\bQ_0,\bep)$ fixed as in
\eqref{gradient-f-bz0-explicit}. Explicitly,
$\nabla \hbbeta(\bz_0)^\top$ is uniquely defined as
\begin{equation}
    \label{gradient-bz0-while-XQ0-bep-fixed}
    \tbbeta - \hbbeta =[\nabla \hbbeta(\bz_0)]^\top \bfeta + o(\|\bfeta\|)
\end{equation}
where 
$\hbbeta=\argmin_{\bb\in\R^p}\|\bX(\bb-\bbeta) - \bep\|^2/(2n)+g(\bb)$ and
$\tbbeta=\argmin_{\bb\in\R^p}\|(\bX+\bfeta\ba_0^\top)(\bb-\bbeta) - \bep\|^2/(2n)+g(\bb)$.
When computing gradients with respect to $\by$ in order
to find closed-form expressions for $\hbH$ in \eqref{def-matrix-H},
we view $\hbbeta(\by,\bX)=\argmin_{\bb\in\R^p}\|\bX\bb - \by\|^2/(2n)+g(\bb)$
as a function of $(\by,\bX)$ and
if $\by\mapsto \hbbeta(\by,\bX)$
is differentiable at $\by$ for a fixed $\bX$ then
\begin{equation}
\hbbeta(\tby,\bX) - \hbbeta(\by,\bX) =[(\partial/\partial\by)\hbbeta(\by,\bX)] (\tby-\by) + o(\|\tby-\by\|)
\label{eq:jacobian-y}
\end{equation}
where $(\partial/\partial\by)\hbbeta(\by,\bX)\in\R^{p\times n}$ is the Jacobian.
Once the Jacobian $(\partial/\partial\by)\hbbeta(\by,\bX)$ is computed,
$\hbH$ in \eqref{def-matrix-H} is given by $\hbH{}^\top =
\bX (\partial/\partial \by)\hbbeta(\by,\bX)$.
We use the Jacobian notation $(\partial/\partial\by)\hbbeta(\by,\bX)$
when computing the derivatives with respect to $\by$ to avoid
confusion with the gradient $\nabla \bbeta(\bz_0)$ in \eqref{gradient-bz0-while-XQ0-bep-fixed}. 

\subsection{Twice continuously differentiable penalty}
\label{sec:computation-twice-differentiation-g}
The simplest example for which closed-form expressions for
$\hbH,\df,\bw_0$ can be obtained is that
of twice continuously differentiable and strongly convex penalty $g$.
If $g$ is strongly convex,
\Cref{lemma:lipschitzness} proves that the Fr\'echet derivative 
of $\bh=\hbbeta-\bbeta$ with respect to $(\bep,\bX)$
exist for almost every $(\bep,\bX)$ by Rademacher's theorem.
At a point $(\bep,\bX)$ where the derivative exist,
we obtain a closed form expression for the gradient
\eqref{eq:identity-bw0} as follows.
The KKT conditions of the optimization problem \eqref{penalized-hbbeta-general}
read
$
\bX^\top\bigl(\by-\bX\hbbeta\bigr) =
\bX^\top\bigl(\bep-\bX\bh\bigr) = n\nabla g(\hbbeta).$
Differentiation with respect to $\bz_0$
for a fixed $(\bep,\bX\bQ_0)$ as in \eqref{gradient-bz0-while-XQ0-bep-fixed}
gives
$$\big\{\bX^\top\bX + n[\nabla^2 g(\hbbeta)] \big\} (\nabla \hbbeta(\bz_0))^\top
= \ba_0 (\by-\bX\hbbeta)^\top - \bX^\top \langle \ba_0,\bh\rangle
.
$$
By the product rule, this provides the derivative of $f(\bz_0)=
\bX\bh- \bep$, namely
\begin{equation}
\nabla f(\bz_0)^\top = 
\bX\bigl(
    \bX^\top\bX +  n\nabla g(\hbbeta) 
\bigr)^{-1} 
\bigl[
    \ba_0 (\by-\bX\hbbeta)^\top 
    -
    \langle \ba_0,\bh\rangle \bX^\top
\bigr]
+ \bI_n \langle \ba_0,\bh\rangle.
\label{nabla-f-twice-continuous-differentiable}
\end{equation}
Regarding $\hbH$ involving differentiation with respect to $\by$, 
the Lipschitz condition of the map $\by\mapsto \hbbeta$ for strongly convex $g$ follows from 
\eqref{eq:lipschitz-continuity-y} in the proof of \Cref{prop:hat-bH}. 
Hence the Jacobian in \eqref{eq:jacobian-y} exists almost everywhere,
and differentiation of the KKT conditions
for fixed $\bX$ gives
$({\bX^\top\bX} + n \nabla^2 g(\hbbeta)) (\partial/\partial\by)\hbbeta(\by,\bX)
= \bX^\top$
so that
$$
\hbH = \big(\bX(\partial/\partial\by)\hbbeta(\by,\bX)\big)^\top
= \bX\bigl(\bX^\top\bX + n \nabla^2 g(\hbbeta)\bigr)^{-1}\bX^\top.
$$
Identity \eqref{nabla-f-twice-continuous-differentiable}
combined with this expression for $\hbH$
provides \eqref{eq:identity-bw0} with 
$$\bw_0=\bX\{\bX^\top\bX +  n\nabla g(\hbbeta) \}^{-1} \ba_0.$$

\subsection{Lasso} \label{sec:de-biasing-lasso}

Consider the Lasso $\hbbeta$ in \eqref{lasso}. 
For $(\bep,\bX)$ with continuous distribution such as Gaussian 
under consideration here, almost surely $\hbbeta$ is unique and 
\bel{lasso-unique}
&& \bX_{\Shat}^\top(\by - \bX\hbbeta)/n = \lam \sgn(\hbbeta_{\Shat}),\ \ 
\|\bX_{\Shat^c}^\top(\by - \bX\hbbeta)/n\|_\infty < \lam,\ \ 
\rank(\bX_{\Shat})=|\Shat|, 
\eel
for the Lasso as in \cite{zhang2010nearly, tibshirani2013lasso} and 
\cite[Proposition 3.9]{bellec_zhang2018second_order_stein}, 
so that the Jacobian of the mapping 
$(\bz_0,\bep,\bX\bQ_0)\to \bX\hbbeta$ with respect to $\bz_0$ and $\bep$ 
can be computed directly by differentiating the KKT condition as in 
\cite{tibshirani2013lasso, bellec_zhang2018second_order_stein,bellec_zhang2019dof_lasso}. 
The following proposition provides 
closed-form expressions for the gradients 
for the Lasso estimator which are valid almost surely
and require no assumption on the sparsity of $\bbeta$
or the penalty level. 

\begin{proposition}
    \label{proposition:lasso-general}
    Let $\lambda>0$ and consider the Lasso $\hbbeta$ in \eqref{lasso}.
    Let $\Shat = \{j\in[p]: \hbeta_j\ne 0\}$.
    For almost every $(\bepbar,\bXbar)\in\R^{n\times (p+1)}$,
    there exists a neighborhood of $(\bepbar,\bXbar)$ in which
    the map
    $(\bep,\bX)\mapsto \Shat$ is constant,
    $|\Shat|\le n$, $\bX_{\Shat}^\top\bX_{\Shat}$ is invertible
    and the map
    $(\bep,\bX)\mapsto \hbbeta$ is Lipschtiz.
    In this neighborhood, almost surely
    $\bigl[\nabla \hbbeta(\bz_0)^\top\bigr]_{\Shat^c} = \mathbf{0} \in\R^{n\times |\Shat^c|}$, 
    $$
    \bigl[\nabla \hbbeta(\bz_0)^\top\bigr]_{\Shat} = 
    (\bX_{\Shat}^\top\bX_{\Shat})^{-1}
    \bigl((\ba_0)_{\Shat}(\bX\hbbeta - \by)^\top - \bX_{\Shat}^\top \langle \ba_0,\bh\rangle\bigr) 
    \in \R^{n\times |\Shat|},
    $$
    $\hbH =  \bX_{\Shat}(\bX_{\Shat}^\top\bX_{\Shat})^{-1}\bX_{\Shat}^\top$,
    $\df=|\Shat|$
    and \eqref{eq:identity-bw0} holds with $\bw_0 =
    \bX_{\Shat}(\bX_{\Shat}^\top\bX_{\Shat})^{-1}(\ba_0)_{\Shat}$.
\end{proposition}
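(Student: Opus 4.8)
The plan is to establish the structure of the active set and then differentiate the KKT conditions. \textbf{Step 1: Local constancy of $\Shat$ and invertibility.} First I would invoke the characterization \eqref{lasso-unique}, which holds almost surely for the continuous (Gaussian) distribution of $(\bep,\bX)$: on the active set $\Shat$ we have the equality $\bX_{\Shat}^\top(\by-\bX\hbbeta)/n = \lambda\,\sgn(\hbbeta_{\Shat})$, strict inequality $\|\bX_{\Shat^c}^\top(\by-\bX\hbbeta)/n\|_\infty < \lambda$ on the inactive set, and $\rank(\bX_{\Shat}) = |\Shat|$, which forces $|\Shat|\le n$ and makes $\bX_{\Shat}^\top\bX_{\Shat}$ invertible. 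The strictness of the inactive-set inequality and the nonvanishing of $\hbeta_j$ on $\Shat$ are exactly the conditions needed for a continuity/implicit-function argument showing that $(\bep,\bX)\mapsto\Shat$ is locally constant and $(\bep,\bX)\mapsto\hbbeta$ is locally Lipschitz; this is the same reasoning as in \cite{tibshirani2013lasso} and \cite[Proposition 3.9]{bellec_zhang2018second_order_stein}, so I would cite those.

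\textbf{Step 2: Differentiate the KKT conditions with respect to $\bz_0$.} On the neighborhood where $\Shat$ is fixed, the estimator solves the restricted stationarity equation $\bX_{\Shat}^\top(\by - \bX_{\Shat}\hbbeta_{\Shat})/n = \lambda\,\sgn(\hbbeta_{\Shat})$ with $\hbbeta_{\Shat^c}=\mathbf{0}$. Recall from \eqref{gradient-bz0-while-XQ0-bep-fixed} that here $\by = \bep + \bX\bbeta$ with $\bX = \bX\bQ_0 + \bz_0\ba_0^\top$ and $(\bX\bQ_0,\bep)$ held fixed, so $\by - \bX\hbbeta = \bep - \bX\bh$ depends on $\bz_0$ both through $\bX$ and through $\bh$. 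Perturbing $\bz_0 \mapsto \bz_0 + \bfeta$, i.e.\ $\bX \mapsto \bX + \bfeta\ba_0^\top$, differentiating the stationarity equation on $\Shat$, and using that the sign vector $\sgn(\hbbeta_{\Shat})$ is locally constant, yields
\begin{equation*}
\bX_{\Shat}^\top\bX_{\Shat}\,[\nabla\hbbeta(\bz_0)^\top]_{\Shat}
= (\ba_0)_{\Shat}(\by - \bX\hbbeta)^\top - \bX_{\Shat}^\top\langle\ba_0,\bh\rangle,
\end{equation*}
after collecting the term from $\partial(\bX_{\Shat})/\partial\bz_0 = \bfeta(\ba_0)_{\Shat}^\top$ hitting $(\by - \bX\hbbeta)$ and the term from $\partial(\bX_{\Shat}\hbbeta_{\Shat})/\partial\bz_0$ through the chain rule. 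Inverting $\bX_{\Shat}^\top\bX_{\Shat}$ gives the stated formula for $[\nabla\hbbeta(\bz_0)^\top]_{\Shat}$, and $[\nabla\hbbeta(\bz_0)^\top]_{\Shat^c}=\mathbf{0}$ since $\hbbeta_{\Shat^c}\equiv\mathbf{0}$ locally. Then $\nabla f(\bz_0)^\top = \bX[\nabla\hbbeta(\bz_0)^\top] + \bI_n\langle\ba_0,\bh\rangle$ by the product rule applied to $f(\bz_0)=\bX\hbbeta - \by$ (as in \eqref{nabla-f-introduction-chain-rule}), which after substituting and using $\bX[\nabla\hbbeta(\bz_0)^\top] = \bX_{\Shat}[\nabla\hbbeta(\bz_0)^\top]_{\Shat}$ matches \eqref{eq:identity-bw0} with $\bw_0 = \bX_{\Shat}(\bX_{\Shat}^\top\bX_{\Shat})^{-1}(\ba_0)_{\Shat}$.

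\textbf{Step 3: Differentiate with respect to $\by$ to get $\hbH$.} Separately, viewing $\hbbeta(\by,\bX)$ with $\bX$ fixed and differentiating the same restricted stationarity equation with respect to $\by$ gives $\bX_{\Shat}^\top\bX_{\Shat}\,(\partial/\partial\by)\hbbeta_{\Shat} = \bX_{\Shat}^\top$, hence $(\partial/\partial\by)\hbbeta_{\Shat} = (\bX_{\Shat}^\top\bX_{\Shat})^{-1}\bX_{\Shat}^\top$ and zero on $\Shat^c$; therefore $\hbH = (\bX(\partial/\partial\by)\hbbeta)^\top = \bX_{\Shat}(\bX_{\Shat}^\top\bX_{\Shat})^{-1}\bX_{\Shat}^\top$ and $\df = \trace[\hbH] = |\Shat|$ since this is a rank-$|\Shat|$ orthogonal projection. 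I would cite \cite{tibshirani2013lasso} for this last computation as well.

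\textbf{Main obstacle.} The only genuinely delicate point is the almost-everywhere local constancy of $\Shat$ together with the Lipschitz (hence a.e.\ differentiable) property of $(\bep,\bX)\mapsto\hbbeta$: one must argue that the bad set — where the inactive-set inequality is not strict, or some $\hbeta_j$ vanishes on the boundary, or $\bX_{\Shat}$ is rank-deficient — has Lebesgue measure zero, which relies on \eqref{lasso-unique} and a stratification argument over the finitely many possible $(\Shat,\sgn)$ pairs. Once past that, everything reduces to differentiating a linear system with a locally constant coefficient matrix, so Steps 2 and 3 are routine. I would lean on \cite[Proposition 3.9]{bellec_zhang2018second_order_stein} and \cite{bellec_zhang2019dof_lasso} for the measure-zero exceptional-set bookkeeping rather than reprove it.
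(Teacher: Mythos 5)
Your proof is correct and takes essentially the same route as the paper: invoke the almost-sure uniqueness and KKT characterization \eqref{lasso-unique} (citing \cite[Proposition~3.9]{bellec_zhang2018second_order_stein}) together with the H\"older/Lipschitz continuity of $(\bep,\bX)\mapsto\by-\bX\hbbeta$ from \Cref{lemma:lipschitzness} to get local constancy of $\Shat$ and the sign pattern, then differentiate the restricted stationarity equation to read off $\hbH$, $\nabla\hbbeta(\bz_0)$ and $\bw_0$.

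One observation worth recording: your derivation gives
\begin{equation*}
\bX_{\Shat}^\top\bX_{\Shat}\,[\nabla\hbbeta(\bz_0)^\top]_{\Shat}
= (\ba_0)_{\Shat}(\by - \bX\hbbeta)^\top - \bX_{\Shat}^\top\langle\ba_0,\bh\rangle,
\end{equation*}
i.e.\ with $(\by-\bX\hbbeta)$, which after substitution into $\nabla f(\bz_0)^\top=\bI_n\langle\ba_0,\bh\rangle+\bX_{\Shat}[\nabla\hbbeta(\bz_0)^\top]_{\Shat}$ yields exactly $(\bI_n-\hbH)\langle\ba_0,\bh\rangle+\bw_0(\by-\bX\hbbeta)^\top$ with $\bw_0=\bX_{\Shat}(\bX_{\Shat}^\top\bX_{\Shat})^{-1}(\ba_0)_{\Shat}$ as in \eqref{eq:identity-bw0}. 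The statement of \Cref{proposition:lasso-general} as printed has $(\bX\hbbeta-\by)^\top$ in the first term of the display for $[\nabla\hbbeta(\bz_0)^\top]_{\Shat}$, which is a sign typo: solving the implicit equation displayed in the paper's own proof gives $(\by-\bX\hbbeta)^\top$, and only that sign is consistent with the stated $\bw_0$ and \eqref{eq:identity-bw0}. So you have in fact written the corrected version; this is not an error on your part.
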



\begin{proof}[Proof of \Cref{proposition:lasso-general}] 
    Proposition 3.9 in \cite{bellec_zhang2018second_order_stein}
    proves, for almost every $(\bep,\bX)$, uniqueness of $\hbbeta$ and \eqref{lasso-unique}. 
    Let $(\bepbar,\bXbar)\in\R^{n\times (p+1)}$ be a point at which \eqref{lasso-unique} holds. 
    It follows from \eqref{lasso-unique} and the Holder continuity of $(\bep,\bX)\mapsto \bep-\bX\hbbeta$ established
    after \eqref{lipschitz-eq-1}  that for 
    almost every $(\bepbar,\bXbar)$ there is an open neighborhood in $\R^{n\times (p+1)}$
    in which $\Shat=\Sbar$, $\sgn(\hbbeta_{\Shat})=\bs_{\Sbar}$ and $\rank(\bX_{\Shat})=|\Sbar|$ 
    are constants, so that 
    $\hbbeta$ is locally equal to 
    $\argmin_{\bb\in\R^{\Sbar}}\|\by- \bX_{\Sbar}\bb\|^2/(2n)
    + \lam \bs_{\Sbar}^\top\bb_{\Sbar}$ with the linear penalty $\lam \bs_{\Sbar}^\top\bb_{\Sbar}$.
    In this neighborhood $(\bep,\bX)\to \hbbeta$ has the analytic expression 
    \bes
    \hbbeta_{\Sbar} = (\bX_{\Sbar}^\top\bX_{\Sbar})^{-1} (\bX_{\Sbar}^\top\by - n\lam \bs_{\Sbar}),
    \qquad
    \hbbeta_{\Sbar^c} = \mathbf{0}_{\Sbar^c}.
    \ees
    Differentiating the above immediately yields the formulas for 
    $\hbH$, $\df$ and $\bigl[\nabla \hbbeta(\bz_0)^\top\bigr]_{\Shat^c}$. 
    For $\bigl[\nabla \hbbeta(\bz_0)^\top\bigr]_{\Shat}$, 
    differentiating both sides of 
    $\bX_{\Sbar}^\top(\bX_{\Sbar}(\hbbeta - \bbeta) - \bep) = - n\lam \bs_{\Sbar}$ yields 
    \bes
    (\ba_0)_{\Sbar} (\bX_{\Sbar}\hbbeta - \by)^\top
    +\bX_{\Sbar}^\top \ba_0^\top\bh
+\bX_{\Sbar}^\top\bX_{\Sbar}[\nabla\hbbeta(\bz_0)^\top]_{\Sbar} = {\bf 0}
    \ees
    due to $\bX = \bX\bQ_0+\bz_0\ba_0^\top$. 
    Finally, the formula for $\bw_0$ follows from 
    $(\pa/\pa\bz_0)(\bX\hbbeta - \by) = \bX(\pa/\pa\bz_0)\hbbeta + \bI_n\ba_0^\top\bh$ and 
    simple algebra.
\end{proof}

\subsection{Group Lasso}
\label{sec:de-biasing-group-lasso}

Consider a partition $(G_1,...,G_K)$ of $\{1,...,p\}$
and the group Lasso estimator in \eqref{group-lasso}.
Let ${\widehat{B}} = \{k {\in [K]}: \|\hbbeta_{G_k}\|\ne 0 \}$ be the set of active groups
and $\Shat=\cup_{k\in {\widehat{B}}} G_k$ the union of all active groups. 
Define
the block diagonal matrix
$\bM=\diag\bigl((\bM_{G_k,G_k})_{k\in {\widehat{B}}} \bigr)
\in\R^{|\Shat|\times |\Shat|}$ by
\begin{equation}
    \label{matrix-M-group-lasso}
    \bM_{G_k,G_k} = 
    {n \lambda_k }{\|\hbbeta_{G_k}\|^{-1}}
    \left(\bI_{G_k} - \|\hbbeta_{G_k}\|^{-2}\hbbeta_{G_k}\hbbeta_{G_k}^\top\right),
    \qquad
    \bM\in\R^{|\Shat|\times |\Shat|}.
\end{equation}
The following proposition provides closed-form expressions 
for the gradients for the Group Lasso estimator and related quantities 
$\hbH$ and $\bw_0$ in terms of $\Shat$ and $\bM$.
Its proof is given in \Cref{sec:proof-gradient-GL}.
Note that the formula for $\hbH$ was known
\cite{vaiter2012degrees}.

\begin{restatable}{proposition}{LemmaGradientGroupLasso}
    \label{lemma:gradient-group-lasso}
    The following holds for for almost every 
    $(\bybar,\bXbar)\in\R^{n\times (1+p)}$.
    The set $\overline{B}=\{k\in[K]: \|\bbetabar_{G_k}\|>0\}$
    of active groups
    is the same for all minimizers $\bbetabar$
    of \eqref{group-lasso} at $(\bybar,\bXbar)$
    and
    $\widehat{B}=\overline{B}$
    for all $(\by,\bX)$ in a sufficiently small
    neighborhood of $(\bybar,\bXbar)$.
    If additionally $\bXbar_{\overline{S}}^\top\bXbar_{\overline{S}}$
    is invertible
    where $\overline{S}=\cup_{k\in\overline{B}}G_k$
    then
    the map $(\by,\bX)\mapsto \hbbeta$ is Lipschitz 
    in a sufficiently small neighborhood of $(\bybar,\bXbar)$.
    In this neighborhood
    we have
    \begin{equation*}
[\nabla \hbbeta(\bz_0)]_{\Shat^c} = 0,
        \quad
          [\nabla \hbbeta(\bz_0)]_{\Shat}^{\top} =
 (\bX_{\Shat}^\top\bX_{\Shat}+\bM)^{-1}[
    (\ba_0)_{\Shat}(\by - \bX\hbbeta)^\top 
    - \langle \ba_0, \bh\rangle \bX_{\Shat}^\top ],
    \end{equation*}
$\hbH = \bX_{\Shat}(\bX_{\Shat}^\top\bX_{\Shat}+\bM)^{-1}\bX_{\Shat}^\top$
and \eqref{eq:identity-bw0} holds with $\bw_0 = 
\bX_{\Shat}
(\bX_{\Shat}^\top\bX_{\Shat}+\bM)^{-1}
(\ba_0)_{\Shat}$.
\end{restatable}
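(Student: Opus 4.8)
The plan is to establish the claimed local analyticity and Lipschitz structure, then differentiate the KKT conditions as in the Lasso case. First I would invoke the known fact that for continuous design $(\by,\bX)$ (Gaussian here) the set of active groups is almost surely stable and $\bX_{\Shat}$ has full column rank; this is the group-Lasso analogue of \eqref{lasso-unique}, and I would cite \cite{vaiter2012degrees} together with the general Lipschitz/Hölder continuity argument established around \eqref{lipschitz-eq-1} in the proof of \Cref{prop:hat-bH} to conclude that $(\by,\bX)\mapsto\hbbeta$ is locally Lipschitz in a neighborhood where $\widehat B$ is constant. Inside such a neighborhood, for the active groups $k\in\widehat B$ the subdifferential term $\partial(\lambda_k\|\bb_{G_k}\|)$ is the smooth map $\bb_{G_k}\mapsto \lambda_k \bb_{G_k}/\|\bb_{G_k}\|$, so $\hbbeta_{\Shat}$ solves the smooth stationarity equation
\[
\bX_{\Shat}^\top(\by - \bX_{\Shat}\hbbeta_{\Shat}) = n\, \bp(\hbbeta_{\Shat}),
\qquad
\bp(\bb)_{G_k} = \lambda_k \bb_{G_k}/\|\bb_{G_k}\|,
\]
while $\hbbeta_{\Shat^c}=\mathbf 0$; this immediately gives $[\nabla\hbbeta(\bz_0)]_{\Shat^c}=0$ and $[(\partial/\partial\by)\hbbeta]_{\Shat^c}=0$.

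The next step is the computation of the Jacobian of $\bp$ restricted to the active block. A direct calculation gives, for $\bb_{G_k}\ne 0$,
\[
\frac{\partial \bp(\bb)_{G_k}}{\partial \bb_{G_k}}
= \frac{\lambda_k}{\|\bb_{G_k}\|}\Bigl(\bI_{G_k} - \|\bb_{G_k}\|^{-2}\bb_{G_k}\bb_{G_k}^\top\Bigr),
\]
and since $\bp$ acts group-wise the full Jacobian over $\Shat$ is the block-diagonal matrix $n^{-1}\bM$ with $\bM$ as in \eqref{matrix-M-group-lasso} (evaluated at $\hbbeta$). Differentiating the stationarity equation with respect to $\by$ for fixed $\bX$ yields $(\bX_{\Shat}^\top\bX_{\Shat}+\bM)[(\partial/\partial\by)\hbbeta]_{\Shat}=\bX_{\Shat}^\top$, whence $\hbH{}^\top = \bX_{\Shat}(\bX_{\Shat}^\top\bX_{\Shat}+\bM)^{-1}\bX_{\Shat}^\top$, which is symmetric, so this is also $\hbH$; then $\df=\trace[\hbH]$. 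For the gradient with respect to $\bz_0$ holding $(\bX\bQ_0,\bep)$ fixed, I would write the stationarity condition as $\bX_{\Shat}^\top(\bX_{\Shat}(\hbbeta-\bbeta)-\bep)=-n\bp(\hbbeta)$, recall $\bX=\bX\bQ_0+\bz_0\ba_0^\top$ so that $(\partial/\partial\bz_0)\bX_{\Shat}^\top = (\ba_0)_{\Shat}$ contracted appropriately, and differentiate using the product rule exactly as in the proof of \Cref{proposition:lasso-general}. This produces
\[
(\ba_0)_{\Shat}(\bX_{\Shat}\hbbeta-\by)^\top + \bX_{\Shat}^\top\langle\ba_0,\bh\rangle + (\bX_{\Shat}^\top\bX_{\Shat}+\bM)[\nabla\hbbeta(\bz_0)^\top]_{\Shat} = \mathbf 0,
\]
giving the stated formula for $[\nabla\hbbeta(\bz_0)^\top]_{\Shat}$. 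Finally, $\nabla f(\bz_0)^\top = \bX[\nabla\hbbeta(\bz_0)^\top] + \bI_n\langle\ba_0,\bh\rangle$ by the product rule applied to $f(\bz_0)=\bX\hbbeta-\by$, and substituting the block expressions and regrouping terms against $\hbH$ yields \eqref{eq:identity-bw0} with $\bw_0 = \bX_{\Shat}(\bX_{\Shat}^\top\bX_{\Shat}+\bM)^{-1}(\ba_0)_{\Shat}$, after the same algebraic simplification used for the Lasso.

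The main obstacle is the first step: justifying that the active set $\widehat B$ is almost surely locally constant and that $\bX_{\overline S}$ has full column rank on the relevant event, so that the smooth implicit-function/differentiation argument is legitimate. Unlike the Lasso, where sign stability and the characterization \eqref{lasso-unique} are by now standard, for the group Lasso one must rule out, on a measure-zero set of $(\by,\bX)$, boundary cases where some $\|\hbbeta_{G_k}\|$ is exactly zero (the non-smooth point of the group penalty) or where the KKT inequality for an inactive group holds with equality. I would handle this by combining the uniqueness of the fitted vector $\bX\hbbeta$ (from strong convexity of the loss in the fitted values) with a stratification/transversality argument in the spirit of \cite{vaiter2012degrees} and the Hölder-continuity bound established earlier in \Cref{sec:proofs}, deferring the detailed measure-theoretic verification to \Cref{sec:proof-gradient-GL}. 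Once local constancy of $\widehat B$ and invertibility of $\bX_{\Shat}^\top\bX_{\Shat}$ are in hand, the remaining computations are routine differentiation of a smooth system.
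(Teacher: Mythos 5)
Your overall structure matches the paper's: establish local constancy of the active-group set $\widehat B$, then differentiate the (now smooth) KKT system on the active block. Your computation of the Jacobian of the group-wise subdifferential $\bp(\bb)_{G_k}=\lambda_k\bb_{G_k}/\|\bb_{G_k}\|$, giving $n^{-1}\bM$ for $\bM$ in \eqref{matrix-M-group-lasso}, and the ensuing formulae for $[\nabla\hbbeta(\bz_0)]_{\Shat}$, $\hbH$ and $\bw_0$ are correct and are essentially the same calculation the paper alludes to in its (terse) proof.

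However, there is a genuine gap in the first step, which you yourself flag as the main obstacle but then defer rather than resolve. The paper's proof rests entirely on \Cref{lemma:kkt-strict}, which shows that for $(\by,\bX)$ with a joint Lebesgue density, the KKT inequality for each inactive group holds \emph{strictly} with probability one, and moreover that the set $U$ where strictness holds is open, so $\widehat B$ is locally constant on $U$. The argument is a specific conditioning trick: for each candidate active set $B$ and each $k\in B^c$, condition on $(\by,(\bX\be_j)_{j\notin G_k})$; given this conditioning, $\bX_{G_k}^\top(\by-\bX\hbbeta(B))$ (with $\hbbeta(B)$ the estimator restricted to support $\cup_{k\in B}G_k$, which is measurable with respect to the conditioning variables) has a density on $\R^{|G_k|}$, so it avoids the sphere of radius $n\lambda_k$ almost surely; a finite union bound over $B$ and $k$ finishes the a.s.\ claim, and openness of $U$ follows from the H\"older continuity of $(\by,\bX)\mapsto\bX^\top(\by-\bX\hbbeta(B))$ established after \eqref{lipschitz-eq-1}. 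Your proposal replaces this with a vague invocation of a ``stratification/transversality argument in the spirit of \cite{vaiter2012degrees}'' and then explicitly defers the verification to \Cref{sec:proof-gradient-GL} --- but that appendix \emph{is} where the result you are being asked to prove lives, so the deferral is circular. Without this step your proof is incomplete.

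A smaller but real inaccuracy: you assert that ``$\bX_{\Shat}$ has full column rank'' holds almost surely as ``the group-Lasso analogue of \eqref{lasso-unique}.'' This is true for the Lasso (each active coordinate contributes one column and $|\Shat|\le n$ a.s.), but it is \emph{false} in general for the group Lasso: if a single active group $G_k$ has $|G_k|>n$ then $\bX_{G_k}$ cannot have full column rank. That is precisely why the proposition states invertibility of $\bXbar_{\overline S}^\top\bXbar_{\overline S}$ as an additional hypothesis rather than deriving it. Since the proposition assumes it, this does not create a logical gap in the proof you need to give, but the claim as written is wrong and should not be presented as automatic.
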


\section{Proof of the main results in Section~\ref{sec:application-de-biasing} }
\label{sec:proofs}

In order to prove \Cref{thm:from-consistency,thm:main-result},
we apply the bound on the normal approximation in
\Cref{thm:L2-distance-from-normal}.
We recall here some notation used throughout the proof.
Let $\hbbeta$ be the estimator \eqref{penalized-hbbeta-general},
$\hbH$ the gradient of $\by\to\bX\hbbeta$ as in \eqref{eq:gradient-phi-intro},
$\ba_0\in\R^p$ with $\|\bSigma^{-1/2}\ba_0\|=1$,
$\bz_0$ and $\bQ_0$ as defined in \eqref{u_0}, 
\begin{equation}
    \label{eq:recap}
\theta=\langle\ba_0,\bbeta\rangle,
\qquad
f(\bz_0) = \bX\hbbeta-\by, \qquad \xi_0 = \bz_0^\top f(\bz_0) - \dv f(\bz_0).
\end{equation}
Vector $\bw_0\in\R^n$ is given by \Cref{lemma:existence-bw0}.
The oracle $\bbeta^*$
and its associated noiseless prediction risk $\risk$
are given by \eqref{oracle-general-g-strongly-convex}.
Throughout,
$\E_0$ denotes the conditional expectation given $(\bep,\bX\bQ_0)$
and $\Var_0$ the conditional variance given $(\bep,\bX\bQ_0)$.

\subsection{Lipschitzness of regularized least-squares
and existence of $\bw_0$}
\label{sec:proof-lipschitzness}

By Rademacher's theorem, a Lipschitz function $U\to\R$
for some open set $U\subset \R^q$
is Fr\'echet differentiable almost everywhere in $U$.
The following lemma is the device that 
verifies the Lipschitz condition for 
the mappings $(\bep,\bX)\mapsto\hbbeta$ and $(\bep,\bX)\mapsto\bX\bh-\bep$
in certain open set $U$,
and consequently differentiability almost everywhere in $U$.

\begin{lemma}
    \label{lemma:lipschitzness}
    Let {$\bbeta\in\R^p$,} $\bX$ and $\tbX$ 
    be two design matrices of size $n\times p$, 
    and $\bep$ and  $\tbep$ two noise vectors in $\R^n$. 
    Let $g:\R^p\to\R$ be convex such that minimizers
    \bes
        \hbbeta
        \in \argmin_{\bb\in\R^p}\left\{
            \frac{\|\bep+\bX(\bbeta - \bb)\|^2}{2n} + g(\bb)
        \right\},
        \quad
        \tbbeta
        \in \argmin_{\bb\in\R^p}\left\{
            \frac{\|\tilde\bep+\tbX(\bbeta - \bb)\|^2}{2n} + g(\bb)
        \right\}
    \ees
    exist.
        Let $\bh=\hbbeta-\bbeta$,
        $\bff = \bX\bh - \bep$,
        $\tbh = \tbbeta-\bbeta$,
        $\tbf = \tbX\tbh - \tbep$.
    Let also
    $D_g(\tbbeta,\hbbeta) = (\tbbeta-\hbbeta)^\top\big\{(\pa g)(\tbbeta) - (\pa g)(\hbbeta)\big\}$
    where $(\pa g)(\tbbeta) = n^{-1} \tbX{}^\top(\tbep -\tbX\tbh)$
    is the subdifferential at $\tbbeta$ given by the optimality condition
    of the above optimization problem
    and similarly for $(\pa g)(\hbbeta)$,
    with $D_g(\hbbeta,\tbbeta)\ge 0$
    by the monotonicity of the subdifferential.
    Then
    \bel{lipschitz-eq-1}
    n D_g(\tbbeta,\hbbeta)		
    + \|\bff - \tbf\|^2
    &=&
        (\tbh - \bh)^\top(\bX-\tbX)^\top\bff
        + (\tbep - \bep + (\bX-\tbX)\bh)^\top(\bff - \tbf) 
    \cr&=&
    \trace[(\bX - \tbX)^\top(\bff \tbh{}^\top - \tbf \bh^\top)]
    + (\tbep - \bep)^\top(\bff - \tbf).
    \eel
    If $g$ is coercive (i.e., $g(\bx)\to +\infty$ as $\|\bx\|\to+\infty$)
    then the map
    $(\bep,\bX)\mapsto \bep - \bX\bh$ is Holder continuous
    with coefficient $1/2$ on every compact. We also have
    \begin{align}
        \nonumber
   & n D_g(\tbbeta,\hbbeta)	
    +
    \|\bX(\hbbeta-\tbbeta)\|^2/2
    +
    \|\tbX(\hbbeta-\tbbeta)\|^2/2
    \\ &=
    (\hbbeta-\tbbeta)^\top(\bX^\top\bep - \tbX^\top\tbep)
        +
        (\hbbeta-\tbbeta)^\top(\bX^\top \bX - \tbX{}^\top\tbX)(\bh + \tbh)/2
    \label{eq:lipschitz-3}
    \\ &\le
    \|\hbbeta - \tbbeta\|
    \|\bep - \tbep\| \|\bX+\tbX\|_{op}/2
    \label{lipschitz-eq-2}
      \\&\qquad +
    \|\hbbeta - \tbbeta\|
    \|\bX-\tbX\|_{op}
    \bigl[
    (\|\bep+\tbep\|/2
    + 
    (\|\bX\|_{op}+\|\tbX\|_{op}))
    (\|\tbh\| + \|\bh\|)/2
    \bigr]
    .
    \nonumber
    \end{align}
    If either $g$ is strongly convex
    or if there exists a constant $\kappabar>0$
    and a bounded neighborhood $\mathcal N$ of $(\bepbar,\bXbar)$
    such that $\|\hbbeta-\tbbeta\|\kappabar \le \|\bX(\hbbeta-\tbbeta)\|$
    for all $\{(\bep,\bX),(\tbep,\tbX)\}\subset \mathcal N$
    then the map $(\bep,\bX)\mapsto \hbbeta$ is Lipschitz
    in $\mathcal N$.
\end{lemma}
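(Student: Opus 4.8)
The plan is to extract all the identities from the first-order optimality (KKT) conditions of the two penalized least-squares problems and then upgrade them to continuity statements; the only non-algebraic ingredient will be an a priori bound on $\|\hbbeta\|$, available in each of the two regimes. Since $\bep+\bX(\bbeta-\bb)=\by-\bX\bb$ with $\by=\bX\bbeta+\bep$, stationarity of $\hbbeta$ reads $(\pa g)(\hbbeta)=n^{-1}\bX^\top(\bep-\bX\bh)=-n^{-1}\bX^\top\bff$, and likewise $(\pa g)(\tbbeta)=-n^{-1}\tbX^\top\tbf$; monotonicity of $\pa g$ then gives $D_g(\tbbeta,\hbbeta)=D_g(\hbbeta,\tbbeta)\ge 0$. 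Multiplying the definition of $D_g$ by $n$ and using $\tbbeta-\hbbeta=\tbh-\bh$ yields $nD_g(\tbbeta,\hbbeta)=(\tbh-\bh)^\top(\bX^\top\bff-\tbX^\top\tbf)$. To reach \eqref{lipschitz-eq-1} I would add $\|\bff-\tbf\|^2$ to both sides: subtracting $(\tbh-\bh)^\top(\bX-\tbX)^\top\bff$ leaves $(\tbh-\bh)^\top\tbX^\top(\bff-\tbf)$, and the elementary identity $\tbX(\tbh-\bh)+(\bff-\tbf)=(\bX-\tbX)\bh+(\tbep-\bep)$, which follows at once from $\bff=\bX\bh-\bep$ and $\tbf=\tbX\tbh-\tbep$, produces the second summand of the first displayed form; the trace form is then obtained by writing each scalar as a trace, using $\trace[AB]=\trace[BA]$ and $\bh^\top(\bX-\tbX)^\top(\bff-\tbf)=\bh^\top(\bX-\tbX)^\top\bff-\bh^\top(\bX-\tbX)^\top\tbf$.

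For the H\"older claim (under coercivity of $g$) I would first note that comparing the objective at $\hbbeta$ with its value at $\bb=\mathbf 0$ gives $g(\hbbeta)\le\|\by\|^2/(2n)+g(\mathbf 0)$; on a compact set of $(\bep,\bX)$ the quantity $\|\by\|=\|\bX\bbeta+\bep\|$ is bounded, so coercivity forces $\|\hbbeta\|$, hence $\|\bh\|$ and $\|\bff\|\le\|\bX\|_{op}\|\bh\|+\|\bep\|$, to be bounded by a constant $C_1$ depending only on the compact (and the same for $\tbh,\tbf$). Dropping the nonnegative term $nD_g(\tbbeta,\hbbeta)$ from the trace form of \eqref{lipschitz-eq-1} and bounding its right-hand side by $|\tbh^\top(\bX-\tbX)^\top\bff|\le\|\tbh\|\,\|\bX-\tbX\|_{op}\|\bff\|$ and Cauchy--Schwarz yields $\|\bff-\tbf\|^2\le 2C_1^2\|\bX-\tbX\|_{op}+\|\bep-\tbep\|\,\|\bff-\tbf\|$; solving this quadratic inequality in $\|\bff-\tbf\|$ gives $\|\bff-\tbf\|\le\|\bep-\tbep\|+\sqrt2\,C_1\|\bX-\tbX\|_{op}^{1/2}$, i.e.\ H\"older-$\tfrac12$ continuity of $(\bep,\bX)\mapsto\bep-\bX\bh=-\bff$ on that compact.

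For identity \eqref{eq:lipschitz-3} I would start again from $nD_g(\tbbeta,\hbbeta)=(\tbbeta-\hbbeta)^\top(\tbX^\top(\tbep-\tbX\tbh)-\bX^\top(\bep-\bX\bh))$ and substitute $\bh=(\bh+\tbh)/2+(\hbbeta-\tbbeta)/2$, $\tbh=(\bh+\tbh)/2-(\hbbeta-\tbbeta)/2$; the design-quadratic terms collapse to $\tfrac12\|\bX(\hbbeta-\tbbeta)\|^2+\tfrac12\|\tbX(\hbbeta-\tbbeta)\|^2$ together with a multiple of $(\hbbeta-\tbbeta)^\top(\bX^\top\bX-\tbX^\top\tbX)(\bh+\tbh)$, while the noise terms are exactly $(\hbbeta-\tbbeta)^\top(\bX^\top\bep-\tbX^\top\tbep)$; rearranging gives \eqref{eq:lipschitz-3}. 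Then \eqref{lipschitz-eq-2} follows from the symmetric splittings $\bX^\top\bep-\tbX^\top\tbep=\tfrac12(\bX+\tbX)^\top(\bep-\tbep)+\tfrac12(\bX-\tbX)^\top(\bep+\tbep)$ and $\bX^\top\bX-\tbX^\top\tbX=\tfrac12((\bX+\tbX)^\top(\bX-\tbX)+(\bX-\tbX)^\top(\bX+\tbX))$, followed by Cauchy--Schwarz, submultiplicativity of $\|\cdot\|_{op}$, $\|\bX+\tbX\|_{op}\le\|\bX\|_{op}+\|\tbX\|_{op}$ and $\|\bh+\tbh\|\le\|\bh\|+\|\tbh\|$. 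For Lipschitzness of $(\bep,\bX)\mapsto\hbbeta$: when $g$ is strongly convex one has $nD_g(\tbbeta,\hbbeta)\ge c_g\|\hbbeta-\tbbeta\|^2$ for a positive constant $c_g$ (e.g.\ $n\mu\,\phi_{\min}(\bSigma)$ when \eqref{strong-convex} holds), and strong convexity is coercive so the a priori bounds on $\|\bh\|,\|\tbh\|$ above apply; combining \eqref{eq:lipschitz-3} with \eqref{lipschitz-eq-2} on a compact gives $c_g\|\hbbeta-\tbbeta\|^2\le C_3\|\hbbeta-\tbbeta\|(\|\bep-\tbep\|+\|\bX-\tbX\|_{op})$, and dividing by $\|\hbbeta-\tbbeta\|$ gives the bound. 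In the restricted-eigenvalue regime, $\tfrac12\kappabar^2\|\hbbeta-\tbbeta\|^2\le\tfrac12\|\bX(\hbbeta-\tbbeta)\|^2$, which is bounded by the right-hand side of \eqref{eq:lipschitz-3} since $D_g\ge0$; the missing input is boundedness of $\|\bh\|$ on $\mathcal N$, which I would obtain by first fixing $(\tbep,\tbX)=(\bepbar,\bXbar)$ (so $\|\tbh\|=c_0$ is constant and $\|\bh\|\le c_0+\|\hbbeta-\tbbeta\|$), deducing $\|\hbbeta-\tbbeta\|\le C_4(c_0+\|\hbbeta-\tbbeta\|)\|(\bep,\bX)-(\bepbar,\bXbar)\|$ and hence continuity at $(\bepbar,\bXbar)$ on a smaller neighborhood, then running the same estimate for a general pair in that neighborhood.

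The algebra behind \eqref{lipschitz-eq-1} and \eqref{eq:lipschitz-3} is routine once the right symmetric regroupings are chosen; I expect the main obstacle to be ensuring that the a priori control on $\|\hbbeta\|$ (equivalently $\|\bh\|$), which feeds every Cauchy--Schwarz step, is legitimately available in both regimes --- via the comparison with $\bb=\mathbf 0$ and coercivity in the strongly convex case, and via the bootstrap-from-continuity argument just sketched in the restricted-eigenvalue case.
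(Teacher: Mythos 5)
Your proof is correct and takes essentially the same approach as the paper: combine the two KKT optimality conditions to obtain the algebraic identities \eqref{lipschitz-eq-1} and \eqref{eq:lipschitz-3}, then feed an a priori bound on $\|\bh\|,\|\tbh\|$ (from coercivity by comparison with a fixed candidate) into Cauchy--Schwarz to get the H\"older and Lipschitz conclusions. Your bootstrap argument for bounding $\|\tbh\|$ in the restricted-eigenvalue case (fixing $(\tbep,\tbX)=(\bepbar,\bXbar)$ first and shrinking the neighborhood) spells out in more detail the step the paper dispatches with a one-line linear-versus-quadratic comparison, but the underlying idea is the same.
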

\begin{proof}[Proof of \Cref{lemma:lipschitzness}] 
    The KKT conditions for $\hbbeta$ and $\tbbeta$ provide
    \bes
    n(\hbbeta-\tbbeta)^\top(\partial g)(\hbbeta) 
    =
    (\tbh - \bh)^\top\bX^\top\bff
    ,\qquad
    n(\tbbeta-\hbbeta)^\top(\partial g)(\tbbeta) 
    =
    (\bh - \tbh)^\top\tbX{}^\top\tbf.
    \ees
    Summing
    and adding $\|\bff - \tbf\|^2=(\tbep - \bep)^\top(\bff-\tbf)
    + (\bX\bh - \tbX\tbh)^\top(\bff - \tbf)$
    on both sides,
    \bes
    n D_g(\hbbeta,\tbbeta) + \|\bff -  \tbf\|^2
    &=&
    \tbh{}^\top(\bX -\tbX)^\top\bff
    + \bh^\top(\tbX-\bX)^\top\tbf
    +
    (\tbep - \bep)^\top(\bff - \tbf)
    \ees
    so that \eqref{lipschitz-eq-1} holds.

    By optimality of $\hbbeta$,
    $\|\bff\|^2/(2n) + g(\hbbeta)
    \le \|\bX\bbeta + \bep\|^2/(2n)$.
    If $g$ is coercive, this implies
    that for every compact $K\subset\R^{n\times (1+p)}$,
    $\|\bff\|+\|\bh\|$ and $\|\tbf\|+\|\tbh\|$ are bounded by a constant
    depending only on $g,\bbeta,n,K$ 
    if $\{(\bep,\bX),(\tbep,\tbX)\}\subset K$.
    In this case, \eqref{lipschitz-eq-1} implies that 
    $\|\tbf - \bff\|^2 \le(\|\tbX - \bX\|_F + \|\bep-\tbep\|) C(g,\bbeta,n,K)$ for some other constant  depending on $g,\bbeta,n,K$ only.
    This implies Holder continuity 
    of $(\bep,\bX)\mapsto \bep - \bX\bh$ with Holder coefficient $1/2$
    on every compact.

    For \eqref{eq:lipschitz-3} and \eqref{lipschitz-eq-2},
    the KKT conditions for $\hbbeta$ yield
    \bes
    n(\hbbeta-\tbbeta)^\top (\pa g)(\hbbeta) + \|\bX(\hbbeta-\tbbeta)\|^2/2
     = (\hbbeta-\tbbeta)^\top \bX^\top(\bep-\bX(\bh+\tbh)/2). 
    \ees
    Summing the above and its $\tbbeta$ counterpart yields 
    the equality \eqref{eq:lipschitz-3}.
    Writing $\bX^\top\bep - \tbX^\top\tbep =
    (\tbX+\bX)^\top(\bep-\tbep)/2 + (\bX-\tbX)^\top(\tbep+\bep)/2$
    and similarly
    $\bX^\top\bX - \tbX{}^\top\tbX
    =(\bX+\tbX)^\top(\bX - \tbX)/2
    + (\bX-\tbX)^\top(\bX + \tbX)/2$,
    inequality \eqref{lipschitz-eq-2} follows.
    To prove the Lipschitz condition in $\mathcal N$,
    we note that for a fixed value of $(\bep,\bX,\bh)$,
    the right hand side of \eqref{lipschitz-eq-2}
    is linear in $\|\tbh\|$ while the left-hand side
    is quadratic in $\|\tbh\|$ thanks to either strong convexity
    of $g$ or the assumption on $\kappabar$.
    This implies that $\|\tbh\|$ is bounded
    uniformly for all $(\tbep,\tbX)$ in $\mathcal N$.
    Since $\bep,\bX,\tbep,\tbX,\|\tbh\|,\|\bh\|$ are all bounded in
    $\mathcal N$, \eqref{lipschitz-eq-2}
    divided by $\|\hbbeta-\tbbeta\|$
    provides the desired Lipschitz property.
\end{proof}

\restateIfEnabled{
    \lemmaExistenceWZero*
}

\begin{proof}[Proof of \Cref{lemma:existence-bw0}]
    Under \Cref{assum:main},
    \Cref{lemma:lipschitzness} implies that the map
    $(\bep,\bX)\mapsto \bff = \bX\bh - \bep$ is
    Lipschitz in an open neighborhood of almost every point, and thus 
    $\hbH$ and $\nabla f(\bz_0)$ are defined as Fr\'echet derivatives almost surely 
    in \eqref{def-matrix-H} and \eqref{eq:gradient-f-bXQ0-bep} respectively. 
    To prove \eqref{eq:identity-bw0}, 
    i.e. that the range of $\nabla f(\bz_0) - \langle \ba_0,\bh\rangle (\bI_n-\hbH)$ 
    is the linear span of $\bff$, 
    we study the directional derivative in a direction 
    $\bfeta\in\R^n$.
    For two pairs $(\bep,\bX)$ and $(\tbep,\tbX)$ 
    with $\tbX=\bX + t \bfeta \ba_0^\top = \bX\bQ_0+(t \bfeta + \bz_0)\ba_0^\top$ 
    and $\tbep = \bep + t \bfeta \langle \ba_0,\bh\rangle$, 
    consider the solutions $\hbbeta$ and $\tbbeta$
    defined in \Cref{lemma:lipschitzness} and 
    $\bphi_t = \tbX(\tbbeta-\bbeta) - \tbep$
    with $\bphi_0 = \bX(\hbbeta-\bbeta)-\bep = \bff$.
    When the map $(\bep,\bX)\mapsto \bff$ is
    Fr\'echet differentiable at $(\bep,\bX)$, 
    \bel{pf-lm-3-1}
    \lim_{t\to 0+} (\bphi_t - \bphi_0)/t = \bigl( \nabla f(\bz_0) - \langle \ba_0,\bh\rangle
    (\bI_n-\hbH) \bigr)^\top \bfeta
    \eel
    by the chain rule and the linearity of the Fr\'echet derivative, noting that
    $(\partial/\partial\bep)(\bep-\bX\bh) = \bI_n - \hbH$.
    For this specific choice of $(\tbep,\tbX)$ we have
    \begin{equation}
        \label{eq:specific-choice-tbX-tbep}
        (\tbX-\bX)\bh + \bep - \tbep = 0. 
    \end{equation}
    It follows that the second term in the first line
    of \eqref{lipschitz-eq-1} is zero, so that \eqref{lipschitz-eq-1} gives 
    $$
    \mu n \|\bSigma^{1/2}(\hbbeta-\tbbeta)\|^2
    +
    \|\bphi_0- \bphi_t\|^2
    \le 
    |\langle \ba_0, \bh-\tbh\rangle
    t \bfeta^\top\bff| 
    $$
    due to $\tbX - \bX = t \bfeta \ba_0^\top$. 
    Consequently $\phi_t-\phi_0=0$ when $\bfeta^\top\bff = 0$. 
    This and \eqref{pf-lm-3-1} give \eqref{eq:identity-bw0}. 
    Moreover, for $\bff\neq {\bf 0}$, 
    $\bw_0= \lim_{t\to 0} (\bphi_t - \bphi_0)/t$ for $\bfeta = -\bff/\|\bff\|^2$, 
    so that \eqref{eq:bound-norm-bw0} is an upper bound 
    for $\lim_{t\to 0+} \|\bphi_t - \bphi_0\|/t$ in the case of 
    $\|\bSigma^{-1/2}\ba_0\|=1= - \bfeta^\top\bff$ where 
    $$
    \mu n \|\bSigma^{1/2}(\hbbeta-\tbbeta)\|^2
    +
    \|\bphi_0- \bphi_t\|^2
    \le |t|\, \|\bSigma^{1/2}(\bh-\tbh)\| 
    $$
    by the previous display. 
    For $\mu>0$, the above inequality gives $\|\bphi_0 - \bphi_t\|^2
    \le t^2 (4\mu n)^{-1}$ using $uv\le u^2/4 + v^2$.
    For $\mu=0$, 
    \bes
    \phi_{\min}(\tbW) \|\bSigma^{1/2}(\bh-\tbh)\|^2
    \le  \|\tbX{}^\top(\bh-\tbh)\|^2
    = \|\bphi_t - \bphi_0\|^2
    \ees
    with $\phi_{\min}(\tbW)$ being 
    the smallest eigenvalue of $\tbW = \bSigma^{-1/2}\tbX{}^\top\tbX\bSigma^{-1/2}$. 
    Hence, \eqref{eq:bound-norm-bw0} holds in either cases.   
\end{proof}

\subsection{Loss equivalence to oracle estimators}\label{sec:loss-equiv}
To apply \Cref{thm:L2-distance-from-normal} with respect to $\bz_0$
to $f$ in \eqref{eq:recap},
we will need to control
expectations involving $\|\bw_0\|$,
$\langle \ba_0,\bh\rangle$, $\|\bX\bh\|$
and $\|\by-\bX\hbbeta\|$.
To this end, define the random variables $F_+$ and $F$ by
\bel{def-F_+}
F_+ \defas
\bigl(\|\bg\|^2/n\bigr)
\vee 
\bigl(\|\bep\|^2/(\sigma^2n)\bigr)
\vee\bigl(\|\bep-\bX\bh^*\|^2/(n\risk)\bigr)
\vee 1
\eel 
with $\bg = \bX\bh^*/\|\bSigma^{1/2}\bh^*\|$ 
and the $\bh^*$ and $R_*$ in \eqref{oracle-general-g-strongly-convex},  
and 
\bel{def-F}
F\defas 
2/[1\wedge\max\{\mu, \phi_{\min}(\bSigma^{-1/2}(\bX^\top\bX/n)\bSigma^{-1/2})\}]. 
\eel
We note that the three random vectors
$\bep/\sigma,\bg$ and $(\bep-\bX\bh^*)/\risk{}^{1/2}$
have $N(\mathbf{0},\bI_n)$ distribution,
so that $F_+$ is of the form
$F_+=\max_{i=1,2,3} W_i/n$ where each $W_i$ has the $\chi^2_n$ distribution.
Thus by \Cref{prop:bounded-negative-moments} and properties of the $\chi^2_n$ distribution, 
\bel{inequalities-moments}
     \E[F_+^4] \vee \E[F^{10}] 
 \le C(\gamma,\mu),\quad \E[(F_+-1)^2] \le
 3 \Var[\chi^2_n]/n^{2} = 
 6/n.
\eel
It follows from \eqref{normalization-a_0}, \Cref{lemma:moment-equivalence}
below and \eqref{eq:bound-norm-bw0} that almost surely
\bel{new-bd-1}&& 
\langle \ba_0,\bh\rangle^2
\le \|\bSigma^{1/2}\bh\|^2
\vee(\|\bX\bh\|^2/n)  \le F_+F^2{\risk},\  
\quad
\|\bw_0\|^2
\le F/(2n) ,
\\&&
\|\by-\bX\hbbeta\|^2/n \le 2 F_+ + 2F_+F^2\risk \le 4F_+F^2\risk, 
\label{new-bd-2}
\eel
for the $\bw_0$ in \Cref{lemma:existence-bw0}.
The moment inequalities in \eqref{inequalities-moments}
and the almost sure bounds
\eqref{new-bd-1}-\eqref{new-bd-2} allow us
to control expectations
involving $\|\bw_0\|$,
$\langle \ba_0,\bh\rangle$, $\|\bX\bh\|$
and $\|\by-\bX\hbbeta\|$ throughout the proofs.
The following lemma provides the first
inequality in \eqref{new-bd-1}.

\begin{lemma}[Deterministic lemma]
    \label{lemma:moment-equivalence} 
    Consider the linear model \eqref{LM} and a convex penalty $g(\cdot)$. Let 
    $\hbbeta$ in \eqref{penalized-hbbeta-general}
    and $\bbeta^*,\bh^*,\risk$ be defined in
    \eqref{oracle-general-g-strongly-convex}.
    {Suppose the penalty satisfies $\bu^\top\{(\pa g)(\bu+\bbeta^*) - (\pa g)(\bbeta^*)\}\ge\mu\|\bSigma^{1/2}\bu\|^2\ 
    \forall \bu\in \R^p$ with $\mu\in [0,1/2]$.}  
    Let $F_+$ be defined in \eqref{def-F_+} and $F$ any random variable
    satisfying
    \begin{equation}
    1\le F/2\quad \text{ and either }\quad
            \|\bSigma^{1/2}\bh\|^2/(n\|\bX\bh\|^2)
            \le F/2
            \quad\text{ or }\quad
            \mu^{-1} = F/2,
    \label{eq:condition-F}
    \end{equation}
    for instance \eqref{def-F}.
    Then, 
    \bel{lm-F-1}\qquad
    && \|\bSigma^{1/2} \bh\|^2 \le F^2\max\big({{\overline \sigma}^2},\|\bSigma^{1/2} \bh^*\|^2\big)
   \le F_+F^2{\risk},   
 \\ \label{lm-F-2}
    && \|\bX\bh\|^2/n \le 
    \max\big\{2F{{\overline \sigma}^2},\;\;{{\overline \sigma}^2} + F^2\|\bSigma^{1/2}\bh^*\|^2\big\}
    \le F_+F^2{\risk}, 
    \eel
    where ${{\overline \sigma}^2} = {F_+\sigma^2 + (F_+-1)\|\bSigma^{1/2}\bh^*\|^2 
    = (F_+-1){\risk}  + \sigma^2}$.
%
\end{lemma}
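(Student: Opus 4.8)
The plan is a purely deterministic basic‑inequality (oracle‑inequality) argument, with all the ``random'' ingredients packaged into $F_+$ and $F$. First I would record the two first‑order optimality conditions: for the penalized estimator \eqref{penalized-hbbeta-general}, the KKT condition reads $n^{-1}\bX^\top(\by-\bX\hbbeta)=n^{-1}\bX^\top(\bep-\bX\bh)\in(\pa g)(\hbbeta)$, while for the oracle \eqref{oracle-general-g-strongly-convex} it reads $-\bSigma\bh^*\in(\pa g)(\bbeta^*)$. Setting $\bu=\hbbeta-\bbeta^*=\bh-\bh^*$ and feeding these two particular subgradients into the standing hypothesis $\bu^\top\{(\pa g)(\bu+\bbeta^*)-(\pa g)(\bbeta^*)\}\ge\mu\|\bSigma^{1/2}\bu\|^2$, a one‑line rearrangement using $\bep-\bX\bh=(\bep-\bX\bh^*)-\bX\bu$ produces the master inequality
\[
n^{-1}\|\bX\bu\|^2+\mu\|\bSigma^{1/2}\bu\|^2\ \le\ n^{-1}\bu^\top\bX^\top(\bep-\bX\bh^*)+\bu^\top\bSigma\bh^*.
\]
Choosing the subgradients of $g$ to be exactly those delivered by the two KKT systems is legitimate since conditions of the form \eqref{strong-convex} are imposed for every selection of subgradient.

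Next I would bound the right‑hand side by Cauchy--Schwarz, $n^{-1}\bu^\top\bX^\top(\bep-\bX\bh^*)\le(\|\bX\bu\|/\sqrt n)(\|\bep-\bX\bh^*\|/\sqrt n)$ and $\bu^\top\bSigma\bh^*\le\|\bSigma^{1/2}\bu\|\,\|\bSigma^{1/2}\bh^*\|$, and then invoke \eqref{def-F_+}: it gives $\|\bep-\bX\bh^*\|^2/n\le F_+\risk$ and, with $\bg=\bX\bh^*/\|\bSigma^{1/2}\bh^*\|$, also $\|\bX\bh^*\|^2/n=\|\bg\|^2\|\bSigma^{1/2}\bh^*\|^2/n\le F_+\|\bSigma^{1/2}\bh^*\|^2$ (read as $0$ when $\bh^*=\mathbf{0}$, so that $\bg$ need not be defined). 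Abbreviating $P=\|\bX\bu\|/\sqrt n$, $E=\|\bSigma^{1/2}\bu\|$, $c^2=\|\bSigma^{1/2}\bh^*\|^2$ and recording the algebraic identity $\overline\sigma^2=F_+\risk-c^2$ (so that $F_+\risk=\overline\sigma^2+c^2$ and $c^2\le\risk\le F_+\risk$), the master inequality collapses to a scalar quadratic bound $P^2+\mu E^2\le P\sqrt{F_+\risk}+Ec$.

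From this scalar inequality I would extract bounds on $E$ and on $P$ by completing squares, and then pass to $\bh$ through the triangle inequalities $\|\bSigma^{1/2}\bh\|\le E+c$ and $\|\bX\bh\|/\sqrt n\le P+\|\bX\bh^*\|/\sqrt n\le P+\sqrt{F_+}\,c$. Here the two alternatives in \eqref{eq:condition-F} enter. If $\mu^{-1}=F/2$ (so $\mu>0$ and $F=2/\mu\ge4$ because $\mu\le\tfrac12$), minimizing $P^2-P\sqrt{F_+\risk}$ over $P$ gives $\mu E^2-Ec\le F_+\risk/4$, hence $E\le\tfrac F2 c+\tfrac{\sqrt F}{2\sqrt2}\sqrt{F_+\risk}$; substituting $E$ back bounds $P$; using $F_+\risk\le 2\max(\overline\sigma^2,c^2)$ and the elementary fact that $\tfrac F2+1+\tfrac{\sqrt F}{2}\le F$ for $F\ge4$ then yields $\|\bSigma^{1/2}\bh\|^2\le F^2\max(\overline\sigma^2,c^2)$, while an analogous computation for $\|\bX\bh\|^2/n$ distinguishes the regimes ``$\overline\sigma^2$ dominates'' and ``$c^2$ dominates'' to land on $\max\{2F\overline\sigma^2,\ \overline\sigma^2+F^2c^2\}$. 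If instead the restricted‑eigenvalue alternative $n\|\bSigma^{1/2}\bh\|^2/\|\bX\bh\|^2\le F/2$ holds --- the one supplied by \eqref{def-F} when $\gamma<1$, in particular whenever $\mu=0$ (where $\phi_{\min}(\bSigma^{-1/2}(\bX^\top\bX/n)\bSigma^{-1/2})>0$ forces $F<\infty$) --- I would bound the prediction error $\|\bX\bh\|^2/n$ first, again from the master inequality, and then transfer it via $\|\bSigma^{1/2}\bh\|^2\le\tfrac F2\,\|\bX\bh\|^2/n$. In every case the closing inequalities $\le F_+F^2\risk$ in \eqref{lm-F-1}--\eqref{lm-F-2} are immediate from $\overline\sigma^2=F_+\risk-c^2\le F_+\risk$, from $c^2\le F_+\risk$, and from $F\ge2$; e.g.\ $\overline\sigma^2+F^2c^2=F_+\risk+(F^2-1)c^2\le(F_++F^2-1)\risk\le F_+F^2\risk$.

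I expect the structural steps above to be routine; the real work --- and the main obstacle --- is the constant bookkeeping needed to reach the \emph{sharp} coefficients ($F^2$ in \eqref{lm-F-1} and the precise maximum in \eqref{lm-F-2}), which constrains the choice of the AM--GM weights and of the case split between the two regimes quite tightly (it is essentially tight at $\mu=\tfrac12$, i.e.\ $F=4$). The cases $\mu>0$ and $\mu=0$ must moreover be kept strictly separate throughout: in the latter there is no $\mu\|\bSigma^{1/2}\bu\|^2$ term on the left of the master inequality to absorb the cross term $\bu^\top\bSigma\bh^*$, so the estimation‑error bound must instead be routed through the prediction error by way of the restricted‑eigenvalue hypothesis, and the roles of \eqref{lm-F-1} and \eqref{lm-F-2} are in a sense reversed.
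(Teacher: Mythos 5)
Your setup is the right one and matches the paper's: you invoke the two KKT systems, feed the two particular subgradients into the strong convexity hypothesis, and arrive at a correct ``master'' inequality for $\bu=\hbbeta-\bbeta^*$. The divergence, and the source of a real gap, is how the cross term is then treated. You bound it by Cauchy--Schwarz, yielding $P^2+\mu E^2\le P\sqrt{F_+\risk}+Ec$; the paper instead \emph{polarizes} it via the identities $2\bu^\top\bX^\top\bX\bh^*=\|\bX\bh\|^2-\|\bX\bu\|^2-\|\bX\bh^*\|^2$ and $2\bu^\top\bSigma\bh^*=\|\bSigma^{1/2}\bh\|^2-\|\bSigma^{1/2}\bu\|^2-c^2$, and completes the square in the remaining $\bep$-cross term using $\|\bX\bu-\bep\|^2\ge 0$. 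After rearranging this produces \eqref{lower-bound-sigma-bar}, i.e.\ $(1+2\mu)\|\bSigma^{1/2}\bu\|^2+\|\bX\bh\|^2/n\le\overline\sigma^2+\|\bSigma^{1/2}\bh\|^2$. The crucial feature is that $\|\bX\bh\|^2/n$ appears \emph{on the left}, so both target quantities are controlled by the same scalar inequality in $\|\bSigma^{1/2}\bh\|^2$ alone, with no lossy $P\sqrt{F_+\risk}$ or $Ec$ terms to absorb.

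This matters concretely in the restricted-eigenvalue branch, especially when $\mu=0$. Your Cauchy--Schwarz master gives $P^2\le P\sqrt{F_+\risk}+Ec$ with $E$ uncontrolled; the RE hypothesis is stated for $\bh$ rather than for $\bu$, so you must route through two triangle inequalities ($E\le\|\bSigma^{1/2}\bh\|+c$ and $\|\bX\bh\|/\sqrt n\le P+\sqrt{F_+}\,c$) and then solve a fixed-point quadratic. Carrying this out, e.g.\ at $\mu=0$, $F=2$, $F_+=1$, $c^2=\risk$, delivers a bound on $\|\bX\bh\|^2/n$ of order $30\risk$ or more, whereas the lemma claims $F_+F^2\risk=4\risk$ and the paper's polarization route attains exactly that (indeed with $\mu=0$, $F=2$ one finds $\|\bX\bh\|^2/n\le 4\overline\sigma^2$, which is tight). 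So the RE half of your proposal, as written, cannot reach the stated constants; the polarization is not an optional refinement but the load-bearing step.

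Two further organizational points. First, the paper's case split is not on the two alternatives of \eqref{eq:condition-F} directly; it first splits on whether $\|\bSigma^{1/2}\bh\|\ge F\|\bSigma^{1/2}\bh^*\|$ (else the target bound is trivial), then on the \emph{data-dependent} event $\|\bX\bh\|^2/n\gtrless(2/F)\|\bSigma^{1/2}\bh\|^2$; only in the second sub-case is $\mu^{-1}=F/2$ forced to hold (the RE alternative is incompatible with that sub-case), and $(1+2\mu)(1-1/F)^2-1\ge 1/F^2$ is then verified using $F\ge 4$. Second, your computation in the strongly convex branch is correct --- the algebra $E\le(F/2)c+\tfrac{\sqrt F}{2\sqrt 2}\sqrt{F_+\risk}$ and the elementary estimate $F/2+1+\sqrt F/2\le F$ for $F\ge 4$ both check out --- so the issue is isolated to the RE branch and to the substitution of Cauchy--Schwarz for the polarization identity.
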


\begin{proof}[Proof of \Cref{lemma:moment-equivalence}]
    The {KKT conditions for}
    $\hbbeta$, i.e., 
    $n \partial g(\hbbeta) = \bX^\top(\by-\bX\hbbeta)$,
    yield
    \bes
        {2(\hbbeta-\bbeta^*)^\top (\pa g)(\hbbeta)}
        &=& {2(\hbbeta-\bbeta^*)^\top \bX^\top(\by-\bX\hbbeta)/n} 
        \cr &{=}&
        (
        \|\by-\bX\bbeta^*\|^2
        -\|\by-\bX\hbbeta\|^2
        -\|\bX(\hbbeta-\bbeta^*)\|^2
        )/n 
        \\ &{=}&
        (\|\bX\bh^*\|^2 - \|\bX\bh\|^2
        -\|\bX(\hbbeta-\bbeta^*)\|^2
        +2 \bep^\top\bX(\bh-\bh^* 
        ) )/n 
        \nonumber
        \\
        &\le&
        (\|\bX\bh^*\|^2 - \|\bX\bh\|^2 + \|\bep\|^2)/n.
        \nonumber
    \ees
    Similarly, the KKT conditions
    $-\bSigma\bh^* = \partial g(\bbeta^*)$
    for $\bbeta^*$ yield
    \begin{equation}
        \label{eq:KKT-conditions-bbeta-star}
        {2(\bbeta^* - \hbbeta)^\top (\pa g)(\bbeta^*) } 
        +\|\bSigma^{1/2}(\hbbeta-\bbeta^*)\|^2 
        \le
        \|\bSigma^{1/2}\bh\|^2 - \|\bSigma^{1/2}\bh^*\|^2
        .
    \end{equation}
    Summing the two above displays yields 
    \bel{lower-bound-sigma-bar}
    {(1+2\mu)}\|\bSigma^{1/2}(\hbbeta-\bbeta^*)\|^2
    &\le& 
    {(F_+-1)\|\bSigma^{1/2}\bh^*\|^2
    + \|\bSigma^{1/2}\bh\|^2 - \|\bX\bh\|^2/n
    + F_+\sigma^2}
    \cr&=& 
    {{\overline \sigma}^2} 
    + {\|\bSigma^{1/2}\bh\|^2 - \|\bX\bh\|^2/n.}
    \eel
    For $\|\bSigma^{1/2} \bh\| \ge F \|\bSigma^{1/2}\bh^*\|$, 
    by the triangle inequality
    \begin{equation}
        \label{eq:lower-bound-triangle-sigma-bar}
     \|\bSigma^{1/2} \bh\|^2(1-1/F)^2
     \le 
     \|\bSigma^{1/2}(\hbbeta-\bbeta^*)\|^2
    \end{equation}
    provides a lower bound on the left-hand side of \eqref{lower-bound-sigma-bar} so that 
    \bel{eq:cases-sigmabar}
    {{\overline \sigma}^2}
    &\ge&  
    {\begin{cases}
    \big\{(1-1/F)^2 + 2/F-1)\big\}\|\bSigma^{1/2} \bh\|^2,
    & \text{ if } \|\bX\bh\|^2/n\ge (2/F)\|\bSigma^{1/2} \bh\|^2, 
    \cr \big\{(1-1/F)^2(1+2\mu) - 1\big\}\|\bSigma^{1/2} \bh\|^2, 
    & \text{ if } \|\bX\bh\|^2/n < (2/F)\|\bSigma^{1/2} \bh\|^2,    
    \end{cases}} 
    \cr &\ge & F^{-2}\|\bSigma^{1/2} \bh\|^2  
    \eel
    {due to $F = 2/\mu\ge 4$ in the second case.} 
    This gives \eqref{lm-F-1}. 
    For $\|\bSigma^{1/2} \bh\| \ge F \|\bSigma^{1/2}\bh^*\|$
    by \eqref{lower-bound-sigma-bar},    
    \eqref{eq:lower-bound-triangle-sigma-bar} and
    \eqref{eq:cases-sigmabar} we have  
    \bes
    \|\bX\bh\|^2/n \le {{\overline \sigma}^2}
    + \|\bSigma^{1/2} \bh\|^2\{1-(1-1/F)^2\}
    \le {{\overline \sigma}^2} +
    F^2{{\overline \sigma}^2}\{1-(1-1/F)^2\}
    = 2F{{\overline \sigma}^2}, 
    \ees
    and for $\|\bSigma^{1/2} \bh\| < F \|\bSigma^{1/2}\bh^*\|$ we have 
    $\|\bX\bh\|^2/n \le {{\overline \sigma}^2}+F^2\|\bSigma^{1/2}\bh^*\|^2$
    and thus \eqref{lm-F-2} holds.
\end{proof}

\subsection{Existence and properties of $\hbH$ and $\df$} 

\begin{proposition}
    \label{prop:hat-bH}
    Let $\bX\in\R^{n\times p}$ be any fixed design matrix,
    and $\hbbeta(\by)=\argmin_{\bb\in\R^p}\big\{\|\by-\bX\bb\|^2/(2n) +g(\bb)\big\}$.
    Then, the following statements hold. 

    \begin{enumerate}
        \item
            $\|\bX(\hbbeta(\by)-\hbbeta(\tby))\|
            \le \|\by-\tby\|$ for all $\by,\tby\in\R^n$, i.e., 
            $\by\mapsto \bX\hbbeta(\by)$ is 1-Lipschitz.
          Its gradient $\hbH$ 
          exists almost everywhere by Rademacher's
          theorem, that is, for almost every $\by$ there exists
          $\hbH\in\R^{n\times n}$
          with $\|\hbH\|_{op}\le 1$
          such that $\bX\hbbeta(\tby)=\bX\hbbeta(\by)+\hbH{}^\top \bfeta +
          o(\|\bfeta\|)$.
        \item
         For almost every $\by$, matrix $\hbH$ is symmetric with
         eigenvalues in $[0,1]$. 
         Consequently, with $\df = \trace(\hbH)$ as degrees of freedom, 
         $(n-\df)(1-\df/n)\le \|\bI_n-\hbH\|_F^2 \le n-\df$. 
    \end{enumerate}
\end{proposition}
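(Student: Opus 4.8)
First, for part (i), the plan is to read the first-order (KKT) conditions $n^{-1}\bX^\top(\by-\bX\hbbeta(\by))\in(\pa g)(\hbbeta(\by))$ and exploit monotonicity of $\pa g$. For two responses $\by,\tby$ (with the minimum attained, e.g. $g$ coercive as under \Cref{assum:main}; in any case $\bX\hbbeta(\by)$ is unique since the objective is strictly convex in $\bX\bb$), subtracting the two KKT inclusions, pairing with $\hbbeta(\by)-\hbbeta(\tby)$, multiplying by $n$ and rearranging gives
\[
(\bX\hbbeta(\by)-\bX\hbbeta(\tby))^\top(\by-\tby)\ \ge\ \|\bX(\hbbeta(\by)-\hbbeta(\tby))\|^2 .
\]
Cauchy--Schwarz on the left then yields $\|\bX(\hbbeta(\by)-\hbbeta(\tby))\|\le\|\by-\tby\|$, i.e. $\by\mapsto\bX\hbbeta(\by)$ is $1$-Lipschitz, and Rademacher's theorem supplies $\hbH$ for a.e. $\by$ with $\|\hbH\|_{op}=\|\hbH^\top\|_{op}\le1$. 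Taking $\tby=\by+t\bfeta$ in the displayed inequality, dividing by $t^2$ and letting $t\to0$ at a point of differentiability also records $\bfeta^\top\hbH\bfeta\ge\|\hbH^\top\bfeta\|^2\ge0$, so the symmetric part of $\hbH$ is positive semidefinite; this will be promoted to $\hbH\succeq0$ once symmetry is in hand.

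Next, for the symmetry and spectral containment in part (ii), the idea is to realize $\bX\hbbeta(\cdot)$ as a gradient. Set $\Psi(\by)=\min_{\bb\in\R^p}\{\|\by-\bX\bb\|^2/(2n)+g(\bb)\}$, the partial minimization of a jointly convex function, hence a finite convex function on $\R^n$. By Danskin's theorem --- equivalently, $(n^{-1}(\by-\bX\hbbeta(\by)),\mathbf0)$ is a subgradient of the joint objective at $(\by,\hbbeta(\by))$, so its $\by$-block lies in $\partial\Psi(\by)$ --- the map $\by\mapsto n^{-1}(\by-\bX\hbbeta(\by))$ is a continuous (indeed, by (i), Lipschitz) subgradient selection of $\Psi$, which forces $\Psi\in C^1$ with $\nabla\Psi(\by)=n^{-1}(\by-\bX\hbbeta(\by))$. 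Hence $\bX\hbbeta(\by)=\nabla\Phi(\by)$ for $\Phi(\by)=\tfrac12\|\by\|^2-n\Psi(\by)$, whose gradient $\bX\hbbeta(\cdot)$ is monotone by (i), so $\Phi$ is convex and has a $1$-Lipschitz gradient. Therefore, wherever $\hbH$ exists it equals $\nabla^2\Phi$: it is symmetric (the Jacobian of a Lipschitz gradient field is symmetric a.e., since the distributional curl vanishes) and has eigenvalues in $[0,1]$ (positive semidefiniteness from convexity of $\Phi$, the upper bound $1$ from the $1$-Lipschitz gradient), consistent with both $\|\hbH\|_{op}\le1$ and the bound from (i). In particular $0\le\df=\trace(\hbH)\le n$.

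For the Frobenius-norm statement, the plan is a short spectral computation: diagonalize $\hbH=\sum_{i=1}^n\lambda_i\bv_i\bv_i^\top$ with $\lambda_i\in[0,1]$, so $\df=\sum_i\lambda_i$ and $\|\bI_n-\hbH\|_F^2=\sum_i(1-\lambda_i)^2$. Since $0\le1-\lambda_i\le1$ gives $(1-\lambda_i)^2\le1-\lambda_i$, summing yields $\|\bI_n-\hbH\|_F^2\le\sum_i(1-\lambda_i)=n-\df$; and Cauchy--Schwarz gives $(n-\df)^2=\big(\sum_i(1-\lambda_i)\big)^2\le n\sum_i(1-\lambda_i)^2$, i.e. $\|\bI_n-\hbH\|_F^2\ge(n-\df)^2/n=(n-\df)(1-\df/n)$.

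The hard part will be part (ii): whereas the Lipschitz bound is a one-line monotonicity argument and the Frobenius estimates are pure linear algebra, symmetry of $\hbH$ genuinely requires recognizing $\bX\hbbeta(\cdot)$ as a proximal/gradient map (via the convex potential $\Phi$ and the $C^1$-regularity of the Moreau-type envelope $\Psi$) and then handling the almost-everywhere bookkeeping carefully --- combining Rademacher differentiability of the $1$-Lipschitz map with the fact that a Lipschitz gradient field has symmetric Jacobian a.e.
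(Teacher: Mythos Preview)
Your proof is correct and, for part (i) and the Frobenius bounds, essentially identical to the paper's. For part (ii) both you and the paper introduce the \emph{same} convex potential: the paper writes
\[
u(\by)=\sup_{\bb}\bigl\{\by^\top\bX\bb-\tfrac12\|\bX\bb\|^2-ng(\bb)\bigr\}
=\tfrac12\|\by\|^2-\tfrac12\|\by-\bX\hbbeta(\by)\|^2-ng(\hbbeta(\by)),
\]
which is exactly your $\Phi(\by)=\tfrac12\|\by\|^2-n\Psi(\by)$, and both observe that $\bX\hbbeta(\by)$ is a subgradient of this function. The difference is in how symmetry and positive semidefiniteness of $\hbH$ are extracted. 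The paper invokes \emph{Alexandrov's theorem} directly: any finite convex function on $\R^n$ is a.e.\ twice differentiable in the sense that there exists a symmetric positive semidefinite $\bS$ approximating the subgradient to first order, and then identifies $\bS=\hbH$ using the Fr\'echet expansion from part (i). You instead first upgrade regularity --- arguing via Danskin (continuous subgradient selection of a convex function) that $\Psi\in C^1$, hence $\Phi\in C^{1,1}$ with $\nabla\Phi=\bX\hbbeta$ --- and then obtain symmetry of $\hbH$ a.e.\ from the distributional Schwarz/curl argument for a Lipschitz gradient field, with positive semidefiniteness coming from monotonicity of $\nabla\Phi$. Your route is arguably more elementary in that it avoids the black box of Alexandrov's theorem, trading it for the (standard) facts that a convex function with continuous subgradient is $C^1$ and that weak second partials commute; the paper's route is shorter once Alexandrov is granted and does not need the $C^1$ regularity of $\Psi$ as an intermediate step.
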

\begin{proof}
    A proof of (i) is given in \cite{bellec2016bounds}.
    For completeness, the argument is the following:
    by \eqref{eq:lipschitz-3} with $\tbX=\bX$,
    $\tbep = \tby-\bX\bbeta$ and $\bep=\by -\bX\bbeta$ we have
\begin{equation}
    n D_g(\hbbeta(\tby),\hbbeta(\by))
    + \|\bX\hbbeta(\tby) - \bX\hbbeta(\by))\|^2
    \le (\by-\tby)^\top\bX(\hbbeta(\by)-\hbbeta(\tby)).
    \label{eq:lipschitz-continuity-y}
\end{equation}
Using
$D_g(\hbbeta(\tby),\hbbeta(\by))\ge0$ by monotonicity of the subdifferential
and the Cauchy-Schwarz inequality yields the desired Lipschitz property.
For (ii), define
\bes
u(\by)&=&(\|\by\|^2 - \|\by-\bX\hbbeta(\by)\|^2)/2 - n g(\hbbeta(\by))
    \\&=&\sup_{\bb\in\R^p} \{ \by^\top \bX\bb - \|\bX\bb\|^2/2 - n g(\bb)\}.
\ees
The function $u:\R^n\to\R$ is convex in $\by$ as a supremum of affine functions,
and $\bX\hbbeta(\by)$ is a subgradient of $u$ at $\by$.
Alexandrov's theorem as stated in \cite[Theorem D.2.1]{niculescu2006convex}
states that any convex $u:\R^n\to\R$
is twice differentiable at $\by$ for almost every $\by$
in the following sense:
$u$ is Fr\'echet differentiable at $\by$ with gradient $\nabla u(\by)$
and there exists a symmetric positive semi-definite matrix $\bS$ such that
for every $\varepsilon>0$ there exists $\delta>0$ such that for all $\tby\in\R^n$,
$$\|\tby-\by\|\le \delta\quad\text{implies}\quad \sup_{\bv\in\partial u(\tby)}\|\bv-\nabla u(\by)-\bS(\tby-\by)\|\le \varepsilon \|\tby-\by\|.
$$
By (i) and the definition of $\hbH$, for almost every $\by$ it holds
that $\bX\hbbeta(\tby)=\bX\hbbeta(\by) + \hbH{}^\top(\tby-\by) + o(\|\tby-\by\|)$.
Combining these two results and taking $\bv=\bX\hbbeta(\tby)$,
we get that $\bS=\hbH$ for almost every $\by$.
\end{proof}

\begin{restatable}{lemma}{lemmaDfStronglyConvexBoundedAwayFromN}
    \label{lemma:n-df-strongly-convex}
    Let \Cref{assum:main} be fulfilled with $n\ge 2$.
    Then there exists an event $\Omega_0$ independent of $(\bz_0,\bep)$
    such that
    $$
    \Omega_0 
    \subset
    \left\{
        n-\df
        \ge \|\bI_n - \hbH\|_F^2
    \ge C_*(\gamma,\mu)\, n \right\}
    \text{ with }
    \begin{cases}
        \P(\Omega_0^c)=0 & \text{ if } \gamma<1, 
        \\ \P(\Omega_0^c) \le e^{-n/2} & \text{ if } \gamma\ge1,
    \end{cases}
    $$
    where $C_*(\gamma,\mu)\in(0,1)$ depends on $\{\gamma,\mu\}$ only.
\end{restatable}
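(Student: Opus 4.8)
The plan is to reduce the whole statement to a lower bound of the form $n-\df\ge c(\gamma,\mu)\,n$ on a suitable event, and to treat the two regimes separately. Indeed, \Cref{prop:hat-bH}(ii) always gives both $n-\df\ge\|\bI_n-\hbH\|_F^2$ and $\|\bI_n-\hbH\|_F^2\ge(n-\df)(1-\df/n)=(n-\df)^2/n$, so as soon as $n-\df\ge c\,n$ on some event $\Omega_0$ we also get $\|\bI_n-\hbH\|_F^2\ge c^2n$ there, and the lemma follows with $C_*(\gamma,\mu)=c(\gamma,\mu)^2$. When $\gamma<1$ this is immediate and needs no strong convexity: $\hbH$ is the gradient of $\by\mapsto\bX\hbbeta(\by)$, so its range lies in $\mathrm{col}(\bX)$, hence $\mathrm{rank}(\hbH)\le p$; since the eigenvalues of $\hbH$ lie in $[0,1]$ this yields $\df=\trace[\hbH]\le p\le\gamma n$ almost surely, so one may take $\Omega_0$ of full probability and $c=1-\gamma$. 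For the rest I assume $\gamma\ge1$, which by \Cref{assum:main} forces $\mu>0$.

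The crucial step for $\gamma\ge1$ is the almost sure operator inequality
\begin{equation*}
\hbH\ \preceq\ \bX(\bX^\top\bX+n\mu\bSigma)^{-1}\bX^\top\ =\ \bA(\bA^\top\bA+n\mu\bI_p)^{-1}\bA^\top\ =\ \bI_n-n\mu(\bA\bA^\top+n\mu\bI_n)^{-1},
\end{equation*}
where $\bA=\bX\bSigma^{-1/2}$ has i.i.d.\ $N(0,1)$ entries and the last two equalities are the push-through identity. To establish the inequality I would mimic the twice-differentiable computation of \Cref{sec:computation-twice-differentiation-g} but use only the two-point monotonicity \eqref{strong-convex}. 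Fix $\by$ at a point of differentiability of $\by\mapsto\hbbeta(\by)$ (a.e.\ such points exist by the Lipschitz property in \Cref{prop:hat-bH}(i)), perturb it to $\tby=\by+t\bfeta$ with corresponding minimizer $\tbbeta$, subtract the KKT conditions at the two points and pair with $\hbbeta-\tbbeta$; using \eqref{strong-convex} this gives $\mu n\|\bSigma^{1/2}(\hbbeta-\tbbeta)\|^2+\|\bX(\hbbeta-\tbbeta)\|^2\le(\bX(\hbbeta-\tbbeta))^\top(\by-\tby)$. Dividing by $t^2$ and letting $t\to0^+$, with $\bX(\hbbeta-\tbbeta)=-t\hbH\bfeta+o(t)$ and $\bSigma^{1/2}(\hbbeta-\tbbeta)=-t\bK\bfeta+o(t)$ where $\bK=\bSigma^{1/2}(\partial/\partial\by)\hbbeta$ and $\hbH=\bA\bK$, this becomes $\mu n\|\bK\bfeta\|^2+\|\bA\bK\bfeta\|^2\le\bfeta^\top\bA\bK\bfeta$ for every $\bfeta$. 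Writing $\bv=\bK\bfeta$ and $\bN=n\mu\bI_p+\bA^\top\bA\succ0$, this reads $\bv^\top\bN\bv\le(\bA^\top\bfeta)^\top\bv$, and two applications of the Cauchy--Schwarz inequality in the inner product $\langle x,y\rangle_\bN=x^\top\bN y$ upgrade it to $\bfeta^\top\hbH\bfeta\le(\bA^\top\bfeta)^\top\bN^{-1}\bA^\top\bfeta$, which is the claimed domination. Taking traces then yields $n-\df\ge n\mu\,\trace[(\bA\bA^\top+n\mu\bI_n)^{-1}]$.

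The final step is to turn this trace into a bound that does not depend on $\bz_0$. Writing $\bA=\bB+\bz_0\bv^\top$ with $\bv=\bSigma^{-1/2}\ba_0$ (so $\|\bv\|=1$) and $\bB=\bX\bQ_0\bSigma^{-1/2}$, which satisfies $\bB\bv=0$ and depends only on $\bX\bQ_0$, one gets $\bA\bA^\top=\bM_0+\bz_0\bz_0^\top$ with $\bM_0=\bB\bB^\top$. Weyl's interlacing inequalities for the rank-one positive update $\bz_0\bz_0^\top$ give $\lambda_k(\bA\bA^\top)\le\lambda_{k-1}(\bM_0)$ for the decreasingly ordered eigenvalues, $k=2,\dots,n$, whence
\begin{equation*}
\trace\big[(\bA\bA^\top+n\mu\bI_n)^{-1}\big]\ \ge\ \sum_{k=2}^{n}\frac{1}{\lambda_{k-1}(\bM_0)+n\mu}\ \ge\ \frac{n-1}{\|\bM_0\|_{op}+n\mu};
\end{equation*}
this is where one dimension is lost, explaining the hypothesis $n\ge2$. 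Since $\bB$ is, in an orthonormal basis of the range of $\bI_p-\bv\bv^\top$, a standard $n\times(p-1)$ Gaussian matrix independent of $\bz_0$, Gaussian concentration of the operator norm (deviation $t=\sqrt n$ in $\|\bB\|_{op}\le\sqrt n+\sqrt{p-1}+t$, together with $p\le\gamma n$) gives $\P(\|\bM_0\|_{op}>n(2+\sqrt\gamma)^2)\le e^{-n/2}$. Taking $\Omega_0=\{\|\bM_0\|_{op}\le n(2+\sqrt\gamma)^2\}$ --- an event measurable with respect to $\bX\bQ_0$, hence independent of $(\bz_0,\bep)$, with $\P(\Omega_0^c)\le e^{-n/2}$ --- and combining the last two displays with $n-1\ge n/2$ for $n\ge2$ gives $n-\df\ge\frac{\mu}{2((2+\sqrt\gamma)^2+\mu)}\,n$ on $\Omega_0$, which finishes the proof. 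I expect the operator-inequality step to be the main obstacle: it requires running the Bregman/directional-derivative argument for a possibly non-smooth strongly convex $g$, and --- more importantly --- extracting from it a bound on $\df$ that is uniform in $\bz_0$, a role played exactly by the decomposition $\bA\bA^\top=\bM_0+\bz_0\bz_0^\top$ and rank-one interlacing.
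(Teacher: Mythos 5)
Your proof is correct, and it reaches the same constants (up to a factor of two) and the same event $\Omega_0=\{\|\bX\bQ_0\bSigma^{-1/2}\|_{op}\le(2+\sqrt\gamma)\sqrt n\}$ as the paper, but via a genuinely different intermediate device. The paper stays with $\hbH$ throughout: it considers perturbations $\tby-\by$ orthogonal to $\bz_0$, combines \eqref{eq:lipschitz-continuity-y} with \eqref{strong-convex} and the operator-norm bound on $\bX\bQ_0\bSigma^{-1/2}$ to show $\|\bP\hbH\bP\|_{op}\le L<1$ for the projector $\bP=\bI_n-\bz_0\bz_0^\top/\|\bz_0\|^2$, and then applies Cauchy interlacing (through the rank-$(n-1)$ compression $\bP\hbH\bP$) to conclude that at least $n-1$ eigenvalues of $\bI_n-\hbH$ exceed $1-L$. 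You instead push the Bregman inequality one step further---via a directional-derivative limit and a double Cauchy--Schwarz in the $\bN$-inner product, $\bN=n\mu\bI_p+\bA^\top\bA$---to obtain the clean a.s.\ operator domination $\hbH\preceq\bA\bA^\top(\bA\bA^\top+n\mu\bI_n)^{-1}$, then take traces and use Weyl's rank-one interlacing on $\bA\bA^\top=\bM_0+\bz_0\bz_0^\top$ to remove the $\bz_0$-dependence from $\trace[(\bA\bA^\top+n\mu\bI_n)^{-1}]$. Both "lose one dimension" (explaining the $n\ge2$ hypothesis), and both end with the same concentration bound on the Gaussian matrix $\bB=\bX\bQ_0\bSigma^{-1/2}$. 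What the paper's route buys is economy: it works only with the gradient of $\by\mapsto\bX\hbbeta$ (the matrix $\hbH$ whose existence Proposition~\ref{prop:hat-bH} already guarantees) and never needs to introduce the Jacobian of $\by\mapsto\hbbeta$ itself. What your route buys is a cleaner quantitative domination of $\hbH$ by the ridge resolvent, which could be of independent interest and recovers the well-known degrees-of-freedom bound $\df\le\trace[\bA\bA^\top(\bA\bA^\top+n\mu\bI_n)^{-1}]$ as a byproduct; the cost is that you need $\bK=\bSigma^{1/2}(\partial/\partial\by)\hbbeta$ to exist, which is fine under strong convexity (Lemma~\ref{lemma:lipschitzness} gives the Lipschitz property) but is formally a stronger differentiability requirement than what the paper uses.
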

\begin{proof}[Proof of \Cref{lemma:n-df-strongly-convex}]
    If $\gamma<1$, the choice $C_*(\gamma,\mu)=(1-\gamma)$ works 
    with probability one because $\rank(\hbH)\le \rank(\bX) \le p\le \gamma n$
    and $\|\hbH\|_{op}\le 1$.

    If $\gamma\ge 1$ then we have $\mu>0$ in \Cref{assum:main}.
    Let $\Omega_0 = \{\|\bX\bQ_0\bSigma^{-1/2}\|_{op} \le \sqrt p + 2 \sqrt n \}$.
    By \cite[Theorem II.13]{DavidsonS01},
    $\P(\Omega_0) \ge 1-e^{-n/2}$ 
    due to $\bX\bQ_0\bSigma^{-1/2} 
    = \bX\bSigma^{-1/2}(\bI_p - (\bSigma^{-1/2}\ba_0)(\bSigma^{-1/2}\ba_0)^\top$ 
    with $\|\bSigma^{-1/2}\ba_0\|=1$. 
    Next, we hold $\bX$ fixed and study the derivatives of $\bX\hbbeta$
    with respect to $\by$. Let
    $\hbbeta(\by)$ be as in \Cref{prop:hat-bH}.
    Let $\bP=\bI_n-\bz_0\bz_0^\top /\|\bz_0\|^2$ be the projection
    onto $\{\bz_0\}^\perp$ so that $\bP\bX=\bP\bX\bQ_0$.
    Let $\by,\tby$ be such that $\bz_0^\top(\by-\tby) = 0$,
    or equivalently $\bP(\by-\tby) = \by-\tby$.
    By \eqref{eq:lipschitz-continuity-y} and \eqref{strong-convex},
    \bes
    n \mu \|\bSigma^{1/2}(\hbbeta(\by) - \hbbeta(\tby))\|^2
    + \|\bX(\hbbeta(\by) - \hbbeta(\tby) )\|^2
    &\le&
    (\by-\tby)^\top\bX(\hbbeta(\by) - \hbbeta(\tby))
  \\&=&
    (\by-\tby)^\top\bP\bX(\hbbeta(\by) - \hbbeta(\tby)).
    \ees
    On $\Omega_0$,
    $\mu(\sqrt\gamma + 2)^{-2}\|\bX\bQ_0(\hbbeta(\by)-\hbbeta(\tby))\|^2\le
    n\mu\|\bSigma^{1/2}(\hbbeta(\by)-\hbbeta(\tby))\|^2
    $. Combined with the above display, this implies
    $(1+\mu(\sqrt\gamma + 2)^{-2})\|\bP\bX(\hbbeta(\by)-\hbbeta(\tby))\| \le \|\bP(\by-\tby)\|$.
    With $\tby = \by + \bP\bfeta$
    and by definition of $\hbH$ we have
    $L^{-1}\|\bP\hbH\bP\bfeta\| \le \|\bP\bfeta\| + o(\|\bfeta\|)$
    for $L=(1+\mu(\sqrt\gamma +2)^{-2})^{-1}$,
    hence $\|\bP\hbH\bP\|_{op}\le L$.
    Since $\rank(\bP)=n-1$,
    by Cauchy's interlacing theorem
    $\bI_n - \hbH$ has at least $n-1$ eigenvalues
    no smaller than $1- L > 0$.
    Finally, since $\bI_n-\hbH$ is symmetric
    with eigenvalues in $[0,1]$
    by \Cref{prop:hat-bH},
    $$
    n-\df
    =
    \trace[\bI_n-\hbH]
    \ge\|\bI_n - \hbH\|_F^2
    \ge (n-1)(1-L)^2 
    \ge n C_*$$
    with $C_* = (1-L)^2/2$
    thanks to $n\ge 2$.
\end{proof}

\subsection{Lower bound on $\|\by-\bX\hbbeta\|^2/n$}
The following lemmas are useful to bound from below
the denominator in \eqref{definition-eps_0-a_0}.

\begin{lemma}
    \label{lemma:uniformly-bounded}
    Let \Cref{assum:main} be fulfilled. Then
    $\E[\xi_0^2]\le \C(\gamma,\mu) n \risk$ and
    \bel{eq:uniformly-bounded}
    \E[( (1-\df/n) \langle \ba_0,\bh\rangle + n^{-1}\langle \bz_0,\by-\bX\hbbeta\rangle)^2]/\risk
    &\le& \C(\gamma,\mu) n^{-1},
    \\\E[ I_{\Omega_0}( \langle \ba_0,\bh\rangle 
    +
    (n-\df)^{-1}
    \langle \bz_0,\by-\bX\hbbeta\rangle)^2]/\risk
    &\le& \C(\gamma,\mu) n^{-1}, 
    \label{eq:uniformly-bounded-2}
    \eel
    where $\Omega_0$ is the event from \Cref{lemma:n-df-strongly-convex}.
\end{lemma}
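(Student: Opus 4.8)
The plan is to reduce all three estimates to a single bound on $\E[\xi_0^2]$, combined with the almost sure inequalities \eqref{new-bd-1}--\eqref{new-bd-2} and the moment bounds \eqref{inequalities-moments}.

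\textbf{Step 1: bounding $\E[\xi_0^2]$.} Since $\E_0[\xi_0]=0$, the tower property gives $\E[\xi_0^2]=\E[\Var_0[\xi_0]]$, and by the Second Order Stein formula (\Cref{prop:2nd-order-stein-Bellec-Zhang}, applied conditionally on $(\bep,\bX\bQ_0)$, for which $\bz_0\sim N(\mathbf 0,\bI_n)$) this equals $\E[\|f(\bz_0)\|^2]+\E[\trace[\{\nabla f(\bz_0)\}^2]]$. The integrability needed to invoke that formula conditionally follows from the local Lipschitz property of $f$ established in \Cref{lemma:lipschitzness} together with the a.s. bounds \eqref{new-bd-1}--\eqref{new-bd-2}. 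Now $\|f(\bz_0)\|^2=\|\by-\bX\hbbeta\|^2\le 4nF_+F^2\risk$ by \eqref{new-bd-2}, while $|\trace[\{\nabla f(\bz_0)\}^2]|\le\|\nabla f(\bz_0)\|_F^2$; and from the identity $\nabla f(\bz_0)^\top=(\bI_n-\hbH)^\top\langle\ba_0,\bh\rangle+\bw_0(\by-\bX\hbbeta)^\top$ of \Cref{lemma:existence-bw0}, the triangle inequality combined with $\|\bI_n-\hbH\|_F^2\le n$ (\Cref{prop:hat-bH}), $\langle\ba_0,\bh\rangle^2\le F_+F^2\risk$, $\|\bw_0\|^2\le F/(2n)$ and $\|\by-\bX\hbbeta\|^2\le 4nF_+F^2\risk$ gives $\|\nabla f(\bz_0)\|_F^2\le C(\gamma,\mu)\,nF_+F^3\risk$ almost surely. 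Taking expectations and applying H\"older, $\E[F_+F^3]\le \E[F_+^4]^{1/4}\E[F^4]^{3/4}\le C(\gamma,\mu)$ by \eqref{inequalities-moments}, yields $\E[\xi_0^2]\le C(\gamma,\mu)\,n\risk$.

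\textbf{Step 2: the two remaining bounds.} I would use the algebraic identity \eqref{-xi0}, which rearranges to $(n-\df)\langle\ba_0,\bh\rangle+\langle\bz_0,\by-\bX\hbbeta\rangle=-\xi_0-\langle\bw_0,\by-\bX\hbbeta\rangle$. Dividing by $n$ and squaring, the left-hand side of \eqref{eq:uniformly-bounded} is $n^{-2}\E[(\xi_0+\langle\bw_0,\by-\bX\hbbeta\rangle)^2]/\risk\le 2n^{-2}(\E[\xi_0^2]+\E[\langle\bw_0,\by-\bX\hbbeta\rangle^2])/\risk$; the first term is controlled by Step 1, and $\langle\bw_0,\by-\bX\hbbeta\rangle^2\le\|\bw_0\|^2\|\by-\bX\hbbeta\|^2\le 2F_+F^3\risk$ a.s. by \eqref{new-bd-1}--\eqref{new-bd-2}, so its expectation is $\le C(\gamma,\mu)\risk$ by the same H\"older bound. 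This gives \eqref{eq:uniformly-bounded}. For \eqref{eq:uniformly-bounded-2} I would divide by $(n-\df)$ rather than $n$, restrict to $\Omega_0$ on which $n-\df\ge C_*(\gamma,\mu)n$ by \Cref{lemma:n-df-strongly-convex}, so that $I_{\Omega_0}(n-\df)^{-2}\le (C_*(\gamma,\mu)n)^{-2}$, and then argue exactly as above.

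\textbf{Main obstacle.} Given how directly the conclusion follows from results already in place, the only genuine care is in Step 1: justifying the conditional application of \Cref{prop:2nd-order-stein-Bellec-Zhang} and the passage from the almost sure (polynomially growing in $\bz_0$) bounds to finite expectations, i.e., verifying that $f_i(\bz_0)$ and $\nabla f_i(\bz_0)$ are conditionally square-integrable given $(\bep,\bX\bQ_0)$. This is precisely where \eqref{inequalities-moments} together with \eqref{new-bd-1}--\eqref{new-bd-2} do the work; the remainder of the argument is bookkeeping with the triangle and H\"older inequalities.
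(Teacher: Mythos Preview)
Your proof is correct and follows essentially the same approach as the paper: apply the Second Order Stein formula conditionally to get $\E[\xi_0^2]=\E[\|f(\bz_0)\|^2]+\E[\trace\{\nabla f(\bz_0)\}^2]$, bound the gradient term via the decomposition \eqref{eq:identity-bw0} together with the almost sure bounds \eqref{new-bd-1}--\eqref{new-bd-2} and the moment bounds \eqref{inequalities-moments}, and then derive \eqref{eq:uniformly-bounded}--\eqref{eq:uniformly-bounded-2} from the algebraic identity \eqref{-xi0} and the lower bound on $n-\df$ in $\Omega_0$. The paper's proof differs only in cosmetic details of bookkeeping (e.g., it first isolates the bound $\E[\langle\bw_0,\by-\bX\hbbeta\rangle^2]\le C(\gamma,\mu)\risk$ as a standalone display, and it invokes \eqref{inequalities-moments} directly rather than spelling out the H\"older step).
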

\begin{proof}[Proof of \Cref{lemma:uniformly-bounded}]
    By \eqref{eq:identity-bw0} combined with 
    \eqref{eq:bound-norm-bw0}, \eqref{new-bd-2}
    and \eqref{inequalities-moments},
    \begin{equation}
        \E[
        \langle \bw_0,\by-\bX\hbbeta\rangle^2
        ]
        \le
        \E[
        \|\bw_0\|^2
        \|\by-\bX\hbbeta\|^2
        ]
        \le
        \E[ (F/(2n))  4n F_+F^2]
        \le \C(\gamma,\mu) \risk
        \label{eq:bound-w0-inner-f}
    \end{equation}
    Similarly, by definition of $V^*(\theta)$ in \eqref{Var(xi_0)},
    $
    \E[\xi_0^2]
    =\E[V^*(\theta)]
    =
    \E[\|\by-\bX\hbbeta\|^2 + 
    \|\nabla f(\bz_0)\|_F^2]
    $.
    Using \eqref{new-bd-1}-\eqref{new-bd-2} we have
    $\E[\|\by-\bX\hbbeta\|^2 \le 4 n \risk \E[F_+F^2]$ and
    \bel{bound-nabla-f}
    \E[\|\nabla f(\bz_0)\|_F^2]
    &\le&
    \E[2 \|\bI_n-\hbH\|_F^2\langle \ba_0,\bh\rangle^2
    + 2 \langle \bw_0,\by-\bX\hbbeta\rangle^2
    ]
    \cr
    &\le&
    n
    \E[ 2 \langle \ba_0,\bh\rangle^2]
    + \C(\gamma,\mu) \risk
    \\
    &\le&
    n \risk
    \C(\gamma,\mu)
    \nonumber
    \eel
    thanks to $\nabla f(\bz_0)$ in \eqref{eq:identity-bw0},
     and
    $(a+b)^2\le2(a^2+b^2)$ for the first inequality,
    $\|\bI_n-\hbH\|_F^2\le n$ by \Cref{prop:hat-bH}
    and
    \eqref{eq:bound-w0-inner-f} for the second inequality,
    and \eqref{new-bd-1}-\eqref{inequalities-moments}
    for the third inequality.
    This provides $\E[\xi_0^2] \le \C(\gamma,\mu) n \risk$.
    Next, \eqref{eq:uniformly-bounded} holds due
    the bound \eqref{eq:bound-w0-inner-f} and
    the relationship in \eqref{-xi0} between
    $\xi_0$, $\langle \bw_0,\by-\bX\hbbeta\rangle$ and the integrand
    in left-hand side of \eqref{eq:uniformly-bounded}.
    Then \eqref{eq:uniformly-bounded-2} follows from
    \eqref{eq:uniformly-bounded} and
    $I_{\Omega_0}(1-\df/n)^{-2} \le C_*(\gamma,\mu)^{-2}$
    by \Cref{lemma:n-df-strongly-convex}.
\end{proof}

\begin{restatable}{lemma}{lemmaDeltasABCD}
    \label{lemma:Deltas-a-b-c-d-strongly-convex}
    Let $\hbbeta$ be as in
    \eqref{penalized-hbbeta-general}
    for convex $g$
    and let $\bbeta^*,\bh^*,\risk$ 
    be as in \eqref{oracle-general-g-strongly-convex}.
    Then
    \bel{lower-boud-f-Delta-a-d}
    (1-\df/n)^2/8
    &\le&
    \|\by-\bX\hbbeta\|^2/(n\risk)
    +
    \Delta_n^a+ \Delta_n^b + \Delta_n^c 
        \label{lower-bound-y-Xhbbeta-Delta-abcd}
    \\&\le&
    V^*(\theta)/(n\risk)
        + \Delta_n^d
        + \Delta_n^a+ \Delta_n^b + \Delta_n^c 
        \label{lower-bound-V^*-Delta-e}
    \eel
    where $V^*(\theta)$ is defined in \eqref{Var(xi_0)} and
    $\Delta_n^a,\dots,\Delta_n^d$ are nonnegative terms defined as
    \begin{align}
    \Delta_n^a &\defas
    \sigma^2\big|(1-\df/n)-\bep^\top(\by-\bX\hbbeta)/(n\sigma^2) \big|^2
    \big/\risk,
    \label{eq:def-delta-n-a}
    \\
    \Delta_n^b &\defas
    (F_+ -1)_+\|\by-\bX\hbbeta\|^2/(n\risk),
    \label{eq:delta-n-b}
    \\
    \Delta_n^c &\defas
    |(1-\df/n)\langle \ba_*,\bh\rangle
    - 
    \bg^\top(\bX\hbbeta-\by)/n|^2
    /\risk
    \label{eq:delta-n-c}
    \\
    \Delta_n^d &\defas
      n^{-1} |2 \bw_0^\top (\bI_n-{\hbH})(\by-\bX\hbbeta)\langle \ba_0,\bh\rangle|/\risk.
    \nonumber
    \end{align}
    where 
    $\bg=\bX\bh^*/\|\bSigma^{1/2}\bh^*\|$
    and $\ba_* = \bSigma\bh^* / \|\bSigma^{1/2}\bh^*\|$.
\end{restatable}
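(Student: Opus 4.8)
The plan is to prove the two inequalities \eqref{lower-bound-y-Xhbbeta-Delta-abcd} and \eqref{lower-bound-V^*-Delta-e} in turn: the second is a short expansion of $V^*(\theta)$, while the first carries the content. Throughout write $\br=\by-\bX\hbbeta=\bep-\bX\bh$ for the residual, $\bv=\by-\bX\bbeta^*=\bep-\bX\bh^*$ for the oracle residual, $B=\|\bSigma^{1/2}\bh^*\|^2$ so that $\risk=\sigma^2+B$, and $\delta=1-\df/n\in[0,1]$ by \Cref{prop:hat-bH}; nonnegativity of $\Delta_n^a,\dots,\Delta_n^d$ is immediate. For \eqref{lower-bound-V^*-Delta-e}: by \Cref{lemma:existence-bw0} and the symmetry of $\hbH$ from \Cref{prop:hat-bH} one has $\nabla f(\bz_0)=(\bI_n-\hbH)\langle\ba_0,\bh\rangle+\br\bw_0^\top$, and expanding the square gives
\[
V^*(\theta)-\|\br\|^2=\trace[\{\nabla f(\bz_0)\}^2]=\|\bI_n-\hbH\|_F^2\langle\ba_0,\bh\rangle^2+2\langle\ba_0,\bh\rangle\,\bw_0^\top(\bI_n-\hbH)\br+(\bw_0^\top\br)^2 .
\]
The first and third terms are nonnegative and the middle one is $\ge-n\risk\Delta_n^d$, whence $\|\br\|^2/(n\risk)\le V^*(\theta)/(n\risk)+\Delta_n^d$, and adding $\Delta_n^a+\Delta_n^b+\Delta_n^c$ to both sides yields \eqref{lower-bound-V^*-Delta-e}.

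For the first inequality I would combine three ingredients. (a) \emph{Projection onto the oracle residual}: Cauchy--Schwarz gives $\|\br\|^2\ge(\bv^\top\br)^2/\|\bv\|^2\ge(\bv^\top\br)^2/(n\risk F_+)$ since $\|\bv\|^2\le n\risk F_+$ by the definition \eqref{def-F_+} of $F_+$; because $F_+\ge1$ this absorbs into $\Delta_n^b$ as $\|\br\|^2/(n\risk)+\Delta_n^b\ge\bigl(\bv^\top\br/(n\risk)\bigr)^2$. (b) \emph{Decomposition of $\bv^\top\br$}: using $\bX\bh^*=\|\bSigma^{1/2}\bh^*\|\bg$ we get $\bv^\top\br/n=\bep^\top\br/n-\|\bSigma^{1/2}\bh^*\|\,\bg^\top\br/n$, and the very definitions of $\Delta_n^a,\Delta_n^c$ give $\bep^\top\br/n=\delta\sigma^2+E_1$, $\bg^\top\br/n=-\delta\langle\ba_*,\bh\rangle+E_2$ with $E_1^2=\sigma^2\risk\Delta_n^a$, $E_2^2=\risk\Delta_n^c$, hence $\bv^\top\br/n=\delta\bigl(\sigma^2+\langle\bSigma^{1/2}\bh,\bSigma^{1/2}\bh^*\rangle\bigr)+E$ where $|E|\le|E_1|+\|\bSigma^{1/2}\bh^*\|\,|E_2|\le\risk\sqrt{\Delta_n^a+\Delta_n^c}$ by Cauchy--Schwarz in $\R^2$ together with $\sigma^2+B=\risk$. (c) \emph{Convexity}: the KKT conditions give $\bX^\top\br/n\in(\pa g)(\hbbeta)$ and $-\bSigma\bh^*\in(\pa g)(\bbeta^*)$, so monotonicity of $\pa g$ gives $(\hbbeta-\bbeta^*)^\top(\bX^\top\br/n+\bSigma\bh^*)\ge0$; plugging in the identity $\bX(\hbbeta-\bbeta^*)=\bv-\br$ and $\langle\bSigma^{1/2}\bh,\bSigma^{1/2}\bh^*\rangle=\|\bSigma^{1/2}\bh^*\|\langle\ba_*,\bh\rangle$ turns this into $\bv^\top\br/n\ge\|\br\|^2/n+B-\langle\bSigma^{1/2}\bh,\bSigma^{1/2}\bh^*\rangle$, i.e.\ $\sigma^2+\langle\bSigma^{1/2}\bh,\bSigma^{1/2}\bh^*\rangle\ge\risk+\|\br\|^2/n-\bv^\top\br/n$.

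Finally I would combine (a)--(c) by an elementary scalar argument: with $x=\|\br\|^2/(n\risk)$, $y=\bv^\top\br/(n\risk)$, $e=\sqrt{\Delta_n^a+\Delta_n^c}$, steps (b) and (c) give $y(1+\delta)\ge\delta(1+x)-e\ge\delta-e$, step (a) gives $x+\Delta_n^b\ge y^2$, and $\Delta_n^a+\Delta_n^c=e^2$, so $x+\Delta_n^a+\Delta_n^b+\Delta_n^c\ge y^2+e^2$; a one-line split on $e\ge\delta$ versus $e<\delta$ (in the latter case $y\ge(\delta-e)/2$, so $y^2+e^2\ge(\delta-e)^2/4+e^2\ge\delta^2/5$) then yields the claimed bound $(1-\df/n)^2/8$, with room to spare. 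The main obstacle is step (c): recognizing that monotonicity of the subdifferential, combined with the algebraic identity $\bX(\hbbeta-\bbeta^*)=\bv-\br$, is precisely what converts the convexity of $g$ (only monotonicity of $\pa g$ is used, matching the hypothesis) into the needed lower bound on $\sigma^2+\langle\bSigma^{1/2}\bh,\bSigma^{1/2}\bh^*\rangle$. Secondary care is needed for the numerical constant and for the degenerate case $\bh^*=\mathbf 0$, where $\bg,\ba_*$ are read as $\mathbf 0$ and only the projection onto $\bep$ survives.
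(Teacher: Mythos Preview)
Your proof is correct and follows essentially the same route as the paper's: the same KKT/monotonicity inequality (your step (c) is the paper's \eqref{KKT-conditions-oracle-combined}), the same use of the definitions of $\Delta_n^a,\Delta_n^c$ to extract $\delta(\sigma^2+\langle\bSigma^{1/2}\bh,\bSigma^{1/2}\bh^*\rangle)$ from $\bv^\top\br/n$, and the same Cauchy--Schwarz plus $\|\bv\|^2\le n\risk F_+$ to bring in $\Delta_n^b$. The only difference is cosmetic, in the final scalar step: the paper adds $(1-\df/n)$ times the KKT inequality to the $\Delta_n^a,\Delta_n^c$ bound, applies Cauchy--Schwarz $\bv^\top\br\le\|\bv\|\|\br\|$, and squares via $(2a+2b)^2\le 8(a^2+b^2)$ to land exactly on the constant $1/8$, whereas you first pass to the scalars $x,y,e$ and do a case split, obtaining $\delta^2/5$ (hence indeed ``with room to spare''). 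Your treatment of \eqref{lower-bound-V^*-Delta-e} via the expansion of $\trace[\{\nabla f(\bz_0)\}^2]$ is identical to the paper's.
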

\begin{proof}[Proof of \Cref{lemma:Deltas-a-b-c-d-strongly-convex}]
    By the triangle inequality and definitions of $\Delta_n^a,\Delta_n^c$,
\begin{align}
    (1-\df/n)\sigma
 &\le
 (\bep/\sigma)^\top(\by-\bX\hbbeta)/n + 
(\Delta_n^a\risk)^{1/2},
\label{520}
\\
(1- \df/n)\langle \ba_*,\bh\rangle
  &\le
(\bg^\top(\bX\hbbeta-\by))/n
+(\Delta^c \risk)^{1/2},
\label{eq:lower-boud-g-a-Delta-c}
\\
(1- \df/n)(\sigma^2 + \bh^\top\bSigma\bh^*)
  &\le
  (\bep - \bX\bh^*)^\top(\by-\bX\hbbeta)/n
  + (\Delta_n^a)^{1/2}\risk
  + (\Delta_n^c     )^{1/2}\risk
  \nonumber
\end{align}
where the last line follows from the weighted sum
$\sigma\eqref{520}
+\|\bSigma^{1/2}\bh^*\|\eqref{eq:lower-boud-g-a-Delta-c}$ 
and using $\sigma \vee \|\bSigma^{1/2}\bh^*\| \le \risk{}^{1/2}$
for the last two terms.
By the KKT conditions of $\hbbeta$ and $\bbeta^*$,
$$(\bbeta^*-\hbbeta)^\top \partial g(\bbeta^*)
= (\bh-\bh^*)^\top\bSigma \bh^*,
\qquad
(\hbbeta-\bbeta^*)^\top \partial g(\hbbeta)
= (\hbbeta- \bbeta^*)^\top\bX^\top(\by-\bX\hbbeta)/n.$$
Summing these equalities
and using the monotonicity of the subdifferential yields
\bel{KKT-conditions-oracle-combined}
\|\bSigma^{1/2}\bh^*\|^2 
+ \|\by-\bX\hbbeta\|^2/n
  &\le &
\bh^\top\bSigma \bh^*
+ (\hbbeta-\bbeta^*)\bX^\top(\by-\bX\hbbeta)/n
+ \|\by-\bX\hbbeta\|^2/n
\cr
  & = &
\bh^\top\bSigma \bh^*
+ (\by-\bX\bbeta^*)^\top(\by-\bX\hbbeta)/n.
\eel
Combining \eqref{KKT-conditions-oracle-combined} multiplied by $1-\df/n$
with the line after \eqref{eq:lower-boud-g-a-Delta-c} gives
\bes
  &&(1-\df/n)
(\risk + \|\by-\bX\hbbeta\|^2/n)
  \\&\le&
(2-\df/n)(\by-\bX\bbeta^*)^\top(\by-\bX\hbbeta)/n
+
(\Delta_n^a)^{1/2}\risk+ (\Delta_n^c)^{1/2}\risk
  \\&\le&
2 \|\by-\bX\bbeta^* \| \| \by-\bX\hbbeta \|/n
+
2(\max\{\Delta_n^a,  \Delta_n^c \})^{1/2}\risk
\ees
using the Cauchy-Schwarz inequality
and $(2-\df/n)\le 2$ for the last inequality. Using
$(2a+2b)^2\le 8 (a^2+b^2)$
for the right-hand side
with $\|\by-\bX\bbeta^*\|^2\|\by-\bX\hbbeta\|^2/(n^2\risk{}^2)
\le \|\by-\bX\hbbeta\|^2/(n\risk) + \Delta_n^b$
completes the proof of \eqref{lower-boud-f-Delta-a-d}.
The second inequality, \eqref{lower-bound-V^*-Delta-e},
then follows from \eqref{eq:identity-bw0}, \eqref{Var(xi_0)} and
\bes
  &&
\trace\bigl[
    \bigl( (\bI_n-\hbH)\langle \ba_0,\bh\rangle
+ \bw_0 (\by-\bX\hbbeta) \bigr)^2
\bigr]
\\
&=&
\|\bI_n-\hbH\|_F^2\langle \ba_0,\bh\rangle^2
+
(\bw_0^\top(\by-\bX\hbbeta))^2
+
2 \bw_0^\top(\bI_n-\hbH)(\by-\bX\hbbeta)\langle \ba_0,\bh\rangle
\ees
which implies
$V^*(\theta) - \|\by-\bX\hbbeta\|^2
\ge - n\Delta_n^d\risk$.
\end{proof}

\begin{lemma}
    \label{lemma:Delta_n-to-0}
    Define
    $\Delta_n\defas\Delta_n^a+\Delta_n^b+\Delta_n^c+\Delta_n^d$
    where $\Delta_n^a,...,\Delta_n^d$ are defined
    in \Cref{lemma:Deltas-a-b-c-d-strongly-convex}.
    Under \Cref{assum:main} we have
    $\E[\Delta_n]\le C(\gamma,\mu) n^{-1/2}$.
\end{lemma}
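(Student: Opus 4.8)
The plan is to bound each of the four nonnegative terms $\Delta_n^a,\dots,\Delta_n^d$ separately in expectation, showing each is $O(n^{-1/2})$, and then sum. The common strategy for $\Delta_n^a$ and $\Delta_n^c$ is the following: both are of the form $\sigma^2|(1-\df/n) - \langle \bv, \by-\bX\hbbeta\rangle/(n\,c)|^2/\risk$ for a standard normal vector $\bv$ (either $\bep/\sigma$ or $\bg=\bX\bh^*/\|\bSigma^{1/2}\bh^*\|$) and an appropriate scalar $c$. Recall from the Second Order Stein machinery that $-\xi_0 = \dv f(\bz_0) - \bz_0^\top f(\bz_0)$ has conditional mean zero, and a similar Stein/divergence identity holds if we integrate by parts in $\bep$ (respectively in the direction $\bh^*$) rather than in $\bz_0$. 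Concretely, differentiating $f(\bep,\bX) = \bX\hbbeta-\by$ with respect to $\bep$ gives the matrix $\hbH - \bI_n$ (as noted after \eqref{pf-lm-3-1}), so Stein's formula applied to $\bep \mapsto \langle \bep, \by-\bX\hbbeta\rangle$ yields $\E[\bep^\top(\by-\bX\hbbeta)] = \E[\sigma^2\trace(\hbH-\bI_n)] = -\sigma^2\E[n-\df]$, i.e. the term inside the absolute value in $\Delta_n^a$ is mean-zero. The key point is then to bound its \emph{variance} by $O(n^{-1})$ times $\risk^2/\sigma^4$, which follows from the Gaussian Poincaré inequality in $\bep$: the gradient of $\bep \mapsto (1-\df/n) - \bep^\top(\by-\bX\hbbeta)/(n\sigma^2)$ involves $\hbH - \bI_n$ and derivatives of $\df$, and its Frobenius norm squared is $O(n^{-1})$ after using $\|\bI_n-\hbH\|_F^2 \le n$ (from \Cref{prop:hat-bH}) together with the almost sure bounds \eqref{new-bd-1}--\eqref{new-bd-2} and the moment bounds \eqref{inequalities-moments}. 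The same argument, now integrating by parts in the one-dimensional direction parametrized by $\bh^*$ inside $\bX$ (using that $\bg = \bX\bh^*/\|\bSigma^{1/2}\bh^*\|$ is a standard normal direction and using \eqref{eq:ref-X-Q0}-type decompositions), handles $\Delta_n^c$: here one must also account for the extra rank-one derivative term $\bw_*(\by-\bX\hbbeta)^\top$ analogous to \eqref{eq:identity-bw0}, which is negligible by the bound $\|\bw_*\|^2 \le F/(2n)$ from \eqref{new-bd-1}.

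For $\Delta_n^b = (F_+-1)_+ \|\by-\bX\hbbeta\|^2/(n\risk)$, I would simply apply Cauchy--Schwarz: $\E[\Delta_n^b] \le \E[(F_+-1)^2]^{1/2}\, \E[(\|\by-\bX\hbbeta\|^2/(n\risk))^2]^{1/2}$. The first factor is $O(n^{-1/2})$ by \eqref{inequalities-moments} (indeed $\E[(F_+-1)^2]\le 6/n$), and the second factor is $O(1)$ by \eqref{new-bd-2}, which gives $\|\by-\bX\hbbeta\|^2/(n\risk) \le 4 F_+ F^2$ almost surely, combined with $\E[F_+^4]\vee\E[F^{10}] \le C(\gamma,\mu)$. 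Hence $\E[\Delta_n^b] \le C(\gamma,\mu) n^{-1/2}$.

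For $\Delta_n^d = n^{-1}|2\bw_0^\top(\bI_n-\hbH)(\by-\bX\hbbeta)\langle\ba_0,\bh\rangle|/\risk$, I would bound it deterministically using Cauchy--Schwarz and $\|\bI_n-\hbH\|_{op}\le 1$: $\Delta_n^d \le 2 n^{-1} \|\bw_0\|\,\|\by-\bX\hbbeta\|\,|\langle\ba_0,\bh\rangle|/\risk$. Now substitute the almost sure bounds $\|\bw_0\|^2\le F/(2n)$, $\|\by-\bX\hbbeta\|^2 \le 4nF_+F^2\risk$, and $\langle\ba_0,\bh\rangle^2 \le F_+F^2\risk$ from \eqref{new-bd-1}--\eqref{new-bd-2}. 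This yields $\Delta_n^d \le 2n^{-1}(F/(2n))^{1/2}(4nF_+F^2\risk)^{1/2}(F_+F^2\risk)^{1/2}/\risk = C\, n^{-1} F^{1/2}(F/n)^{1/2}\cdot n^{1/2}\cdot F_+ F^2 \cdot \risk^{1/2}\risk^{1/2}/\risk$; careful bookkeeping of powers of $n$ gives a factor $n^{-1}$ overall, and then $\E[\Delta_n^d] \le \E[F^{5/2}F_+]\cdot C n^{-1} \le C(\gamma,\mu) n^{-1}$ by Cauchy--Schwarz and \eqref{inequalities-moments}, which is even better than required.

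The main obstacle will be $\Delta_n^c$: unlike $\Delta_n^a$, where integration by parts is directly in the isotropic Gaussian $\bep$, for $\Delta_n^c$ one integrates by parts in a \emph{single} Gaussian direction $\bh^*$ embedded inside $\bX$, and the design perturbation $\bX \mapsto \bX + t\bfeta(\bh^*)^\top$ affects $\hbbeta$ through both the quadratic loss \emph{and} changes the quantity $\langle\ba_*,\bh\rangle = \langle\bSigma\bh^*,\bh\rangle/\|\bSigma^{1/2}\bh^*\|$ being tracked; one must verify that the relevant Stein identity produces exactly $(1-\df/n)\langle\ba_*,\bh\rangle - \bg^\top(\bX\hbbeta-\by)/n$ as a mean-zero quantity (this mirrors the derivation of \eqref{eq:unbiased-estimating-equation} with $\ba_0$ replaced by $\ba_*$) and then control its variance via Gaussian Poincaré, handling the rank-one correction term. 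Everything else is routine once the almost sure bounds \eqref{new-bd-1}--\eqref{new-bd-2}, the moment bounds \eqref{inequalities-moments}, and \Cref{prop:hat-bH} are in hand.
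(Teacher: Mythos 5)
Your plan for $\Delta_n^b$ and $\Delta_n^d$ matches the paper's proof: for $\Delta_n^b$ the paper first bounds almost surely $\Delta_n^b\le(F_+-1)\,4F_+F^2$ and then uses the moment bounds in \eqref{inequalities-moments}, while your Cauchy--Schwarz split in expectation produces the same order; for $\Delta_n^d$ both you and the paper apply $\|\bI_n-\hbH\|_{op}\le 1$, the almost-sure bounds \eqref{new-bd-1}--\eqref{new-bd-2}, and the moment bounds to get $\E[\Delta_n^d]\le C(\gamma,\mu)n^{-1}$.

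However, your treatment of $\Delta_n^a$ has a real gap. You propose to bound the variance of $\phi(\bep)=(1-\df/n)-\bep^\top(\by-\bX\hbbeta)/(n\sigma^2)$ via the Gaussian Poincar\'e inequality in $\bep$, which requires the gradient $\nabla_{\bep}\phi$, and you explicitly say this gradient ``involves $\hbH-\bI_n$ and derivatives of $\df$.'' But $\df=\trace[\hbH]$ is the trace of the first Fr\'echet derivative of $\by\mapsto\bX\hbbeta$, so $\nabla_\bep\df$ would be a second-order derivative of $\bX\hbbeta$. The paper only controls first-order weak differentiability of $\hbbeta$ (see \Cref{prop:hat-bH} and \Cref{lemma:lipschitzness}); for the Lasso, $\df=|\Shat|$ is integer-valued, and for an arbitrary convex $g$ nothing is assumed about second derivatives. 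So the Poincar\'e route does not go through under \Cref{assum:main}. The paper avoids this entirely by applying \Cref{prop:2nd-order-stein-Bellec-Zhang} to $f(\bep)=\by-\bX\hbbeta$ with $\nabla_\bep f = \bI_n-\hbH$: the Second Order Stein formula directly yields the \emph{exact} variance
\[
\E\bigl[\bigl(\sigma^2(n-\df)-\bep^\top(\by-\bX\hbbeta)\bigr)^2\bigr]
=\sigma^2\E\bigl[\|\by-\bX\hbbeta\|^2+\sigma^2\trace\{(\bI_n-\hbH)^2\}\bigr],
\]
which involves only first derivatives. This is precisely why that formula (valid for once-weakly-differentiable $f$) is used rather than Poincar\'e, and it bypasses any need to differentiate $\df$.

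For $\Delta_n^c$, you correctly identify the structure (integrate by parts in the one-dimensional direction $\bh^*$ inside $\bX$, the Stein identity mirrors \eqref{eq:unbiased-estimating-equation} with $\ba_0\mapsto\ba_*$) but you propose re-deriving the estimate from scratch, flagging it as ``the main obstacle.'' The paper instead observes that $\bg=\bz_0$ when $\ba_0=\ba_*$ (and $\|\bSigma^{-1/2}\ba_*\|=1$ since $\ba_*=\bSigma\bh^*/\|\bSigma^{1/2}\bh^*\|$), so \eqref{eq:uniformly-bounded} from \Cref{lemma:uniformly-bounded}, which was proved for arbitrary normalized $\ba_0$, gives $\E[\Delta_n^c]\le C(\gamma,\mu)n^{-1}$ in one line. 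If you go your route you would effectively be re-proving a special case of \Cref{lemma:uniformly-bounded}, and you would hit the same second-derivative issue for the term analogous to $(1-\df/n)\langle\ba_*,\bh\rangle$ unless you, too, switch to the Second Order Stein identity (as \Cref{lemma:uniformly-bounded}'s proof does, via the relation \eqref{-xi0} between $\xi_0$ and that quantity).
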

\begin{proof}[Proof of \Cref{lemma:Delta_n-to-0}]
We bound each of
$\Delta_n^a,\Delta_n^b,\Delta_n^c,\Delta_n^d$ separately. 
We have
$\Delta_n^b\le(F_+-1)4F_+F^2$ by \eqref{new-bd-2}
so that $\E[\Delta_n^b]\le \C(\gamma,\mu) n^{-1/2}$
by virtue of  
\eqref{inequalities-moments}.
For $\Delta_n^a$ 
we have $\Delta_n^a = n^{-2}\sigma^{-2} |\sigma^2(n-\df)- \bep^\top(\by-\bX\hbbeta)
|^2/\risk$.
By  the Second Order Stein formula
(\Cref{prop:2nd-order-stein-Bellec-Zhang})
with respect to $\bep$ conditionally on $\bX$,
\bes
\E[\Delta_n^a]
           &=&
n^{-2} \E\big[\|\by-\bX\hbbeta\|^2/\risk + \sigma^2\trace(\{\bI_n-\hbH\}^2)/\risk\big]
           \le
n^{-1} \E\big[4F_+F^2  + 1\big]
\ees
where we used 
$\trace(\{\bI_n-\hbH\}^2)\le n$ from \Cref{prop:hat-bH}
and \eqref{new-bd-2} for the inequality.
Thanks to \eqref{inequalities-moments}, this shows
that $\E[\Delta_n^a]\le n^{-1} \C(\gamma,\mu)$. 
Similarly for $\Delta_n^d$ in \eqref{lower-bound-V^*-Delta-e},
$\Delta_n^d
\le 2 n^{-1} \|\bw_0\| \|\by-\bX\hbbeta\| |\langle \ba_0,\bh\rangle|/\risk
\le n^{-1} 2 (F/2)^{1/2} 2 F_+ F^2$
hence $\E[\Delta_n^d] \le n^{-1}\C(\gamma,\mu)$ 
by \eqref{new-bd-2} and \eqref{inequalities-moments}. 
For $\Delta_n^c$  we have
$\bg=\bz_0$ for $\ba_0=\ba_*$ so that 
$\E[\Delta_n^c]\le \C(\gamma,\mu) n^{-1}$
by \eqref{eq:uniformly-bounded}.
\end{proof}

\subsection{Event $\Omega_n$}
\label{sec:proof-high-probability-events}
With $\Omega_0,C_*(\gamma,\mu)$ in \Cref{lemma:n-df-strongly-convex} and
$\Delta_n$ in \Cref{lemma:Delta_n-to-0}, let
\begin{equation}
    \label{Omega_n}
    \Omega_n = \Omega_0 
    ~\cap~
    \bigl\{
        \E_0[\Delta_n] \vee \Delta_n \le C_*^2(\gamma,\mu)/16
    \bigr\}.
\end{equation}
By the union bound, Markov's inequality and the bound on $\E[\Delta_n]$
in \Cref{lemma:Delta_n-to-0},
\begin{equation}
\P(\Omega_n^c)\le \P(\Omega_0^c) + \C(\gamma,\mu) n^{-1/2}
\le C_0 n^{-1/2}
\label{eq:bound-proba-Omega_n}
\end{equation}
thanks to $\P(\Omega_0^c)\le e^{-n/2}$ in \Cref{lemma:n-df-strongly-convex}.
By \eqref{lower-bound-y-Xhbbeta-Delta-abcd},
\begin{equation}
    \Omega_n \subset \{
        \|\by-\bX\hbbeta\|^2 \ge\risk n C_*^2(\gamma,\mu)/16
    \}.
    \label{eq:lower-bound-f-in-Omega_n}
\end{equation}
Since $\Omega_0^c$ is independent of $\bz_0$,
taking the condition expectation $\E_0$ of 
\eqref{lower-bound-V^*-Delta-e} in $\Omega_0$ gives
\begin{equation}
    \Omega_n \subset \{ 
        \Var_0[\xi_0]
        \wedge
        \E_0[\|\by-\bX\hbbeta\|^2]
    \ge\risk n C_*^2(\gamma,\mu)/16 \}.
    \label{eq:lower-bound-Var-0-in-Omega_n}
\end{equation}

\subsection{Proofs of \Cref{lemma:negligible-term,lemma:delta-0,lemma:check-V-variance} and \Cref{theorem:variance-iff}
}
\label{sec:proof-variance}

\restateIfEnabled{
    \lemmaNegligibleTerm*
}

\begin{proof}[Proof of \Cref{lemma:negligible-term}]
    With $\Omega_n$ in \eqref{Omega_n},
    \eqref{eq:extra-term-to-0}
    follows from \eqref{eq:lower-bound-Var-0-in-Omega_n}
    and \eqref{eq:bound-w0-inner-f}.
\end{proof}

\restateIfEnabled{
    \lemmaDeltaZero*
}
\begin{proof}[Proof of \Cref{lemma:delta-0}]
    Let $\Omega_n$ be as in \eqref{Omega_n}.
    The first inequality in \eqref{eq:delta-0-to-0}
    follows from the triangle inequality.
    By \eqref{eq:lower-bound-Var-0-in-Omega_n} we
    have
    $\E\bigl[
        I_{\Omega_n} ~
        \E_0[|\widehat{V}(\theta) - V^*(\theta)|] / \Var_0[\xi_0]
    \bigr]
    \le
    \E[|\widehat{V}(\theta) - V^*(\theta)|] 16 /(n\risk
    C_*^2(\gamma,\mu)
    )
    $.
With  $V^*(\theta),\widehat{V}(\theta)$
in \eqref{Var(xi_0)}-\eqref{def-hat-V-theta}
and $\nabla f(\bz_0)^\top$ in \eqref{eq:identity-bw0},
$$
V^*(\theta) - \widehat{V}(\theta)
= \langle \bw_0, \by-\bX\hbbeta \rangle^2 
+2 \bw_0^\top (\bI_n-{\hbH})(\by-\bX\hbbeta)\langle \ba_0,\bh\rangle
.
$$
Using $\|\bI_n-\hbH\|_{op}\le 1$ from \Cref{prop:hat-bH}
and
\eqref{new-bd-1}-\eqref{new-bd-2} we find
by the Cauchy-Schwarz inequality
$
|V^*(\theta) - \widehat{V}(\theta)|
\le (2+ 2\sqrt 2)\risk F_+F^3$.
    The proof of \eqref{eq:delta-0-to-0}
    is complete by virtue 
    Holder's inequality and the moment bounds
    \eqref{inequalities-moments}.
\end{proof}

\restateIfEnabled{
    \lemmaCheckVariance*
}

\begin{proof}[Proof of \Cref{lemma:check-V-variance}]
    By the triangle inequality for the Euclidean norm in $\R^2$,
    \begin{equation}
    |\check V(\ba_0)^{1/2} - \widehat{V}(\theta)^{1/2}|
    \le \|\bI_n-\hbH\|_F |\langle \ba_0,\bh\rangle + (n-\df)^{-1}\langle \bz_0,\by-\bX\hbbeta\rangle|.
    \label{eq:difference-two-variance}
    \end{equation}
    Let $\Omega_n$ be as in \eqref{Omega_n}.
    Using $\widehat{V}(\theta)\ge\|\by-\bX\hbbeta\|^2$,
    the lower bound \eqref{eq:lower-bound-f-in-Omega_n}
    and $\|\bI_n-\hbH\|_F^2\le n$ by \Cref{prop:hat-bH},
    \bes
    &&\E[
        I_{\Omega_n}
        |{\check V(\ba_0)^{1/2}}/{\widehat{V}(\theta)^{1/2}} - 1|^2
    ]
    \\&\le&
    \E[
        I_{\Omega_n}
        (
    \langle \ba_0,\bh\rangle + (n-\df)^{-1}\langle \bz_0,\by-\bX\hbbeta\rangle
    )^2
    ] 16/
    (\risk C_*^2(\gamma,\mu))
    \ees
    so that $\Omega_n\subset \Omega_0$ and
    \eqref{eq:uniformly-bounded-2} completes the proof
    for the first term in the maximum in \eqref{eq:check-variance-1}.
    For the second term in the maximum,
    by the triangle inequality
    for the norm $\E_0[(\cdot)^2]^{1/2}$, we have
    $|\E_0[\check V(\ba_0)]^{1/2} - \E_0[\widehat{V}(\theta)]^{1/2}|
    \le
    \E_0[|\check V(\ba_0)^{1/2} - \widehat{V}(\theta)^{1/2}|^2]^{1/2}$.
    The proof is completed by using
    again \eqref{eq:difference-two-variance}, the lower
    bound \eqref{eq:lower-bound-Var-0-in-Omega_n}
    on $\E_0[\|\by-\bX\hbbeta\|^2]$ in $\Omega_n$
    and the same argument as for the first term in the maximum.
\end{proof}

\restateIfEnabled{
    \theoremVarianceConsistentIFF*
}
\begin{proof}[Proof of \Cref{theorem:variance-iff}]

    (v) $\Leftrightarrow$ (iv) 
    is due to
    $C_*(\gamma,\mu) n \le \|\bI_n-\hbH\|_F^2 \le n$ in $\Omega_n$
    by \Cref{lemma:n-df-strongly-convex}
    and \Cref{prop:hat-bH} combined with
    \eqref{eq:DeBias-within-constant-factor}.

    (iv) $\Leftrightarrow$ (vi) follows from
    $C_*(\gamma,\mu) n \le \|\bI_n-\hbH\|_F^2 \le n$ in $\Omega_n$.

    (vi) $\Leftrightarrow$ (vii) is proved in \Cref{lemma:check-V-variance}.

    (iii) $\Rightarrow$ (i),
    (iii) $\Rightarrow$ (vi) and
    (iii) $\Rightarrow$ (vii),
    are shown in the proof of 
    \Cref{thm:from-consistency}.

    (iv) $\Rightarrow$ (iii) follows from
    $\|\by-\bX\hbbeta\|^2/(n\risk) = O_\P(1)$
    by \eqref{new-bd-2} and \eqref{inequalities-moments}.

    (iii) $\Rightarrow$ $\delta_1^2\to^\P 0$ was shown in
    the proof of \Cref{thm:from-consistency}, and $\delta_1^2\to^\P 0$
    implies
    $\E[\|\nabla f(\bz_0)\|_F^2]/\E_0[\|\by-\bX\hbbeta\|^2]\to^\P 0$
    and 
    $\E[\trace[(\nabla f(\bz_0))^2]]/\E_0[\|\by-\bX\hbbeta\|^2]\to^\P 0$
    so that (iii) $\Rightarrow$ (ii) holds.

    By \Cref{lemma:delta-0},
    (ii) implies that $\E_0[\widehat{V}(\theta)]/\E_0[\|\by-\bX\hbbeta\|^2]-1
    = \E_0[\|\bI_n-\hbH\|_F^2\langle \ba_0,\bh\rangle^2]
    /\E_0[\|\by-\bX\hbbeta\|^2]$ converges to 0 in probability.
    Since $\E_0[\|\by-\bX\hbbeta\|^2]/(n\risk) = O_\P(1)$
    by \eqref{new-bd-2} and \eqref{inequalities-moments},
    combined with $\|\bI_n-\hbH\|_F^2\ge C_*^2(\gamma,\mu) n$  in $\Omega_0$
    by \Cref{lemma:n-df-strongly-convex},
    this implies $I_{\Omega_0}\E_0[\langle \ba_0,\bh\rangle^2]/\risk =
    \E_0[
    I_{\Omega_0}
    \langle \ba_0,\bh\rangle^2]/\risk \to^\P 0$
    as $\Omega_0$ is independent of $\bz_0$.
    Thus (ii) implies (iii) by Markov's inequality with respect to $\E_0$.

    Finally, to show (i) $\Leftrightarrow$ (ii), 
    we have by the Gaussian Poincar\'e inequality
    $$
    \Var_0[
    \|\by-\bX\hbbeta\|^2
    ]
    \le \E_0[\|[\nabla f(\bz_0)](\by-\bX\hbbeta)\|^2]
    \le \E_0[\|\nabla f(\bz_0)\|_{op}^2 \|\by-\bX\hbbeta\|^2].$$
    With $\nabla f(\bz_0)$ in \eqref{eq:identity-bw0}
    and the bounds \eqref{new-bd-1}-\eqref{new-bd-2} we have
    $\|\by-\bX\hbbeta\|^2\le 4nF_+F^2 \risk$ and
    $\|\nabla f(\bz_0)\|_{op}^2\le2(2 F_+F^3 + F_+F^2)\risk$
    thanks to $\|\bI_n-\hbH\|_{op}\le 1$ by \Cref{prop:hat-bH}.
    Combined with the lower bound \eqref{eq:lower-bound-Var-0-in-Omega_n}
    on $\Var_0[\xi_0]$
    and the moment upper bounds \eqref{inequalities-moments} we obtain
    $\E\bigl[
    I_{\Omega_n}
    \Var_0[\|\by-\bX\hbbeta\|^2]/\Var_0[\xi_0]^2
    \bigr]\le \C(\gamma,\mu) n^{-1/2}$
    which gives (vii) $\Leftrightarrow$ (i).
\end{proof}

\subsection{Proofs of 
\Cref{thm:De-Bias-within-constant} and
asymptotic normality results}
\label{sec:proof-asymptotic-normality}

\restateIfEnabled{
    \theoremConstantTimesResiduals*
}

\begin{proof}[Proof of \Cref{thm:De-Bias-within-constant}]
    Let $\Omega_n$ be as in \eqref{Omega_n}.
    Since $I_{\Omega_n}\|\by-\bX\hbbeta\|^{-2}
    \le 16/(C_*^2(\gamma,\mu) n \risk)$ by
    \eqref{eq:lower-bound-f-in-Omega_n}, using
    \eqref{eq:uniformly-bounded-2}
    completes the proof of \eqref{eq:DeBias-within-constant-factor}.
    For the second part, random variables
    bounded in $L_2$ are stochastically bounded
    so that \eqref{thm:De-Bias-within-constant}
    provides $|\langle \ba_0,\DeBias-\bbeta\rangle|
    = O_\P(1) \|\by-\bX\hbbeta\|/(n-\df)$,
    and $I_{\Omega_0}(1-\df/n)^{-1} \le C_*(\gamma,\mu)^{-1}$
    for $\Omega_0$ in \Cref{lemma:n-df-strongly-convex}
    provides $(1-\df/n) = O_\P(1)$.
\end{proof}

\restateIfEnabled{
    \theoremFromConsistency*
}

\begin{proof}[Proof of \Cref{thm:from-consistency}]
    Let $\Omega_n$ be as in \eqref{Omega_n}.
    Let $\delta_1^2$ be the quantity in \eqref{definition-eps_0-a_0},
    omitting the dependence in $\ba_0$ as it is clear from context.
    Since $\delta_1^2\le 1$ by definition,
    $\E[\delta_1^2] \le \E[\Omega_n\delta_1^2]+ \P(\Omega_n^c)$.
    In $\Omega_n$,
    \eqref{eq:lower-bound-Var-0-in-Omega_n} provides a lower
    bound on the denominator of $\delta_1^2$ so that
    $\E[I_{\Omega_n}\delta_1^2]
    \le
    \E[\|\nabla f(\bz_0)\|_F^2] 16/(n\risk C_*^2(\gamma,\mu))
    $.
    By \eqref{bound-nabla-f} and the bound \eqref{eq:bound-proba-Omega_n}
    on $\P(\Omega_n^c)$,
    we obtain
    \begin{equation}
    \E[\delta_1^2]
    \le
    \E[I_{\Omega_n}\delta_1^2]
    +
    \P(\Omega_n^c)
    \le
    \C(\gamma,\mu)
    \bigl(
    \E[\langle \ba_0,\bh\rangle^2/\risk]
    + n^{-1}
    \bigr)
    +
    C_0(\gamma,\mu) n^{-1/2}
    \label{ineq-single-a0}
    \end{equation}
    Furthermore, 
    $\langle \ba_0,\bh\rangle^2/\risk$ is bounded in $L_2$
    thanks to
    $\E[\langle \ba_0,\bh\rangle^4/\risk^2]
    \le \E[F_+^2F^4]
    \le \C(\gamma,\mu)$
    by \eqref{new-bd-1} and \eqref{inequalities-moments}.
    Since a sequence of random variables uniformly
    bounded in $L_2$ is uniformly integrable,
    the assumption
    $\langle \ba_0,\bh\rangle^2/\risk \to^\P 0$
    implies $\E[\langle \ba_0,\bh\rangle^2/\risk]\to0$
    and thus $\E[\delta_1^2]\to 0$. 
    This completes the proof that $\E[\delta_1^2] \to 0$
    and that $\xi_0/\Var_0[\xi_0]^{1/2}\to^d N(0,1)$ by 
    \Cref{thm:L2-distance-from-normal}.
    Next, by \eqref{eq:limiting-F},
    $(n-\df)\langle \ba_0,\DeBias-\bbeta\rangle
    /\Var_0[\xi_0]^{1/2}
    \to^d N(0,1)$ also holds.
    It remains to prove $V_0/\Var_0[\xi_0]\to^\P 1$
    for all four possible choices for $V_0$.
    By \eqref{eq:consistency-variance},
    $\E[\delta_1^2]\to0$ implies
    $\|\by-\bX\hbbeta\|^2/\Var_0[\xi_0]\to^\P 1$, while
    \begin{equation}
    \label{eq:imply-variance-consistency}
    0\le
    \frac{\widehat{V}(\theta)}{\|\by-\bX\hbbeta\|^2}
    -1
    = 
    \frac{\|\hbH-\bI_n\|_F^2\langle \ba_0,\bh\rangle^2/(n\risk)}{\|\by-\bX\hbbeta\|^2/(n\risk)}.
    \end{equation}
    \Cref{prop:hat-bH} provides $\|\hbH-\bI_n\|_F^2\le n$
    so that the numerator converges to 0 in probability
    thanks to assumption
    $\langle \ba_0,\bh\rangle^2/\risk\to^\P 0$.
    The denominator is bounded from below
    by $C_*^2(\gamma,\mu)/16$ in $\Omega_n$
    by \eqref{eq:lower-bound-f-in-Omega_n}
    and $\P(\Omega_n)\to 1$.
    This proves $\widehat{V}(\theta)/\Var_0[\xi_0]\to^\P 1$ and
    $\check V(\ba_0)/\Var_0[\xi_0]\to^\P 1$ follows
    by \Cref{lemma:check-V-variance}.
    Slutsky's theorem completes the proof 
    as $V_0/\Var_0[\xi_0]\to^\P 1$ for all four possible choices
    for $V_0$. 
    As $\Phi(t)$ is continuous, convergence in Kolmogorov distance 
    is equivalent to convergence in distribution. 
\end{proof}

\restateIfEnabled{
    \theoremMainResult*
}

\begin{proof}[Proof of \Cref{thm:main-result}]
    We
    construct a subset $\overline{S}$
    of the sphere
    such that
    $\langle \ba_0,\bh\rangle^2/\risk
    \to^\P 0$
    uniformly over all
    $\ba_0\in\bSigma^{1/2}\overline{S}$.
    Let $\bv$ be uniformly distributed on the unit Euclidean sphere $S^{p-1}$,
    independently of $(\bX,\by)$,
    and denote by $\nu$ its probability measure.
    The vector $\sqrt p \bv$ is subgaussian in $\R^p$
    \cite[Theorem 3.4.6]{vershynin2018high}, in the sense
    that for any non-zero vector $\bu\in\R^p$,
    $\int \exp\{(\sqrt p \bv^\top\bu)^2/(C^*\|\bu\|^2)\} d\nu(\bv)\le 2$
    for some absolute constant $C^*>0$.
    By Jensen's inequality and Fubini's Theorem,
    \bes
    \int
    \exp
    \Bigl\{
        \E\Bigl[
            \frac{(\bv^\top \bSigma^{1/2}\bh)^2}{C^*\|\bSigma^{1/2}\bh\|^2}
        \Bigr]
    \Bigr\} \; d\nu(\bv)
    &\le&
    \E\Big[\int
        \exp \Bigl\{
            \frac{(\sqrt p \bv^\top \bh)^2}{C^* \|\bSigma^{1/2}\bh\|^2}
        \Bigr\} 
    \; d\nu(\bv)
    \Big]
    \\&\le& 2.
    \ees
    Hence by Markov's inequality, for any positive $x$,
    $\nu(\{\bv\in S^{p-1}: \E[(\bv^\top \bSigma^{1/2}\bh)^2/\|\bSigma^{1/2}\bh\|^2]>C^*x/p\})\le 2e^{-x}$.
    Setting $x=p/a_p$, we obtain that the subset $\overline{S}\subset S^{p-1}$
    defined by
    \eqref{set-tilde-S-}
    has relative volume at least $|\overline{S}|/|S^{p-1}| \ge 1 - 2e^{-p/a_p}$,
    and for all $\ba_0\in\bSigma^{1/2}\overline{S}$,
    \begin{equation}
        \label{bound-second-moment-ba0-bh}
        \E[\langle \ba_0,\bh\rangle^2/\|\bSigma^{1/2}\bh\|^2]
        \le C^*/a_p.
    \end{equation}
    Furthermore, the set $\overline{S}\cap\{\bSigma^{-1/2}\be_j/\|\bSigma^{-1/2}\be_j\|, j\in[p]\}$ has cardinality at least $p-\phi_{\rm cond}(\bSigma)a_p/C^*$ due to
    \bes
        \sum_{j=1}^p
        \frac{1}{\|\bSigma^{-1/2}\be_j\|^{2}}
        \E\Bigl[\frac{\langle \be_j,\bh\rangle^2}{\|\bSigma^{1/2}\bh\|^2}\Bigr]
          &\le&
          \|\bSigma\|_{op} 
          \E\Bigl[
          \frac{\|\bh\|^2}{\|\bSigma^{1/2}\bh\|^2}
          \Bigr]
        \le \phi_{\rm cond}(\bSigma).
    \ees
    To show that $\sup_{\ba_0\in\bSigma^{1/2}\overline{S}}
    \E[\delta_1^2(\ba_0)]\to 0$, thanks
    to \eqref{ineq-single-a0} it is enough to
    prove that $\E[\langle \ba_0,\bh\rangle^2/\risk]\to 0$
    uniformly over $\ba_0\in\bSigma^{1/2}\overline{S}$.
    By the Cauchy-Schwarz inequality,
    \bes
    \E[\langle \ba_0, \bh\rangle^2/\risk] 
          &=&
    \E[\langle \ba_0, \bh\rangle\|\bSigma^{1/2}\bh\|/\risk
    ~
    \langle \ba_0,\bh\rangle /\|\bSigma^{1/2}\bh\|
    ] 
        \\&\le&
    \E[\|\bSigma^{1/2}\bh\|^4/\risk^2]^{1/2}
    (C^*/a_p)^{1/2}
    \ees
    for any $\ba_0\in\bSigma^{1/2}\overline{S}$
    thanks to \eqref{bound-second-moment-ba0-bh},
    while
    $\E[\|\bSigma^{1/2}\bh\|^4/\risk{}^2]\le
    \E[F_+^2F^4]\le
    \C(\gamma,\mu)
    $
    by \eqref{new-bd-1} and \eqref{inequalities-moments}.
    This implies 
    $\sup_{\ba_0\in\bSigma^{1/2}\overline{S}}
    \E[\langle \ba_0,\bh\rangle^2/\risk]\to 0$
    and 
    $\sup_{\ba_0\in\bSigma^{1/2}\overline{S}}\E[\delta_1^2(\ba_0)]\to0$
    hold.

    By \Cref{thm:L2-distance-from-normal}
    this shows that $\xi_0/\Var_0[\xi_0]^{1/2}\to^d N(0,1)$
    uniformly over $\ba_0\in \bSigma^{1/2}\overline{S}$.
    Since the bounds
    \eqref{eq:extra-term-to-0},
    \eqref{eq:imply-variance-consistency}
    are all uniform over all $\ba_0$ with $\|\bSigma^{-1/2}\ba_0\|=1$,
    Slutsky's theorem implies
    $V_0/\Var_0[\xi_0]\to^\P 1$,
    $\xi_0/V_0^{1/2}\to^d N(0,1)$ and
    $\{(n-\df)\langle \ba_0,\bh\rangle + \bz_0^\top(\by-\bX\hbbeta)\}
    /V_0^{1/2}\to^d N(0,1)$
    uniformly over $\ba_0\in\bSigma^{1/2}\overline{S}$
    for all four possible choices for $V_0$, 
    and convergence in Kolmogorov distance follows from convergence in distribution.  
\end{proof}

\restateIfEnabled{
    \theoremGisANorm*
}

\begin{proof}[Proof of \Cref{thm:g-is-a-norm}]
    The first statement of the theorem follows
    from \Cref{lemma:uniformly-bounded}.
    Finally, if $g$ is a norm then the KKT conditions of $\hbbeta$,
    $|\bz_0^\top(\by-\bX\hbbeta)|
    = n | (\bSigma^{-1}\ba_0)^\top \partial g(\hbbeta) |
    \le n g( \bSigma^{-1}\ba_0)$
    since for a norm $g(\bu) = \sup_{\bv\in\partial g(\bu)}\langle \bu,\bv\rangle$.
\end{proof}

\appendix

\section{Integrability of $\phi_{\min}^{-1}(\bX\bSigma^{-1/2}/\sqrt n)$ when $p/n\to\gamma\in(0,1)$}
\label{appendix:integrability-edelman}
In our regression model with Gaussian covariates, the matrix $\bX\bSigma^{-1/2}$
has iid $N(0,1)$ entries, and the inverse of its smallest singular value enjoys
the following
integrability property as $n,p\to+\infty$ with $p/n\to\gamma\in(0,1)$.
\begin{proposition}
    \label{prop:bounded-negative-moments}
    Let $n>p$ and let $\bG$ be a matrix with $n$ rows, $p$ columns and iid $N(0,1)$ entries.
    Then $\bG^\top\bG$ is a Wishart matrix
    and if $n,p\to+\infty$ with $p/n\to\gamma\in(0,1)$ we have for any constant $k$ not growing with $n,p$,
    $$\lim_{p/n\to\gamma}
    \E[\phi_{\min}(\bG^\top\bG/n)^{-k}] = (1-\sqrt\gamma)^{-2k}
    $$
\end{proposition}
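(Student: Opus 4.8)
The plan is to combine a convergence-in-probability statement for the smallest eigenvalue with a uniform negative-moment bound that upgrades it to convergence of the $k$-th negative moments. Throughout set $\rho=(1-\sqrt\gamma)^2$ and $X=\phi_{\min}(\bG^\top\bG/n)=s_{\min}(\bG)^2/n$, where $s_{\min}(\bG)$ denotes the smallest singular value of $\bG$. Since the map $\bG\mapsto s_{\min}(\bG)$ is $1$-Lipschitz for the Frobenius norm, Gaussian concentration gives $\P(s_{\min}(\bG)\le\sqrt n-\sqrt p-t)\le e^{-t^2/2}$ for every $t>0$ (see \cite{DavidsonS01}); combined with $\E[s_{\min}(\bG)]\ge\sqrt n-\sqrt p$ (Gordon's inequality) and the Bai--Yin description of the left edge of the Marchenko--Pastur law, which gives $s_{\min}(\bG)/\sqrt n\to 1-\sqrt\gamma$, one obtains $X\to^\P\rho$ as $p/n\to\gamma$, hence $X^{-k}\to^\P\rho^{-k}=(1-\sqrt\gamma)^{-2k}$. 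It then remains to show that $\{X^{-k}\}$ is uniformly integrable along the sequence, which I would do by splitting the expectation at the level $\rho/2$.

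On the event $\{X>\rho/2\}$ the variable $X^{-k}$ is bounded by $(2/\rho)^k$ and converges to $\rho^{-k}$ in probability, so bounded convergence yields $\E[X^{-k}I_{\{X>\rho/2\}}]\to\rho^{-k}$; it therefore suffices to prove $\E[X^{-k}I_{\{X\le\rho/2\}}]\to 0$. By the layer-cake identity,
\[
\E\bigl[X^{-k}I_{\{X\le\rho/2\}}\bigr]=(\rho/2)^{-k}\,\P(X\le\rho/2)+k\int_0^{\rho/2}u^{-k-1}\,\P(X\le u)\,du,
\]
so everything reduces to controlling $\P(X\le u)=\P(s_{\min}(\bG)\le\sqrt{un})$ for $u\in(0,\rho/2]$, and I would use two complementary estimates. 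For $u$ bounded away from $0$, say $u\in[\delta_n,\rho/2]$ with $\delta_n\asymp 1/n$ chosen below, the concentration bound above with $t=\sqrt n-\sqrt p-\sqrt{un}$ gives $\P(X\le u)\le\exp(-(\sqrt n-\sqrt p-\sqrt{un})^2/2)\le e^{-c(\gamma)n}$ once $p/n$ is close enough to $\gamma$ (this uses $\rho/2<\rho=(1-\sqrt\gamma)^2$), which more than dominates the polynomial factor $\delta_n^{-k}\asymp n^k$ coming from the integral. For $u$ near $0$, $u\in(0,\delta_n]$, I would use the \emph{small-ball estimate}
\[
\P\bigl(s_{\min}(\bG)^2\le sn\bigr)\le p\,\P\bigl(\chi^2_{n-p+1}\le psn\bigr)\le p\,\bigl(C(\gamma)\,ns\bigr)^{(n-p+1)/2},
\]
obtained from the pseudoinverse identity $s_{\min}(\bG)^{-2}=\|\bG^+\|_{op}^2\le\sum_{j=1}^p\operatorname{dist}(\bG_j,H_j)^{-2}$ (here $\bG_j$ is the $j$-th column of $\bG$ and $H_j$ the span of the remaining columns), the fact that $\operatorname{dist}(\bG_j,H_j)$ has the $\chi_{n-p+1}$ distribution, and the elementary lower-tail bound $\P(\chi^2_m\le t)\le(et/m)^{m/2}$. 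Since $n-p\to+\infty$, for $n$ large the exponent $(n-p+1)/2$ exceeds $k+1$, so $\int_0^{\delta_n}u^{-k-1}p(C(\gamma)nu)^{(n-p+1)/2}\,du$ converges and, with the choice $\delta_n=1/(2C(\gamma)n)$, equals $2^{-(n-p+1)/2}$ times a factor polynomial in $n$, hence is negligible. Adding the two ranges gives $\E[X^{-k}I_{\{X\le\rho/2\}}]\to 0$ and therefore $\E[X^{-k}]\to(1-\sqrt\gamma)^{-2k}$.

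I expect the one genuine point to be the small-ball estimate near zero, i.e. obtaining the sharp power $(n-p+1)/2$ in $\P(X\le u)$; the pseudoinverse/distance-to-subspace identity together with the $\chi^2_{n-p+1}$ lower tail supplies exactly this, and it is in the spirit of Edelman's analysis of the smallest Wishart eigenvalue. It is also this bound that makes $\E[X^{-k}]$ finite for each $n$ with $n-p>2k$, which holds eventually since $n-p\to\infty$. The remaining ingredients are routine: the two-sided concentration of $s_{\min}(\bG)$, the layer-cake bookkeeping, and bounded convergence on $\{X>\rho/2\}$. A minor technical caveat is that $(n,p)$ form a triangular array rather than a nested sequence, so the Bai--Yin statement should be read in its convergence-in-probability form, which is all that the argument uses.
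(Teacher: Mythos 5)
Your proposal is correct, and the overall architecture matches the paper's: establish convergence of $\phi_{\min}(\bG^\top\bG/n)$ to $(1-\sqrt\gamma)^2$, then upgrade to convergence of $k$-th negative moments by proving uniform integrability of $S_n^{-k}$, which in turn reduces to controlling $\P(S_n\le u)$ sharply near $u=0$. The difference is in the technical engine used near zero. The paper imports the explicit density bound for the smallest Wishart eigenvalue from Edelman (1988), $f_L(\lambda)\le\text{(Gamma-function prefactor)}\cdot\lambda^{(n-p-1)/2}e^{-\lambda/2}$, and then controls the prefactor with Stirling's formula; this produces the geometric decay $\exp(-c(\gamma)(n-p))$ after integrating against $\lambda^{-k}$. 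You instead derive the same polynomial exponent $(n-p+1)/2$ from scratch via the pseudoinverse identity $s_{\min}(\bG)^{-2}\le\|\bG^+\|_F^2=\sum_j\operatorname{dist}(\bG_j,H_j)^{-2}$, a union bound, the $\chi^2_{n-p+1}$ distribution of each distance, and the elementary tail bound $\P(\chi^2_m\le t)\le(et/m)^{m/2}$. This avoids the exact Wishart eigenvalue density and the Stirling bookkeeping entirely, which makes your argument more self-contained. You also pair the layer-cake split with Davidson--Szarek concentration to handle the intermediate range $[\delta_n,\rho/2]$, whereas the paper bounds the whole integral over $(0,\eps)$ by a single mode estimate; the two are equivalent in strength. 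Finally, the paper invokes almost-sure convergence (Silverstein) where you use convergence in probability, correctly flagging that for a triangular array the latter is the natural formulation and suffices. All steps check out: the identity $\|e_j^\top\bG^+\|=\operatorname{dist}(\bG_j,H_j)^{-1}$ holds because the row $e_j^\top\bG^+$ lies in $H_j^\perp\cap\operatorname{col}(\bG)=\operatorname{span}(P_j\bG_j)$, the small-ball exponent $(n-p+1)/2$ exceeds $k$ eventually since $n-p\to\infty$, and the exponential concentration bound $e^{-c(\gamma)n}$ dominates the polynomial factor $\delta_n^{-k}\asymp n^k$ once $p/n$ is near $\gamma$.
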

\begin{proof}
    Throughout the proof, $p=p_n$ is an implicit function of $n$; we omit the subscript for brevity.
    Since $S_n = \phi_{\min}(\bG^\top\bG/n)\to(1-\sqrt\gamma)^2$ almost surely (cf. \cite{silverstein1985smallest}),
    it is enough to show that the sequence of random variables
    $(S_n^{-k})_{n\ge n_0}$ is uniformly integrable for some $n_0>0$, i.e., that
    $\sup_{n\ge n_0} \E[S_n^{-k} I_{\{ S_n < \eps \}}] \to 0$ as $\eps\to 0.$
    For uniform integrability, we use the following argument from \cite[Section 5]{edelman1988eigenvalues}.
    The matrix $\bG^\top\bG$ is a Wishart matrix and the density of $L = \phi_{\min}(\bG^\top\bG)$ satisfies for $\lambda \ge0$,
    \bes
    f_L(\lambda)
    &\le& \frac{
    \sqrt\pi 2^{-(n-p+1)/2} \Gamma(\frac{n+1}{2})
}{\Gamma(\frac p 2)\Gamma(\frac{n-p+1}{2})\Gamma(\frac{n-p+2}{2})}
    \lambda^{(n-p-1)/2}
    e^{-\lambda/2}
    = \frac{
    \sqrt\pi \Gamma(\frac{n+1}{2})
}{\Gamma(\frac p 2)\Gamma(\frac{n-p+2}{2})}
    f_{\chi^2_{n-p+1}}(\lam)
    \ees
    cf. \cite[Section 5]{edelman1988eigenvalues}.
    The density of $S_n=L/n=\phi_{\min}(\bG^\top\bG/n)$ that we are interested in, is given by
    $f_{S_n}(x) = n f_L(nx)$ for $x\ge 0$. Hence if $0<\eps<(1-\gamma)/2$,
    \begin{equation*}
        \E[ S_n^{-k} I_{ \{S<\eps\} }]
        \le
        \left[\frac{
            \sqrt\pi \Gamma(\frac{n+1}{2}   )   (\frac n 2)^{(n-p+1)/2}
        }{
            \Gamma(\frac p 2)\Gamma(\frac{n-p+1}{2})\Gamma(\frac{n-p+2}{2})
        }
     \right]\int_0^\eps x^{(n-p-1)/2 - k}
        e^{-nx/2}dx.
    \end{equation*}
    The mode of the integrand over $[0,+\infty)$ is $x_n^*=1-p/n-1/n - 2k/n$.
    Thanks to $\eps<(1-\gamma)/2$,
    there exists some $n_1\ge 1$ such that for all $n\ge n_1$,
    \begin{equation}
        \label{smaller-than-mode}
        n-p-1 - 2k \ge n (1-\gamma)/2,
    \end{equation}
    $(1-\gamma)/2$ is smaller than the mode $x_n^*$ and the integral above is bounded by
    $\eps^{(n-p-k+1)/2} e^{-n\eps/2}$.
    Let $\Lambda_n$ denote the bracket of the previous display. 
    Then using Stirling's formula $\Gamma(x+1) \asymp \sqrt{2\pi x} e^{-x} x^x$,
    we have for some constants $n_2,C_2(\gamma)>0$ possibly depending on $\gamma$
    $$\sup_{n\ge n_2}\frac{\log(\Lambda_n)}{(n-p+1)/2} \le C_2(\gamma)$$
    because the main terms (coming from $x^x$ in Stirling's formula) cancel each other.
    Then for any $n\ge n_1\vee n_2$,
    \bel{eq-edelman}
        \E[ S_n^{-k} I_{ \{S_n<\eps\} }]
        &\le& \left(\exp( {C_2(\gamma)} ) \eps\right)^{(n-p+1)/2} \eps^{-k} e^{-n\eps/2} \cr
        &\le& \left(\exp( {C_2(\gamma)} ) \eps\right)^{(n-p+1)/2 - k}  e^{ k C_2(\gamma) - n\eps/2}.
    \eel
    For $n\ge n_1$, \eqref{smaller-than-mode} holds and if $\eps< (\exp C_2(\gamma))^{-1}$ we have
    $$
        \sup_{n\ge n_1\vee n_2}
        \E[ S_n^{-k} I_{ \{S_n<\eps\} }]
        \le
        \left(\exp( {C_2(\gamma)} ) \eps\right)^{(n_1 (1-\gamma)/4 )}
        e^{ k C_2(\gamma)} 
    $$
    which converges to $0$ as $\eps\to 0$.
    This shows uniform integrability of the sequence and proves the claim.
\end{proof}

\section{Proof: $p>n$ without strong convexity}
\label{sec:p-larger-n-without-strong-convexity}

\begin{lemma}
    \label{lemma:edelman}
    Let $\bbeta\in\R^p$ and assume that $p/n\le \gamma$.
    Then for any $\kappa<1$,
    $$
    \P\Bigl(
        \inf_{
            t\in\R, \bu\in\R^p:\|\bu\|_0\le \kappa n
        }
        \Bigl(
        \frac{ \|\bX(\bu-t \bbeta)\|^2 }{n\|\bSigma^{1/2}(\bu-t \bbeta)\|^2}
        \Bigr)
        > \varphi(\gamma, \kappa)^2
    \Bigr) \to 1,
    $$
    for some constant $\varphi(\gamma, \kappa)>0$ depending
    only on $\gamma,\kappa$.
\end{lemma}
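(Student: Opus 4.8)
The plan is to rewrite the infimum as a minimum, over exponentially (in $n$) many \emph{fixed} low-dimensional subspaces, of the smallest eigenvalue of a Gaussian Gram matrix, and then to combine a sharp lower-tail bound for each of these eigenvalues with a union bound, choosing $\varphi(\gamma,\kappa)$ small enough that the tail estimate beats the entropy of the union. The crucial point is that the ``obvious'' concentration-based route fails, so a non-Gaussian tail estimate — exactly the one behind \Cref{prop:bounded-negative-moments} — is needed.

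First I would pass to the whitened matrix $\bG = \bX\bSigma^{-1/2}$, which has i.i.d.\ $N(0,1)$ entries, and substitute $\bw = \bSigma^{1/2}(\bu - t\bbeta)$, so that the ratio in the lemma becomes $\|\bG\bw\|^2/(n\|\bw\|^2)$, invariant under scaling of $\bw$. As $t$ ranges over $\R$ and $\bu$ over vectors with $\|\bu\|_0 \le \kappa n$, the vector $\bw$ ranges over the union, over subsets $S\subseteq[p]$ with $|S| = s := \lfloor\kappa n\rfloor$, of the subspaces $W_S := \bSigma^{1/2}\bigl(\operatorname{span}\{\be_j : j\in S\} + \R\bbeta\bigr)$, each of dimension $d_S \le s+1 < n$ for $n$ large since $\kappa<1$. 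Because $\bbeta$ and $\bSigma$ are deterministic, each $W_S$ is a fixed subspace, so for any $\bU_S\in\R^{p\times d_S}$ with orthonormal columns spanning $W_S$ the matrix $\bG\bU_S$ is $n\times d_S$ with i.i.d.\ $N(0,1)$ entries, and the infimum in the lemma equals $\min_{|S|=s}\phi_{\min}\bigl((\bG\bU_S)^\top\bG\bU_S\bigr)/n$.

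For a fixed $S$, the quantity $\phi_{\min}\bigl((\bG\bU_S)^\top\bG\bU_S\bigr)/n$ is the smallest eigenvalue, scaled by $n$, of a $d_S\times d_S$ Wishart matrix with $n$ degrees of freedom and aspect ratio $d_S/n\to\kappa\in(0,1)$; hence the tail estimate \eqref{eq-edelman} from the proof of \Cref{prop:bounded-negative-moments}, used with $k=0$ and with the ratio $\kappa$ in place of $\gamma$, gives, for all large $n$ and all $\epsilon$ below a threshold depending only on $\kappa$, $\P\bigl(\phi_{\min}((\bG\bU_S)^\top\bG\bU_S)/n \le \epsilon\bigr) \le \bigl(e^{C_2(\kappa)}\epsilon\bigr)^{(1-\kappa)n/2}$. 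A union bound over the $\binom{p}{s} \le (e\gamma/\kappa)^{\kappa n}$ choices of $S$ then bounds $\P\bigl(\min_{|S|=s}\phi_{\min}((\bG\bU_S)^\top\bG\bU_S)/n \le \epsilon\bigr)$ by $\exp\bigl(n[\kappa\log(e\gamma/\kappa) + \tfrac{1-\kappa}{2}\log(e^{C_2(\kappa)}\epsilon)]\bigr)$, which tends to $0$ as soon as $\epsilon < \exp\bigl(-C_2(\kappa) - \tfrac{2\kappa}{1-\kappa}\log(e\gamma/\kappa)\bigr)$. Taking $\varphi(\gamma,\kappa)^2$ equal to, say, half of this threshold — further capped below the admissibility range of \eqref{eq-edelman}, which only shrinks it — one obtains a positive constant depending only on $\gamma,\kappa$ for which $\P\bigl(\min_{|S|=s}\phi_{\min}((\bG\bU_S)^\top\bG\bU_S)/n > \varphi(\gamma,\kappa)^2\bigr)\to1$, i.e.\ the assertion.

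The main difficulty, and the reason the argument must be set up this way, is that the naive approach — estimating each $\phi_{\min}((\bG\bU_S)^\top\bG\bU_S)$ near its typical value $\approx(1-\sqrt\kappa)^2 n$ via a sub-Gaussian concentration inequality such as Davidson--Szarek and then union bounding — does \emph{not} work: the number $\binom{p}{\kappa n}$ of subsets is exponential in $n$ at a rate that can dominate the Gaussian large-deviation rate at \emph{every} fixed level, already for $\gamma = 1$ and moderate $\kappa$. One is therefore forced to exploit the much faster $\bigl(C\epsilon\bigr)^{(1-\kappa)n/2}$ decay of the lower tail of the smallest Wishart eigenvalue as $\epsilon\to0$ (which is why Edelman's exact density bound underlying \eqref{eq-edelman}, rather than a concentration estimate, is the right tool), at the price of a possibly small constant $\varphi(\gamma,\kappa)$. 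The only genuine bookkeeping is tracking the entropy term $\kappa\log(e\gamma/\kappa)$ against the Stirling constant $C_2(\kappa)$ and the exponent $(1-\kappa)/2$, so that the two exponential rates combine to something strictly negative.
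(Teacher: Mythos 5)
Your proposal is correct and follows essentially the same route as the paper's proof: whiten to $\bG=\bX\bSigma^{-1/2}$, reduce to the minimum over $\binom{p}{\lfloor\kappa n\rfloor}$ fixed $(\lfloor\kappa n\rfloor+1)$-dimensional subspaces of the smallest Wishart eigenvalue, apply the Edelman-density tail bound \eqref{eq-edelman} with $k=0$, and close with a union bound after choosing $\eps$ small enough that the exponential decay rate dominates the binomial entropy. The only cosmetic deviations are that the paper bounds $\binom{p}{\lfloor\kappa n\rfloor}\le e^{n\log(e\gamma)}$ and uses $C_2(\kappa')$ with $\kappa'=(\kappa+1)/2$ to cover $d/n$ uniformly at finite $n$, while your added remark explaining why Davidson--Szarek concentration plus union bound cannot work is accurate and a nice clarification that the paper does not spell out.
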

\begin{proof}
If $V\subset \R^p$ is a subspace of dimension $d=\lfloor \kappa n \rfloor +1$
and $\bG = \bX\bSigma^{-1/2}$ then by \eqref{eq-edelman}
with $k=0$, $\eps\in(0,(1-d/n)/2)$ and $n$ large enough,
$$
\P(
\inf_{\bv\in \bSigma^{1/2} V: \|\bv\|=1} \|\bG\bv\|^2/n
< \eps
) \le 
\exp\bigl(C_2(\kappa')\log(\eps) (n-p+1)/2 - n\eps/2\bigr).
$$
for constant $\kappa'=(\kappa+1)/2$ thanks to
$1>\kappa'\ge d/n$.
Applying this bound to
the subspace $V_B= \{\bu -t\bbeta, (\bu,t)\in\R^{p+1}:\bu_{B^c}=\mathbf{0}\}$
for $B\subset[p]$ with $|B|\le \kappa n$ and using the union bound,
\begin{align*}
\P\Bigl(\inf_{
            t\in\R, \bu\in\R^p:\|\bu\|_0\le \kappa n
        }
        \Bigl(
        \frac{ \|\bX(\bu-t \bbeta)\|^2 }{n\|\bSigma^{1/2}(\bu-t \bbeta)\|^2}
        \Bigr)
        < \eps
\Bigr)
&\le \binom{p}{\lfloor \kappa n\rfloor}
e^{C_2(\kappa')\log(\eps)(n-d+1)/2 - n\eps/2}
\\&\le
e^{ n \log(e\gamma) + C_2(\kappa')\log(\eps)(n-d+1)/2 - n\eps/2 } 
\end{align*}
using $\binom{p}{q} \le e^{q \log(ep/q)}\le
e^{n \log(ep/n)}
$ with $q=\lfloor \kappa n\rfloor \le n$ and $p/n\le \gamma$.
Since $d\le \kappa n + 1$, choosing
$\eps = 1 \wedge \exp(C_2(\kappa')^{-1}(1-\kappa)^{-1}2 \log(e\gamma))$
the right-hand side of the previous display is bounded from above by $e^{-n\eps/2}$.
This value of $\eps$ provides $\varphi(\gamma,\kappa)^2$.
\end{proof}

\theoremGroupLassoPLargerN*

\begin{proof}[Proof of \Cref{thm:lasso-p-larger-than-n}]
    As in the rest of the paper, $f(\bz_0)=\by-\bX\hbbeta$ and we wish to apply
    \Cref{thm:L2-distance-from-normal} to $\bz_0$ conditionally on
    $(\bep,\bX\bQ_0)$. Instead of applying \Cref{thm:L2-distance-from-normal}
    to $f$,
    and in order to avoid certain events of small probability
    where the sparse eigenvalues of $\bX$ are not well behaved,
    we will apply it to a different function.
    Consider $F_+$ in \eqref{def-F_+} and the events
    \bes
    \Omega_L  &=& \big\{
    \|\hbbeta\|_0\le \kappa n/2
    \big\},
    \quad
    \Omega_\chi=
    \big\{
    F_+ < 2,
    \quad
    (F_+-1)_+ < 4 \sqrt{\log(n)/n} 
    \big\},
    \\
        \Omega_E&=&
        \Big\{\min_{t\in\R, \bu\in\R^p:\|\bu\|_0\le \kappa n}
            \frac{ \|\bX(\bu-t\bbeta)\|}{\|\bSigma^{1/2}(\bu-t\bbeta)\|}
        > 
        \varphi \sqrt n,
        \qquad
        \|\bX\bSigma^{-1/2}\|_{op} < \sqrt{n} (2 + \sqrt{\gamma})
    \Big\}
    \ees
    where $\varphi=\varphi(\gamma,\kappa)$ is the constant from
    \Cref{lemma:edelman}.
    Finally, let $\Omega_{KKT}$ be the event \eqref{eq:KKT-strict-group-lasso}
    that the KKT conditions of $\hbbeta$ hold strictly,
    and set
    $$
    \Omega \defas \Omega_L \cap \Omega_E \cap \Omega_{KKT} \cap \Omega_\chi.
    $$
    We have $\P(\Omega_L)\to 1$ by \eqref{eq:assum-lasso}
    and standard concentration bounds for $\chi^2_n$ random variables
    \cite[Lemma 1]{laurent2000adaptive}
    give $\P(\Omega_\chi) \to 1$.
    \Cref{lemma:edelman} and \cite[Theorem II.13]{DavidsonS01}
    provide $\P(\Omega_E)\to 1$ and
    \eqref{eq:KKT-strict-group-lasso} gives 
    $\P(\Omega_{KKT})=1$.
    These bounds imply $\P(\Omega)\to 1$ by the union bound.

    As the only randomness of the problem comes from
    $(\bep,\bX)$,
    we may choose the underlying probability space
    as $\R^n\times \R^{n\times p}$,
    so that $\Omega,\Omega_L,\Omega_E,\Omega_{KKT}$ are subsets
    of $\R^n\times \R^{n\times p}$.
    We next prove that $\Omega$ is open
    as a subset of $\R^n\times \R^{n\times p}$.
    Indeed, because the KKT conditions are strict in $\Omega$,
    $\Omega$ is a disjoint union of sets of the form
    \begin{multline}
        \label{eq:open-set-fixed-B}
    \Omega_L\cap  \Omega_E \cap \Omega_\chi \cap 
    \{ \|\hbbeta_{G_k}\| > 0, k\in B\} \cap \{
        \|\bX^\top(\by-\bX\hbbeta)\| < n \lambda_k,
        k \in B^c
    \}
    \end{multline}
    over all possible active groups $B\subset\{1,...,K\}$. 
    The sets $\Omega_E,\Omega_\chi$ are open 
    as 
    the inequalities
    are strict. In $\Omega_E$
    the function $(\bep,\bX)\mapsto \hbbeta$
    is locally Lipschitz by \Cref{lemma:lipschitzness},
    hence continuous.
    By continuity, the preimage of the open set $(0,+\infty)$
    by the function
    $\Omega_E\to\R$,
    $(\bep,\bX)\mapsto \|\hbbeta_{G_k}\|$ is open by continuity,
    and the preimage of the open set $(-\infty,n\lambda_k)$
    by the function
    $\Omega_E\to\R$,
    $(\bep,\bX)\mapsto \|\bX_{G_k}^\top(\by-\bX\hbbeta)\|$ is also open,
    again by continuity.
    This shows that the set \eqref{eq:open-set-fixed-B}
    is open for any fixed $B\subset\{1,...,K\}$
    so that $\Omega$ is open as the union of sets
    of the form \eqref{eq:open-set-fixed-B}
    over all $B\subset\{1,...,K\}$ satisfying
    $\sum_{k\in B}|G_k|\le \kappa n/2$.
    This proves that $\Omega\subset\R^{n} \times \R^{n\times p}$
    is open.

    For $F =  2 \max\{1,\|\bSigma^{1/2}\bh\|^2/(n\|\bX\bh\|^2) \}$
    in \Cref{lemma:moment-equivalence},
    \eqref{eq:condition-F} is satisfied
    so that \eqref{lm-F-1}-\eqref{lm-F-2} hold.
    In $\Omega$, we thus have
    $\|\bSigma^{1/2}\bh\|^2 \vee(\|\bX\bh\|^2/n)
    \le F_+F^2 \risk 
    \le 8 \varphi^{-2} \risk$
    and $\|\by-\bX\hbbeta\|/\sqrt n\le F_+^{1/2}\sigma + \sqrt{8}\varphi^{-1} \risk^{1/2}\le 3\sqrt{2} \varphi^{-1}\risk$. 
    Furthermore
    $\|\bw_0\|_2^2\le \varphi^{-1} /n$ in $\Omega_E$
    thanks to $|\Shat|\le \kappa n/2$ and
    the explicit expression for $\bw_0$ in \Cref{lemma:gradient-group-lasso}.
    In summary we have in $\Omega$
    \begin{equation}
    \|\bSigma^{1/2}\bh\|^2 \vee(\|\bX\bh\|^2/n)
    \le 8 \varphi^{-2}\risk,
    ~
    \|\by-\bX\hbbeta\|^2/n
    \le 18\varphi^{-2} \risk,
    ~
    \|\bw_0\|^2\le\varphi^{-2}/n
    \label{eq:new-bound-lasso-GL}
    \end{equation}
    which replace \eqref{new-bd-1}-\eqref{new-bd-2} in the present context.
    By the deterministic inequality \eqref{520},
    in $\Omega$ we have $\df \le |\Shat| \le \kappa n/2$ since $\hbH$ is
    rank at most $|\Shat|$ with operator norm at most one, so that
    \begin{equation}
    I_{\Omega}(1-\kappa/2)^2/8
    \le \|\by-\bX\hbbeta\|^2/(n\risk) + \Delta_n^a + \Delta_n^b + \Delta_n^c.
    \label{lower-bound-kappa-lasso-group-lasso}
    \end{equation}
    Let $(\bep,\bX),(\bep,\tbX)$ both in $\Omega$,
    let $\tbep=\bep$,
    and let $\bh,\tbh, \bff, \tbf$ be as in \Cref{lemma:lipschitzness}.
    Thanks to event $\Omega_E$ and the fact that $|\Shat|\le \kappa n/2$
    and similarly for $\tbbeta$ we have
    $\varphi^2\|\bSigma^{1/2}(\bh -\tbh)\|^2
    \le \|\bX(\bh-\tbh)\|^2/(2n)
    +  \|\tbX(\bh-\tbh)\|^2/(2n)$.
    Thus by \eqref{eq:lipschitz-3},
    $$n\varphi^2\|\bSigma^{1/2}(\bh -\tbh\|^2
    \le (\tbh-\bh)^\top(\bX-\tbX)^\top\bep
    + (\bh-\tbh)^\top(\bX^\top\bX - \tbX{}^\top\tbX)(\bh+\tbh)/2.
    $$
    Summing this inequality with 
    the first line in \eqref{lipschitz-eq-1} we find
    \bel{lipschitz-argument-GL}
&&
    n\varphi^2\|\bSigma^{1/2}(\bh -\tbh)\|^2
    + \|\bff-\tbf\|^2
  \cr&\le&
     (\tbh-\bh)^\top(\bX-\tbX)^\top\bep
    + (\bh-\tbh)^\top(\bX^\top\bX - \tbX{}^\top\tbX)(\bh+\tbh)/2
  \cr&& + (\tbh-\bh)^\top(\bX-\tbX)^\top\bff
  + \bh^\top(\bX-\tbX)^\top(\bff-\tbf).
    \eel
    Thanks to the bounds in \eqref{eq:new-bound-lasso-GL},
    this implies
    $\|\bff-\tbf\| \le 
    L \|(\bX-\tbX)\bSigma^{-1/2}\|_{op}$
    if $\{(\bep,\bX),(\bep,\tbX)\}\subset \Omega$,
    where $L=\C(\gamma,\kappa) \risk^{1/2}$.
    
    For a given $(\bep,\bX\bQ_0)$,
    we define $U_0=\{\bz_0\in\R^n: (\bep,\bX\bQ_0+\bz_0 \ba_0^\top)\in\Omega\}$.
    In $U_0$, the function $f(\bz_0) = \bX(\hbbeta-\bbeta)-\bep$
    is $L$-Lipschitz.
    By Kirszbraun's theorem, there exists a function $F:\R^n\to \R^n$
    that is an extension of $f$,
    i.e., $F(\bz_0) = f(\bz_0)$ for $\bz_0\in U_0$, and such that
    $F$ is $L$-Lipschitz in the whole $\R^n$.
    Note that both function $F$ and $f$
    implicitly depend on $(\bep,\bX\bQ_0)$.
    Since $\Omega$ is open, $U_0$ is also open,
    and thus conditionally on $(\bX\bQ_0,\bep)$,
    \begin{equation}
        \label{eq:equality-gradients-extension}
        \nabla f(\bz_0) = \nabla F(\bz_0),
        \qquad
        \text{ for all } \bz_0 \in U_0.
    \end{equation}
    (Without the openness of $\Omega$ established above,
    equality of the gradients would be unclear).

    Since $F:\R^n\to\R^n$ is such that
    $F(\bz_0)=f(\bz_0)$ in $\Omega$,
    by \eqref{lower-bound-kappa-lasso-group-lasso}
    \bel{eq:lower-boud-F-extension-Omega}
    (1-\kappa/2)^2 I_{\Omega}
       &\le& I_{\Omega}[
           \|\by-\bX\hbbeta\|^2/(n\risk)
           + \Delta_n^a + \Delta_n^b + \Delta_n^c
       ]
    \cr&=& I_{\Omega}[
        \|F(\bz_0)\|^2/(n\risk) 
        + \Delta_n^a + \Delta_n^b + \Delta_n^c
    ].
    \eel
    Taking conditional
    expectations and multiplying both sides by
    $\delta_1^2 \defas
    \E_0[\|\nabla F(\bz_0)\|_F^2]/
    \bigl\{
        \E_0[\|\nabla F(\bz_0)\|_F^2] + \E_0[\|F(\bz_0)\|^2]
    \bigr\}$
    we find
    \begin{equation*}
    \delta_1^2 ( 1-\kappa/2)^2
    \E_0[I_{\Omega}]
    \le 
    \E_0[\|\nabla F(\bz_0)\|_F^2/(n\risk)]
    + \E_0[I_\Omega(\Delta_n^a + \Delta_n^b + \Delta_n^c)],
    \end{equation*}
    due to
    $\delta_1^2\E_0[I_{\Omega}\|F(\bz_0)\|^2]\le \E_0[\|\nabla F(\bz_0)\|_F^2]$
    for the first term and $\delta_1^2\le 1$ for the second.
    Using $\delta_1^2\le 1$ and $1=I_\Omega+I_{\Omega^c}$,
    \bes
       &&\E[\delta_1^2]( 1-\kappa/2)^2
       \le 
    \E[I_{\Omega}\|\nabla F(\bz_0)\|_F^2/(n\risk)]
    + \E[I_\Omega(\Delta_n^a + \Delta_n^b + \Delta_n^c)]
    + \P[\Omega^c](1+L^2/\risk)
    \ees
    where we used that $\|\nabla F(\bz_0)\|_F^2\le n
    \|\nabla F(\bz_0)\|_{op}^2\le n L^2$ in $\Omega^c$
    since $F$ is $L$-Lipschitz.
    We now prove that the three terms
    on the right-hand side of converge to 0.
    For the third term, $L^2/\risk \le \C(\gamma,\kappa)$
    and $\P(\Omega^c)\to 0$ as $\Omega$
    has probability approaching one.
    For the first term,
    since $F$ is $L$-Lipschitz, $\|\nabla F(\bz_0)\|_F^2 \le n L^2$ almost surely
    so that the sequence of random variables 
    $I_{\Omega}\|\nabla F(\bz_0)\|_F^2/(\risk n)$ is uniformly integrable.
    Thanks to uniform integrability,
    if we can prove $\|\nabla F(\bz_0)\|_F^2/(\risk n)\to^\P 0$
    then $\E[I_{\Omega}\|\nabla F(\bz_0)\|_F^2/(\risk n)]\to 0$ holds.
    We use that $I_\Omega \nabla f(\bz_0) =I_\Omega \nabla F(\bz_0)$
    by \eqref{eq:equality-gradients-extension}, and that 
    in $\Omega$ the gradients of $f$ with respect to $\bz_0$ are given
    in \Cref{lemma:gradient-group-lasso} so that by 
    \eqref{eq:new-bound-lasso-GL}
    \bes
    I_{\Omega}\|\nabla F(\bz_0)\|_F/(\risk n)^{1/2}
       &=&
    I_{\Omega}\|\nabla f(\bz_0)\|_F/(\risk n)^{1/2}
    \\&\le& I_{\Omega}
        \big[
        \|\bw_0\|
        \|\by-\bX\hbbeta\| + \|\bI_n-\hbH\|_F |\langle \ba_0,\bh\rangle|
        \big]
        /(\risk n)^{1/2}
    \\&\le&\C(\gamma,\kappa)
    (
    n^{-1/2} + |\langle \ba_0,\bh\rangle|/\risk^{1/2}
    )
    \ees
    which converges to 0 in probability 
    thanks to assumption $\langle \ba_0,\hbbeta-\bbeta\rangle^2/\risk\overset{\P}{\to} 0$.
    Thanks to uniform integrability, this proves
    $\E[I_{\Omega}\|\nabla F(\bz_0)\|_F^2/(\sigma^2 n)] \to 0$.
    It remains to show 
    $\E[I_\Omega( \Delta_n^a+ \Delta_n^b + \Delta_n^c)]\to 0$.
    By definition of $\Delta_n^b$ in \eqref{eq:delta-n-b},
    thanks to $\Omega_\chi$ and \eqref{eq:new-bound-lasso-GL}
    we have $I_\Omega \Delta_n^b
    \le 18 \varphi^{-2}(F_+-1) \le \C(\gamma,\kappa) \sqrt{\log(n)/n}$.
    For $\Delta_n^a$ in \eqref{eq:def-delta-n-a},
    let $\bPi:\R^n\to\R^n$ be the convex projection onto the Euclidean ball
    of radius $\sqrt{18\varphi^{-2}\risk}$,
    then $\bPi(\by-\bX\hbbeta)=\by-\bX\hbbeta$ in $\Omega$ by \eqref{eq:new-bound-lasso-GL} so that
    \bel{bound-Delta_n-grouplasso}
    \E[I_{\Omega}\Delta_n^a]
    &=&\E\big[
        I_{\Omega}
        \big\{
            (1-\df/n) - \bep^\top\bPi(\by-\bX\hbbeta)/(n\sigma^2)
        \big\}^2
    \big] \sigma^2/\risk
      \cr&\le&
      \E[\|\bPi(\by-\bX\hbbeta)\|^2]/(n^2\risk) + \sigma^2/(n\risk)
      \cr&\le&
      18\varphi^{-2}/n + 1/n
      \eel
    by applying \Cref{prop:2nd-order-stein-Bellec-Zhang}
    to the function $\bep\mapsto\bPi(\by - \bX\hbbeta)$ which is 1-Lipschitz
    as the composition of two 1-Lipschitz functions (cf. \Cref{prop:hat-bH}(i)).
    For $\Delta_n^c$ in \eqref{eq:delta-n-c}, let $\bg,\ba_*$
    by as in \Cref{lemma:Deltas-a-b-c-d-strongly-convex}
    and set $\bu_*=\bSigma^{-1}\ba_*$,
    $\bQ_* = \bI_p - \bu_* \ba_*^\top$ and
    note that $(\bg,\bu_*,\bQ_*) = (\bz_0,\bu_0,\bQ_0)$ for $\ba_0=\ba_*$.
    Let also $\bw_*$ be the $\bw_0$ from \Cref{lemma:gradient-group-lasso}
    for $\ba_0=\ba_*$.
    As above for $\ba_0$, for a fixed $(\bep,\bX\bQ_*)$
    the function $\bg\mapsto \by-\bX\hbbeta$ is $L$-Lipschitz
    in $U_* =\{\bg\in\R^n: (\bep,\bX\bQ_* + \bg\ba_*^\top)\in\Omega\}$
    by \eqref{lipschitz-argument-GL} for the value of $L$ given after
    \eqref{lipschitz-argument-GL}.
    Furthermore $\|\by-\bX\hbbeta\|^2\le 18 n \varphi^{-2}\risk$ in $\Omega$.
    By Kirszbraun's theorem, there exists an extension $F_*:\R^n\to\R^n$
    implicitly depending on $(\bep,\bX\bQ_*)$
    such that $F_*(\bg) = \by-\bX\hbbeta$ in $\Omega$
    and $\|F_*(\bg)\|^2\le 18 n \varphi^{-2}\risk$ by
    composing the extension given by Kirszbraun's theorem by the convex
    projection onto the Euclidean ball of radius
    $(18 n \varphi^{-2}\risk)^{1/2}$.
    By \Cref{prop:2nd-order-stein-Bellec-Zhang}
    with respect to $\bg$
    conditionally on $(\bep,\bX\bQ_*)$
    \bes
    \E\bigl[I_\Omega\bigl((n-\df)\langle \ba_*,\bh\rangle + \langle \bw_* + \bg, \by-\bX\hbbeta\rangle\bigr)^2\bigr]
         &=&
    \E\bigl[I_\Omega\bigl(\dv F_*(\bg) + \langle \bg, F_*(\bg)\rangle\bigr)^2\bigr]
      \\ &\le& 18 n \varphi^{-2} \risk +  n L^2.
    \ees
    For the value of $L$ given after \eqref{lipschitz-argument-GL}
    and using the bound \eqref{eq:new-bound-lasso-GL}
    to control $\langle\bw_*, \by-\bX\hbbeta\rangle$ in $\Omega$,
    this gives
    $\E[I_\Omega \Delta_n^c] \le \C(\gamma,\kappa) n^{-1}$.

    This proves
    $(1-\kappa/2)^2 \E[\delta_1^2]\to 0$.
    Consequently $\Xi_0 = \bz_0^\top F(\bz_0) - \dv F(\bz_0)$
    satisfies
    $|\sup_t|\P(\Xi_0/\|F(\bz_0)\|\le t) - \Phi(t)|\to 0$
    by \eqref{eq:consistency-variance-2}. 
    Since $\xi_0=\bz_0^\top f(\bz_0) - \dv f(\bz_0)$ is equal to $\Xi_0$
    on the event $\Omega$ because $F$ is an extension of $f$, we have
    $|\P(\xi_0/\| f(\bz_0)\| \le t) -
    \P(\Xi_0/ \| F(\bz_0)\| \le t)
    |\le 2 \P(\Omega^c) \to 0$ 
    so that
    $\sup_t|\P(\xi_0/\|f(\bz_0)\|\le t) - \Phi(t)| \to 0$ as well.
    The conclusion \eqref{eq:thm_lasso-p-bigger-n}
    is obtained by controlling the term $\bw_0^\top(\by-\bX\hbbeta)/\|\by-\bX\hbbeta\|$
    by $\|\bw_0\|$ which is bounded as in \eqref{eq:new-bound-lasso-GL}
    in $\Omega$.

    It remains to show that \eqref{eq:thm_lasso-p-bigger-n}
    holds uniformly over all $\ba_0\in\bSigma^{1/2}\overline{S}$
    and to derive the properties of $\overline{S}$.
    The proof of the relative volume bound on $\overline{S}$
    and the lower bound on the cardinality
    of $\{j\in[p]: \be_j/\|\bSigma^{-1/2}\be_j\| \in \bSigma^{1/2}\overline{S}
    \}$ is the same as in the proof of \Cref{thm:main-result}
    given around \eqref{bound-second-moment-ba0-bh},
    and for $\ba_0\in\bSigma^{1/2}\overline{S}$
    inequality \eqref{bound-second-moment-ba0-bh} holds.
    For such $\ba_0$,
    $\E[I_\Omega \langle \ba_0,\bh\rangle^2/\risk]
    \le 8\varphi^{-2}\E[\langle \ba_0,\bh\rangle^2/\|\bSigma^{1/2}\bh\|^2]
    \le 8\varphi^{-2} C^*/a_p\to0$
    by \eqref{eq:new-bound-lasso-GL} for the first inequality
    and \eqref{bound-second-moment-ba0-bh} for the second.
\end{proof}

\section{Strict KKT conditions with probability one for the group Lasso}
\label{sec:proof-gradient-GL}

\begin{restatable}{lemma}{kktStrictLemma}
    \label{lemma:kkt-strict}
Consider a design matrix $\bX\in\mathbb R^{n\times p}$
and a response vector $\by\in\R^n$
for which the joint distribution of $(\bX,\by)$ admits a density
with respect to the Lebesgue measure.
Consider a partition of $\{1,...,p\}$ into groups $(G_1,...,G_K)$
and any minimizer
$$\hbbeta\in \argmin_{\bb\in\mathbb R^p} \frac{1}{2n}\|\bX \bb- \by\|^2 
+ \|\bb\|_{GL}
,
\qquad
\|\bb\|_{GL} \defas
\sum_{k=1,...,K} \lambda_k \|\bb_{G_k}\|_2$$
for some deterministic $\lambda_1,...,\lambda_K > 0$. 
There exists an open set $U\subset \R^{n\times (1+p)}$ such that
$\P((\by,\bX)\in U) = 1$ and
the KKT conditions are strict in $\{(\by,\bX)\in U\}$ in the sense that
\begin{equation}
    \label{eq:KKT-strict-group-lasso}
\Bigl\{(\by,\bX)\in U \Bigr\}
\subset
\Bigl\{\forall k=1,...,K, \quad \hbbeta_{G_k} = 0 \quad \Rightarrow\quad \| \bX_{G_k}^\top(\by - \bX \hbbeta) \|_2 < n \lambda_k  \Bigr\}.
\end{equation}
Finally, $\widehat{B}=\{k\in[K]:\|\hbbeta_{G_k}\|>0\}$ is constant
in a small neighborhood of any point in $U$.
\end{restatable}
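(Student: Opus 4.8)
The plan is to prove the statement in two stages. First I would show that the strict inequalities in \eqref{eq:KKT-strict-group-lasso} hold off a Lebesgue-null set; then I would upgrade this from ``off a null set'' to ``on an open set $U$'' and simultaneously extract the local constancy of $\widehat B$. There are only finitely many possible active patterns $B\subseteq[K]$, so it suffices to work with one $B$ and one inactive group $k_0\in[K]\setminus B$ at a time. Writing $S=\bigcup_{k\in B}G_k$, I would compare the full group-Lasso problem with the reduced problem over coordinates in $S$: let $\bc_0\in\argmin_{\bc\in\R^{|S|}}\{\tfrac1{2n}\|\by-\bX_S\bc\|^2+\sum_{k\in B}\lambda_k\|\bc_{G_k}\|\}$ and set $\rho(\by,\bX_S):=\by-\bX_S\bc_0$; the fitted value $\bX_S\bc_0$ is unique because $\mathbf u\mapsto\|\by-\mathbf u\|^2$ is strictly convex, so $\rho$ is a well-defined function of $(\by,\bX_S)$ alone. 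A short optimality comparison shows that if some full minimizer $\hbbeta$ has active pattern exactly $B$, then $\hbbeta$ is supported on $S$, its objective value equals the reduced optimum, and hence $\bX\hbbeta=\bX_S\bc_0$, i.e. $\by-\bX\hbbeta=\rho(\by,\bX_S)$.

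Consequently, the ``boundary event'' $\{(\by,\bX):\ \exists$ a minimizer with active pattern $B$ and $\|\bX_{G_{k_0}}^\top(\by-\bX\hbbeta)\|=n\lambda_{k_0}\}$ is contained in $\mathcal A_{B,k_0}:=\{(\by,\bX):\ \|\bX_{G_{k_0}}^\top\rho(\by,\bX_S)\|=n\lambda_{k_0}\}$, and I would prove $\mathcal A_{B,k_0}$ is Lebesgue-null by Fubini: fix every coordinate except the $n|G_{k_0}|$ entries of $\bX_{G_{k_0}}$, so that $\rho(\by,\bX_S)=:\bv$ is frozen; if $\bv=0$ the corresponding slice is empty since $n\lambda_{k_0}>0$, and if $\bv\neq0$ then $\bX_{G_{k_0}}\mapsto\|\bX_{G_{k_0}}^\top\bv\|^2$ is a non-constant quadratic polynomial in $\bX_{G_{k_0}}$, so its level set $\{=n^2\lambda_{k_0}^2\}$ has measure zero in $\R^{n\times|G_{k_0}|}$. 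Summing over the finitely many pairs $(B,k_0)$ and using that $(\by,\bX)$ has a density with respect to Lebesgue measure, the strict inequalities in \eqref{eq:KKT-strict-group-lasso} hold off a null set $N$. I expect this step — isolating a group whose columns do not enter the residual on the relevant event, and recognizing the tightness condition as a polynomial level set — to be the main obstacle; once it is in place the rest is soft analysis.

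For the second stage, I would take $U$ to be exactly the set where the strict inequalities of \eqref{eq:KKT-strict-group-lasso} hold (so $\P((\by,\bX)\in U)\ge\P(N^c)=1$) and show that $U$ is open with $\widehat B$ locally constant on it. The two facts I would use are: (a) $(\by,\bX)\mapsto\bX\hbbeta$ is continuous — by \Cref{prop:hat-bH}(i) in the $\by$-direction and by \Cref{lemma:lipschitzness}, applied with $\bbeta=0$, for joint continuity — so the residual $\by-\bX\hbbeta$ and each map $(\by,\bX)\mapsto\|\bX_{G_k}^\top(\by-\bX\hbbeta)\|$ is continuous; and (b) the optimal value of \eqref{group-lasso} is continuous in $(\by,\bX)$ by coercivity of $\|\cdot\|_{GL}$. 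Given a point of $U$ with active pattern $\overline B$, the KKT inclusion $\{k:\hbbeta_{G_k}\neq0\}\subseteq\{k:\|\bX_{G_k}^\top(\by-\bX\hbbeta)\|=n\lambda_k\}$ combined with strictness shows $\widehat B=\{k:\|\bX_{G_k}^\top(\by-\bX\hbbeta)\|=n\lambda_k\}$ there, a description that does not depend on the choice of minimizer. For $k\notin\overline B$ the strict inequality $\|\bX_{G_k}^\top(\by-\bX\hbbeta)\|<n\lambda_k$ is an open condition and persists nearby, so these groups stay inactive; for $k^\ast\in\overline B$, if there were a sequence in a neighborhood of the given point converging to it along which some minimizer had a vanishing $G_{k^\ast}$-block, then since the optimal value and the residual norm converge, $\|\hbbeta\|_{GL}$ converges along the sequence, the minimizers are bounded, and any subsequential limit is a minimizer at the limit point with vanishing $G_{k^\ast}$-block — contradicting the minimizer-independent description of $\widehat B$ there (which contains $k^\ast$). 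Hence $\widehat B=\overline B$ on a whole neighborhood, on which the strict inequalities defining $U$ continue to hold; this proves both that $U$ is open and that $\widehat B$ is constant near every point of $U$.
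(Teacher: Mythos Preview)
Your proposal is correct. Stage~1 is essentially the paper's argument: both fix $B\subset[K]$ and $k_0\notin B$, freeze all data except the columns $\bX_{G_{k_0}}$, observe that the residual of the reduced problem then depends only on the frozen data, and conclude that the event $\{\|\bX_{G_{k_0}}^\top(\by-\bX\hbbeta(B))\|=n\lambda_{k_0}\}$ is null (the paper via conditional densities, you via Fubini and a quadratic level set); the content is identical.

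Stage~2 is where you genuinely diverge. The paper does \emph{not} take $U$ to be the strict-KKT set for the full estimator; it defines $U=\bigcap_{B}\bigcap_{k\notin B}\{\|\bX_{G_k}^\top(\by-\bX\hbbeta(B))\|\neq n\lambda_k\}$ using the \emph{constrained} estimators $\hbbeta(B)$ for every $B$. Openness is then immediate from continuity of each $(\by,\bX)\mapsto\bX^\top(\by-\bX\hbbeta(B))$ (the H\"older claim after \eqref{lipschitz-eq-1}), and local constancy of $\widehat B$ follows from a margin trick: pass to $U_\delta$ where every $\|\bX_{G_k}^\top(\by-\bX\hbbeta(B))\|/(n\lambda_k)$ is at distance $>\delta$ from $1$, so that at a nearby point the condition ``$>1-\delta/2$'' forces ``$=1$'' and hence $k\in\widehat B$. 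Your route instead works directly with the set named in the statement and handles the ``active stays active'' direction by a compactness/subsequence argument (bounded minimizers via convergence of $\|\hbbeta\|_{GL}$, continuity of the optimal value, subsequential limits being minimizers). The paper's choice buys a one-line proof of openness and avoids any sequential extraction; your route is more direct in that it never introduces the auxiliary family $\{\hbbeta(B)\}_B$ beyond what Stage~1 already needs, at the cost of invoking Berge-type facts and a slightly longer argument.
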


\begin{proof}[Proof of \Cref{lemma:kkt-strict}]

Consider a fixed $B\subset \{1,...,K\}$ and its complementary set $B^c$, and consider
the Group-Lasso estimator $\hbbeta(B)$ with the additional constraint $\bb_{G_k}=0$ for every 
$k\in{B^c}$.
Now consider a group $k \in B^c$. Since the joint distribution of $(\bX
,\by)$ has a density with respect to the Lebesgue measure,
the conditional distribution of $\bX_{G_k}$ given
$(\by, (\bX\be_{j})_{j\notin G_k})$ also admits a density with respect to the Lebesgue measure.
Conditionally on $(\by, (\bX\be_{j})_{j\notin G_k})$, two cases may appear:
\begin{enumerate}
    \item
        If $\by - \bX \hbbeta(B) = 0$, the KKT condition for group $G_k$ hold strictly since $\lambda_k \ne 0$.
\item
    If $\by - \bX \hbbeta(B) \ne 0$, the distribution of $\bX_{G_k}$ given
$(\by, (\bX\be_{j})_{j\notin G_k})$ and the distribution of
$\bX_{G_k}^\top(\by - \bX \hbbeta(B))$ given 
$(\by, (\bX\be_{j})_{j\notin G_k})$ both have a density with respect to the Lebesgue measure.
The sphere of radius $n\lambda_k$ has measure 0 for any continuous distribution, hence
$$\P\left(
    \| \bX_{G_k}^\top(\by - \bX \hbbeta(B)) \|_2 \ne n\lambda_k
\Big| \by, (\bX\be_{j})_{j\notin G_k}\right) = 1.$$
\end{enumerate}
Finally, the unconditional probability
$\P( \| \bX_{G_k}^\top(\by - \bX \hbbeta(B)) \|_2 \ne n \lambda_k )$ is also one.
Let
$U= \cap_{B\subset\{1,...,K\}}
\cap_{k\notin B}
\{(\by,\bX):
    \|\bX_{G_k}^\top(\by-\bX\hbbeta(B))\|_2 \ne n\lambda_k
\}$.
Then $\P((\by,\bX)\in U) = 1$ as a finite intersection of
events of probability one
and \eqref{eq:KKT-strict-group-lasso} holds.
The set $U$ is open as a finite intersection of open sets,
since 
$\{(\by,\bX):
    \|\bX_{G_k}^\top(\by-\bX\hbbeta(B))\|_2 \ne n\lambda_k
\}$ is open by continuity
of $(\by,\bX)\mapsto \bX^\top(\by-\bX\hbbeta(B))$
by the claim following \eqref{lipschitz-eq-1}.

Next, to show that $\widehat{B}$ is constant in
a neighborhood of every point in $U$, 
set $U_\delta = 
\cap_{B\subset\{1,...,K\}}
\cap_{k\notin B}
\{(\by,\bX):
    |\|\bX_{G_k}^\top(\by-\bX\hbbeta(B))\|_2/(n\lambda_k) - 1|>\delta
\}$ for all $\delta>0$.
We have $U=\cup_{\delta>0}U_\delta$
and the set $U_\delta$ is open by continuity of 
$(\by,\bX)\mapsto|\|\bX_{G_k}^\top(\by-\bX\hbbeta(B))\|_2/(n\lambda_k) - 1|$,
which follows from the continuity 
of $(\by,\bX)\mapsto \bX^\top(\by-\bX\hbbeta(B))$
by the claim following \eqref{lipschitz-eq-1}.
For any $(\bybar,\bXbar)\in U$,
there exists some $\delta>0$ with $(\bybar,\bXbar)\in U_\delta$.
Let $\overline{B}=\{k\in[K]: \|\bbetabar_{G_k}\|>0\}$.
By continuity of $(\by,\bX)\mapsto \bX^\top(\by-\bX\hbbeta)$
thanks to the claim following \eqref{lipschitz-eq-1},
there exists a neighborhood $\mathcal N$ of $(\bybar,\bXbar)$
with $\mathcal N\subset U_\delta$ such that
for all $(\by,\bX)\in\mathcal N$,
$\|\bX_{G_k}^\top(\by-\bX\hbbeta)\|/(n\lambda_k)<1-\delta/2$
for $k\notin \overline{B}$ 
and
$\|\bX_{G_k}^\top(\by-\bX\hbbeta)\|/(n\lambda_k)>1-\delta/2$
for $k\in \overline{B}$.
Since $\mathcal N\subset U_\delta$,
$\|\bX_{G_k}^\top(\by-\bX\hbbeta)\|/(n\lambda_k)>1-\delta/2$
implies 
$\|\bX_{G_k}^\top(\by-\bX\hbbeta)\|/(n\lambda_k) =1$
so that 
that $\widehat{B}=\overline{B}$ in $\mathcal N$.
\end{proof}

\LemmaGradientGroupLasso*

\begin{proof}[Proof of \Cref{lemma:gradient-group-lasso}]
    By \Cref{lemma:kkt-strict}, $\widehat{B}$ and $\Shat$
    are constant in a sufficiently small neighborhood
    of almost every $(\bybar,\bXbar)$.
    The additional assumption that $\bXbar_{\Sbar}^\top\bXbar_{\Sbar}$
    is invertible provides that
    $\bX_{\Sbar}^\top\bX_{\Sbar}$ is invertible by continuity of the smallest
    eigenvalue in a small enough
    compact neighborhood of $(\bybar,\bXbar)$, and in this neighborhood
    $(\by,\bX)\mapsto \hbbeta$ is Lipschitz
    by the sentence following \eqref{lipschitz-eq-2}
    and thus almost everywhere differentiable by Rademacher's theorem.
    The formulae for $\nabla \hbbeta(\bz_0)$,
    $\hbH$ and $\bw_0$ involving the matrix $\bM$ in 
    \eqref{matrix-M-group-lasso}
    are then obtained by differentiating
    the KKT conditions restricted to $\Shat$
    in this neighborhood, that is,
    $\bX_{G_k}^\top(\by-\bX\hbbeta) = n\lambda_k \hbbeta_{G_k} / \|\hbbeta_{G_k}\|$ for all $k\in\widehat{B}$.
\end{proof}

\section{Proof of Theorem~\texorpdfstring{\ref{thm:consistency-variance}}{}}
\label{appendix:proof-variance-quadratic-case}

\begin{proof}[Proof of \Cref{thm:consistency-variance}] 
With $\E[\|\bmubar + \bAbar^\top \bz\|^2]=\|\bmubar\|^2+\|\bAbar\|_F^2$
in mind, consider
\bes
\widehat{\Var[\xi]}
&=&
\|f(\bz)-(\bmubar+\bAbar^\top\bz)\|^2 + 
\trace[\{\nabla f(\bz) - \bAbar\}^2]
\\&& + 2(f(\bz) - \bmubar-\bAbar^\top\bz)^\top(\bmubar+\bAbar^\top\bz)
+2\trace[\{\nabla f(\bz) - \bAbar\}\bAbar]
\\&&
+ \bigl(\|\bmubar+\bAbar^\top\bz\|^2 - \|\bmubar\|^2 - \|\bAbar\|_F^2 \bigr)
+\|\bmubar\|^2 + \|\bAbar\|_F^2 + \trace[\bAbar^2].
\ees
By the triangle and Cauchy-Schwarz inequality inequalities, 
\bes
&& \E\big[\big|\|f(\bz)\|^2+\trace[\{\nabla f(\bz)\}^2] - \Var[\xi]\big|\big]
\cr &\le & \E\big[\|f(\bz)-(\bmubar + \bAbar^\top \bz)\|^2\big]+\E[\|\nabla f(\bz)-\bAbar\|_F^2] 
\cr && + 2\big\{\E\big[\|f(\bz)-(\bmubar + \bAbar^\top \bz)\|^2\big]+\E[\|\nabla f(\bz)-\bAbar\|_F^2]\big\}^{1/2}\big\{\|\bmubar\|^2+2\|\bAbar\|_F^2\big\}^{1/2}
\cr && + \E\big[\big|\|\bmubar + \bAbar^\top \bz\|^2 - \|\bmubar\|^2 - \|\bAbar\|_F^2\big|\big]
+ \big|\|\bmubar\|^2 + \|\bAbar\|_F^2 + \trace(\bAbar^2) - \Var[\xi]\big|. 
\ees
We have $\E\big[\|f(\bz)-(\bmubar + \bAbar^\top \bz)\|^2\big]
\le \E[\|\nabla f(\bz)-\bAbar\|_F^2] \le \epsdoublebar_{1,2}^2\Var[\xi]/2$ 
by the Gaussian Poincar\'e inequality, 
$\E\big[\big|\|\bmubar + \bAbar^\top \bz\|^2 - \|\bmubar\|^2 - \|\bAbar\|_F^2\big|^2\big]
= \|\bAbar\bmubar\|^2+2\trace\{(\bAbar\bAbar^\top)^2\}
\le \|\bAbar\|_{op}^2C_0^2\Var[\xi]$, and 
$0\le 1 - \{\|\bmubar\|^2 + \|\bAbar\|_F^2 + \trace(\bAbar^2)\}/\Var[\xi] \le \epsdoublebar_{1,2}^2$ 
as in \eqref{quadratic-approximation-3}. Thus
\eqref{thm:consistency-variance-two-terms-1} holds 
and the conclusions follow. 
\end{proof}

\bibliographystyle{plainnat}
\bibliography{asymptotic-normality.bib}

\begin{thebibliography}{47}
\providecommand{\natexlab}[1]{#1}
\providecommand{\url}[1]{\texttt{#1}}
\expandafter\ifx\csname urlstyle\endcsname\relax
  \providecommand{\doi}[1]{doi: #1}\else
  \providecommand{\doi}{doi: \begingroup \urlstyle{rm}\Url}\fi

\bibitem[Bayati and Montanari(2012)]{bayati2012lasso}
Mohsen Bayati and Andrea Montanari.
\newblock The lasso risk for gaussian matrices.
\newblock \emph{IEEE Transactions on Information Theory}, 58\penalty0
  (4):\penalty0 1997--2017, 2012.

\bibitem[Bellec(2018{\natexlab{a}})]{bellec2018nb_lsb}
Pierre~C Bellec.
\newblock The noise barrier and the large signal bias of the lasso and other
  convex estimators.
\newblock \emph{arXiv:1804.01230}, 2018{\natexlab{a}}.
\newblock URL \url{https://arxiv.org/pdf/1804.01230.pdf}.

\bibitem[Bellec(2018{\natexlab{b}})]{bellec2018noise}
Pierre~C Bellec.
\newblock The noise barrier and the large signal bias of the lasso and other
  convex estimators.
\newblock \emph{arXiv preprint arXiv:1804.01230}, 2018{\natexlab{b}}.

\bibitem[Bellec and Tsybakov(2017)]{bellec2016bounds}
Pierre~C Bellec and Alexandre~B Tsybakov.
\newblock Bounds on the prediction error of penalized least squares estimators
  with convex penalty.
\newblock In Vladimir Panov, editor, \emph{Modern Problems of Stochastic
  Analysis and Statistics, Selected Contributions In Honor of Valentin
  Konakov}. Springer, 2017.
\newblock URL \url{https://arxiv.org/pdf/1609.06675.pdf}.

\bibitem[Bellec and Zhang(2018)]{bellec_zhang2018second_order_stein}
Pierre~C Bellec and Cun-Hui Zhang.
\newblock Second order stein: Sure for sure and other applications in
  high-dimensional inference.
\newblock \emph{arXiv preprint arXiv:1811.04121}, 2018.

\bibitem[Bellec and Zhang(2019)]{bellec_zhang2019dof_lasso}
Pierre~C Bellec and Cun-Hui Zhang.
\newblock De-biasing the lasso with degrees-of-freedom adjustment.
\newblock \emph{preprint}, 2019.

\bibitem[Bellec et~al.(2018)Bellec, Lecu\'e, and Tsybakov]{bellec2016slope}
Pierre~C. Bellec, Guillaume Lecu\'e, and Alexandre~B. Tsybakov.
\newblock Slope meets lasso: Improved oracle bounds and optimality.
\newblock \emph{Ann. Statist.}, 46\penalty0 (6B):\penalty0 3603--3642, 2018.
\newblock ISSN 0090-5364.
\newblock URL \url{https://arxiv.org/pdf/1605.08651.pdf}.

\bibitem[Belloni et~al.(2014)Belloni, Chernozhukov, and
  Hansen]{belloni2014high}
Alexandre Belloni, Victor Chernozhukov, and Christian Hansen.
\newblock High-dimensional methods and inference on structural and treatment
  effects.
\newblock \emph{Journal of Economic Perspectives}, 28\penalty0 (2):\penalty0
  29--50, 2014.

\bibitem[Bickel et~al.(1993)Bickel, Klaassen, Bickel, Ritov, Klaassen, Wellner,
  and Ritov]{BickelKRW98}
Peter~J Bickel, Chris~AJ Klaassen, Peter~J Bickel, Y~Ritov, J~Klaassen, Jon~A
  Wellner, and YA'Acov Ritov.
\newblock \emph{Efficient and adaptive estimation for semiparametric models}.
\newblock Johns Hopkins University Press Baltimore, 1993.

\bibitem[Bickel et~al.(2009)Bickel, Ritov, and
  Tsybakov]{bickel2009simultaneous}
Peter~J. Bickel, Yaacov Ritov, and Alexandre~B. Tsybakov.
\newblock Simultaneous analysis of lasso and dantzig selector.
\newblock \emph{Ann. Statist.}, 37\penalty0 (4):\penalty0 1705--1732, 08 2009.
\newblock \doi{10.1214/08-AOS620}.
\newblock URL \url{http://dx.doi.org/10.1214/08-AOS620}.

\bibitem[Blumensath and Davies(2009)]{blumensath2009iterative}
Thomas Blumensath and Mike~E Davies.
\newblock Iterative hard thresholding for compressed sensing.
\newblock \emph{Applied and computational harmonic analysis}, 27\penalty0
  (3):\penalty0 265--274, 2009.

\bibitem[Bogachev(1998)]{bogachev1998gaussian}
Vladimir~Igorevich Bogachev.
\newblock \emph{Gaussian measures}.
\newblock Number~62. American Mathematical Soc., 1998.

\bibitem[Bu et~al.(2019)Bu, Klusowski, Rush, and Su]{bu2019algorithmic}
Zhiqi Bu, Jason Klusowski, Cynthia Rush, and Weijie Su.
\newblock Algorithmic analysis and statistical estimation of slope via
  approximate message passing.
\newblock In \emph{Advances in Neural Information Processing Systems}, pages
  9361--9371, 2019.

\bibitem[Cai and Guo(2017)]{cai2017confidence}
T~Tony Cai and Zijian Guo.
\newblock Confidence intervals for high-dimensional linear regression: Minimax
  rates and adaptivity.
\newblock \emph{The Annals of statistics}, 45\penalty0 (2):\penalty0 615--646,
  2017.

\bibitem[Celentano and Montanari(2019)]{celentano2019fundamental}
Michael Celentano and Andrea Montanari.
\newblock Fundamental barriers to high-dimensional regression with convex
  penalties.
\newblock \emph{arXiv preprint arXiv:1903.10603}, 2019.

\bibitem[Chatterjee(2009)]{chatterjee2009fluctuations}
Sourav Chatterjee.
\newblock Fluctuations of eigenvalues and second order poincar{\'e}
  inequalities.
\newblock \emph{Probability Theory and Related Fields}, 143\penalty0
  (1-2):\penalty0 1--40, 2009.

\bibitem[Davidson and Szarek(2001)]{DavidsonS01}
Kenneth~R Davidson and Stanislaw~J Szarek.
\newblock Local operator theory, random matrices and banach spaces.
\newblock \emph{Handbook of the geometry of Banach spaces}, 1\penalty0
  (317-366):\penalty0 131, 2001.

\bibitem[Donoho and Montanari(2016)]{donoho2016high}
David Donoho and Andrea Montanari.
\newblock High dimensional robust m-estimation: Asymptotic variance via
  approximate message passing.
\newblock \emph{Probability Theory and Related Fields}, 166\penalty0
  (3-4):\penalty0 935--969, 2016.

\bibitem[Edelman(1988)]{edelman1988eigenvalues}
Alan Edelman.
\newblock Eigenvalues and condition numbers of random matrices.
\newblock \emph{SIAM Journal on Matrix Analysis and Applications}, 9\penalty0
  (4):\penalty0 543--560, 1988.

\bibitem[El~Karoui et~al.(2013)El~Karoui, Bean, Bickel, Lim, and
  Yu]{el_karoui2013robust}
Noureddine El~Karoui, Derek Bean, Peter~J Bickel, Chinghway Lim, and Bin Yu.
\newblock On robust regression with high-dimensional predictors.
\newblock \emph{Proceedings of the National Academy of Sciences}, 110\penalty0
  (36):\penalty0 14557--14562, 2013.

\bibitem[Fan and Li(2001)]{fan2001variable}
Jianqing Fan and Runze Li.
\newblock Variable selection via nonconcave penalized likelihood and its oracle
  properties.
\newblock \emph{Journal of the American statistical Association}, 96\penalty0
  (456):\penalty0 1348--1360, 2001.

\bibitem[Feng and Zhang(2019)]{feng2019sorted}
Long Feng and Cun-Hui Zhang.
\newblock Sorted concave penalized regression.
\newblock \emph{Annals of Statistics}, 47\penalty0 (6):\penalty0 3069--3098,
  2019.

\bibitem[Javanmard and Montanari(2014{\natexlab{a}})]{JavanmardM14a}
Adel Javanmard and Andrea Montanari.
\newblock Confidence intervals and hypothesis testing for high-dimensional
  regression.
\newblock \emph{The Journal of Machine Learning Research}, 15\penalty0
  (1):\penalty0 2869--2909, 2014{\natexlab{a}}.

\bibitem[Javanmard and Montanari(2014{\natexlab{b}})]{JavanmardM14b}
Adel Javanmard and Andrea Montanari.
\newblock Hypothesis testing in high-dimensional regression under the gaussian
  random design model: Asymptotic theory.
\newblock \emph{IEEE Transactions on Information Theory}, 60\penalty0
  (10):\penalty0 6522--6554, 2014{\natexlab{b}}.

\bibitem[Javanmard and Montanari(2018)]{javanmard2018debiasing}
Adel Javanmard and Andrea Montanari.
\newblock Debiasing the lasso: Optimal sample size for gaussian designs.
\newblock \emph{The Annals of Statistics}, 46\penalty0 (6A):\penalty0
  2593--2622, 2018.

\bibitem[Laurent and Massart(2000)]{laurent2000adaptive}
B.~Laurent and P.~Massart.
\newblock Adaptive estimation of a quadratic functional by model selection.
\newblock \emph{Ann. Statist.}, 28\penalty0 (5):\penalty0 1302--1338, 10 2000.
\newblock \doi{10.1214/aos/1015957395}.
\newblock URL \url{http://dx.doi.org/10.1214/aos/1015957395}.

\bibitem[Lecu\'e and Mendelson(2018)]{lecue2015regularization_small_ball_I}
Guillaume Lecu\'e and Shahar Mendelson.
\newblock Regularization and the small-ball method i: Sparse recovery.
\newblock \emph{Ann. Statist.}, 46\penalty0 (2):\penalty0 611--641, 04 2018.
\newblock \doi{10.1214/17-AOS1562}.
\newblock URL \url{https://doi.org/10.1214/17-AOS1562}.

\bibitem[Lei et~al.(2018)Lei, Bickel, and El~Karoui]{lei2018asymptotics}
Lihua Lei, Peter~J Bickel, and Noureddine El~Karoui.
\newblock Asymptotics for high dimensional regression m-estimates: fixed design
  results.
\newblock \emph{Probability Theory and Related Fields}, 172\penalty0
  (3-4):\penalty0 983--1079, 2018.

\bibitem[Miolane and Montanari(2018)]{miolane2018distribution}
L{\'e}o Miolane and Andrea Montanari.
\newblock The distribution of the lasso: Uniform control over sparse balls and
  adaptive parameter tuning.
\newblock \emph{arXiv preprint arXiv:1811.01212}, 2018.

\bibitem[Ndaoud(2020)]{ndaoud2020scaled}
Mohamed Ndaoud.
\newblock Scaled minimax optimality in high-dimensional linear regression: A
  non-convex algorithmic regularization approach.
\newblock \emph{arXiv preprint arXiv:2008.12236}, 2020.

\bibitem[Niculescu and Persson(2006)]{niculescu2006convex}
Constantin Niculescu and Lars-Erik Persson.
\newblock \emph{Convex functions and their applications}.
\newblock Springer, 2006.

\bibitem[Silverstein(1985)]{silverstein1985smallest}
Jack~W Silverstein.
\newblock The smallest eigenvalue of a large dimensional wishart matrix.
\newblock \emph{The Annals of Probability}, 13\penalty0 (4):\penalty0
  1364--1368, 1985.

\bibitem[Stein(1981)]{stein1981estimation}
Charles~M Stein.
\newblock Estimation of the mean of a multivariate normal distribution.
\newblock \emph{The annals of Statistics}, pages 1135--1151, 1981.

\bibitem[Sun and Zhang(2012)]{sun2012scaled}
Tingni Sun and Cun-Hui Zhang.
\newblock Scaled sparse linear regression.
\newblock \emph{Biometrika}, 99\penalty0 (4):\penalty0 879--898, 2012.

\bibitem[Sun and Zhang(2013)]{sun2013sparse}
Tingni Sun and Cun-Hui Zhang.
\newblock Sparse matrix inversion with scaled lasso.
\newblock \emph{The Journal of Machine Learning Research}, 14\penalty0
  (1):\penalty0 3385--3418, 2013.

\bibitem[Sur and Cand{\`e}s(2018)]{sur2018modern}
Pragya Sur and Emmanuel~J Cand{\`e}s.
\newblock A modern maximum-likelihood theory for high-dimensional logistic
  regression.
\newblock \emph{arXiv preprint arXiv:1803.06964}, 2018.

\bibitem[Talagrand(2010)]{talagrand2010mean}
Michel Talagrand.
\newblock \emph{Mean field models for spin glasses: Volume I: Basic examples},
  volume~54.
\newblock Springer, 2010.

\bibitem[Thrampoulidis et~al.(2015)Thrampoulidis, Abbasi, and
  Hassibi]{thrampoulidis2015lasso}
Christos Thrampoulidis, Ehsan Abbasi, and Babak Hassibi.
\newblock Lasso with non-linear measurements is equivalent to one with linear
  measurements.
\newblock In \emph{Advances in Neural Information Processing Systems}, pages
  3420--3428, 2015.

\bibitem[Thrampoulidis et~al.(2018)Thrampoulidis, Abbasi, and
  Hassibi]{thrampoulidis2018precise}
Christos Thrampoulidis, Ehsan Abbasi, and Babak Hassibi.
\newblock Precise error analysis of regularized $ m $-estimators in high
  dimensions.
\newblock \emph{IEEE Transactions on Information Theory}, 64\penalty0
  (8):\penalty0 5592--5628, 2018.

\bibitem[Tibshirani(2013)]{tibshirani2013lasso}
Ryan~J Tibshirani.
\newblock The lasso problem and uniqueness.
\newblock \emph{Electronic Journal of Statistics}, 7:\penalty0 1456--1490,
  2013.

\bibitem[Vaiter et~al.(2012)Vaiter, Deledalle, Peyr{\'e}, Fadili, and
  Dossal]{vaiter2012degrees}
Samuel Vaiter, Charles Deledalle, Gabriel Peyr{\'e}, Jalal Fadili, and Charles
  Dossal.
\newblock The degrees of freedom of the group lasso.
\newblock \emph{arXiv preprint arXiv:1205.1481}, 2012.

\bibitem[Van~de Geer et~al.(2014)Van~de Geer, B{\"u}hlmann, Ritov, and
  Dezeure]{GeerBR14}
Sara Van~de Geer, Peter B{\"u}hlmann, Ya’acov Ritov, and Ruben Dezeure.
\newblock On asymptotically optimal confidence regions and tests for
  high-dimensional models.
\newblock \emph{The Annals of Statistics}, 42\penalty0 (3):\penalty0
  1166--1202, 2014.

\bibitem[Vershynin(2018)]{vershynin2018high}
Roman Vershynin.
\newblock \emph{High-dimensional probability: An introduction with applications
  in data science}, volume~47.
\newblock Cambridge University Press, 2018.

\bibitem[Zhang(2010)]{zhang2010nearly}
Cun-Hui Zhang.
\newblock Nearly unbiased variable selection under minimax concave penalty.
\newblock \emph{The Annals of statistics}, 38\penalty0 (2):\penalty0 894--942,
  2010.

\bibitem[Zhang(2011)]{zhang2011statistical}
Cun-Hui Zhang.
\newblock Statistical inference for high-dimensional data.
\newblock \emph{Mathematisches Forschungsinstitut Oberwolfach: Very High
  Dimensional Semiparametric Models, Report}, \penalty0 (48):\penalty0 28--31,
  2011.

\bibitem[Zhang and Huang(2008)]{ZhangH08}
Cun-Hui Zhang and Jian Huang.
\newblock The sparsity and bias of the lasso selection in high-dimensional
  linear regression.
\newblock \emph{Ann. Statist.}, 36\penalty0 (4):\penalty0 1567--1594, 08 2008.
\newblock URL \url{https://doi.org/10.1214/07-AOS520}.

\bibitem[Zhang and Zhang(2014)]{ZhangSteph14}
Cun-Hui Zhang and Stephanie~S Zhang.
\newblock Confidence intervals for low dimensional parameters in high
  dimensional linear models.
\newblock \emph{Journal of the Royal Statistical Society: Series B (Statistical
  Methodology)}, 76\penalty0 (1):\penalty0 217--242, 2014.

\end{thebibliography}

\end{document}